\numberwithin{equation}{section}
\theoremstyle{plain}
\newtheorem{thm}{Theorem}[section]
\newtheorem{prop}[thm]{Proposition}
\newtheorem{defi}[thm]{Definition}
\newtheorem{lem}[thm]{Lemma}
\newtheorem{cor}[thm]{Corollary}
\newtheorem{rema}[thm]{Remark}
\newcommand{\rr}{n}
\newcommand{\NN}{N}
\title{$N$-point spherical functions and asymptotic boundary KZB equations}
\author{J.V. Stokman \& N. Reshetikhin}
\address{J.S.: KdV Institute for Mathematics, University of Amsterdam,
Science Park 105-107, 1098 XG Amsterdam, The Netherlands.}
\email{j.v.stokman@uva.nl}
\address{N.R.: Department of Mathematics, University of California, Berkeley,
CA 94720, USA \& ITMO University, Kronverskii Ave. 49, Saint Petersburg, 197101, Russia \& KdV Institute for Mathematics, University of Amsterdam, Science Park 105-107, 1098 XG Amsterdam, The Netherlands}
\email{reshetik@math.berkeley.edu}
\begin{document}
\keywords{}
%%%%%%%%%%%%%%%%%%%%%%%%%%%%%%%
%%%%%%%%%%%%%%%%%%%%%%%%%%%%%%%
\begin{abstract}
Let $G$ be a split real connected Lie group with finite center.
In the first part of the paper we define and study formal elementary spherical functions. They are formal power series analogues of elementary spherical functions on $G$ in which the role of the quasi-simple admissible $G$-representations is 
replaced by Verma modules.
For generic highest weight we express the formal elementary spherical functions
in terms of Harish-Chandra series and integrate them to spherical functions on the regular part of $G$.  We show that they produce eigenstates for spin versions of quantum hyperbolic Calogero-Moser systems.

In the second part of the paper we define and study special subclasses of global and formal elementary spherical functions, which we call global and formal $N$-point spherical functions. Formal $N$-point spherical functions arise as limits of correlation functions for boundary Wess-Zumino-Witten conformal field theory on the cylinder when the position variables tend to infinity. We construct global $N$-point spherical functions in terms of compositions of equivariant differential intertwiners associated with principal series representations,
and express them in terms of Eisenstein integrals.
We show that the eigenstates of the quantum spin Calogero-Moser system associated to $N$-point spherical functions are also common eigenfunctions of a commuting family of first-order differential operators, which we call asymptotic boundary Knizhnik-Zamolodchikov-Bernard operators. These operators are explicitly given in terms of folded classical dynamical $r$-matrices and associated dynamical $k$-matrices. 
\end{abstract}
%%%%%%%%%%%%%%%%%%%%%%%%%%%%%%%%%%%%%%
\maketitle
%\tableofcontents
%\setcounter{tocdepth}{1}
\pagestyle{myheadings}
%%%%%%%%%%%%%%%%%%%%%%%%%%%%%%%
\section{Introduction}
%%%%%%%%%%%%%%%%%%%%%%%%%%%%%%%
Results of this paper lie at the interface of representation theory and quantum integrable systems.
The motivation comes from the theory of spherical functions in harmonic analysis on real reductive groups, from the theory of quantum integrable systems of Calogero-Moser type and from conformal field theory with conformal boundary conditions. 

We show that vector-valued elementary spherical functions provide joint eigenfunctions of the 
commuting quantum Hamiltonians of quantum spin\footnote{Throughout this paper we use "spin" in the sense how this term is used in physics as the description of internal degrees of freedom of one-dimensional quantum particles.}
Calogero-Moser type systems. 
We introduce a special class of vector-valued elementary spherical functions, which we call $N$-point spherical functions.
We show that the associated joint eigenfunctions of the quantum Hamiltonians are also joint eigenfunctions of 
a commuting family of
first order differential Knizhnik-Zamolodchikov-Bernard (KZB) type operators,
which originate in conformal field theory with conformal boundary conditions.

We also develop the theory of formal elementary spherical functions and formal $N$-point spherical functions. We show that formal spherical functions provide a representation theoretic interpretation of the Harish-Chandra series, and we use formal $N$-point spherical functions to establish the consistency of the differential KZB type equations. 

In the next four sections of the introduction we describe the main results in more detail.

%%%%%%%%%%%%%%%%%%%%%%%%%%%%%%%
\subsection{$\NN$-point spherical functions}
%%%%%%%%%%%%%%%%%%%%%%%%%%%%%%%%
Let $G$ be a split real connected semisimple Lie group with finite center, and $K$ be a maximal compact subgroup of $G$. 
For two finite dimensional complex $K$-representations $(\sigma_\ell,V_\ell)$ and $(\sigma_r,V_r)$, write $\sigma:=\sigma_\ell\otimes\sigma_r^*$ for the resulting $K\times K$-representation on $V_\ell\otimes V_r^*\simeq\textup{Hom}(V_r,V_\ell)$.

The space $C_\sigma^\infty(G)$ of {\it $\sigma$-spherical functions on $G$} consists of the smooth functions $f: G\rightarrow V_\ell\otimes V_r^*$ satisfying 
\begin{equation}\label{transfospherical}
f(k_\ell gk_r^{-1})=\sigma(k_\ell,k_r)f(g)\qquad (k_\ell,k_r\in K,\, g\in G). 
\end{equation}
We say that a $\sigma$-spherical function $f\in C_\sigma^\infty(G)$ is
{\it elementary} if it is 
of the form
\[
f_{\mathcal{H}}^{\phi_\ell,\phi_r}(g):=\phi_\ell\circ\pi(g)\circ\phi_r
\]
for some quasi-simple admissible $G$-representation $(\pi,\mathcal{H})$ and $K$-intertwiners
$\phi_\ell\in\textup{Hom}_K(\mathcal{H},V_\ell)$ and $\phi_r\in\textup{Hom}_K(V_r,\mathcal{H})$. 

For special choices of $\sigma$ the theory of $\sigma$-spherical functions leads to representation theoretic constructions of integrable quantum one-dimensional many body systems and their eigenstates (see, e.g., \cite{OP,Op,HS,EK,O}). The commuting Hamiltonians arise from the action of the $G$-biinvariant differential operators on $C_\sigma(G)$,
while elementary $\sigma$-spherical functions produce the eigenstates. 
We extend these results to an arbitrary $K\times K$-representation $\sigma$. The corresponding quantum integrable system is called the
{\it quantum $\sigma$-spin Calogero-Moser system}. We will describe this integrable system in more detail in 
Subsection \ref{S13} of the introduction.

In this paper we also study elementary spherical functions when
the $K\times K$-representation is the state space $V_\ell\otimes\mathbf{U}\otimes V_r^*$
of a quantum spin chain of length $N\in\mathbb{Z}_{\geq 0}$ with reflecting boundaries. The bulk part
\begin{equation}\label{space}
\mathbf{U}:=U_1\otimes\cdots\otimes U_\NN
\end{equation}
of the state space $V_\ell\otimes\mathbf{U}\otimes V_r^*$
is the tensor product of $N$ finite dimensional $G$-modules $(\tau_i,U_i)$, and $V_\ell\otimes\mathbf{U}\otimes V_r^*$ is regarded as $K\times K$-module
with the subgroup $K\times 1$ acting diagonally on the first $N+1$ tensor factors and $1\times K$ acting on the last tensor factor. We denote its representation map by
$\sigma^{(N)}$.
We define {\it $N$-point $\sigma^{(N)}$-spherical functions}, or simply $N$-point spherical functions, as the special subclass\footnote{
In our follow-up paper \cite{RS}, we consider the space $C_{\sigma_\ell,\underline{\tau},\sigma_r}^\infty(G^{\times (N+1)})$ of $V_\ell\otimes\mathbf{U}\otimes V_r^*$-valued functions
$\widetilde{f}$ on $G^{\times (N+1)}$ satisfying the transformation behaviour
\[
\widetilde{f}(k_\ell g_0h_1^{-1},h_1g_1h_2^{-1},\ldots,h_Ng_Nk_r^{-1})=(\sigma_\ell(k_\ell)\otimes\tau_1(h_1)\otimes\cdots\otimes\tau_N(h_N)\otimes\sigma_r^*(k_r))
\widetilde{f}(g_0,\ldots,g_N)
\]
for $(k_\ell,h_1,\ldots,h_N,k_r)\in K\times G^{\times N}\times K$. This space is  
preserved by the action of the commutative algebra of biinvariant differential operators on $G^{\times (N+1)}$, and $N$-point $\sigma^{(N)}$-spherical functions $f$ produce
simultaneous eigenfunctions $\widetilde{f}\in C^{\infty}_{\sigma,\underline{\tau},\sigma_r}(G^{\times (N+1)})$ of the biinvariant differential operators on $G^{\times (N+1)}$ via the formula
\[
\widetilde{f}(g_0,\ldots,g_N):=\bigl(\textup{id}_{V_\ell}\otimes\tau_1(g_0^{-1})\otimes
\tau_2(g_1^{-1}g_0^{-1})\otimes\cdots\otimes\tau_N(g_{N-1}^{-1}\cdots g_1^{-1}g_0^{-1})\otimes\textup{id}_{V_r^*}\bigr)
f(g_0\cdots g_N).
\]
In this paper we do not use to full extent the $G^{\times N}$-action on $\mathbf{U}$. This will be done in the followup paper \cite{RS}, where we will focus on superintegrability.}
of elementary $\sigma^{(\NN)}$-spherical functions of the form
\begin{equation}\label{f-sigma}
f_{\underline{\mathcal{H}}}^{\phi_\ell,\mathbf{D},\phi_r}(g) :=
(\phi_\ell\otimes id_{\mathbf{U}}){\mathbf D}\circ\pi_{\mathcal{H}_N}(g)\circ \phi_r,
\end{equation}
where 
 \begin{enumerate}
\item $\underline{\mathcal{H}}:=(\mathcal{H}_{0},\ldots,
\mathcal{H}_{\NN})$ is an $(\NN+1)$-tuple of quasi-simple admissible $G$-representations,
\item $\mathbf{D}: \mathcal{H}_{\NN}^\infty\rightarrow
\mathcal{H}_{0}^\infty\otimes\mathbf{U}$ is a $G$-intertwiner given as the composition
\[
\mathbf{D}=(D_1\otimes\textup{id}_{U_2\otimes\cdots\otimes U_\NN})\cdots
(D_{\NN-1}\otimes\textup{id}_{U_\NN})D_\NN,
\]
of $G$-intertwiners $D_i: \mathcal{H}_{i}^{\infty}
\rightarrow \mathcal{H}_{{i-1}}^{\infty}\otimes U_i$, where $\mathcal{H}_i^\infty\subseteq\mathcal{H}_i$ is the space of smooth vectors,
\item $\phi_\ell\in\textup{Hom}_K(\mathcal{H}_{0},V_\ell)$ and
$\phi_r\in\textup{Hom}_K(V_r,\mathcal{H}_\NN)$ are $K$-intertwiners.
\end{enumerate}
Note that indeed $f_{\underline{\mathcal{H}}}^{\phi_\ell,\mathbf{D},\phi_r}$ is an elementary $\sigma^{(N)}$-spherical function because
$(\phi_\ell\otimes id_{\mathbf{U}}){\mathbf D}$ extends by continuity to a $K$-intertwiner $\mathcal{H}_N\rightarrow V_\ell\otimes\mathbf{U}$ and 
\[
f_{\underline{\mathcal{H}}}^{\phi_\ell,\mathbf{D},\phi_r}=f_{\mathcal{H}_{N}}^{(\phi_\ell\otimes id_{\mathbf{U}}){\mathbf D},\phi_r}.
\]
Moreover, elementary $\sigma$-spherical functions may be viewed as the $0$-point spherical functions.

Let $G=KAN_+$ be an Iwasawa decomposition of $G$, and denote by $\mathfrak{h}$ the complexified Lie algebra of $A$. Because $G$ is split, $\mathfrak{h}$ is a Cartan subalgebra of the complexified Lie algebra $\mathfrak{g}$ of $G$.
A linear functional $\lambda\in\mathfrak{h}^*$ defines
a multiplicative character $\eta_\lambda$ of $AN_+$ which acts trivially on $N_+$. For $\lambda\in\mathfrak{h}^*$ let $(\pi_\lambda,\mathcal{H}_\lambda)$ be the quasi-simple admissible $G$-representation obtained by 
normalized induction from $\eta_\lambda$.
The representation $(\pi_\lambda, \mathcal{H}_\lambda)$ is a finite direct sum of principal series representations. 
In Subsection \ref{S63} we provide a nontrivial family of $\NN$-point spherical functions $f_{\mathcal{H}_{\underline{\lambda}}}^{\phi_\ell,\mathbf{D},\phi_r}$ with the $(N+1)$-tuple of quasi-simple admissible representations given by
\[
\mathcal{H}_{\underline{\lambda}}=(\mathcal{H}_{\lambda_0},\ldots,\mathcal{H}_{\lambda_N})
\]
where $\lambda_i\in\mathfrak{h}^*$ are such that $\lambda_i-\lambda_{i-1}$ are weight of $U_i$, and  
$D_i: \mathcal{H}_{\lambda_i}^\infty\rightarrow\mathcal{H}_{\lambda_{i-1}}^\infty\otimes U_i$ are $G$-intertwiners constructed as $G$-equivariant differential operators.
They admit an integral representation
\begin{equation}\label{link1}
f_{\mathcal{H}_{\underline{\lambda}}}^{\phi_\ell,\mathbf{D},\phi_r}(g)=E_{\lambda_\NN}^{\sigma^{(\NN)}}(g)
T_{\lambda_\NN}^{(\phi_\ell\otimes\textup{id}_{\mathbf{U}})\mathbf{D},\phi_r}
\end{equation}
where $T_\lambda^{(\phi_\ell\otimes\textup{id}_{\mathbf{U}})\mathbf{D},\phi_r}\in V_\ell\otimes V_r^*\simeq\textup{Hom}(V_r,V_\ell)$ is an explicit rank one operator depending on the two $K$-intertwiners 
$(\phi_\ell\otimes\textup{id}_{\mathbf{U}})\mathbf{D}$ and $\phi_r$, and $E_\lambda^{\sigma}(g)$ ($\lambda\in\mathfrak{h}^*$) is the Eisenstein integral \eqref{Eisenstein}.\footnote{
For more background on Eisenstein integrals and their role in harmonic analysis see, e.g.,  \cite{HCe,HCI, HCII,W}.}

One can naturally speculate that affine analogues of $N$-point spherical functions 
should give $N$-point correlation functions for boundary Wess-Zumino-Witten-Novikov (WZWN) conformal field theory on an elliptic curve with conformally invariant boundary conditions. From this perspective the $G$-intertwiners $D_i$ are asymptotic remnants of affine vertex operators, and the $K$-intertwiners $\phi_\ell$ and $\phi_r$ are limits of boundary vertex operators. 

This perspective predicts that the restrictions of $N$-point spherical functions to $A\subset G$ provide joint eigenfunctions
of a commuting family of $N$ first-order differential operators, obtained as ``topological limit'' of trigonometric KZB operators.
The pertinent trigonometric KZB operators are first order differential operators in variables describing points on an infinite cylinder with reflecting conformal boundary conditions
and in dynamical variables, which can be identified with the subgroup $A\subset G$ (see \cite[\S2.3]{St2}). In the topological limit the 
dependence on the points disappears. 

In Section \ref{SectionbKZB} we directly construct $N$ first-order differential operators 
on $A$, called {\it asymptotic boundary KZB operators}, and we show that the restrictions of the $N$-point spherical functions $f_{\mathcal{H}_{\underline{\lambda}}}^{\phi_\ell,\mathbf{D},\phi_r}$ to $A\subset G$ provide joint eigenfunctions of the  Hamiltonians of the quantum $\sigma^{(N)}$-spin Calogero-Moser system as well as of the asymptotic boundary KZB operators (see Theorem \ref{mainthmbKZBEisenstein})\footnote{From the perspective of footnote 2, the eigenvalue equations with respect to the asymptotic boundary KZB operators arise from the action of the biinvariant differential operator $\Omega_i-\Omega_{i-1}$ on $\widetilde{f}$, where $\Omega$ is the quadratic Casimir of $G$ and $\Omega_i$ is its interpretation as biinvariant differential operator acting on the $i^{\textup{th}}$-coordinate of $G^{\times (N+1)}$.}.
We describe the asymptotic boundary KZB operators in more detail in Subsection \ref{S14} of the introduction.

%%%%%%%%%%%%%%%
\subsection{Formal $\NN$-point spherical functions}
%%%%%%%%%%%%%%%%
In this paper we also develop the theory of formal elementary $\sigma$-spherical functions and formal $\NN$-point $\sigma^{(N)}$-spherical functions. A formal elementary $\sigma$-spherical function is a formal power series analogue of the restriction of the elementary $\sigma$-spherical function
$f_{\mathcal{H}_\lambda}^{\phi_\ell,\phi_r}$ to the  positive Weyl chamber $A_+$ in $A$, constructed as follows.

The complexified Lie algebra 
$\mathfrak{b}$ of $AN_+$ is a Borel subalgebra containing the Cartan subalgebra $\mathfrak{h}$. Let $R$ be the associated root system of 
$\mathfrak{g}$, $R^+$ be the set of positive roots, and $M_\lambda$ the Verma module of highest weight $\lambda\in\mathfrak{h}^*$. We denote by $M_\lambda[\mu]$ the weight space of $M_\lambda$ of weight $\mu\in\mathfrak{h}^*$.

Let $\mathfrak{n}_-$ be the nilpotent subalgebra of $\mathfrak{g}$ opposite to $\mathfrak{b}$, and $\overline{M}_\lambda$ be the $\mathfrak{n}_-$-completion of $M_\lambda$.
Fix $\mathfrak{k}$-intertwiners $\phi_\ell\in\textup{Hom}_{\mathfrak{k}}(M_\lambda,V_\ell)$ and
$\phi_r\in\textup{Hom}_{\mathfrak{k}}(V_r,\overline{M}_\lambda)$, where $\mathfrak{k}$ is the complexified Lie algebra of $K$. 
We denote by 
$\phi_\ell^\mu\in\textup{Hom}_{\mathbb{C}}(M_\lambda[\mu],V_\ell)$ and 
$\phi_r^\mu\in\textup{Hom}_{\mathbb{C}}(V_r,M_\lambda[\mu])$ the weight components of  $\phi_\ell$ and $\phi_r$ of weight $\mu$. 

The {\it formal elementary $\sigma$-spherical function associated with $M_\lambda$, $\phi_\ell$ and $\phi_r$} is the formal series 
\begin{equation}\label{fssf}
F_{M_\lambda}^{\phi_\ell,\phi_r}:=\sum_{\mu\leq\lambda}\bigl(\phi_\ell^\mu\circ\phi_r^\mu)\xi_\mu
\end{equation}
where $\leq$ is the dominance order on $\mathfrak{h}^*$ and $\xi_\mu$ is 
the multiplicative character $\xi_\mu(a):=e^{\mu(\log(a))}$ on $A$.

Let $U(\mathfrak{g})$ and $U(\mathfrak{k})$ be the universal enveloping algebra of $\mathfrak{g}$ and $\mathfrak{k}$ respectively, and denote by $Z(\mathfrak{g})$ the center of $U(\mathfrak{g})$.
Harish-Chandra's radial component 
$\widehat{\Pi}(z)$ of $z\in U(\mathfrak{g})$ is the $U(\mathfrak{k})^{\otimes 2}$-valued differential operator on the regular part $A_{\textup{reg}}$ of $A$ such that 
\[
\widehat{\Pi}(z)(f\vert_{A_{\textup{reg}}})=(r^*(z)f)\vert_{A_{\textup{reg}}}
\]
for all spherical functions $f$, where $r^*(z)$ denotes the left $G$-invariant differential operator on $G$ associated to $z$. 
The radial components $\widehat{\Pi}(z)$ of the $G$-biinvariant differential operators $r^*(z)$ ($z\in Z(\mathfrak{g})$) pairwise commute.

We show in Theorem \ref{mainTHMF}{\bf a} that 
$F_{M_\lambda}^{\phi_\ell,\phi_r}$, as formal power series, is a simultaneous eigenfunction of the differential operators $\widehat{\Pi}(z)$ ($z\in Z(\mathfrak{g})$) with
eigenvalues given by the central character $\zeta_\lambda$ of $M_\lambda$. As a consequence, we are able to relate 
the formal elementary $\sigma$-spherical function $F_{M_\lambda}^{\phi_\ell,\phi_r}$ to the $\sigma$-Harish-Chandra series, when $\lambda$ is in an appropriate subset 
of generic highest weights\footnote{See \eqref{HCgeneric} for details.}.

The $\sigma$-Harish-Chandra series is defined as follows. Let $\Omega\in Z(\mathfrak{g})$ be the quadratic Casimir element.
For generic $\lambda\in\mathfrak{h}^*$ the {\it $\sigma$-Harish-Chandra series}  $\Phi^\sigma_\lambda$ 
is the unique $\textup{End}(V_\ell\otimes V_r^*)$-valued formal eigenfunction of $\widehat{\Pi}(\Omega)$ with eigenvalue $\zeta_\lambda(\Omega)$
of the form\footnote{Note here the remarkable fact, well known to specialists in harmonic analysis, that for generic $z\in Z(\mathfrak{g})$ and $\lambda\in\mathfrak{h}^*$ the requirement that the formal $\textup{End}(V_\ell\otimes V_r^*)$-valued power series $f=\sum_{\mu\leq\lambda}f_{\lambda-\mu}\xi_\mu$ is an eigenfunction of the radial component of $z$ with eigenvalue $\zeta_\lambda(z)$
%differential equation
%$\widehat{\Pi}^\sigma(z)f=\zeta_\lambda(z)f$
%for formal $\textup{End}(V_\ell\otimes V_r^*)$-valued power series $f=\sum_{\mu\leq\lambda}f_{\lambda-\mu}\xi_\mu$
will uniquely define the coefficients $f_\gamma\in\textup{End}(V_\ell\otimes V_r^*)$ in terms 
of $f_0\in\textup{End}(V_\ell\otimes V_r^*)$.
This in particular holds true for $z=\Omega$.
The quadratic Casimir $\Omega$ is a natural choice since its radial component
%$\widehat{\Pi}^\sigma(\Omega)$ 
is an explicit second-order differential operator that produces the Hamiltonian of the $\sigma$-spin quantum Calogero-Moser system, solvable by asymptotic Bethe ansatz, see Subsection \ref{S13}.}
\[
\Phi^\sigma_\lambda=\sum_{\mu\leq\lambda}\Gamma_{\lambda-\mu}^\sigma(\lambda)\xi_\mu \qquad\quad (\Gamma_0^\sigma(\lambda)=\textup{id}_{V_\ell\otimes V_r^*}).
\]
The $\sigma$-Harish-Chandra series converges on $A_+$, and thus defines an analytic $\textup{End}(V_\ell\otimes V_r^*)$-valued analytic function on $A_+$. The $\sigma$-Harish-Chandra function plays an important role in the asymptotic analysis of $\sigma$-spherical functions through the explicit expansion of the Eisenstein integral in Harish-Chandra series, see, e.g., \cite{HC,HCI,HCII,W}. Another interesting recent application of $\sigma$-Harish-Chandra series is its appearance in the description of four-point spin conformal blocks in Euclidean conformal field theories within the conformal bootstrap program (see \cite{SSI,IS,ILLS} and references therein).

We show in Theorem \ref{mainTHMF}{\bf c} that for generic $\lambda\in\mathfrak{h}^*$,
\begin{equation}\label{link2}
F_{M_\lambda}^{\phi_\ell,\phi_r}=\Phi_\lambda^\sigma(\cdot)(\phi_\ell^\lambda\circ\phi_r^\lambda).
\end{equation}
In this case the formal $\sigma$-spherical function $F_{M_\lambda}^{\phi_\ell,\phi_r}$ is a $V_\ell\otimes V_r^*$-valued analytic function on $A_+$ 
which extends to a smooth $V_\ell\otimes V_r^*$-valued function on the dense open subset $G_{\textup{reg}}:=KA_+K$ of regular elements in $G$ satisfying the equivariance poperty \eqref{transfospherical}, where $M$ is the centraliser of $A$ in $K$.
Conversely, \eqref{link2} provides a representation theoretic interpretation of the expansion coefficients $\Gamma_{\lambda-\mu}^\sigma(\lambda)$ of the Harish-Chandra series in terms of matrix coefficients of Verma modules.

For $\mathfrak{g}=\mathfrak{sl}_2(\mathbb{C})$ the weight components of $\mathfrak{k}$-intertwiners
$\phi_\ell$ and $\phi_r$ are Meixner-Pollaczek polynomials. On the other hand, the $\sigma$-Harish-Chandra series can be expressed in terms of Gauss' hypergeometric series ${}_2F_1$.
Formula \eqref{link2} then provides a representation theoretic proof of the formula \cite{Er,Ra,KJ} expressing the Poisson kernel of Meixner-Pollaczek polynomials as a ${}_2F_1$. This is detailed in Subsection \ref{rankoneSection}.

We define {\it formal $N$-point $\sigma^{(N)}$-spherical functions} to be the special subclass of 
formal elementary $\sigma^{(N)}$-spherical functions of the form
\[
F_{M_{\underline{\lambda}}}^{\phi_\ell,\mathbf{\Psi},\phi_r}:=F_{M_{\lambda_N}}^{(\phi_\ell\otimes\textup{id}_{\mathbb{U}})\mathbf{\Psi},\phi_r}
\]
where 
\begin{enumerate}
\item $M_{\underline{\lambda}}=(M_{\lambda_0},\ldots,M_{\lambda_N})$ is an $(N+1)$-tupe of Verma modules with highest weights $\lambda_i$ such that $\lambda_i-\lambda_{i-1}$ is a weight of $U_i$ for each $i=1,\dots, N$,
\item $\mathbf{\Psi}: M_{\lambda_N}\rightarrow
M_{\lambda_0}\otimes\mathbf{U}$ is a $\mathfrak{g}$-intertwiner given as the composition
 \[
 \mathbf{\Psi}=(\Psi_1\otimes\textup{id}_{U_2\otimes\cdots\otimes U_N})\cdots
(\Psi_{N-1}\otimes\textup{id}_{U_N})\Psi_N
\]
of $\mathfrak{g}$-intertwiners $\Psi_i: M_{\lambda_i}
\rightarrow M_{\lambda_{i-1}}\otimes U_i$,
\item $\phi_\ell\in\textup{Hom}_{\mathfrak{k}}(M_{\lambda_0},V_\ell)$ and
$\phi_r\in\textup{Hom}_{\mathfrak{k}}(V_r,\overline{M}_{\lambda_N})$  are $\mathfrak{k}$-intertwiners.
\end{enumerate}
For generic $\lambda_N\in\mathfrak{h}^*$ formula \eqref{link2} provides an explicit expression of the formal $N$-point spherical function $F_{M_{\underline{\lambda}}}^{\phi_\ell,\mathbf{\Psi},\phi_r}$ in terms of the $\sigma^{(N)}$-Harish-Chandra series $\Phi_{\lambda_N}^{\sigma^{(N)}}$ and the highest weight components of the $\mathfrak{k}$-intertwiners
$(\phi_\ell\otimes\textup{id}_{\mathbf{U}})\mathbf{\Psi}$ and $\phi_r$. It is the analogue of  
formula \eqref{link1} expressing the $N$-point spherical function $f_{\mathcal{H}_{\underline{\lambda}}}^{\phi_\ell,\mathbf{D},\phi_r}$ as Eisenstein integral. 

We show that
formal elementary $N$-point spherical functions $F_{M_{\underline{\lambda}}}^{\phi_\ell,\mathbf{\Psi},\phi_r}$ give rise to joint eigenfunctions of the quantum Hamiltonians of the $\sigma^{(N)}$-spin Calogero-Moser system and, in addition, are joint eigenfunctions of asymptotic boundary KZB operators (Theorem \ref{mainthmbKZB} and Corollary \ref{corMAINkzb}). Using a boundary version of the fusion operator for $\mathfrak{g}$-intertwiners from \cite{E,EV}, we obtain
a topologically complete set of joint formal eigenfunctions consisting of formal $N$-point spherical functions. This result implies
that the boundary asymptotic KZB operators commute (Theorem \ref{consistentoperators}).
It also suggests that the quantum $\sigma^{(N)}$-spin Calogero-Moser system is super-integrable, which we prove in our follow-up paper \cite{RS}.

%%%%%%%%%%%%%%%%%%%%%%%%%%%%%%%%%%%%%
\subsection{The quantum spin Calogero-Moser systems}\label{S13}
%%%%%%%%%%%%%%%%%%%%%%%%%%%%%%%%%%%%%

The commuting quantum spin Calogero-Moser Hamiltonians corresponding to the spherical functions on $G$ 
are the $U(\mathfrak{k})^{\otimes 2}$-valued differential operators 
\[
H_z:=\delta\circ\widehat{\Pi}(z)\circ\delta^{-1}
\qquad (z\in Z(\mathfrak{g})),
\]
on $A_{\textup{reg}}$, where $\delta:=\xi_\rho\prod_{\alpha\in R^+}(1-\xi_{-2\alpha})^{\frac{1}{2}}$ and $\rho$ is the half sum of the positive roots. The $\textup{End}(V_\ell\otimes V_r^*$)-valued differential operators $H_z^\sigma:=\sigma(H_z)$ ($z\in Z(\mathfrak{g})$) are the Hamiltonians for the $\sigma$-spin Calogero-Moser system. 

The quadratic Hamiltonian $H_\Omega$ admits the following explicit expression.
Denote by $\langle\cdot,\cdot\rangle_{\mathfrak{g}_0}$ the Killing form  of the Lie algebra $\mathfrak{g}_0$ of $G$. It restricts to a scalar product on the Lie algebra $\mathfrak{h}_0$ of $A$, giving $A=\exp(\mathfrak{h}_0)$ the structure of a Riemannian manifold. 
Denote by $\mathfrak{g}_{0,\alpha}$ the root subspace in $\mathfrak{g}_0$ associated to $\alpha\in R$, and by $\theta\in\textup{Aut}(\mathfrak{g})$ the complex linear extension of the Cartan involution of $\mathfrak{g}_0$ relative to the Iwasawa decomposition $G=KAN_+$. 
Choose $e_\alpha\in\mathfrak{g}_{0,\alpha}$ ($\alpha\in R$) 
such that $\theta(e_\alpha)=-e_{-\alpha}$ and $[e_\alpha,e_{-\alpha}]=t_\alpha$, where 
$t_\alpha\in\mathfrak{h}_0$ is the unique element such that $\langle t_\alpha,h\rangle_{\mathfrak{g}_0}=\alpha(h)$ for all $h\in\mathfrak{h}_0$. Then
\[
\mathfrak{k}=\bigoplus_{\alpha\in R^+}\mathbb{C}y_\alpha
\] 
with $y_\alpha:=e_\alpha-e_{-\alpha}$ ($\alpha\in R$). 

The quadratic Hamiltonian $\mathbf{H}:=-\frac{1}{2}(H_\Omega+\|\rho\|^2)$ of the spin Calogero-Moser model is 
given by
\[
\mathbf{H}=-\frac{1}{2}\Delta+V
\]
with $\Delta$ the Laplace-Beltrami operator on $A$, and $V$
the $U(\mathfrak{k})^{\otimes 2}$-valued potential 
\[
V=-\frac{1}{2}\sum_{\alpha\in R}\frac{1}{(\xi_\alpha-\xi_{-\alpha})^2}
\Bigl(\frac{\|\alpha\|^2}{2}+\prod_{\epsilon\in\{\pm 1\}}(y_\alpha\otimes 1+\xi_{\epsilon\alpha}
(1\otimes y_\alpha)\Bigr),
\]
see Proposition \ref{qH}. The extension of this result to arbitrary real semisimple Lie group $G$ is given in \cite{RS}.

Special cases 
of the representation theoretic construction of quantum $\sigma$-spin Calogero-Moser systems and their eigenstates are known. For example, the case 
when $\sigma_\ell$ and $\sigma_r$ are the trivial representation was studied in \cite{OP,Op}, and the case when
$\mathfrak{g}=\mathfrak{sp}_{r}(\mathbb{C})$ and
$\sigma_\ell=\sigma_r$ is one-dimensional was analysed in \cite[Chpt. 5]{HS}. Other natural special cases will be discussed in
Subsection \ref{vvCM}. 

The theory developed in this paper can also be applied to compact symmetric spaces. In this case it
yields a trigonometric version of quantum spin Calogero-Moser systems, with eigenstates described by vector-valued multivariable orthogonal polynomials. For certain compact symmetric spaces and special choices of $\sigma$, this relates to the theory of Etingof, Kirillov Jr. and Schiffmann \cite{EK,ES} on generalised weighted trace functions and Oblomkov's \cite{O} version for Grassmannians. In these two cases 
the eigenfunctions can be expressed in terms of {\it scalar-valued} Jack polynomials and $BC$-type Heckman-Opdam polynomials, respectively.

The classical integrable systems underlying the quantum $\sigma$-spin trigonometric Calogero-Moser systems were considered in \cite{FP, FP1, FP2, Re}. 

%%%%%%%%%%%%%%%%%%%%%%%%%%%%%%%%%
\subsection{Asymptotic boundary KZB operators}\label{S14}
%%%%%%%%%%%%%%%%%%%%%%%%%%%%%%%%%
For $N$-point $\sigma^{(N)}$-spherical functions on $G$ the related quantum $\sigma^{(N)}$-spin Calogero-Moser system turns out to be a super-integrable quantum Calogero-Moser spin chain with associated spin space $V_\ell\otimes\mathbf{U}\otimes V_r^*$. In its universal form the quantum Hamiltonians 
are obtained by a coordinate
radial component map from $Z(\mathfrak{g})^{\otimes (N+1)}$ to $U(\mathfrak{k})\otimes  U(\mathfrak{g})^{\otimes N}\otimes U(\mathfrak{k})$-valued differential operators on $A_{\textup{reg}}$, cf. footnote 2. The quantum Hamiltonians described in the previous subsection arise as the coordinate radial components of $1^{\otimes N}\otimes z$ ($z\in Z(\mathfrak{g})$) and are given by 
\[
H_z^{(N)}:=\delta\circ (\Delta^{(N)}\otimes\textup{id})(\widehat{\Pi}(z))\circ\delta^{-1}\qquad\quad (z\in Z(\mathfrak{g}))
\]
with $\Delta^{(N)}: U(\mathfrak{k})\rightarrow U(\mathfrak{k})^{\otimes (N+1)}$ the $N$-fold iterated
comultiplication of the universal enveloping algebra $U(\mathfrak{k})$. In particular, these quantum Hamiltonians are $U(\mathfrak{k})^{\otimes (N+2)}$-valued.
The asymptotic KZB operators are part of the algebra of quantum Hamiltonians of the quantum Calogero-Moser spin chain, see footnote 4.
The super-integrable perspective is discussed in detail in our follow-up paper \cite{RS}. In this paper we obtain the asymptotic KZB operators by deriving the asymptotic KZB equations for $N$-point spherical functions using quantum field theoretic methods.

Let $\{x_s\}_{s=1}^\rr$ be an orthonormal basis of $\mathfrak{h}_0$ and $\partial_{x_s}$ the associated first order differential operator on $A$.
Write $E$ for the $U(\mathfrak{g})$-valued first order differential operator 
\[
E:=\sum_{s=1}^\rr\partial_{x_s}\otimes x_s
\]
on $A$. 
Consider the $\mathfrak{g}\otimes\mathfrak{g}$-valued functions
\begin{equation}\label{foldedexpressions}
\begin{split}
r^+&=\sum_{\alpha\in R}\frac{y_\alpha\otimes e_\alpha}{1-\xi_{-2\alpha}},\\
r^-&=\sum_{s=1}^\rr x_s\otimes x_s+\sum_{\alpha\in R}\frac{(e_\alpha+e_{-\alpha})\otimes e_\alpha}
{1-\xi_{-2\alpha}}
\end{split}
\end{equation}
and the $U(\mathfrak{k})\otimes U(\mathfrak{g})\otimes U(\mathfrak{k})$-valued function 
\begin{equation*}
\kappa:=\sum_{\alpha\in R}\frac{y_\alpha\otimes e_\alpha\otimes 1}{1-\xi_{-2\alpha}}
+1\otimes \kappa^{\textup{core}}\otimes 1+\sum_{\alpha\in R}\frac{1\otimes e_\alpha\otimes y_\alpha}{\xi_\alpha-\xi_{-\alpha}},
\end{equation*}
with its core $\kappa^{\textup{core}}$ the $U(\mathfrak{g})$-valued function 
\[
\kappa^{\textup{core}}:=\frac{1}{2}\sum_{s=1}^\rr x_s^2+\sum_{\alpha\in R}\frac{e_\alpha^2}{1-\xi_{-2\alpha}}.
\]
The {\it asymptotic boundary KZB operators} 
are the first-order $U(\mathfrak{k})\otimes U(\mathfrak{g})^{\otimes N}\otimes U(\mathfrak{k})$-valued differential operators
\begin{equation}\label{introbKZBoper}
\mathcal{D}_i:=E_i-\sum_{j=1}^{i-1}r_{ji}^+-\kappa_i-\sum_{j=i+1}^Nr_{ij}^-
\end{equation}
on $A_{\textup{reg}}$ for $i=1,\ldots,N$.
Here the indices $i,j$ on the right hand side of \eqref{introbKZBoper}
indicate in which tensor components of $U(\mathfrak{g})^{\otimes N}$ the $U(\mathfrak{g})$-components of $E$, $r^{\pm}$ and $\kappa$ are placed. Note that the only nontrivial contributions to the left and right $U(\mathfrak{k})$-tensor components of $U(\mathfrak{k})\otimes U(\mathfrak{g})^{\otimes N}\otimes U(\mathfrak{k})$ arise from $\kappa_i-\kappa_i^{\textup{core}}$.

The local terms of the asymptotic KZB operators are folded and contracted versions of Felder's  \cite{F}, \cite[\S 2]{ES} classical trigonometric dynamical $r$-matrix
\[
r:=-\frac{1}{2}\sum_{s=1}^\rr x_s\otimes x_s-\sum_{\alpha\in R}\frac{e_{-\alpha}\otimes e_\alpha}{1-\xi_{-2\alpha}}
\]
since
\begin{equation}\label{rpm}
r^{\pm}=\pm r+(1\otimes \theta)r_{21},\qquad \kappa^{\textup{core}}=m((1\otimes \theta)r_{21}),
\end{equation}
with $m$ the multiplication map of $U(\mathfrak{g})$.
More generally, for $a\in A_{\textup{reg}}$, 
\[
\kappa(a)=r^+(a)\otimes 1+1\otimes\kappa^{\textup{core}}(a)\otimes 1+
1\otimes ((\textup{Ad}_{a^{-1}}\otimes\textup{id})r_{21}^+(a)).
\]
 An algebraic analysis of folding and contraction of classical dynamical $r$-matrices is in the follow-up paper \cite{St2}.

By the results as explained in Subsection \ref{S13}, the $V_\ell\otimes\mathbf{U}\otimes V_r^*$-valued analytic functions 
\[
\mathbf{f}_{\underline{\lambda}}^{\phi_\ell,\mathbf{D},\phi_r}(a):=\delta(a)f_{\mathcal{H}_{\underline{\lambda}}}^{\phi_\ell,\mathbf{D},\phi_r}(a)\qquad\quad (a\in A_+)
\]
satisfy 
\[
H^{(N)}_z\bigl(\mathbf{f}_{\underline{\lambda}}^{\phi_\ell,\mathbf{D},\phi_r}\bigr)=\zeta_{\lambda_N-\rho}(z)\mathbf{f}_{\underline{\lambda}}^{\phi_\ell,\mathbf{D},\phi_r}
\qquad\quad (z\in Z(\mathfrak{g})),
\]
see Theorem \ref{mainthmbKZBEisenstein}. In this paper we will present two different proofs for the fact that they also satisfy the
first order differential equations
\begin{equation}\label{bKZBeqintro}
\mathcal{D}_i\bigl(\mathbf{f}_{\underline{\lambda}}^{\phi_\ell,\mathbf{D},\phi_r}\bigr)=\Bigl(\frac{(\lambda_i,\lambda_i)}{2}-
\frac{(\lambda_{i-1},\lambda_{i-1})}{2}\Bigr)\mathbf{f}_{\underline{\lambda}}^{\phi_\ell,\mathbf{D},
\phi_r},\qquad i=1,\ldots,N.
\end{equation}

The starting point of the proofs is rewriting the right hand side of \eqref{bKZBeqintro} in terms of the action of the Casimir $\Omega$ on 
$\mathcal{H}_{\lambda_i}^{\infty}$ and $\mathcal{H}_{\lambda_{i-1}}^\infty$ on both sides of the $i$th intertwiner $D_i$ in 
$\mathbf{f}_{\underline{\lambda}}^{\phi_\ell,\mathbf{D},\phi_r}$.

For the first proof we use an explicit Cartan-type factorisation of the Casimir $\Omega$ in $U(\mathfrak{g})$, see \eqref{KAKomega}. This factorisation is the algebraic reflection of the explicit formula for the differential operator $\widehat{\Pi}(\Omega)$ on $A_{\textup{reg}}$.
Pushing the factors from this factorisation through the intertwiners 
$D_j$ in $\mathbf{f}_{\underline{\lambda}}^{\phi_\ell,\mathbf{D},\phi_r}$ to the far left and right 
is creating the $r^{\pm}$ contributions to the asymptotic boundary KZB equations. 
The remaining factors are then absorbed by the $K$-intertwiners $\phi_\ell$ and $\phi_r$, producing the contribution $\kappa_i-\kappa_i^{\textup{core}}$ to $\mathcal{D}_i$. In this proof the core $\kappa^{\textup{core}}_i$ of $\kappa_i$ is already part of the initial factorisation of the Casimir element, and stays put at its initial spot throughout this procedure. 
In this proof
the terms $r^+_{ji}$ ($j<i$) and $r^-_{ij}$ ($j>i$) appear as the expressions \eqref{foldedexpressions}, not as folded and contracted versions of Felder's classical dynamical $r$-matrix.

In the second proof we substitute the factorisation\footnote{This factorisation can be used to 
derive the asymptotic KZB equations for Etingof's and Schiffmann's \cite{ES} generalised weighted trace functions in a manner similar to the one as described above for $N$-point spherical functions, see \cite{St2} (weighted traces are naturally
associated to the symmetric space $G\times G/\textup{diag}(G)$, with $\textup{diag}(G)$
the group $G$ diagonally embedded into $G\times G$).}
\begin{equation}\label{OmegaAAA}
\Omega=\sum_{k=1}^\rr x_k^2+\frac{1}{2}\sum_{\alpha\in R}\left(\frac{1+a^{-2\alpha}}{1-a^{-2\alpha}}\right)t_\alpha+2\sum_{\alpha\in R}\frac{e_{-\alpha}e_\alpha}{1-a^{-2\alpha}}
\end{equation}
of the quadratic Casimir element $\Omega$ for regular $a\in A_{\textup{reg}}$,
push the left and right root vectors through the intertwiners $D_j$ 
to the far left and right, reflect against the $K$-intertwiners $\phi_\ell$ and $\phi_r$,
and push the reflected factors back to their original position, where they merge and create the core $\kappa^{\textup{core}}_i$ of $\kappa_i$.  When we initially move components of $\Omega$ to the boundaries the 
terms $r_{ji}$ or $r_{ij}$ are created.
On the way back they are producing similar terms, but now involving the $\theta$-twisted $r$-matrix
$(1\otimes \theta)r_{21}$. This proof
naturally leads to the folded and contracted expressions \eqref{rpm} for $r^{\pm}$ and $\kappa^{\textup{core}}$ in terms of Felder's $r$-matrix. 

A separate proof is needed to show that formal $N$-point $\sigma^{(N)}$-spherical functions are joint eigenfunctions of the asymptotic KZB operators
(Theorem \ref{mainthmbKZB}). 
It leads to the proof of the commutativity of the asymptotic 
boundary KZB operators (Theorem \ref{consistentoperators}). This in turn implies that 
the $r^{\pm}$ satisfy three coupled classical dynamical Yang-Baxter equations,
and that $\kappa$ solves an associated classical dynamical reflection equation (Theorem \ref{thmDEF}). An algebraic proof of this fact is given in the follow-up paper
\cite{St2}.
%%%%%%%%%%%%%%%%%%%%%%%%%%%%%%%%%%%%%%%
\subsection{Outlook}
%%%%%%%%%%%%%%%%%%%%%%%%%%%%%%%%%%%%%%%
Harish-Chandra's theory of harmonic analysis on $G$ has been developed for arbitrary real connected semisimple Lie groups $G$ with finite center (more generally, for reductive $G$ in Harish-Chandra's class). We expect that the theory of global and formal $N$-point spherical functions extends to this more general setup as well. The role of the Cartan subalgebra $\mathfrak{h}_0$ will be taken over by a maximal abelian subalgebra $\mathfrak{a}_0$ of the $(-1)$-eigenspace of the Cartan involution $\theta_0$, and the role of the root system $R$ by the associated restricted root system in $\mathfrak{a}_0^*$. In our follow-up paper \cite{RS} we derive the asymptotic boundary KZB equations in this more general context. The compatibility condition for asymptotical boundary KZB equations in the non-split cases also give rise to consistency conditions on their building blocks, but these conditions no longer imply separate dynamical Yang-Baxter and reflection equations, see \cite[\S 6.2]{RS}.

Boundary KZB equations with spectral parameters will be discussed in a separate paper (for affine $\mathfrak{sl}_2$ Kolb has already derived the associated KZB-heat equation in \cite{K}). A short discussion of the boundary KZB equations and their degeneration to asymptotic boundary KZB equations and type $C$ (asymptotic) Gaudin Hamiltonians can be found in \cite[\S 2.3]{St2}.

It is natural to generalise the theory to quantum groups using the Letzter-Kolb \cite{Le,K2} theory of quantum (affine) symmetric pairs. We expect that the role of $\kappa$ with trivialised right boundary component 
will be taken over by a dynamical universal $K$-matrix $\mathcal{K}$, whose action on the parametrising spaces of quantum boundary vertex operators describes the action of the Balagovic-Kolb \cite{BK} universal $K$-matrix \cite{BK} on the spin spaces of the quantum boundary vertex operators. This should be compared with the way that dynamical $R$-matrices appear in Etingof's and Varchenko's \cite{EV2} theory of generalised trace functions  and quantum KZB equations. 
This direction has many promising connections to integrable models in statistical mechanics and quantum field theory with integrable boundary conditions, see, e.g., \cite{DM,GZ,JKKMW} and references therein.
%%%%%%%%%%%%%%%%%%%%%%%%%%%%%%%%%%%%%
\subsection{Contents of the paper}
%%%%%%%%%%%%%%%%%%%%%%%%%%%%%%%%%%%%%%
In Section \ref{sectionII} we recall basic facts on irreducible split Riemannian pairs and establish the relevant notations. In Section \ref{S3} we recall, following \cite{CM,W}, Harish-Chandra's radial component map and the explicit expression of the radial component of the quadratic Casimir element. We furthermore establish the link to quantum spin hyperbolic Calogero-Moser systems (Subsection \ref{vvCM}) and highlight various important special cases.
We recall the construction of the Harish-Chandra series in Subsection \ref{S5}, and discuss how they give rise to eigenstates for the quantum spin hyperbolic Calogero-Moser systems.  In the first two subsections of Section \ref{SectionRepTh} we recall fundamental results of 
Harish-Chandra \cite{HCe,HCI, HCII} on the principal series representations of $G$ and its associated matrix coefficients. In Subsection \ref{ssss3} we discuss the algebraic principal series representations, and the description of the associated spaces of 
$\mathfrak{k}$-intertwiners. Section \ref{RTHC} first discusses how the algebraic principal series representations can be identified with $\mathfrak{k}$-finite parts of weight completions of Verma modules, which
leads to a detailed description of the $\mathfrak{k}$-intertwining spaces $\textup{Hom}_{\mathfrak{k}}(M_\lambda,V_\ell)$
and $\textup{Hom}_{\mathfrak{k}}(V_r,\overline{M}_\lambda)$.
In the second half of the section we introduce formal elementary $\sigma$-spherical functions and prove their key properties (differential equations and relation to $\sigma$-Harish-Chandra series). In Section \ref{SectionbKZB} we first derive asymptotic operator KZB equations for $\mathfrak{g}$-intertwiners and relate them to factorisations of the quadratic Casimir element $\Omega$. 
In Subsections \ref{S62} \& \ref{S63} we describe the spaces of $G$-equivariant
differential operators $\mathcal{H}_\lambda^\infty\rightarrow\mathcal{H}_\mu^\infty\otimes U$ for a finite dimensional $G$-representation $U$, and derive the asymptotic boundary KZB equations for the associated $N$-point spherical functions.
In Subsection \ref{intertwinersection}
we derive the asymptotic boundary KZB equations for the formal $N$-point spherical functions.
Subsection \ref{sectionBFO} and subsection \ref{S66} introduce the boundary fusion operator and establishes the integrability of the asymptotic boundary KZB operators. Finally, in Subsection \ref{inteq} we establish the resulting coupled classical dynamical Yang-Baxter equations and the associated dynamical reflection equations for the building blocks $r^{\pm}$ and $\kappa$ of the asymptotic boundary KZB operators.
\vspace{.5cm}\\
\noindent
%%%%%%%%%%%%%%%%%%%%%%%%%%%%%%%%%%%%%%%%%%%
{\bf Acknowledgments.} We thank Ivan Cherednik, Pavel Etingof, Giovanni Felder, Gert Heckman, Erik Koelink, Christian Korff, Tom Koornwinder, Eric Opdam, Maarten van Pruijssen, Taras Skrypnyk and Bart Vlaar for discussions and comments. We thank
Sam van den Brink for carefully reading the first part of the paper and pointing out a number of typos.
The work of J.S. and N.R. was supported by the Dutch Research Council (NWO). 
In addition the work of N.R. was partially supported by NSF DMS-1601947 and by 
RSF 21-11-00141. He also would like to thank ETH-ITS for the hospitality during the final stages of the work.\\
\vspace{.3cm}\\
%%%%%%%%%%%%%%%%%%%%%%%%%%%%%%%%%%%%%%%%%%
{\bf Notations and conventions.} 
We write $\textup{ad}_L: L\rightarrow \mathfrak{gl}(L)$ for the adjoint representation of a Lie algebra $L$, and $\langle\cdot,\cdot\rangle_L$ for its Killing form.  Real Lie algebras will be denoted with a subscript zero; The complexification of a real Lie algebra
$\mathfrak{g}_0$ with be denoted by $\mathfrak{g}:=\mathfrak{g}_0\otimes_{\mathbb{R}}\mathbb{C}$. The tensor product $\otimes_F$ of $F$-vector spaces is denoted by $\otimes$ in case $F=\mathbb{C}$. For complex vector spaces $U$ and $V$ we write $\textup{Hom}(U,V)$ for the vector space of complex linear maps $U\rightarrow V$. Representations of Lie groups are complex, strongly continuous Hilbert space representations. If $U$ and $V$ are the representation spaces of
two representations of a Lie group $G$, then $\textup{Hom}_G(U,V)$ denotes the subspace of bounded linear $G$-intertwiners $U\rightarrow V$.  If $U$ and $V$ are two $\mathfrak{g}$-modules for a complex Lie algebra $\mathfrak{g}$, then $\textup{Hom}_{\mathfrak{g}}(U,V)$ denotes
the subspace of $\mathfrak{g}$-intertwiners $U\rightarrow V$. The representation map of the infinitesimal $\mathfrak{g}$-representation associated to a smooth $G$-representation $(\tau,U)$ will be denoted by $\tau$ again, if no confusion can arise.

%%%%%%%%%%%%%%%%%%%%%%%%%%%%%%%%%%%%%%%%%%%

%%%%%%%%%%%%%%%%%%%%%%%%%%%%%%%%
\section{Split real semisimple Lie algebras}\label{sectionII}
%%%%%%%%%%%%%%%%%%%%%%%%%%%%%%%%
This short section is to fix the basic notations for split real semisimple Lie algebras and Lie groups. For further reading consult, e.g., \cite{Kn}.
%%%%%%%%%%%%%%%%%%%%%%%%%%%%%%%%%
\subsection{Root space and Cartan decomposition}
%%%%%%%%%%%%%%%%%%%%%%%%%%%%%%%%

Let $\mathfrak{g}_0$ be a split real semisimple Lie algebra with Cartan involution $\theta_0\in\textup{Aut}(\mathfrak{g}_0)$ and corresponding Cartan decomposition
\[
\mathfrak{g}_0=\mathfrak{k}_0\oplus\mathfrak{p}_0.
\]
The $+1$-eigenspace $\mathfrak{k}_0\subset\mathfrak{g}_0$ is a Lie subalgebra of $\mathfrak{g}_0$, and 
the $-1$-eigenspace $\mathfrak{p}_0$ is an $\textup{ad}_{\mathfrak{g}_0}(\mathfrak{k}_0)$-submodule of $\mathfrak{g}_0$. The complex linear extension of $\theta_0$ will be denoted by $\theta\in\textup{Aut}(\mathfrak{g})$ (it is a Chevalley involution of $\mathfrak{g}$). Then $\mathfrak{g}=\mathfrak{k}\oplus\mathfrak{p}$ is the decomposition of $\mathfrak{g}$ in $+1$ and $-1$-eigenspaces of $\theta$.

The bilinear form $(x,y)\mapsto -\langle x,\theta_0(y)\rangle_{\mathfrak{g}_0}$ on $\mathfrak{g}_0$ is positive definite.
Fix a Cartan subalgebra $\mathfrak{h}_0$ of $\mathfrak{g}_0$ which is contained in $\mathfrak{p}_0$ (this is possible since $\mathfrak{g}_0$ is split), then the restriction of $\langle\cdot,\cdot\rangle_{\mathfrak{g}_0}$ to $\mathfrak{h}_0$ is positive definite. 
We will write $(\cdot,\cdot)$ for the resulting inner product on $\mathfrak{h}_0$, and 
$\|\cdot\|$ for the norm. We use the same notations for the induced scalar product and norm on $\mathfrak{h}_0^*$. 
The complexification $\mathfrak{h}$ of $\mathfrak{h}_0$ is a Cartan subalgebra of $\mathfrak{g}$. We also write $(\cdot,\cdot)$ for the complex bilinear extensions of $(\cdot,\cdot)$ to bilinear forms on $\mathfrak{h}$ and $\mathfrak{h}^*$. 

Let
\begin{equation}\label{rootspacecomplex}
\mathfrak{g}=\mathfrak{h}\oplus\bigoplus_{\alpha\in R}\mathfrak{g}_\alpha
\end{equation}
be the root space decomposition of $\mathfrak{g}$,
with root system $R=R(\mathfrak{g},\mathfrak{h})\subset\mathfrak{h}^*$ and associated root spaces
\[
\mathfrak{g}_\alpha:=\{x\in\mathfrak{g} \,\, | \,\, \textup{ad}_{\mathfrak{g}}(h)x=\alpha(h)x\quad \forall\, h\in\mathfrak{h}\}.
\]
Fix a set $\{\alpha_1,\ldots,\alpha_\rr\}$ of simple roots of $R$. Write $R^+$ for the associated set of positive roots. 
Let $t_\lambda\in\mathfrak{h}$ be the unique element satisfying
\[
\langle h,t_\lambda\rangle_{\mathfrak{g}}=\lambda(h)\qquad \forall h\in\mathfrak{h}.
\]
Then $[x,y]=\langle x,y\rangle_{\mathfrak{g}}t_\alpha$ for root vectors $x\in\mathfrak{g}_\alpha$ and 
$y\in\mathfrak{g}_{-\alpha}$, see \cite[Prop. 8.3]{Hu}.

The root space decomposition \eqref{rootspacecomplex} refines to
\begin{equation}\label{rootspacereal}
\mathfrak{g}_0=\mathfrak{h}_0\oplus\bigoplus_{\alpha\in R}\mathfrak{g}_{0,\alpha}
\end{equation}
with $\mathfrak{g}_{0,\alpha}:=\mathfrak{g}_0\cap\mathfrak{g}_\alpha$ a one-dimensional real vector space for all $\alpha\in R$. In particular, all roots $\alpha\in R$ are real-valued on $\mathfrak{h}_0$, and $t_\alpha\in\mathfrak{h}_0$ ($\alpha\in R$).

We fix $e_\alpha\in\mathfrak{g}_{0,\alpha}$ ($\alpha\in R$)
such that 
\begin{equation}\label{mapSerre}
[e_\alpha,e_{-\alpha}]=t_\alpha,\qquad\quad \theta_0(e_\alpha)=-e_{-\alpha}
\end{equation}
for all $\alpha\in R$ (the fact that this is possible follows from, e.g., \cite[\S 25.2]{Hu}). Then $\langle e_\alpha,e_{-\alpha}\rangle_{\mathfrak{g}_0}=1$ for $\alpha\in R$. Set 
\[
y_\alpha:=e_\alpha-e_{-\alpha}\in\mathfrak{k}_0,\qquad \alpha\in R,
\]
then $y_{-\alpha}=-y_{\alpha}$ ($\alpha\in R$) and 
\begin{equation*}
\begin{split}
\mathfrak{k}_0&=\bigoplus_{\alpha\in R^+}\mathbb{R}y_\alpha,\\
\mathfrak{p}_0&=\mathfrak{h}_0\oplus\bigoplus_{\alpha\in R^+}\mathbb{R}(e_\alpha+e_{-\alpha}).
\end{split}
\end{equation*}

Let $G$ be a connected real Lie group with Lie algebra $\mathfrak{g}_0$ and finite center. Denote by $K\subset G$ the connected Lie subgroup with Lie algebra $\mathfrak{k}_0$, which is maximal compact in $G$. The Cartan involution $\theta_0$ integrates to a global Cartan involution $\Theta_0\in\textup{Aut}(G)$, and $K$ is the subgroup of elements $g\in G$ fixed by $\Theta_0$ (i.e., $(G,K)$ is a Riemannian symmetric pair).  The map 
\[
K\times \mathfrak{p}_0\rightarrow G,\qquad (k,x)\mapsto k\exp(x)
\] 
is a diffeomorphism, called the global Cartan decomposition of $G$.

%%%%%%%%%%%%%%%%%%%%%%%%%%%%%%%%%%%%
\subsection{One-dimensional $\mathfrak{k}_0$-representations}\label{SSonedim}
%%%%%%%%%%%%%%%%%%%%%%%%%%%%%%%%%%%%

Let $\textup{ch}(\mathfrak{k}_0)$ be the space of one-dimensional real representations of $\mathfrak{k}_0$. If $\mathfrak{g}$ is simple but not of type $C_\rr$ ($\rr\geq 1$) then $\mathfrak{k}_0$ is semisimple (see, for instance, \cite[\S 3.1]{St}), hence $\textup{ch}(\mathfrak{k}_0)=\{\chi_0\}$ with $\chi_0$ the trivial representation. 
If $\mathfrak{g}_0\simeq\mathfrak{sp}(\rr;\mathbb{R})$ ($\rr\geq 1$) then 
$\mathfrak{k}_0\simeq\mathfrak{gl}_\rr(\mathbb{R})$, hence $\textup{ch}(\mathfrak{k}_0)=\mathbb{R}\chi$ is one-dimensional.
Write in this case $R_s$ and $R_\ell$ for the set of short and long roots in $R$ with respect to the norm $\|\cdot\|$ (by convention, $R_s=\emptyset$ and $R_\ell=R$ for $\rr=1$).  Set $R_s^+:=R_s\cap R^+$
and $R_l^+:=R_l\cap R^+$. 

%%%%%%%%%%%%%%%%%%%%%%%%%%%%%%%%%%%
\begin{lem}\label{SSonedimlem}
Let $\mathfrak{g}_0=\mathfrak{sp}(\rr;\mathbb{R})$ ($\rr\geq 1$). Denote by $\chi_{\mathfrak{sp}}\in\mathfrak{k}_0^*$ the linear functional satisfying $\chi_{\mathfrak{sp}}(y_\alpha)=0$ for $\alpha\in R_s^+$ and $\chi_{\mathfrak{sp}}(y_\alpha)=1$ for $\alpha\in R_\ell^+$.
Then 
\[
\textup{ch}(\mathfrak{k}_0)=\mathbb{R}\chi_{\mathfrak{sp}}
\]
\end{lem}
%%%%%%%%%%%%%%%%%%%%%%%%%%%%%%%%%
\begin{proof}
See \cite[Lemma 4.3]{St}.
\end{proof}

%%%%%%%%%%%%%%%%%%%%%%%%%%%%%%%%%%%%%%%
\subsection{The Iwasawa decomposition}
%%%%%%%%%%%%%%%%%%%%%%%%%%%%%%%%%%%%%%
Let $A\subset G$ be the connected Lie subgroup with Lie algebra $\mathfrak{h}_0$. It is a closed commutative Lie subgroup of $G$, isomorphic to $\mathfrak{h}_0$ through the restriction of the exponential map $\exp: \mathfrak{g}\rightarrow G$ to $\mathfrak{h}_0$. We write $\log: A\rightarrow \mathfrak{h}_0$ for its inverse. 

Consider the nilpotent Lie subalgebra 
\[
\mathfrak{n}_{0,+}:=\bigoplus_{\alpha\in R^+}\mathfrak{g}_{0,\alpha}
\]
of $\mathfrak{g}_0$. The vector space decomposition
\[
\mathfrak{g}_0=\mathfrak{k}_0\oplus\mathfrak{h}_0\oplus\mathfrak{n}_{0,+}
\]
is the Iwasawa decomposition of $\mathfrak{g}_0$.
Let $N_+\subset G$ be the connected Lie subgroup with Lie algebra $\mathfrak{n}_{0,+}$. Then $N_+$ is simply connected and closed in $G$, and the exponential map $\exp: \mathfrak{n}_{0,+}\rightarrow N_+$ is a diffeomorphism. 
The multiplication map 
\begin{equation}\label{mm}
K\times A\times N_+\rightarrow G, \qquad (k,a,n)\mapsto kan
\end{equation}
is a diffeomorphism onto $G$ (the global Iwasawa decomposition). 
We write 
\[
g=k(g)a(g)n(g)
\]
for the Iwasawa decomposition of $g\in G$, with $k(g)\in K$, $a(g)\in A$ and
$n(g)\in N_+$. 

Since $G$ is split with finite center,  
the centralizer $M:=Z_{K}(\mathfrak{h}_0)$ of $\mathfrak{h}_0$ in $K$ is a finite group. The minimal parabolic subgroup $P=MAN_+$ of $G$
is a closed Lie subgroup of $G$ with Lie algebra $\mathfrak{b}_0:=\mathfrak{h}_0\oplus\mathfrak{n}_{0,+}$. Note that the complexification $\mathfrak{b}$ of $\mathfrak{b}_0$
is the Borel subalgebra of $\mathfrak{g}$ containing $\mathfrak{h}$.

%%%%%%%%%%%%%%%%%%%%%%%%%%%%%%%%%%%%%%%%%%%%%
\section{Radial components of invariant differential operators}\label{S3}
%%%%%%%%%%%%%%%%%%%%%%%%%%%%%%%%%%%%%%%%%%%%% 

Throughout this section we fix a triple $(\mathfrak{g}_0,\mathfrak{h}_0,\theta_0)$ with $\mathfrak{g}_0$ a split real semisimple Lie algebra, $\mathfrak{h}_0$ a split Cartan subalgebra and $\theta_0$ a Cartan involution such 
that $\theta_0|_{\mathfrak{h}_0}=-\textup{id}_{\mathfrak{h}_0}$. We write 
$\mathfrak{h}_0^*\subset\mathfrak{h}^*$ for the real span of the roots, 
$G$ for a connected Lie group with Lie algebra $\mathfrak{g}_0$ and finite center, $K\subset G$ for the connected Lie subgroup with Lie algebra $\mathfrak{k}_0$, and $A\subset G$ for the connected Lie subgroup with Lie algebra $\mathfrak{h}_0$.

%%%%%%%%%%%%%%%%%%%%%%%%%%%%%%%%%%%%%%%%%%%%%%%%
\subsection{The radial component map}
%%%%%%%%%%%%%%%%%%%%%%%%%%%%%%%%%%%%%%%%%%%%%%%% 
The radial component map describes the factorisation of elements $x\in U(\mathfrak{g})$ 
along algebraic counterparts of the Cartan decomposition $G=KAK$. 
We first introduce some preliminary notations.

For $\lambda\in\mathfrak{h}^*$ the map 
\[
\xi_\lambda: A\rightarrow\mathbb{C}^*,\quad a\mapsto a^\lambda:=e^{\lambda(\log(a))}
\]
defines a complex-valued multiplicative character of $A$, which is real-valued for $\lambda\in\mathfrak{h}_0^*$. It satisfies $\xi_\lambda\xi_\mu=\xi_{\lambda+\mu}$ 
($\lambda,\mu\in\mathfrak{h}^*$) and $\xi_0\equiv 1$. 

The adjoint representation $\textup{Ad}: G\rightarrow \textup{Aut}(\mathfrak{g}_0)$ extends naturally to an action of $G$ on the universal enveloping algebra $U(\mathfrak{g})$ of $\mathfrak{g}$ by complex linear algebra automorphisms. We write $\textup{Ad}_g(x)$ for the adjoint action of $g\in G$ on $x\in U(\mathfrak{g})$. Note that for $a\in A$,
\[
\textup{Ad}_a(e_\alpha)=a^{\alpha}e_\alpha\qquad \forall\, \alpha\in R
\]
and $\textup{Ad}_a$ fixes $\mathfrak{h}$ pointwise.

Each $g\in G$ admits a decomposition $g=kak^\prime$ with $k,k^\prime\in K$ and $a\in A$. The double cosets $KaK$
and $Ka^\prime K$ ($a,a^\prime\in A$) coincide iff $a^\prime\in Wa$ with 
$W:=N_K(\mathfrak{h}_0)/M$
the analytic Weyl group of $G$, acting on $A$ by conjugation. Note that $W$ is isomorphic to the Weyl group of $R$ since $G$ is split.
Set 
\[
A_{\textup{reg}}:=\{a\in A \,\, | \,\, a^\alpha\not=1\quad \forall \alpha\in R\}.
\]
Then $\exp: \mathfrak{h}_{0,\textup{reg}}\overset{\sim}{\longrightarrow} A_{\textup{reg}}$, with $\mathfrak{h}_{0,\textup{reg}}:=\{h\in\mathfrak{h}_0 \,\, | \,\,
\alpha(h)\not=0\,\,\, \forall\, \alpha\in R\}$ the set of regular elements in $\mathfrak{h}_0$. The Weyl group $W$ acts freely on $A_{\textup{reg}}$. 

Infinitesimal analogues of the Cartan decomposition of $G$ are realized through the  
vector space decompositions
\begin{equation}\label{infinitesimalKAK}
\mathfrak{g}_0=\mathfrak{h}_0\oplus\textup{Ad}_{a^{-1}}\mathfrak{k}_0\oplus \mathfrak{k}_0
\end{equation}
of $\mathfrak{g}_0$ for $a\in A_{\textup{reg}}$. The decomposition \eqref{infinitesimalKAK} follows
from the identity
\begin{equation}\label{elementaryrel}
e_\alpha=\frac{a^{-\alpha}(\textup{Ad}_{a^{-1}}y_\alpha)-y_\alpha}{a^{-2\alpha}-1},
\end{equation}
which shows that $\{\textup{Ad}_{a^{-1}}y_\alpha, y_\alpha\}$ is a linear basis of 
$\mathfrak{g}_{0,\alpha}\oplus\mathfrak{g}_{0,-\alpha}$ for $a\in A_{\textup{reg}}$. Set
\[
\mathcal{V}:=U(\mathfrak{h})\otimes U(\mathfrak{k})\otimes U(\mathfrak{k}).
\]
By the Poincar{\'e}-Birkhoff-Witt-Theorem, for each $a\in A_{\textup{reg}}$ the linear map
\[
\Gamma_a: \mathcal{V}\rightarrow U(\mathfrak{g}),\qquad
\Gamma_a(h\otimes x\otimes y):=\textup{Ad}_{a^{-1}}(x)hy
\]
is a linear isomorphism. 

Extending the scalars of the complex vector space $\mathcal{V}$ to the ring
$C^\infty(A_{\textup{reg}})$ of complex valued smooth functions on $A_{\textup{reg}}$ allows one to give the factorisation $\Gamma_a^{-1}(x)$ for $x\in
U(\mathfrak{g})$ uniformly in $a\in A_{\textup{reg}}$. It suffices to extend the scalars to the
unital subring $\mathcal{R}$ of $C^\infty(A_{\textup{reg}})$ generated by $\xi_{-\alpha}$ and
$(1-\xi_{-2\alpha})^{-1}$ for all $\alpha\in R^+$. For $a\in A_{\textup{reg}}$ the extension
of $\Gamma_a$ is then the complex linear
map $\widetilde{\Gamma}_a: \mathcal{R}\otimes\mathcal{V}\rightarrow U(\mathfrak{g})$ defined by
\[
\widetilde{\Gamma}_a(f\otimes Z):=f(a)\Gamma_a(Z),\qquad f\in\mathcal{R},\,\,\, Z\in\mathcal{V}.
\]
%%%%%%%%%%%%%%%%%%%%%%%%%%%%%%%%%%%%%%%%%%%%%
\begin{thm}[\cite{CM}] \label{infKAK}
For $x\in U(\mathfrak{g})$ there exists a unique $\Pi(x)\in\mathcal{R}\otimes\mathcal{V}$
such that 
\[
\widetilde{\Gamma}_a\bigl(\Pi(x)\bigr)=x\qquad \forall\, a\in A_{\textup{reg}}.
\]
\end{thm}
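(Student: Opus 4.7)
I would prove uniqueness first, then existence by induction on the PBW filtration degree of $U(\mathfrak{g})$.

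\emph{Uniqueness.} Fix a $\mathbb{C}$-linear basis $\{v_j\}_{j\in J}$ of $\mathcal{V}$; then $\mathcal{R}\otimes\mathcal{V}$ is a free $\mathcal{R}$-module, each element having a unique expansion $\sum_j f_j\otimes v_j$ with $f_j\in\mathcal{R}$. Since $\Gamma_a$ is a $\mathbb{C}$-linear isomorphism for every $a\in A_{\textup{reg}}$ by the PBW theorem applied to \eqref{infinitesimalKAK}, the requirement $\widetilde{\Gamma}_a(\Pi(x))=x$ determines the functions $f_j\colon A_{\textup{reg}}\to\mathbb{C}$ pointwise as the $\{v_j\}$-coefficients of $\Gamma_a^{-1}(x)$. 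Since any element of $\mathcal{R}\subset C^\infty(A_{\textup{reg}})$ is determined by its values on $A_{\textup{reg}}$, this forces $\Pi(x)$ to be unique.

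\emph{Existence, base case.} For $x\in U^1(\mathfrak{g})$, with $U^n(\mathfrak{g})$ the standard PBW filtration, I would verify directly that
\[
\Pi(h)=1\otimes(h\otimes 1\otimes 1),\qquad \Pi(y_\alpha)=1\otimes(1\otimes 1\otimes y_\alpha)
\]
for $h\in\mathfrak{h}$ and $\alpha\in R$ do the job, and that \eqref{elementaryrel} reorganises as
\[
\Pi(e_\alpha)=\frac{\xi_{-\alpha}}{\xi_{-2\alpha}-1}\otimes(1\otimes y_\alpha\otimes 1)-\frac{1}{\xi_{-2\alpha}-1}\otimes(1\otimes 1\otimes y_\alpha),\qquad \alpha\in R.
\]
For $\alpha\in R^+$ the scalar factors lie in $\mathcal{R}$ by inspection; for $\alpha=-\beta$ with $\beta\in R^+$ the identity $1/(\xi_{2\beta}-1)=\xi_{-2\beta}/(1-\xi_{-2\beta})$ rewrites them in the generators of $\mathcal{R}$.

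\emph{Existence, inductive step.} For $n\geq 2$ I would pass to the associated graded, with $\mathcal{V}^n$ denoting the total-degree filtration of $\mathcal{V}$. Because $S(\mathfrak{g})$ is commutative, $\textup{gr}(\Gamma_a)$ becomes an honest $\mathbb{C}$-algebra isomorphism $S(\mathfrak{h}\oplus\mathfrak{k}\oplus\mathfrak{k})\to S(\mathfrak{g})$ induced on generators by the linear map $(h,u,v)\mapsto h+\textup{Ad}_{a^{-1}}(u)+v$. Its inverse is therefore also an algebra map, fully determined by its restriction to $\mathfrak{g}\subset S(\mathfrak{g})$, which by the base case has $\mathcal{R}$-valued matrix entries; multiplicativity then shows $\textup{gr}(\Gamma_a)^{-1}$ has $\mathcal{R}$-valued matrix entries at every graded degree. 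Given $x\in U^n(\mathfrak{g})$, I would lift $\textup{gr}(\Gamma_a)^{-1}(\overline{x})$ to some $\Pi_0(x)\in\mathcal{R}\otimes\mathcal{V}^n$; the top-degree cancellation then yields $r(a):=\widetilde{\Gamma}_a(\Pi_0(x))-x\in U^{n-1}(\mathfrak{g})$ for every $a\in A_{\textup{reg}}$.

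\emph{Main obstacle.} The principal difficulty is to verify that $r$, as a function of $a\in A_{\textup{reg}}$, actually lies in $\mathcal{R}\otimes U^{n-1}(\mathfrak{g})$ rather than merely in the larger ring obtained by adjoining $\xi_\alpha$ for $\alpha\in R^+$. A priori, reordering products like $y_\alpha\cdot\textup{Ad}_{a^{-1}}(y_\beta)$ to the form $\textup{Ad}_{a^{-1}}(U(\mathfrak{k}))\cdot U(\mathfrak{h})\cdot U(\mathfrak{k})$ produces commutators $[y_\alpha,\textup{Ad}_{a^{-1}}(y_\beta)]$ carrying $\xi_\beta$-factors; but these always combine multiplicatively with $\xi_{-\beta}$-factors coming from the degree-$1$ entries of $\Pi_0(x)$, and the algebraic identity $\xi_{-\alpha}(\xi_\alpha-\xi_{-\alpha})=1-\xi_{-2\alpha}$ together with PBW combinatorics forces a systematic cancellation of all offending $\xi_\alpha$-contributions, as the explicit low-degree calculation of $\Pi(e_\alpha e_{-\alpha})$ illustrates. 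Once this is secured, the inductive hypothesis extended $\mathcal{R}$-linearly gives $\Pi(r)\in\mathcal{R}\otimes\mathcal{V}^{n-1}$, and $\Pi(x):=\Pi_0(x)-\Pi(r)$ completes the induction.
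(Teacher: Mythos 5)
The paper does not prove Theorem \ref{infKAK} at all: it is quoted from Casselman--Mili\v{c}i\'c \cite{CM}, so there is no in-paper argument to compare yours with, and I judge your proposal on its own. Your uniqueness argument is fine, your base case (including the rewriting of the coefficients of $\Pi(e_{\pm\alpha})$ in the generators of $\mathcal{R}$) is correct and matches the displayed example $\Pi(e_\alpha)$, and the observation that $\textup{gr}(\Gamma_a)$ is an algebra isomorphism whose inverse has $\mathcal{R}$-valued matrix entries in every degree is a sound start for the induction.

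The problem is that the decisive step is exactly the one you leave as an assertion. Your induction needs $r:=\widetilde{\Gamma}_\bullet(\Pi_0(x))-x$ to lie in $\mathcal{R}\otimes U^{n-1}(\mathfrak{g})$, i.e.\ to have coefficient functions with only non-positive exponents at infinity; that the coefficients can be taken in $\mathcal{R}$ rather than in the larger ring containing the $\xi_\beta$ ($\beta\in R^+$) is the whole content of the theorem, and "PBW combinatorics forces a systematic cancellation" is a restatement of what must be proved, not a proof. Moreover, as literally written ("lift \ldots to \emph{some} $\Pi_0(x)\in\mathcal{R}\otimes\mathcal{V}^n$") the step is false: for $\mathfrak{g}_0=\mathfrak{sl}(2,\mathbb{R})$ and $x=e_\alpha^2$, replacing the monomial lift $\Pi_0(x)$ by $\Pi_0(x)+1\otimes(1\otimes y_\alpha\otimes 1)$ changes nothing in top degree, but changes $r$ by the function $a\mapsto\textup{Ad}_{a^{-1}}(y_\alpha)=a^{-\alpha}e_\alpha-a^{\alpha}e_{-\alpha}$, which is not in $\mathcal{R}\otimes U(\mathfrak{g})$ because $\xi_\alpha\notin\mathcal{R}$. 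So the conclusion depends on the choice of lift, and even for the natural PBW-monomial lift the cancellation has to be argued, not illustrated by one rank-one computation. Your "main obstacle" paragraph also misplaces where the danger arises: your induction never reorders anything into the form $\textup{Ad}_{a^{-1}}(U(\mathfrak{k}))\,U(\mathfrak{h})\,U(\mathfrak{k})$; the positive exponentials sit already inside $\widetilde{\Gamma}_a(\Pi_0(x))$ itself, through the factors $\textup{Ad}_{a^{-1}}(y_{\beta_1}\cdots y_{\beta_p})$.

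The gap is fixable, and the mechanism you name ($\xi_{-\beta}$-factors compensating $\xi_\beta$-factors) is the right one, but it needs to be made into a lemma. Concretely: by multiplicativity of $\textup{gr}(\Gamma_a)^{-1}$ and the fact that the first-leg entries of both $e_\beta$ and $e_{-\beta}$ equal $-\xi_{-\beta}(1-\xi_{-2\beta})^{-1}$, every monomial of $\textup{gr}(\Gamma_a)^{-1}(\overline{x})$ containing the first-leg generators $y_{\beta_1},\dots,y_{\beta_p}$ has coefficient divisible by $\xi_{-\beta_1-\cdots-\beta_p}$ in $\mathcal{R}$. For the monomial lift, each term of $\widetilde{\Gamma}_\bullet(\Pi_0(x))$ is then an $\mathcal{R}$-multiple of the function $a\mapsto a^{-\beta_1-\cdots-\beta_p}\textup{Ad}_{a^{-1}}(y_{\beta_1}\cdots y_{\beta_p})\,h\,y$, and since $a^{-\beta_1-\cdots-\beta_p}\textup{Ad}_{a^{-1}}(y_{\beta_1}\cdots y_{\beta_p})$ is the ordered product of the elements $\xi_{-2\beta_i}(a)e_{\beta_i}-e_{-\beta_i}$, its expansion in a fixed basis of $U(\mathfrak{g})$ has coefficients in $\mathcal{R}$. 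Hence $\widetilde{\Gamma}_\bullet(\Pi_0(x))\in\mathcal{R}\otimes U^{n}(\mathfrak{g})$, and the pointwise degree drop you already established upgrades this to $r\in\mathcal{R}\otimes U^{n-1}(\mathfrak{g})$, closing the induction. Equivalently, and more cleanly, run the whole induction with the rescaled elements $\xi_{-\beta}\textup{Ad}_{a^{-1}}(y_\beta)=\xi_{-2\beta}e_\beta-e_{-\beta}$ in place of $\textup{Ad}_{a^{-1}}(y_\beta)$: their coefficients, and all commutators among them and with $\mathfrak{h}$ and the $y_\gamma$, manifestly stay in $\mathcal{R}\otimes U(\mathfrak{g})$, so no positive exponentials ever appear and no cancellation needs to be invoked. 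Without one of these ingredients your argument has a genuine gap at its central point.
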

%%%%%%%%%%%%%%%%%%%%%%%%%%%%%%%%%%%%%%%%%%%%%
For example, by \eqref{elementaryrel},
\[
\Pi(e_\alpha)=(\xi_{-\alpha}-\xi_\alpha)^{-1}\otimes 1\otimes y_\alpha\otimes 1
-(\xi_{-2\alpha}-1)^{-1}\otimes 1\otimes 1\otimes y_\alpha.
\]
The resulting linear map $\Pi: U(\mathfrak{g})\rightarrow\mathcal{R}\otimes\mathcal{V}$ is called
the radial component map. 
%%%%%%%%%%%%%%%%%%%%%%%%%%%%%%%%%%%%%%%%
\subsection{$\sigma$-Spherical functions}
%%%%%%%%%%%%%%%%%%%%%%%%%%%%%%%%%%%%%%%%%
The radial component map plays an important role in the study of spherical functions.
Fix a finite dimensional representation $\sigma: K\times K\rightarrow \textup{GL}(V_\sigma)$. 
Denote by
$C^\infty(G;V_\sigma)$ the space of smooth $V_\sigma$-valued functions on $G$.
%%%%%%%%%%%%%%%%%%%%%%%%%%%%%%%%%%%%%%%%%%%%%
\begin{defi}\label{defispher}
We say that $f\in C^\infty(G;V_\sigma)$ is a $\sigma$-spherical function on $G$ if 
\[
f(k_1gk_2^{-1})=\sigma(k_1,k_2)f(g)\qquad \forall\, g\in G,\,\,\, \forall k_1,k_2\in K.
\]
We denote by $C^\infty_\sigma(G)$ the subspace of $C^\infty(G;V_\sigma)$ consisting of
$\sigma$-spherical functions on $G$.
\end{defi}
%%%%%%%%%%%%%%%%%%%%%%%%%%%%%%%%%%%%%%%%%%%%%
Examples of $\sigma$-spherical functions on $G$ are 
\[
E_\lambda^\sigma(\cdot)v\in C^\infty_\sigma(G),\qquad v\in V_\sigma
\]
where $E_\lambda^\sigma: G\rightarrow\textup{End}(V_\sigma)$ for $\lambda\in\mathfrak{h}^*$ is
the Eisenstein integral
\begin{equation}\label{Eisenstein}
E_\lambda^\sigma(g):=\int_Kdx\, 
\xi_{-\lambda-\rho}(a(g^{-1}x))
\sigma(x,k(g^{-1}x)).
\end{equation}
Here $\rho:=\frac{1}{2}\sum_{\alpha\in R^+}\alpha\in\mathfrak{h}^*$ and $dx$ is the normalised Haar measure on $K$. 
The representation theoretic construction of $\sigma$-spherical functions (see, e.g., \cite[\S 8]{CM}) will be discussed in Section \ref{SectionRepTh}.

Let $V_\sigma^M$ be the subspace of $M$-invariant elements in $V_\sigma$, with $M$ acting diagonally on $V_\sigma$. 
The function space $C^\infty(A;V_\sigma^M)$ is a $W$-module with $w=kM\in W$ for $k\in N_K(\mathfrak{h}_0)$ acting by
\[
(w\cdot f)(a):=\sigma(k,k)f(k^{-1}ak),\qquad\quad a\in A,\, f\in C^\infty(A;V_\sigma^M).
\]
We write $C^\infty(A;V_\sigma^M)^W$ for the subspace
of $W$-invariant $V_\sigma^M$-valued smooth functions on $A$.
By the Cartan decomposition of $G$, we have the following well known result.
%%%%%%%%%%%%%%%%%%%%%%%%%%%%%%%%%%%%
\begin{cor}
The map $C^\infty(G;V_\sigma)\rightarrow C^\infty(A;V_\sigma)$, $f\mapsto f\vert_{A}$ restricts
to an injective linear map from $C_\sigma^\infty(G)$
into $C^\infty(A;V_\sigma^M)^W$. Similarly, restriction 
to $A_{\textup{reg}}$ defines an injective linear map
$C_\sigma^\infty(G)\hookrightarrow C^\infty(A_{\textup{reg}};V_\sigma^M)^W$.
\end{cor}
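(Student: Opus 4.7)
The plan is to verify three things: (i) the restricted function $f|_A$ takes values in the $M$-fixed subspace $V_\sigma^M$, (ii) it is $W$-invariant in the specified sense, and (iii) the restriction maps (to $A$, respectively to $A_{\textup{reg}}$) are injective on $C_\sigma^\infty(G)$. All three are straightforward consequences of the Cartan decomposition $G = KAK$ together with the transformation rule \eqref{transfospherical}, so the work is bookkeeping rather than conceptual.

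For (i), let $f \in C_\sigma^\infty(G)$ and $a \in A$. Since $M = Z_K(A)$, any $m \in M$ satisfies $mam^{-1} = a$, hence
\[
f(a) = f(mam^{-1}) = \sigma(m,m)f(a),
\]
so $f(a) \in V_\sigma^M$, where $M$ acts diagonally on $V_\sigma$. For (ii), take $w = kM \in W$ with $k \in N_K(\mathfrak{h}_0)$. Because $k$ normalises $\mathfrak{h}_0$, the element $k^{-1}ak$ lies in $A$ (and in $A_{\textup{reg}}$ whenever $a$ does). Applying \eqref{transfospherical} with $k_\ell = k^{-1}$ and $k_r = k^{-1}$ gives $f(k^{-1}ak) = \sigma(k^{-1},k^{-1})f(a)$, and therefore
\[
(w \cdot f|_A)(a) = \sigma(k,k)f(k^{-1}ak) = \sigma(k,k)\sigma(k^{-1},k^{-1})f(a) = f(a),
\]
so $f|_A$ lies in $C^\infty(A;V_\sigma^M)^W$. (One should also note that this definition of the $W$-action is independent of the representative $k$, because two representatives differ by an element of $M$, and the computation in (i) absorbs that ambiguity.)

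For (iii), suppose $f \in C_\sigma^\infty(G)$ satisfies $f|_A = 0$. By the Cartan decomposition $G = KAK$, any $g \in G$ can be written $g = k_1 a k_2$ with $k_1,k_2 \in K$ and $a \in A$; then $f(g) = \sigma(k_1,k_2^{-1})f(a) = 0$ by \eqref{transfospherical}, so $f \equiv 0$ on $G$. For the version restricted to $A_{\textup{reg}}$, suppose $f|_{A_{\textup{reg}}} = 0$. Because $A_{\textup{reg}}$ is obtained from $A$ by removing the hyperplane-type subsets $\{a : a^\alpha = 1\}$ for $\alpha \in R$, it is open and dense in $A$. Continuity of $f$ then forces $f|_A = 0$, and the previous argument gives $f \equiv 0$.

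The only step that requires even minor care is checking that the $W$-action is well defined on the target space (that is, independent of the choice of representative in $N_K(\mathfrak{h}_0)$ modulo $M$), but this is immediate from (i) applied pointwise. There is no genuine obstacle; the statement is essentially a formal consequence of the Cartan decomposition, and the proof should fit in a short paragraph in the paper.
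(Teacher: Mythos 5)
Your proof is correct and follows exactly the route the paper intends: the paper offers no detailed argument, merely noting that the corollary follows from the Cartan decomposition $G=KAK$, and your verification of the $M$-invariance of values, the $W$-equivariance, and the injectivity (with density of $A_{\textup{reg}}$ in $A$ for the second map) is precisely the standard bookkeeping behind that remark.
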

%%%%%%%%%%%%%%%%%%%%%%%%%%%%%%%%%%%

The action of left $G$-invariant differential operators on $C^\infty_\sigma(G)$, pushed through the restriction map $\vert_{A_{\textup{reg}}}$, gives rise to differential operators on $A_{\textup{reg}}$ that can be described explicitly in terms of the radial component map $\Pi$. We describe them in the next subsection.

%%%%%%%%%%%%%%%%%%%%%%%%%%%%%%%%%%%%%%%%%%%%%
\subsection{Invariant differential operators}
%%%%%%%%%%%%%%%%%%%%%%%%%%%%%%%%%%%%%%%%%%%%%%

Denote by $\ell$ and $r$ the left-regular and right-regular representations of $G$ on $C^\infty(G)$ respectively,
\[
(\ell(g)f)(g^\prime):=f(g^{-1}g^\prime),\qquad (r(g)f)(g^\prime):=f(g^\prime g),
\]
with $g,g^\prime\in G$ and $f\in C^\infty(G)$. Let $\mathbb{D}(G)$ be the ring of differential operators on $G$, and $\mathbb{D}(G)^G\subseteq\mathbb{D}(G)$ its subalgebra of left $G$-invariant differential operators.
Differentiating $r$ gives an isomorphism
\[
r_\ast: U(\mathfrak{g})\overset{\sim}{\longrightarrow} \mathbb{D}(G)^G
\]
of algebras. 

Let $U(\mathfrak{g})^M\subseteq U(\mathfrak{g})$ be the subalgebra of $\textup{Ad}(M)$-invariant
elements in $U(\mathfrak{g})$.
Embed $\mathbb{D}(G)$ into $\mathbb{D}(G)\otimes\textup{End}(V_\sigma)$ by
$D\mapsto D\otimes\textup{id}_{V_\sigma}$ ($D\in\mathbb{D}(G)$). 
With respect to the resulting action $r_\ast$ of $U(\mathfrak{g})$ on $C^\infty(G; V_{\sigma})$, the subspace $C^\infty_\sigma(G)$ of $\sigma$-spherical functions is a $U(\mathfrak{g})^K$-invariant subspace of $C^\infty(G; V_\sigma)$.

Let $\mathbb{D}(A)$ be the ring of differential operators on $A$ and $\mathbb{D}(A)^A$ the subalgebra of $A$-invariant differential operators. Let $r^A$ be the right-regular action of $A$ on $C^\infty(A)$. Its differential gives rise to an algebra isomorphism
\begin{equation}\label{rhoA}
r^A_\ast: U(\mathfrak{h})\overset{\sim}{\longrightarrow}\mathbb{D}(A)^A.
\end{equation}
We will write $\partial_h:=r^A_\ast(h)\in\mathbb{D}(A)^A$ for $h\in\mathfrak{h}_0$, which are the derivations 
\[
\bigl(\partial_hf\bigr)(a)=\frac{d}{dt}\biggr\rvert_{t=0}f\bigl(a\exp_A(th)\bigr)
\]
for $f\in C^\infty(A)$ and $a\in A$. 
We also consider $\mathbb{D}(A)^A$ as the subring of $\mathbb{D}(A_{\textup{reg}})$ consisting of
constant coefficient differential operators and write
\[
\mathbb{D}_{\mathcal{R}}\subset\mathbb{D}(A_{\textup{reg}})
\]
for the algebra of differential operators
\[
D=\sum_{m_1,\ldots,m_\rr}c_{m_1,\ldots,m_\rr}\partial_{x_1}^{m_1}\cdots\partial_{x_\rr}^{m_\rr}\in
\mathbb{D}(A_{\textup{reg}})
\]
with coefficients $c_{m_1,\ldots,m_\rr}\in\mathcal{R}$, where $\{x_1,\ldots,x_\rr\}$ is an orthonormal basis of $\mathfrak{h}_0$ with respect to $(\cdot,\cdot)$.
The algebra isomorphism \eqref{rhoA} now extends to a complex linear isomorphism
\[
\widetilde{r}_\ast^A: \mathcal{R}\otimes U(\mathfrak{h})\overset{\sim}{\longrightarrow}\mathbb{D}_{\mathcal{R}}, \quad f\otimes h\mapsto fr_\ast^A(h)
\]
for $f\in\mathcal{R}$ and $h\in U(\mathfrak{h})$. Finally, $\mathbb{D}_{\mathcal{R}}\otimes U(\mathfrak{k})^{\otimes 2}$ will denote the algebra of differential operators 
$D=\sum_{m_1,\ldots,m_\rr}c_{m_1,\ldots,m_\rr}\partial_{x_1}^{m_1}\cdots
\partial_{x_\rr}^{m_\rr}$ on $A_{\textup{reg}}$ with coefficients $c_{m_1,\ldots,m_\rr}$ in $\mathcal{R}\otimes U(\mathfrak{k})^{\otimes 2}$.
It acts naturally on $C^\infty(A_{\textup{reg}}; V_\sigma)$.

By the proof
of \cite[Thm. 3.1]{CM} we have for $f\in C^{\infty}_\sigma(G)$, 
$h\in U(\mathfrak{h})$ and $x,y\in U(\mathfrak{k})$,
\[
\bigl(r_\ast\bigl(\textup{Ad}_{a^{-1}}(x)hy\bigr)f\bigr)(a)=
\sigma(x\otimes S(y))\bigl(r_\ast(h)f\bigr)(a)\qquad \forall\, a\in A_{\textup{reg}},
\]
with $S$ the antipode of $U(\mathfrak{k})$, defined as the anti-algebra homomorphism of $U(\mathfrak{k})$ such that $S(x)=-x$ for all $x\in \mathfrak{k}$. Combined with
Theorem \ref{infKAK} this leads to the following result. 
%%%%%%%%%%%%%%%%%%%%%%%%%%%%%%%%%%%%%%%%%%%%%%
\begin{thm}\label{thmRAD}
With the above conventions, define the linear map 
\[
\widehat{\Pi}: U(\mathfrak{g})\rightarrow \mathbb{D}_{\mathcal{R}}
\otimes U(\mathfrak{k})^{\otimes 2}\]
by $\widehat{\Pi}:=(\widetilde{r}^A_\ast\otimes\textup{id}_{U(\mathfrak{k})}\otimes S)
\Pi$,
and set
\[
\widehat{\Pi}^\sigma:=(\textup{id}_{\mathbb{D}_{\mathcal{R}}}\otimes\sigma)\widehat{\Pi}:
U(\mathfrak{g})\rightarrow \mathbb{D}_{\mathcal{R}}\otimes\textup{End}(V_\sigma).
\] 
\begin{enumerate}
\item[{\bf a.}] For $z\in U(\mathfrak{g})$,
\[
\bigl(r_\ast(z)f\bigr)\vert_{A_{\textup{reg}}}=\widehat{\Pi}^\sigma(z)\bigl(
f\vert_{A_{\textup{reg}}}\bigr)\qquad
\forall\, f\in C^{\infty}_\sigma(G).
\]
\item[{\bf b.}] The restrictions of $\widehat{\Pi}$ and $\widehat{\Pi}^\sigma$ to $Z(\mathfrak{g})$ are algebra homomorphisms. 
\end{enumerate}
\end{thm}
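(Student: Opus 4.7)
The plan is to deduce (a) by unpacking the decomposition $\Pi(z)$ given by Theorem \ref{infKAK} and exploiting the $\sigma$-spherical transformation law, and then to derive (b) from (a) together with the multiplicativity of $r_\ast$ and a density argument for spherical functions.

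For (a), fix $z\in U(\mathfrak{g})$ and write $\Pi(z)=\sum_i f_i\otimes h_i\otimes x_i\otimes y_i\in\mathcal{R}\otimes\mathcal{V}$, so that by Theorem \ref{infKAK}, at each $a\in A_{\textup{reg}}$ the identity $z=\sum_i f_i(a)\,\textup{Ad}_{a^{-1}}(x_i)\,h_i\,y_i$ holds in $U(\mathfrak{g})$. Hence $(r_\ast(z)f)(a)=\sum_i f_i(a)\,(r_\ast(\textup{Ad}_{a^{-1}}(x_i)\,h_i\,y_i)f)(a)$, and the analytic core of the argument is to establish the pointwise identity
\[
\bigl(r_\ast(\textup{Ad}_{a^{-1}}(x)\,h\,y)\,f\bigr)(a)=\sigma(x\otimes S(y))\,\bigl(r^A_\ast(h)\,f|_A\bigr)(a)\qquad (x,y\in U(\mathfrak{k}),\ h\in U(\mathfrak{h})).
\]
To prove it, I would parametrise a neighbourhood of $a$ by $a\,\exp(t_x\textup{Ad}_{a^{-1}}(X))\,\exp(t_h H)\,\exp(t_y Y)$ with $X,Y\in\mathfrak{k}_0$ and $H\in\mathfrak{h}_0$; the key move is the intertwining $a\,\exp(t_x\textup{Ad}_{a^{-1}}(X))=\exp(t_x X)\,a$, which converts the $\textup{Ad}_{a^{-1}}$-factor into a left $K$-translation. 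The left $K$-equivariance of $f$ then absorbs the left factor as $\sigma(\exp(t_x X)\otimes 1)$, and the right-$K$ factor $\exp(t_y Y)$ is absorbed as $\sigma(1\otimes\exp(-t_y Y))$, with the antipode $S$ emerging upon differentiation. Iterating for higher-order $x,y\in U(\mathfrak{k})$ and differentiating the $U(\mathfrak{h})$-part along $A$ yields the identity. Summing over $i$ and comparing with the definitions $\widehat{\Pi}=(\widetilde{r}^A_\ast\otimes\textup{id}\otimes S)\Pi$ and $\widehat{\Pi}^\sigma=(\textup{id}\otimes\sigma)\widehat{\Pi}$ completes (a).

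For (b), fix $z_1,z_2\in Z(\mathfrak{g})$. Since $Z(\mathfrak{g})\subseteq U(\mathfrak{g})^K$, both $r_\ast(z_i)$ commute with the full $K\times K$-action and hence preserve $C^\infty_\sigma(G)$. Applying (a) twice, and using that $r_\ast$ is an algebra homomorphism, one gets for every $f\in C^\infty_\sigma(G)$
\[
\widehat{\Pi}^\sigma(z_1z_2)(f|_{A_{\textup{reg}}})=(r_\ast(z_1)\,r_\ast(z_2)\,f)|_{A_{\textup{reg}}}=\widehat{\Pi}^\sigma(z_1)\,\widehat{\Pi}^\sigma(z_2)\,(f|_{A_{\textup{reg}}}),
\]
where in the last step (a) is reapplied to the spherical function $r_\ast(z_2)f$. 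To upgrade this pointwise-agreement-on-test-functions to an equality in $\mathbb{D}_{\mathcal{R}}\otimes\textup{End}(V_\sigma)$, I would invoke the fact that the restriction map $f\mapsto f|_{A_{\textup{reg}}}$ on $C^\infty_\sigma(G)$ has image that, at any $a_0\in A_{\textup{reg}}$, realises enough jets to separate differential operators in $\mathbb{D}_{\mathcal{R}}\otimes\textup{End}(V_\sigma)$ (modulo the unavoidable $M$- and $W$-symmetries, which are respected by both sides). Matrix coefficients of principal series representations and the Eisenstein integrals \eqref{Eisenstein} with varying $\lambda\in\mathfrak{h}^*$ provide the required supply. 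The universal statement for $\widehat{\Pi}$ then follows by choosing $\sigma$ to be faithful as a $\mathfrak{k}\oplus\mathfrak{k}$-representation, so that the map $U(\mathfrak{k})^{\otimes 2}\to\textup{End}(V_\sigma)$ is injective.

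The main obstacle is the jet-separation step in (b): one must genuinely produce enough spherical functions whose Taylor expansions at a point of $A_{\textup{reg}}$ span the relevant jet spaces. Once this is granted, the algebraic identity in (b) is automatic from (a). Part (a) is essentially a bookkeeping exercise, the only subtlety being the ordering of the factors $\textup{Ad}_{a^{-1}}(x)$, $h$, $y$, which is handled cleanly because Theorem \ref{infKAK} provides the needed PBW-style isomorphism at each point $a\in A_{\textup{reg}}$.
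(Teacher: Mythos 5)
Your treatment of part (a) is correct and is essentially the paper's argument: the pointwise identity $\bigl(r_\ast(\textup{Ad}_{a^{-1}}(x)hy)f\bigr)(a)=\sigma(x\otimes S(y))\bigl(r_\ast(h)f\bigr)(a)$ that you rederive by converting the $\textup{Ad}_{a^{-1}}$-factor into a left translation and using $K\times K$-equivariance is exactly the ingredient the paper imports from \cite[Thm. 3.1]{CM}, and combining it with Theorem \ref{infKAK} yields (a).

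Part (b), however, has a genuine gap. The restriction map sends $C^\infty_\sigma(G)$ into $C^\infty(A_{\textup{reg}};V_\sigma^M)^W$: every function you propose to test against is $V_\sigma^M$-valued and $W$-invariant, and $C^\infty_\sigma(G)$ can even vanish entirely (e.g.\ when $\textup{Hom}_M(V_r,V_\ell)=0$). Hence, even granting your unproved jet-separation claim, testing against restrictions of global spherical functions can only pin down the compressions to $V_\sigma^M$ of the ($\textup{End}_M(V_\sigma)$-valued) coefficients of $\widehat{\Pi}^\sigma(z_1z_2)$ and $\widehat{\Pi}^\sigma(z_1)\widehat{\Pi}^\sigma(z_2)$; this recovers the classical statement that the radial components are multiplicative and commute as operators on $C^\infty(A_{\textup{reg}};V_\sigma^M)$ (cf.\ \cite[Thm. 3.3]{CM}), but not the asserted identity in $\mathbb{D}_{\mathcal{R}}\otimes\textup{End}(V_\sigma)$, let alone the universal identity in $\mathbb{D}_{\mathcal{R}}\otimes U(\mathfrak{k})^{\otimes 2}$. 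Your final step is also not available: no finite-dimensional $\sigma$ makes $U(\mathfrak{k})^{\otimes 2}\rightarrow\textup{End}(V_\sigma)$ injective, since the source is infinite dimensional; separating elements of $U(\mathfrak{k})^{\otimes 2}$ requires the whole family of finite-dimensional modules together with a density result such as \cite[Thm. 2.5.7]{Di}. This is precisely why the paper does not argue with global spherical functions: it proves (b) by constructing, for arbitrary finite-dimensional $\mathfrak{k}\oplus\mathfrak{k}$-modules $V_\ell\otimes V_r^*$, arbitrary $\lambda\in\mathfrak{h}^*$ and arbitrary prescribed leading data, formal elementary $\sigma$-spherical functions which are formal eigenfunctions of all the operators $\widehat{\Pi}^\sigma(z)$ (Theorem \ref{mainTHMF}\,{\bf a}); these are unconstrained by $M$- or $W$-invariance and exist in sufficient abundance, and the separation argument of Subsection \ref{S66} (the case $N=0$ of Theorem \ref{consistentoperators}, resting on \cite[Thm. 2.5.7]{Di}) then upgrades the eigenvalue equations to the operator identity in $\mathbb{D}_{\mathcal{R}}\otimes U(\mathfrak{k})^{\otimes 2}$. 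To salvage your route you would have to replace global spherical functions by such a family (or otherwise supply both the jet density and a mechanism to recover full $\textup{End}(V_\sigma)$- and $U(\mathfrak{k})^{\otimes 2}$-valued coefficients); as written, (b) is not established.
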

%%%%%%%%%%%%%%%%%%%%%%%%%%%%%%%%%%%%%%%%%%%%
\begin{proof}
{\bf a.} This is a well-known result of Harish-Chandra, see, e.g., \cite[Thm. 3.1]{CM}.

{\bf b.} It is well-known that the differential operators $\widehat{\Pi}(z)$ ($z\in Z(\mathfrak{g})$) pairwise commute when acting on $C^\infty(A_{\textup{reg}};V_\sigma^M)$, see \cite[Thm. 3.3]{CM}.
The theory of formal spherical functions which we develop in Section \ref{RTHC}, implies that
they also commute as $U(\mathfrak{k})^{\otimes 2}$-valued differential operators. The key point is that all formal spherical functions are formal power series eigenfunctions of $\widehat{\Pi}(z)$ 
($z\in Z(\mathfrak{g})$) by Theorem \ref{mainTHMF}\,{\bf a}, which forces the differential operators $\widehat{\Pi}(z)$ ($z\in Z(\mathfrak{g})$) to commute as $U(\mathfrak{k})^{\otimes 2}$-valued differential operators by the results in Section \ref{S66} for the special case $N=0$.
\end{proof}
%%%%%%%%%%%%%%%%%%%%%%%%%%%%%%%%%%%%%%%%%%
\begin{rema}\label{Minvariance}
By \cite[Prop. 2.5]{CM} we have $\widehat{\Pi}(z)\in\mathbb{D}_{\mathcal{R}}\otimes
U(\mathfrak{k}\oplus\mathfrak{k})^M$ for $z\in U(\mathfrak{g})^M$, where 
$U(\mathfrak{k}\oplus\mathfrak{k})^M$ is the space of $M$-invariance in 
$U(\mathfrak{k}\oplus\mathfrak{k})\simeq U(\mathfrak{k})^{\otimes 2}$ with respect to the diagonal adjoint action of $M$ on $U(\mathfrak{k}\oplus\mathfrak{k})$. In particular, 
$\widehat{\Pi}^\sigma(z)\in\mathbb{D}_{\mathcal{R}}\otimes\textup{End}_M(V_\sigma)$ for
$z\in U(\mathfrak{g})^M$, with $M$ acting diagonally on $V_\sigma$.
\end{rema}
%%%%%%%%%%%%%%%%%%%%%%%%%%%%%%%%%%%%%%%%%

%%%%%%%%%%%%%%%%%%%%%%%%%%%%%%%%%%%%%%%%%%
\subsection{The radial component of the Casimir element}\label{S34}
%%%%%%%%%%%%%%%%%%%%%%%%%%%%%%%%%%%%%%%%%%%
In this subsection we recall the computation of the radial component of the Casimir element.
As before, let $e_\alpha\in\mathfrak{g}_{0,\alpha}$ ($\alpha\in R$) such that
$[e_\alpha,e_{-\alpha}]=t_{\alpha}$ and $\theta_0(e_\alpha)=-e_{-\alpha}$ ($\alpha\in R$), and $\{x_1,\ldots,x_\rr\}$ an orthonormal basis of $\mathfrak{h}_0$ with respect to $(\cdot,\cdot)$.
The Casimir element $\Omega\in Z(\mathfrak{g})$ is given by
\begin{equation}\label{Omega}
\begin{split}
\Omega&=\sum_{j=1}^\rr x_j^2+\sum_{\alpha\in R}e_\alpha e_{-\alpha}\\
&=\sum_{j=1}^\rr x_j^2+2t_\rho+2\sum_{\alpha\in R^+}e_{-\alpha}e_{\alpha}.
\end{split}
\end{equation}
By  \eqref{elementaryrel}, the second line of \eqref{Omega}, and by
\[
[y_\alpha,\textup{Ad}_{a^{-1}}y_\alpha]=(a^{-\alpha}-a^\alpha)t_\alpha\qquad 
\forall\, a\in A, 
\]
we obtain the following Cartan factorisation of $\Omega$, 
\begin{equation}\label{KAKomega}
\Omega=\sum_{j=1}^\rr x_j^2+
\frac{1}{2}\sum_{\alpha\in R}\left(\frac{a^\alpha+a^{-\alpha}}{a^\alpha-a^{-\alpha}}\right)t_\alpha
+\sum_{\alpha\in R}\left(
\frac{\textup{Ad}_{a^{-1}}(y_\alpha^2)-(a^{\alpha}+a^{-\alpha})\textup{Ad}_{a^{-1}}(y_\alpha)y_\alpha
+y_\alpha^2}{(a^{\alpha}-a^{-\alpha})^2}\right)
\end{equation}
for arbitrary $a\in A_{\textup{reg}}$. It follows that
\begin{equation}\label{piomega}
\begin{split}
\Pi(\Omega)&= \sum_{j=1}^\rr 1\otimes x_j^2\otimes 1\otimes 1
+\frac{1}{2}\sum_{\alpha\in R}\left(\frac{\xi_\alpha+\xi_{-\alpha}}{\xi_\alpha-\xi_{-\alpha}}\right)\otimes t_\alpha
\otimes 1\otimes 1\\
&+\sum_{\alpha\in R}\left\{\frac{1}{(\xi_{\alpha}-\xi_{-\alpha})^2}\otimes 1
\otimes (y_\alpha^2\otimes 1+1\otimes y_\alpha^2)-
\frac{(\xi_{\alpha}+\xi_{-\alpha})}{(\xi_{\alpha}-\xi_{-\alpha})^2}\otimes 1\otimes y_\alpha\otimes y_\alpha\right\}.
\end{split}
\end{equation}
This gives the following result, cf., e.g., \cite[Prop. 9.1.2.11]{W}.
%%%%%%%%%%%%%%%%%%%%%%%%%%%%%%%%%%%%%%%%%%%%%%%%
\begin{cor}\label{corR1}
The differential operator $\widehat{\Pi}(\Omega)\in\mathbb{D}_{\mathcal{R}}\otimes U(\mathfrak{k})^{\otimes 2}$ is given by
\begin{equation}\label{pihat}
\widehat{\Pi}(\Omega)=\Delta+\frac{1}{2}\sum_{\alpha\in R}\left(\frac{\xi_\alpha+\xi_{-\alpha}}
{\xi_\alpha-\xi_{-\alpha}}\right)\partial_{t_\alpha}
+\sum_{\alpha\in R}\frac{1}{(\xi_{\alpha}-\xi_{-\alpha})^2}
\prod_{\epsilon\in\{\pm 1\}}(y_\alpha\otimes 1+\xi_{\epsilon\alpha}(1\otimes
y_\alpha))
\end{equation}
with $\Delta:=\sum_{j=1}^\rr\partial_{x_j}^2$ the Laplace-Beltrami operator on $A$.
\end{cor}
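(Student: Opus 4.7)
The plan is to read off $\widehat{\Pi}(\Omega)$ directly from the explicit formula \eqref{piomega} for $\Pi(\Omega)$, by applying the definition
\[
\widehat{\Pi}=(\widetilde{r}^A_\ast\otimes\textup{id}_{U(\mathfrak{k})}\otimes S)\Pi
\]
termwise, and then to show that the resulting expression repackages into the product form displayed in \eqref{pihat}. So this is essentially a bookkeeping verification; no further input beyond \eqref{piomega} and the definition of $S$ is required.

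The first term $\sum_j 1\otimes x_j^2\otimes 1\otimes 1$ of $\Pi(\Omega)$ becomes $\sum_j\partial_{x_j}^2=\Delta$, and the second term $\frac{1}{2}\sum_\alpha\frac{\xi_\alpha+\xi_{-\alpha}}{\xi_\alpha-\xi_{-\alpha}}\otimes t_\alpha\otimes 1\otimes 1$ becomes the first-order part $\frac{1}{2}\sum_\alpha\frac{\xi_\alpha+\xi_{-\alpha}}{\xi_\alpha-\xi_{-\alpha}}\partial_{t_\alpha}$. For the third group of terms in \eqref{piomega}, the only subtlety is that the rightmost $U(\mathfrak{k})$-factor is acted on by the antipode $S$: since $S(y_\alpha)=-y_\alpha$, one has $S(y_\alpha^2)=y_\alpha^2$, so $1\otimes y_\alpha^2$ is fixed, but the mixed term $y_\alpha\otimes y_\alpha$ picks up a minus sign, which precisely cancels the explicit minus sign in \eqref{piomega}. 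The outcome is
\[
\sum_{\alpha\in R}\frac{1}{(\xi_\alpha-\xi_{-\alpha})^2}\Bigl(y_\alpha^2\otimes 1+1\otimes y_\alpha^2+(\xi_\alpha+\xi_{-\alpha})\,y_\alpha\otimes y_\alpha\Bigr).
\]

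The only remaining check is that this expression coincides with $\sum_\alpha\frac{1}{(\xi_\alpha-\xi_{-\alpha})^2}\prod_{\epsilon\in\{\pm 1\}}(y_\alpha\otimes 1+\xi_{\epsilon\alpha}(1\otimes y_\alpha))$, which is an immediate computation: expanding the product of the two commuting factors $(y_\alpha\otimes 1+\xi_\alpha(1\otimes y_\alpha))$ and $(y_\alpha\otimes 1+\xi_{-\alpha}(1\otimes y_\alpha))$ and using $\xi_\alpha\xi_{-\alpha}=1$ yields exactly the three-term sum above. Assembling the three pieces gives formula \eqref{pihat}.

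The \textbf{only real obstacle} is the sign bookkeeping coming from the antipode $S$ on the right $\mathfrak{k}$-slot; once one observes that $S$ flips the sign of the single $y_\alpha\otimes y_\alpha$ term (and leaves the $y_\alpha^2\otimes 1$ and $1\otimes y_\alpha^2$ terms invariant), the identification with the factored form in \eqref{pihat} is automatic. Note also that membership in $\mathbb{D}_{\mathcal{R}}\otimes U(\mathfrak{k})^{\otimes 2}$ is clear, since all coefficients lie in $\mathcal{R}$ (the factor $\frac{\xi_\alpha+\xi_{-\alpha}}{\xi_\alpha-\xi_{-\alpha}}=\frac{1+\xi_{-2\alpha}}{1-\xi_{-2\alpha}}\cdot\frac{\xi_\alpha}{\xi_\alpha}$ is a unit times an element of $\mathcal{R}$, and similarly for $(\xi_\alpha-\xi_{-\alpha})^{-2}$, after extracting appropriate characters $\xi_{\pm\alpha}$).
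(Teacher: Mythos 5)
Your proposal is correct and is essentially the paper's own argument: the corollary is obtained precisely by applying $\widehat{\Pi}=(\widetilde{r}^A_\ast\otimes\textup{id}_{U(\mathfrak{k})}\otimes S)\Pi$ termwise to the already-established formula \eqref{piomega}, with $S(y_\alpha)=-y_\alpha$ cancelling the explicit minus sign in the mixed term, and the product over $\epsilon\in\{\pm 1\}$ expanding (using $\xi_\alpha\xi_{-\alpha}=1$ and the commutativity of $y_\alpha\otimes 1$ with $1\otimes y_\alpha$) to the three-term sum you obtained. Your sign bookkeeping and the membership of the coefficients in $\mathcal{R}$ are both fine, so nothing is missing.
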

%%%%%%%%%%%%%%%%%%%%%%%%%%%%%%%%%%%%%%%%%

%%%%%%%%%%%%%%%%%%%%%%%%%%%%%%
\begin{rema}
Note that the infinitesimal Cartan factorisations \eqref{KAKomega} of $\Omega$ are parametrised by elements $a\in A_{\textup{reg}}$. In the context of boundary Knizhnik-Zamolodchikov equations (see Section \ref{SectionbKZB}) these will provide the dynamical parameters.
\end{rema}
%%%%%%%%%%%%%%%%%%%%%%%%%%%%%%

There are various ways to factorise $\Omega$, of which \eqref{Omega} and the infinitesimal 
Cartan de\-com\-po\-sition \eqref{KAKomega} are two natural ones. Another factorisation is 
\begin{equation}\label{Aomega}
\Omega=\sum_{j=1}^\rr x_j^2+\frac{1}{2}\sum_{\alpha\in R}\left(\frac{1+a^{-2\alpha}}{1-a^{-2\alpha}}\right)t_\alpha+2\sum_{\alpha\in R}\frac{e_{-\alpha}e_\alpha}{1-a^{-2\alpha}}
\end{equation}
for $a\in A_{\textup{reg}}$, which is a dynamical version of \eqref{Omega}. This formula can be easily proved by moving in \eqref{Aomega} positive root vectors $e_\alpha$ ($\alpha\in R^+$) to the left and using $[e_\alpha,e_{-\alpha}]=t_\alpha$, which causes
the "dynamical" dependence to drop out and reduces \eqref{Aomega} to the second formula of
\eqref{Omega}. The decomposition \eqref{Aomega} is the natural factorisation of $\Omega$ in the context of Etingof's and Schiffmann's \cite{ES}
generalised weighted trace functions and associated asymptotic KZB equations, see \cite{St2}.
%%%%%%%%%%%%%%%%%%%%%%%%%%%%%%%%%%%%%%%%%%%%
\subsection{$\chi$-invariant vectors}\label{chisection}
%%%%%%%%%%%%%%%%%%%%%%%%%%%%%%%%%%%%%%%%%%%%%

Let $V$ be a $\mathfrak{g}_0$-module and fix $\chi\in\textup{ch}(\mathfrak{k}_0)$. We say that a vector $v\in V$ is $\chi$-invariant if $xv=\chi(x)v$ for all $x\in\mathfrak{k}_0$. We write $V^\chi$ for the subspace of $\chi$-invariant vectors in $V$,
\[
V^\chi=\{v\in V \,\, | \,\, e_\alpha v-e_{-\alpha}v=\chi(y_\alpha)v\quad \forall\, \alpha\in R^+\}.
\]
In case of the trivial one-dimensional representation $\chi_0\equiv 0$, 
we write $V^{\chi_0}=V^{\mathfrak{k}_0}$, which is the space of $\mathfrak{k}_0$-fixed vectors in $V$.
{}From the computation of the radial component of the Casimir $\Omega$ in the previous subsection, we obtain the following corollary.
%%%%%%%%%%%%%%%%%%%%%%%%%%%%%%%%%%%%%%%%%%%%
\begin{cor}\label{corO}
Let $V$ be a $\mathfrak{g}_0$-module such that $\Omega|_V=c\,\textup{id}_V$ for some 
$c\in\mathbb{R}$. Fix $\chi\in\textup{ch}(\mathfrak{k}_0)$ and $v\in V^\chi$.
Then 
\[
\Bigl(\sum_{j=1}^\rr x_j^2+\frac{1}{2}\sum_{\alpha\in R}\Bigl(\frac{1+a^{-2\alpha}}{1-a^{-2\alpha}}\Bigr)t_\alpha
+\sum_{\alpha\in R}\frac{1}{(a^{-\alpha}-a^\alpha)^2}
\prod_{\epsilon\in\{\pm 1\}}(\textup{Ad}_{a^{-1}}(y_\alpha)-a^{-\epsilon\alpha}\chi(y_\alpha))
\Bigr)v=cv
\]
for all $a\in A_{\textup{reg}}$.
\end{cor}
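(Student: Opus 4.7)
The natural starting point is the Cartan factorisation \eqref{KAKomega} of the Casimir element $\Omega$, which is valid as an identity in $U(\mathfrak{g})$ for every $a\in A_{\textup{reg}}$. The idea is to apply both sides of \eqref{KAKomega} to the $\chi$-invariant vector $v$, and then to use the defining property $y_\alpha v=\chi(y_\alpha)v$ ($\alpha\in R$) to collapse the right-most $U(\mathfrak{k})$-factors into scalars.

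Concretely, I would proceed in three steps. First, note that the rational prefactor in the second term of \eqref{KAKomega} rewrites as
\[
\frac{a^\alpha+a^{-\alpha}}{a^\alpha-a^{-\alpha}}=\frac{1+a^{-2\alpha}}{1-a^{-2\alpha}},
\]
which matches the second term in the target identity exactly; the first term $\sum_j x_j^2$ also matches verbatim. Second, for the $\alpha$-summand in the third term of \eqref{KAKomega}, use that $\textup{Ad}_{a^{-1}}$ is an algebra automorphism (so $\textup{Ad}_{a^{-1}}(y_\alpha^2)=\textup{Ad}_{a^{-1}}(y_\alpha)^2$) and that $y_\alpha v=\chi(y_\alpha)v$ implies $y_\alpha^2 v=\chi(y_\alpha)^2 v$ and $\textup{Ad}_{a^{-1}}(y_\alpha)y_\alpha v=\chi(y_\alpha)\textup{Ad}_{a^{-1}}(y_\alpha)v$ (the scalar $\chi(y_\alpha)$ commutes with $\textup{Ad}_{a^{-1}}(y_\alpha)$). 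Hence the numerator of the $\alpha$-summand, applied to $v$, becomes
\[
\textup{Ad}_{a^{-1}}(y_\alpha)^2v-(a^\alpha+a^{-\alpha})\chi(y_\alpha)\textup{Ad}_{a^{-1}}(y_\alpha)v+\chi(y_\alpha)^2v,
\]
which factors as $\prod_{\epsilon\in\{\pm 1\}}\bigl(\textup{Ad}_{a^{-1}}(y_\alpha)-a^{-\epsilon\alpha}\chi(y_\alpha)\bigr)v$ (the two factors commute because they differ by a scalar). Combined with $(a^\alpha-a^{-\alpha})^2=(a^{-\alpha}-a^\alpha)^2$ in the denominator, this matches the third term of the target expression.

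Third, substitute into $\Omega v=cv$: the left-hand side of the target identity applied to $v$ equals $\Omega v=cv$, completing the proof. I do not anticipate any genuine obstacle here: the only thing to watch is the operator ordering in the third term of \eqref{KAKomega}, where $y_\alpha$ sits on the far right and $\textup{Ad}_{a^{-1}}(y_\alpha)$ on the far left, so that $\chi$-invariance can indeed be used to turn each rightmost $y_\alpha$ into the scalar $\chi(y_\alpha)$ acting on $v$.
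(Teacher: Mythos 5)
Your proposal is correct and is exactly the argument the paper intends: Corollary \ref{corO} is obtained by applying the Cartan factorisation \eqref{KAKomega} of $\Omega$ to $v$, using $\textup{Ad}_{a^{-1}}(y_\alpha^2)=\textup{Ad}_{a^{-1}}(y_\alpha)^2$ and $y_\alpha v=\chi(y_\alpha)v$ to turn the rightmost $\mathfrak{k}_0$-factors into scalars, and recognising the resulting quadratic in $\textup{Ad}_{a^{-1}}(y_\alpha)$ as the stated product over $\epsilon\in\{\pm 1\}$. Your handling of the operator ordering and of the rewriting $\frac{a^\alpha+a^{-\alpha}}{a^\alpha-a^{-\alpha}}=\frac{1+a^{-2\alpha}}{1-a^{-2\alpha}}$ matches the paper's (implicit) derivation.
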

%%%%%%%%%%%%%%%%%%%%%%%%%%%%%%%%%%%%%%%%%%%%%
If $V$ is $\mathfrak{h}_0$-diagonalisable then Corollary \ref{corO} reduces to explicit recursion relations for the weight components of $v\in V^\chi$. 

%%%%%%%%%%%%%%%%%%%%%%%%%%%%%%%
\begin{rema}
In the setup of the corollary, a vector $u\in V$ is a Whittaker vector of weight $a\in A_{\textup{reg}}$ if 
$e_\alpha u=a^\alpha u$ for all $\alpha\in R^+$. Recursion relations for the weight components of Whittaker vectors are used in \cite[\S 3.2]{DKT} to derive a path model for Whittaker vectors (\cite[Thm. 3.7]{DKT}), as well as for the associated Whittaker functions (\cite[Thm. 3.9]{DKT}). It would be interesting to see what this approach entails for $\sigma$-spherical functions with $\sigma$ a one-dimensional representation of $K\times K$, when the role of the Whittaker vectors is taken over by $\chi$-invariant vectors. 
\end{rema}
%%%%%%%%%%%%%%%%%%%%%%%%%%%%%%%%%%

%%%%%%%%%%%%%%%%%%%%%%%%%%%%%%%
\subsection{Quantum $\sigma$-spin hyperbolic Calogero-Moser systems}\label{vvCM}
%%%%%%%%%%%%%%%%%%%%%%%%%%%%%%%%

We gauge the commuting differential operators $\widehat{\Pi}(z)$ ($z\in Z(\mathfrak{g})$) to
give them the interpretation as quantum Hamiltonians for spin generalisations (in the physical sense) of the quantum hyperbolic Calogero-Moser system. This extends results from \cite{Ga,OP,HOI} and \cite[Part I, Chpt. 5]{HS}, which deal with the "spinless" cases.

Write
\[
A_+:=\{a\in A\,\,\, | \,\,\, a^\alpha>1\quad \forall\,\alpha\in R^+\}
\]
for the positive chamber of $A_{\textup{reg}}$.
Note that $\mathcal{R}$ is contained in the ring $C^\omega(A_{+})$
of analytic functions on $A_+$.

Let $\delta$ be the analytic function on $A_+$ given by
\begin{equation}\label{deltagauge}
\delta(a):=a^{\rho}\prod_{\alpha\in R^+}(1-a^{-2\alpha})^{\frac{1}{2}}.
\end{equation}
Conjugation by $\delta$ defines an
outer automorphism of $\mathbb{D}_{\mathcal{R}}$. For $z\in U(\mathfrak{g})$ we denote by
\begin{equation}\label{Hz}
H_z:=\delta\circ\widehat{\Pi}(z)\circ\delta^{-1}\in\mathbb{D}_{\mathcal{R}}\otimes U(\mathfrak{k})^{\otimes 2}
\end{equation}
the corresponding gauged differential operator. We furthermore write
\begin{equation}\label{fatH}
\mathbf{H}:=-\frac{1}{2}\delta\circ\bigl(\widehat{\Pi}(\Omega)+\|\rho\|^2\bigr)\circ\delta^{-1}.
\end{equation}

%%%%%%%%%%%%%%%%%%%%%%%%%%%%%%
 \begin{prop}\label{qH}
 The assignment $z\mapsto H_z$ defines an algebra map
 $Z(\mathfrak{g})\rightarrow \mathbb{D}_{\mathcal{R}}\otimes U(\mathfrak{k}\oplus\mathfrak{k})^{M}$.  Furthermore, 
\begin{equation}\label{qHam}
\mathbf{H}=-\frac{1}{2}\Delta-\frac{1}{2}\sum_{\alpha\in R}\frac{1}{(\xi_\alpha-\xi_{-\alpha})^2}
\Bigl(\frac{\|\alpha\|^2}{2}+\prod_{\epsilon\in\{\pm 1\}}(y_\alpha\otimes 1+
\xi_{\epsilon\alpha}(1\otimes y_\alpha))\Bigr).
\end{equation}
\end{prop}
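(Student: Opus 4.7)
Plan. For part (a), the assertion follows directly from the earlier material: Theorem \ref{thmRAD}(b) gives that $\widehat{\Pi}|_{Z(\mathfrak{g})}$ is an algebra homomorphism into $\mathbb{D}_{\mathcal{R}} \otimes U(\mathfrak{k})^{\otimes 2}$, and Remark \ref{Minvariance} (applied to $z\in Z(\mathfrak{g})\subseteq U(\mathfrak{g})^M$) refines the target to $\mathbb{D}_{\mathcal{R}} \otimes U(\mathfrak{k}\oplus\mathfrak{k})^M$. Since $\delta\in C^\omega(A_+)^\times$ is a nowhere-vanishing scalar function, conjugation $D \mapsto \delta D \delta^{-1}$ is an algebra automorphism of $\mathbb{D}(A_+)\otimes U(\mathfrak{k}\oplus\mathfrak{k})^M$ that fixes the $U(\mathfrak{k}\oplus\mathfrak{k})^M$ tensor factor pointwise, so $z\mapsto H_z$ inherits both the algebra-homomorphism property and the $M$-invariance.

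For part (b), decompose $\widehat{\Pi}(\Omega) = L_0 + L_1$ via Corollary \ref{corR1} into the scalar second-order part $L_0 = \Delta + \tfrac{1}{2}\sum_{\alpha\in R}\frac{\xi_\alpha + \xi_{-\alpha}}{\xi_\alpha - \xi_{-\alpha}}\partial_{t_\alpha}$ and the spin potential $L_1 = \sum_{\alpha\in R}(\xi_\alpha - \xi_{-\alpha})^{-2}\prod_{\epsilon\in\{\pm 1\}}(y_\alpha\otimes 1 + \xi_{\epsilon\alpha}(1\otimes y_\alpha))$. Since $L_1$ is a multiplication operator in the scalar direction, it commutes with $\delta^{\pm 1}$ and provides the spin contribution of \eqref{qHam} after the overall $-\tfrac{1}{2}$ rescaling. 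For $L_0$: using $1 - \xi_{-2\alpha} = \xi_{-\alpha}(\xi_\alpha - \xi_{-\alpha})$ together with $\sum_{\alpha\in R^+}\alpha = 2\rho$ simplifies \eqref{deltagauge} to $\delta = \prod_{\alpha\in R^+}(\xi_\alpha - \xi_{-\alpha})^{1/2}$, so $u := \log\delta$ satisfies $\partial_h u = \tfrac{1}{2}\sum_{\alpha\in R^+}\alpha(h)\coth(\alpha)$, which identifies $L_0 = \Delta + 2\nabla u \cdot \nabla$ (in an orthonormal basis $\{x_j\}$ of $\mathfrak{h}_0$). The standard scalar gauge identity then yields $\delta L_0 \delta^{-1} = \Delta - \|\nabla u\|^2 - \Delta u$, and a short computation based on $\partial_{x_j}\coth(\alpha) = -4\alpha(x_j)/(\xi_\alpha - \xi_{-\alpha})^2$ gives $\Delta u = -2\sum_{\alpha\in R^+}\|\alpha\|^2/(\xi_\alpha - \xi_{-\alpha})^2$.

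The main obstacle is the Macdonald-type root-system identity
\[
\|\nabla u\|^2 = \|\rho\|^2 + \sum_{\alpha\in R^+}\frac{\|\alpha\|^2}{(\xi_\alpha - \xi_{-\alpha})^2},
\]
which after expanding $\|\nabla u\|^2 = \tfrac{1}{4}\sum_{\alpha,\beta\in R^+}(\alpha,\beta)\coth(\alpha)\coth(\beta)$ and applying $\coth^2(\alpha) = 1 + 4/(\xi_\alpha - \xi_{-\alpha})^2$ reduces to the rank-independent constant-value statement $\tfrac{1}{4}\sum_{\alpha \ne \beta \in R^+}(\alpha,\beta)\coth(\alpha)\coth(\beta) = \|\rho\|^2 - \tfrac{1}{4}\sum_{\alpha\in R^+}\|\alpha\|^2$. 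This classical identity follows combinatorially from the hyperbolic addition law $\coth(\alpha+\beta)(\coth(\alpha) + \coth(\beta)) = 1 + \coth(\alpha)\coth(\beta)$, which allows iterative telescoping of the $\coth$-products along chains of positive roots (directly checkable in rank $\le 2$ and standard in the Heckman--Opdam setup); both sides have the same asymptotic limit $\|\rho\|^2$ as $a\to\infty$ in $A_+$, which provides a useful sanity check. Assembling $\delta L_0\delta^{-1} = \Delta - \|\nabla u\|^2 - \Delta u$ with the identity and with $\Delta u$ yields $\delta L_0 \delta^{-1} + \|\rho\|^2 = \Delta + \sum_{\alpha\in R}\tfrac{\|\alpha\|^2/2}{(\xi_\alpha - \xi_{-\alpha})^2}$, which upon the $-\tfrac{1}{2}$ rescaling and combination with the unchanged $L_1$-term produces the formula \eqref{qHam} for $\mathbf{H}$.
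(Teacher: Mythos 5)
Your proof is correct and follows the same overall route as the paper: part (a) from Theorem \ref{thmRAD} (with Remark \ref{Minvariance} supplying the $M$-invariance, and conjugation by the nowhere-vanishing scalar $\delta$ preserving everything), and part (b) by splitting $\widehat{\Pi}(\Omega)$ as in Corollary \ref{corR1} into the scalar part $L_0$ and the spin multiplication part $L_1$ and gauging only $L_0$. The one place you diverge is that the paper simply quotes the gauge identity $\delta\circ L_0\circ\delta^{-1}=\Delta-\|\rho\|^2+\sum_{\alpha\in R}\tfrac{\|\alpha\|^2/2}{(\xi_\alpha-\xi_{-\alpha})^2}$ as a well-known fact from \cite[Part I, Thm. 2.1.1]{HS}, whereas you derive it: your simplification $\delta=\prod_{\alpha\in R^+}(\xi_\alpha-\xi_{-\alpha})^{1/2}$, the identification $L_0=\Delta+2\nabla u\cdot\nabla$ with $u=\log\delta$, the computation of $\Delta u$, and the reduction to the constancy of $\tfrac14\sum_{\alpha\neq\beta\in R^+}(\alpha,\beta)\coth(\alpha)\coth(\beta)$ are all correct, and that constancy is exactly the classical identity underlying the cited result. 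The only soft spot is your justification of that constancy by ``telescoping along chains of positive roots,'' which is informal for general reduced root systems; a cleaner argument is that the sum is $W$-invariant, its apparent poles along each wall $\alpha=0$ cancel because $s_\alpha$ permutes $R^+\setminus\{\alpha\}$ and flips the sign of $(\alpha,\beta)$, and its limit in the chamber is $\|\rho\|^2-\tfrac14\sum_{\alpha\in R^+}\|\alpha\|^2$ --- or one can simply cite \cite{HS} at that point, as the paper does. Net effect: your version is more self-contained at the cost of needing this one classical root-system identity made rigorous.
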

%%%%%%%%%%%%%%%%%%%%%%%%%%%%%%%%%%%%%%%%%%%%%
\begin{proof}
The first statement is immediate from Theorem \ref{thmRAD}.
The proof of \eqref{qHam} follows from the well known fact that
\[
\delta\circ\Bigl(\Delta+\frac{1}{2}\sum_{\alpha\in R}
\Bigl(\frac{\xi_\alpha+\xi_{-\alpha}}{\xi_\alpha-\xi_{-\alpha}}\Bigr)\partial_{t_\alpha}\Bigr)\circ\delta^{-1}=
\Delta-\|\rho\|^2+\sum_{\alpha\in R}\frac{\|\alpha\|^2}{2}\frac{1}{(\xi_\alpha-\xi_{-\alpha})^2},
\]
see, e.g.,  the proof of \cite[Part I, Thm. 2.1.1]{HS}.
\end{proof}
%%%%%%%%%%%%%%%%%%%%%%%%%%%%%%%%%%%%%%%%%%%%%%%
For $\sigma: U(\mathfrak{k})^{\otimes 2}\rightarrow\textup{End}(V_\sigma)$ a finite dimensional representation we write
\[
H_z^\sigma:=\bigl(\textup{id}_{\mathbb{D}_{\mathcal{R}}}\otimes\sigma\bigr)H_z
\in\mathbb{D}_{\mathcal{R}}\otimes \textup{End}(V_\sigma)\qquad (z\in U(\mathfrak{g})).
\]
Then $H_z^\sigma$ ($z\in Z(\mathfrak{g})$) are commuting $\textup{End}(V_\sigma)$-valued
differential operators on $A$ which, by Proposition \ref{qH}, serve as quantum Hamiltonians for the $\sigma$-spin generalisation of the quantum hyperbolic Calogero-Moser system with Schr{\"o}dinger operator 
\[
\mathbf{H}^\sigma:=\bigl(\textup{id}_{\mathbb{D}_{\mathcal{R}}}\otimes\sigma\bigr)(\mathbf{H}).
\]
We now list a couple of interesting special cases of the quantum $\sigma$-spin hyperbolic Calogero-Moser systems.\\
 
 \noindent
{\bf The spinless case:}  Take $\chi^\ell,\chi^r\in\textup{ch}(\mathfrak{k}_0)$. Their extension to
 complex linear algebra morphisms $U(\mathfrak{k})\rightarrow\mathbb{C}$ are again denoted by $\chi^\ell$ and 
 $\chi^r$. 
  Define $\chi^{\ell,r}_\alpha\in C^\omega(A)$ ($\alpha\in R$) by
 \begin{equation}\label{chilr}
 \chi^{\ell,r}_\alpha(a):=\chi^\ell(y_\alpha)+a^\alpha\chi^r(y_\alpha),\qquad a\in A_+.
 \end{equation}
Note that $\chi_{-\alpha}^{\ell,r}(a)=-(\chi^\ell(y_\alpha)+a^{-\alpha}\chi^r(y_\alpha))$ for $\alpha\in
 R$. The Schr{\"o}dinger operator $\mathbf{H}^{\chi^\ell\otimes\chi^r}$ then becomes
 \[
 \mathbf{H}^{\chi^\ell\otimes \chi^r}=
 -\frac{1}{2}\Delta-\frac{1}{2}\sum_{\alpha\in R}\frac{1}{(\xi_\alpha-\xi_{-\alpha})^2}
 \Bigl(\frac{\|\alpha\|^2}{2}-\chi_\alpha^{\ell,r}\chi_{-\alpha}^{\ell,r}\Bigr).
 \] 
 The special case
  \[
 \mathbf{H}^{\chi_0\otimes\chi_0}=-\frac{1}{2}\Delta-\frac{1}{2}\sum_{\alpha\in R}\frac{\|\alpha\|^2}{2}
 \frac{1}{(\xi_\alpha-\xi_{-\alpha})^2}
 \]
 with $\chi_0\in\textup{ch}(\mathfrak{k}_0)$ the trivial representation
 is the quantum Hamiltonian of the quantum hyperbolic Calogero-Moser system associated to 
 the Riemannian symmetric space $G/K$. If $\mathfrak{g}$ is simple and of type $C_\rr$ ($\rr\geq 1$) then  $\chi^\ell=c_\ell\chi_{\mathfrak{sp}}$ and
 $\chi^r=c_r\chi_{\mathfrak{sp}}$ for some $c_\ell, c_r\in\mathbb{C}$, 
  see Lemma \ref{SSonedimlem}. Using the explicit description of $\chi_{\mathfrak{sp}}$ from Lemma
 \ref{SSonedimlem}, we then obtain
 \begin{equation*}
 \mathbf{H}^{\chi^\ell\otimes\chi^r}=
-\frac{1}{2}\Delta-\frac{1}{2}\sum_{\alpha\in R_s^+}\frac{\|\alpha\|^2}{(\xi_\alpha-\xi_{-\alpha})^2}\\
+\frac{1}{4}\sum_{\beta\in R_\ell^+}\frac{\frac{1}{2}\|\beta\|^2+(c_\ell-c_r)^2}{(\xi_{\beta/2}+
\xi_{-\beta/2})^2}-\frac{1}{4}\sum_{\beta\in R_\ell^+}\frac{\frac{1}{2}\|\beta\|^2+(c_\ell+c_r)^2}
{(\xi_{\beta/2}-\xi_{-\beta/2})^2},
\end{equation*}
hence we recover a two-parameter subfamily of the $\textup{BC}_\rr$ quantum hyperbolic
 Calogero-Moser system. This extends 
\cite[Part I, Thm. 5.1.7]{HS}, which deals with the special case that $\chi^\ell=-\chi^r$ with 
$\chi^\ell\in\textup{ch}(\mathfrak{k}_0)$ integrating to a multiplicative character of $K$.\\
 
\noindent
{\bf The one-sided spin case:} Let $\chi\in\textup{ch}(\mathfrak{k}_0)$ and $\sigma_\ell: U(\mathfrak{k})\rightarrow
\textup{End}(V_\ell)$ a finite dimensional representation. Then 
\[
\mathbf{H}^{\sigma_\ell\otimes\chi}=-\frac{1}{2}\Delta-\frac{1}{2}\sum_{\alpha\in R}\frac{1}{(\xi_\alpha-\xi_{-\alpha})^2}
\Bigl(\frac{\|\alpha\|^2}{2}+\prod_{\epsilon\in\{\pm 1\}}(\sigma_\ell(y_\alpha)+
\xi_{\epsilon\alpha}\chi(y_\alpha))\Bigr).
\]
In the special case that $\chi=\chi_0\in\textup{ch}(\mathfrak{k}_0)$ is the trivial representation the Schr{\"o}dinger operator reduces to
\[
\mathbf{H}^{\sigma_\ell\otimes\chi_0}=-\frac{1}{2}\Delta-\frac{1}{2}\sum_{\alpha\in R}\frac{1}{(\xi_\alpha-\xi_{-\alpha})^2}
\Bigl(\frac{\|\alpha\|^2}{2}+\sigma_\ell(y_\alpha^2)\Bigr).
\]
Finally, if $\mathfrak{g}$ is simple and of type $C_\rr$ ($\rr\geq 1$) and $\chi=c\chi_{\mathfrak{sp}}$ with
$c\in\mathbb{C}$, then
\begin{equation*}
\begin{split}
\mathbf{H}^{\sigma_\ell\otimes\chi}=-\frac{1}{2}\Delta&-\frac{1}{2}\sum_{\alpha\in R^+_s}
\frac{\|\alpha\|^2+2\sigma_\ell(y_\alpha^2)}{(\xi_\alpha-\xi_{-\alpha})^2}\\
&+\frac{1}{4}\sum_{\beta\in R_\ell^+}\frac{\frac{1}{2}\|\beta\|^2+(\sigma_\ell(y_\beta)-c)^2}
{(\xi_{\beta/2}+\xi_{-\beta/2})^2}-\frac{1}{4}
\sum_{\beta\in R_\ell^+}\frac{\frac{1}{2}\|\beta\|^2+(\sigma_\ell(y_\beta)+c)^2}{(\xi_{\beta/2}-\xi_{-\beta/2})^2}.
\end{split}
\end{equation*}

%%%%%%%%%%%%%%%%%%%%%%%%%%%%%%%%%%%%%%%%%%%%%%%
\begin{rema}
Feh{\'e}r and Pusztai \cite{FP1,FP2} obtained the classical analog of the one-sided quantum spin  
Calogero-Moser system
by Hamiltonian reduction. This is extended to double-sided spin 
Calogero-Moser systems in \cite{Re}.
\end{rema}
%%%%%%%%%%%%%%%%%%%%%%%%%%%%%%%%

\noindent
{\bf The matrix case:} The following special case is relevant for the theory of matrix-valued
spherical functions \cite{GPT}, \cite[\S 7]{HvP}. 
%%%%%%%%%%%%%%%%%%%%%%%%%%%%%%%%
Let $\tau: \mathfrak{k}\rightarrow \mathfrak{gl}(V_\tau)$ be a finite dimensional representation.
Consider $\textup{End}(V_\tau)$ as left $U(\mathfrak{k})^{\otimes 2}$-module by
\begin{equation}\label{Endaction}
\sigma_\tau(x\otimes y)T:=\tau(x)T\tau(S(y))
\end{equation}
for $x,y\in U(\mathfrak{k})$ and $T\in\textup{End}(V_\tau)$. Note that $\textup{End}(V_\tau)\simeq
V_\tau\otimes V_\tau^\ast$ as $U(\mathfrak{k})^{\otimes 2}$-modules. 
The associated Schr{\"o}dinger operator $\mathbf{H}^{\sigma_\tau}$ acts on 
$T\in C^\infty(A_{\textup{reg}};\textup{End}(V_\tau))$ 
by 
\begin{equation*}
\begin{split}
\bigl(\mathbf{H}^{\sigma_\tau}T\bigr)(a)=&-\frac{1}{2}(\Delta T)(a)\\
&-\frac{1}{2}\sum_{\alpha\in R}\frac{\frac{\|\alpha\|^2}{2}T(a)+\tau(y_\alpha^2)T(a)
-(a^\alpha+a^{-\alpha})\tau(y_\alpha)T(a)\tau(y_\alpha)+T(a)\tau(y_\alpha^2)}
{(a^\alpha-a^{-\alpha})^2}
\end{split}
\end{equation*}
for $a\in A_{\textup{reg}}$.
%%%%%%%%%%%%%%%%%%%%%%%%%%%%%%%%

%%%%%%%%%%%%%%%%%%%%%%%%%%%%%%%%%%%%%%%%%%%
\subsection{$\sigma$-Harish-Chandra series}\label{S5}
%%%%%%%%%%%%%%%%%%%%%%%%%%%%%%%%%%%%%%%%%%%

In this subsection we recall the construction of the Harish-Chandra series following \cite[Chpt. 9]{W}. They were defined by Harish-Chandra to analyse the asymptotic behaviour of matrix coefficients of admissible $G$-representations and of the associated spherical functions
(see, e.g., \cite{BS, CM, HS} and references therein).

Consider the ring $\mathbb{C}[[\xi_{-\alpha_1},\ldots,\xi_{-\alpha_\rr}]]$ of formal power series at infinity in $A_+$. We express elements $f\in\mathbb{C}[[\xi_{-\alpha_1},\ldots,\xi_{-\alpha_\rr}]]$ 
as $f=\sum_{\gamma\in Q_-}c_\gamma\xi_\gamma$
 with $c_\gamma\in\mathbb{C}$ and
 \[
 Q_-:=\bigoplus_{j=1}^\rr\mathbb{Z}_{\leq 0}\,\alpha_j
 \subseteq Q:=\mathbb{Z}R.
 \]
We consider $\mathcal{R}$ as subring of $\mathbb{C}[[\xi_{-\alpha_1},\ldots,\xi_{-\alpha_\rr}]]$
using power series expansion at infinity in $A_+$ (e.g.,
 $(1-\xi_{-2\alpha})^{-1}=\sum_{m=0}^{\infty}\xi_{-2m\alpha}$ for $\alpha\in R^+$). Similarly,
 we view $\xi_{-\rho}\delta$ as element in $\mathbb{C}[[\xi_{-\alpha_1},\ldots,\xi_{-\alpha_\rr}]]$ through its power series expansion at infinity, where $\delta$ is given by \eqref{deltagauge}.

For $B$ a complex associative algebra we write
 $B[[\xi_{-\alpha_1},\ldots,\xi_{-\alpha_\rr}]]\xi_\lambda$ for the
 $\mathbb{C}[[\xi_{-\alpha_1},\ldots,\xi_{-\alpha_\rr}]]$-module of
 formal series
 $g=\sum_{\gamma\in Q_-}d_\gamma\xi_{\lambda+\gamma}$
 with coefficients $d_\gamma\in B$. If $B=U(\mathfrak{k})^{\otimes 2}$ or $B=\textup{End}(V_\sigma)$ for some $\mathfrak{k}\oplus\mathfrak{k}$-module $V_\sigma$ then
$B[[\xi_{-\alpha_1},\ldots,\xi_{-\alpha_\rr}]]\xi_\lambda$ becomes a $\mathbb{D}_{\mathcal{R}}\otimes U(\mathfrak{k})^{\otimes 2}$-module.
  
Set
\begin{equation}\label{HCgeneric}
\mathfrak{h}_{\textup{HC}}^*:=\{\lambda\in\mathfrak{h}^* \,\, | \,\, 
(2(\lambda+\rho)+\gamma,\gamma)\not=0\quad \forall\, \gamma\in Q_-\setminus\{0\}\}.
\end{equation}
The Harish-Chandra series associated to the triple $(\mathfrak{g}_0,\mathfrak{h}_0,\theta_0)$
is the following formal $U(\mathfrak{k})^{\otimes 2}$-valued eigenfunction
of the 
$U(\mathfrak{k})^{\otimes 2}$-valued differential operator $\widehat{\Pi}(\Omega)$.

%%%%%%%%%%%%%%%%%%%%%%%%%%%%%%%%%%%%%%%%
\begin{prop}\label{cordefHC}
Let $\lambda\in
\mathfrak{h}_{\textup{HC}}^*$.
There exists a unique $U(\mathfrak{k})^{\otimes 2}$-valued formal series 
\begin{equation}\label{Psie}
\Phi_\lambda:=\sum_{\gamma\in Q_-}\Gamma_\gamma(\lambda)\xi_{\lambda+\gamma}\in
U(\mathfrak{k})^{\otimes 2}[[\xi_{-\alpha_1},\ldots,\xi_{-\alpha_\rr}]]\xi_\lambda
\end{equation}
with coefficients $\Gamma_\gamma(\lambda)\in U(\mathfrak{k})^{\otimes 2}$
and $\Gamma_0(\lambda)=1$, satisfying 
\begin{equation}\label{evPsie}
\widehat{\Pi}(\Omega)\Phi_\lambda=(\lambda,\lambda+2\rho)\Phi_\lambda.
\end{equation}
\end{prop}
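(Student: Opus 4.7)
The strategy is the standard one for Frobenius-type recursions: substitute the ansatz $\Phi_\lambda=\sum_{\gamma\in Q_-}\Gamma_\gamma(\lambda)\xi_{\lambda+\gamma}$ into the eigenvalue equation \eqref{evPsie}, expand $\widehat{\Pi}(\Omega)$ as a formal power series in the ring $\mathbb{C}[[\xi_{-\alpha_1},\ldots,\xi_{-\alpha_r}]]$, and read off a triangular recursion on the coefficients $\Gamma_\gamma(\lambda)\in U(\mathfrak{k})^{\otimes 2}$.

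First, using Corollary \ref{corR1} and combining the $\alpha$ and $-\alpha$ contributions in the first-order term, I rewrite
\[
\widehat{\Pi}(\Omega)=\Delta+\sum_{\alpha\in R^+}\frac{1+\xi_{-2\alpha}}{1-\xi_{-2\alpha}}\partial_{t_\alpha}+\sum_{\alpha\in R}\frac{1}{(\xi_\alpha-\xi_{-\alpha})^2}\prod_{\epsilon\in\{\pm 1\}}\bigl(y_\alpha\otimes 1+\xi_{\epsilon\alpha}(1\otimes y_\alpha)\bigr),
\]
and expand every $(1-\xi_{-2\alpha})^{-k}$ as a geometric series at infinity in $A_+$. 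Since $(\xi_\alpha-\xi_{-\alpha})^{-2}=\xi_{-2\alpha}(1-\xi_{-2\alpha})^{-2}$ for $\alpha\in R^+$, the third sum contributes only terms of strictly positive order in $\xi_{-\alpha_1},\ldots,\xi_{-\alpha_r}$; and $(1+\xi_{-2\alpha})/(1-\xi_{-2\alpha})=1+2\sum_{n\geq 1}\xi_{-2n\alpha}$ has constant term $1$. Therefore $\widehat{\Pi}(\Omega)$ expands in $U(\mathfrak{k})^{\otimes 2}\otimes\mathbb{D}(A)^A[[\xi_{-\alpha_1},\ldots,\xi_{-\alpha_r}]]$ with leading (zero-order) part $\Delta+\sum_{\alpha\in R^+}\partial_{t_\alpha}$, which is scalar-valued.

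Applying $\widehat{\Pi}(\Omega)$ to $\xi_{\lambda+\gamma}$ and using $\Delta\xi_\mu=(\mu,\mu)\xi_\mu$ together with $\partial_{t_\alpha}\xi_\mu=(\alpha,\mu)\xi_\mu$, the coefficient of $\xi_{\lambda+\gamma}$ in $\widehat{\Pi}(\Omega)\xi_{\lambda+\gamma}$ is $(\lambda+\gamma,\lambda+\gamma)+(2\rho,\lambda+\gamma)=(\lambda+\gamma,\lambda+\gamma+2\rho)$, while all other contributions produce shifts $\xi_{\lambda+\gamma+\delta}$ with $\delta\in Q_-\setminus\{0\}$. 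Matching the coefficient of $\xi_{\lambda+\gamma}$ on both sides of $\widehat{\Pi}(\Omega)\Phi_\lambda=(\lambda,\lambda+2\rho)\Phi_\lambda$ therefore yields
\[
\bigl((\lambda+\gamma,\lambda+\gamma+2\rho)-(\lambda,\lambda+2\rho)\bigr)\Gamma_\gamma(\lambda)=-\sum_{\substack{\gamma'\in Q_- \\ \gamma'-\gamma\in Q_+\setminus\{0\}}}C_{\gamma,\gamma'}(\lambda)\Gamma_{\gamma'}(\lambda),
\]
with $C_{\gamma,\gamma'}(\lambda)\in U(\mathfrak{k})^{\otimes 2}$ read off from the explicit expansion. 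The scalar on the left simplifies to $(\gamma,2(\lambda+\rho)+\gamma)$.

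For $\gamma=0$ the left-hand side vanishes identically and the right-hand sum is empty, so $\Gamma_0(\lambda)$ is a free parameter, which I fix by $\Gamma_0(\lambda)=1$. For $\gamma\in Q_-\setminus\{0\}$, the defining property $\lambda\in\mathfrak{h}^*_{\textup{HC}}$ of \eqref{HCgeneric} asserts precisely that $(2(\lambda+\rho)+\gamma,\gamma)\neq 0$, so the recursion determines $\Gamma_\gamma(\lambda)$ uniquely from the $\Gamma_{\gamma'}(\lambda)$ with $-\gamma'\in Q_+$ of strictly smaller height than $-\gamma$. Induction on the height $\mathrm{ht}(-\gamma)=\sum_j n_j$ with $-\gamma=\sum_j n_j\alpha_j$ then yields both existence and uniqueness. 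The only thing to watch is the bookkeeping in the recursion step: one must verify that every shift appearing in the expansion of $\widehat{\Pi}(\Omega)$ lies in $Q_-$ and that the zero-shift piece is exactly the scalar operator $\Delta+\sum_{\alpha\in R^+}\partial_{t_\alpha}$, so that the prefactor on the left is indeed $(\gamma,2(\lambda+\rho)+\gamma)$; this is the main (though routine) obstacle and follows directly from the factorisations $(1-\xi_{-2\alpha})^{-k}=\sum_{n\geq 0}\binom{n+k-1}{k-1}\xi_{-2n\alpha}$ and from the extra factor $\xi_{-2\alpha}$ in $(\xi_\alpha-\xi_{-\alpha})^{-2}$.
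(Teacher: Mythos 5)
Your proposal is correct and is essentially the paper's own argument (which it only sketches, following Warner, Chpt.~9): substitute the ansatz into the eigenvalue equation and solve the resulting triangular recursion, with $\lambda\in\mathfrak{h}^*_{\textup{HC}}$, i.e.\ $(2(\lambda+\rho)+\gamma,\gamma)\neq 0$ for $\gamma\in Q_-\setminus\{0\}$, making the scalar prefactor invertible at every step. Your bookkeeping — constant part of $\widehat{\Pi}(\Omega)$ equal to $\Delta+\sum_{\alpha\in R^+}\partial_{t_\alpha}$, all remaining terms shifting exponents by elements of $Q_-\setminus\{0\}$, and induction on the height of $-\gamma$ — correctly supplies the details the paper leaves implicit.
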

%%%%%%%%%%%%%%%%%%%%%%%%%%%%%%%%%%%%

In fact, if $\lambda\in\mathfrak{h}_{\textup{HC}}^*$ then the eigenvalue equation \eqref{evPsie}
for a formal series of the form \eqref{Psie}
gives recursion relations for its coefficients $\Gamma_\lambda(\gamma)$
($\gamma\in Q_-$) which, together with the condition $\Gamma_0(\lambda)=1$, determine the coefficients $\Gamma_\gamma(\lambda)$ uniquely. We call the $\Gamma_\gamma(\lambda)\in U(\mathfrak{k})^{\otimes 2}$ ($\gamma\in Q_-$) the Harish-Chandra coefficients.
%%%%%%%%%%%%%%%%%%%%%%%%%%%%%%%%%

Let $\mathfrak{n}_+$ be the complexified Lie algebra of $N_+$. The sum $U(\mathfrak{h})+\theta(\mathfrak{n}_+)U(\mathfrak{g})$ in $U(\mathfrak{g})$ is an internal direct sum containing $Z(\mathfrak{g})$.
Denote by $\textup{pr}: Z(\mathfrak{g})\rightarrow U(\mathfrak{\mathfrak{h}})$ the restriction to
$Z(\mathfrak{g})$ of
the projection $U(\mathfrak{h})\oplus\theta(\mathfrak{n}_+)U(\mathfrak{g})\rightarrow U(\mathfrak{h})$ on the first direct summand.
Then $\textup{pr}$ is
an algebra homomorphism 
(see, e.g., \cite[\S 1]{CM}). 
The central character at $\lambda\in\mathfrak{h}^*$ is the algebra homomorphism 
\[
\zeta_\lambda: Z(\mathfrak{g})\rightarrow\mathbb{C},\qquad
z\mapsto \lambda(\textup{pr}(z))
\]
with
$\lambda(\textup{pr}(z))$ the evaluation of $\textup{pr}(z)\in U(\mathfrak{h})\simeq S(\mathfrak{h})$ at $\lambda$. By the second expression of the Casimir element $\Omega$ in \eqref{Omega} we have $\zeta_\lambda(\Omega)=(\lambda,\lambda+2\rho)$. Furthermore, by \cite[Prop. 7.4.7]{Di}, $\zeta_{\lambda-\rho}=\zeta_{\mu-\rho}$ for $\lambda,\mu\in\mathfrak{h}^*$ if and only if $\lambda\in W\mu$. 

%%%%%%%%%%%%%%%%%%%%%%%%%%%%%%%%%%%%%%%%%%%
\begin{prop}\label{Casimirs}
Let $\lambda\in\mathfrak{h}_{\textup{HC}}^*$. Then
\[
\widehat{\Pi}(z)\Phi_\lambda=\zeta_\lambda(z)\Phi_\lambda\qquad \forall\, z\in Z(\mathfrak{g})
\]
in $U(\mathfrak{k})^{\otimes 2}[[\xi_{-\alpha_1},\ldots,\xi_{-\alpha_\rr}]]\xi_\lambda$.
\end{prop}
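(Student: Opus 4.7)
The plan is to exhibit $\widehat{\Pi}(z)\Phi_\lambda$ as a formal eigenfunction of $\widehat{\Pi}(\Omega)$ with eigenvalue $(\lambda,\lambda+2\rho)$ and leading coefficient $\zeta_\lambda(z)\cdot 1$, which by uniqueness pins it down as $\zeta_\lambda(z)\Phi_\lambda$. Applying $\widehat{\Pi}(z)$ to \eqref{evPsie} and using that $\widehat{\Pi}$ is an algebra homomorphism on $Z(\mathfrak{g})$ (Theorem \ref{thmRAD}\,{\bf b}), and hence that $\widehat{\Pi}(z)$ and $\widehat{\Pi}(\Omega)$ commute, yields
\[
\widehat{\Pi}(\Omega)\bigl(\widehat{\Pi}(z)\Phi_\lambda\bigr)=(\lambda,\lambda+2\rho)\,\widehat{\Pi}(z)\Phi_\lambda,
\]
and $\widehat{\Pi}(z)\Phi_\lambda$ still lies in $U(\mathfrak{k})^{\otimes 2}[[\xi_{-\alpha_1},\ldots,\xi_{-\alpha_r}]]\xi_\lambda$ once the $\mathcal{R}$-coefficients of $\widehat{\Pi}(z)$ are expanded at infinity in $A_+$.

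Next, I would identify the coefficient of $\xi_\lambda$ in $\widehat{\Pi}(z)\Phi_\lambda$ via Harish-Chandra's projection. Decompose $z=\textup{pr}(z)+x$ with $\textup{pr}(z)\in U(\mathfrak{h})$ and $x\in\theta(\mathfrak{n})U(\mathfrak{g})$. The identity $\Pi(h)=1\otimes h\otimes 1\otimes 1$ for $h\in U(\mathfrak{h})$ gives $\widehat{\Pi}(\textup{pr}(z))=r^A_\ast(\textup{pr}(z))\otimes 1\otimes 1$, whereas from \eqref{elementaryrel} (together with $e_{-\alpha}=e_\alpha-y_\alpha$) one reads off that $\Pi(e_{-\alpha})$ for $\alpha\in R^+$ has all $\mathcal{R}$-coefficients that vanish at infinity in $A_+$; by induction on the length of monomials, this property propagates to $\widehat{\Pi}(x)$ for every $x\in\theta(\mathfrak{n})U(\mathfrak{g})$. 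Consequently the only contribution to the $\xi_\lambda$-coefficient comes from $r^A_\ast(\textup{pr}(z))\xi_\lambda\cdot\Gamma_0(\lambda)=\lambda(\textup{pr}(z))\cdot 1=\zeta_\lambda(z)\cdot 1$.

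The conclusion then follows by uniqueness: the difference $\widehat{\Pi}(z)\Phi_\lambda-\zeta_\lambda(z)\Phi_\lambda$ lies in $U(\mathfrak{k})^{\otimes 2}[[\xi_{-\alpha_1},\ldots,\xi_{-\alpha_r}]]\xi_\lambda$, is a formal eigenfunction of $\widehat{\Pi}(\Omega)$ with eigenvalue $(\lambda,\lambda+2\rho)$, and has vanishing $\xi_\lambda$-coefficient. The recursion produced by this eigenvalue equation, whose diagonal scalar factor $(2(\lambda+\rho)+\gamma,\gamma)$ is nonzero for every $\gamma\in Q_-\setminus\{0\}$ by the hypothesis $\lambda\in\mathfrak{h}_{\textup{HC}}^*$, then inductively forces every coefficient of the difference to vanish, exactly as in the uniqueness argument underlying Proposition \ref{cordefHC}.

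The main obstacle is the inductive step verifying that $\Pi(\theta(\mathfrak{n})U(\mathfrak{g}))$ contributes no constant term at infinity in $A_+$, which amounts to commuting $\mathfrak{n}^-$-factors through the $\textup{Ad}_{a^{-1}}(U(\mathfrak{k}))\cdot U(\mathfrak{h})\cdot U(\mathfrak{k})$ factorisation and tracking the resulting $\mathcal{R}$-coefficients. A logical subtlety to flag is that the $U(\mathfrak{k})^{\otimes 2}$-valued commutativity of $\widehat{\Pi}|_{Z(\mathfrak{g})}$ used in the first step is, per the proof of Theorem \ref{thmRAD}\,{\bf b}, established only in Section \ref{RTHC} via Theorem \ref{mainTHMF}, which itself will invoke the present proposition; this mild circularity can be avoided by running the entire argument specialized to each finite dimensional representation $\sigma$ of $U(\mathfrak{k})^{\otimes 2}$, using the classical commutativity of the $\widehat{\Pi}^\sigma(z)$ from \cite[Thm.~3.3]{CM}, and then lifting back to $U(\mathfrak{k})^{\otimes 2}$ via the separation of points by finite dimensional representations.
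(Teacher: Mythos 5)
Your proof is essentially the paper's: Theorem \ref{thmRAD}\,\textbf{b} makes $\widehat{\Pi}(z)\Phi_\lambda$ a formal eigenfunction of $\widehat{\Pi}(\Omega)$ with eigenvalue $(\lambda,\lambda+2\rho)$, the $\xi_\lambda$-coefficient is identified as $\zeta_\lambda(z)$ through the Harish-Chandra projection $\textup{pr}$ (the paper simply cites \cite[Prop.\ 2.6(ii)]{CM} for exactly the leading-symbol vanishing on $\theta(\mathfrak{n})U(\mathfrak{g})$ that you propose to re-derive), and uniqueness of the asymptotically free $\widehat{\Pi}(\Omega)$-eigenseries for $\lambda\in\mathfrak{h}^*_{\textup{HC}}$ concludes. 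Only your closing caveat needs correction: there is no circularity, since Theorem \ref{mainTHMF} and the commutativity argument of Subsection \ref{S66} rest on Proposition \ref{cordefHC} rather than on the present proposition, and your suggested detour through \cite[Thm.\ 3.3]{CM} would in any case only give commutativity on $V_\sigma^M$-valued functions, which is weaker than the $\textup{End}(V_\sigma)$-valued (let alone $U(\mathfrak{k})^{\otimes 2}$-valued) commutativity that the first step uses.
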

%%%%%%%%%%%%%%%%%%%%%%%%%%%%%%%%%%%%%%%%%%
\begin{proof}
Write $x^\eta:=x_1^{\eta_1}\cdots x_\rr^{\eta_\rr}\in S(\mathfrak{h})$
and $\partial^\eta:=\partial_{x_1}^{\eta_1}\cdots\partial_{x_\rr}^{\eta_\rr}\in\mathbb{D}_{\mathcal{R}}$ for
$\eta\in\mathbb{Z}_{\geq 0}^\rr$.
The leading symbol of 
$D=\sum_{\eta\in\mathbb{Z}_{\geq 0}^\rr}\bigl(\sum_{\gamma\in Q_-}
c_{\eta,\gamma}\xi_\gamma\bigr)\partial^\eta\in\mathbb{D}_{\mathcal{R}}\otimes U(\mathfrak{k})^{\otimes 2}$ 
is defined to be
\[
\mathfrak{s}_\infty(D):=\sum_{\eta\in\mathbb{Z}_{\geq 0}^\rr}c_{\eta,0}x^\eta\in S(\mathfrak{h})
\otimes U(\mathfrak{k})^{\otimes 2}.
\]
Fix $z\in Z(\mathfrak{g})$. Let 
$z_\lambda^\infty\in U(\mathfrak{k})^{\otimes 2}$ be the evaluation of the leading symbol
$\mathfrak{s}_\infty(\widehat{\Pi}(z))$ at $\lambda$. Note that the $\xi_\lambda$-component of the formal power series $\widehat{\Pi}(z)\Phi_\lambda$ 
is $z_\lambda^{\infty}$. Furthermore, $\widehat{\Pi}(z)\Phi_\lambda\in
U(\mathfrak{k})^{\otimes 2}[[\xi_{-\alpha_1},\ldots,\xi_{-\alpha_\rr}]]\xi_\lambda$ is an
eigenfunction of $\widehat{\Pi}(\Omega)$ with eigenvalue $(\lambda,\lambda+2\rho)$ by
Theorem \ref{thmRAD} {\bf b}.

For any $y\in U(\mathfrak{k})^{\otimes 2}$, 
the formal power series 
\[
\Phi_\lambda y:=\sum_{\gamma\in Q_-}(\Gamma_\gamma(\lambda)y)\xi_{\lambda+\gamma}
\]
is the unique eigenfunction of $\widehat{\Pi}(\Omega)$ of the form
$\sum_{\gamma\in Q_-}\widetilde{\Gamma}_\gamma(\lambda)\xi_{\lambda+\gamma}$ ($\widetilde{\Gamma}_\gamma(\lambda)\in U(\mathfrak{k})^{\otimes 2}$)
with eigenvalue $(\lambda,\lambda+2\rho)$ and 
leading coefficient $\widetilde{\Gamma}_0(\lambda)$ equal to $y$ (cf. Proposition \ref{cordefHC}).
It thus follows that
\[
\widehat{\Pi}(z)\Phi_\lambda=\Phi_\lambda z_\lambda^\infty.
\]
By \cite[Prop. 2.6(ii)]{CM} we have 
\[
\mathfrak{s}_\infty\bigl(\widehat{\Pi}(z)\bigr)=\mathfrak{s}_\infty\bigl(\widehat{\Pi}(\textup{pr}(z))\bigr),
\]
hence $z_\lambda^\infty=\lambda\bigl(\textup{pr}(z)\bigr)1_{U(\mathfrak{k})^{\otimes 2}}=
\zeta_\lambda(z)1_{U(\mathfrak{k})^{\otimes 2}}$. This concludes the proof of the proposition.
\end{proof}
%%%%%%%%%%%%%%%%%%%%%%%%%%%%%%%%%%%%%%%%%%
\begin{rema}\label{Minvariance2}
By Remark \ref{Minvariance} and by an argument similar to the proof of Proposition \ref{Casimirs}, it follows that $\Gamma_\gamma(\lambda)\in U(\mathfrak{k}\oplus\mathfrak{k})^M$ for $\lambda\in\mathfrak{h}_{\textup{HC}}^*$ and $\gamma\in Q_-$.
\end{rema}

Fix a finite dimensional representation $\sigma: U(\mathfrak{k})^{\otimes 2}\rightarrow\textup{End}(V_\sigma)$.
For $\lambda\in\mathfrak{h}_{\textup{HC}}^*$ set
\begin{equation}\label{Ftau}
\Phi_\lambda^\sigma:=\sum_{\gamma\in Q_-}\sigma(\Gamma_\gamma(\lambda))\xi_{\lambda+\gamma}\in \textup{End}(V_\sigma)[[\xi_{-\alpha_1},\ldots,\xi_{-\alpha_\rr}]]\xi_\lambda.
\end{equation}
We call $\Phi_\lambda^\sigma$ the $\sigma$-Harish-Chandra series,
and 
\[
\Gamma_\gamma^\sigma(\lambda):=\sigma(\Gamma_\gamma(\lambda))\in
\textup{End}(V_\sigma),\qquad \gamma\in Q_-
\] 
the associated Harish-Chandra coefficients. 
%%%%%%%%%%%%%%%%%%%%
\begin{rema}
Suppose that $V_\sigma$ integrates to a $K\times K$-representation.
Let $\textup{End}_M(V_\sigma)$ be the space of $M$-intertwiners $V_\sigma\rightarrow V_\sigma$ with respect to the diagonal action of $M$ on $V_\sigma$. Then $\Gamma_\gamma^\sigma(\lambda)\in\textup{End}_M(V_\sigma)$ for all $\gamma\in Q_-$ by Remark \ref{Minvariance2}.
\end{rema}
%%%%%%%%%%%%%%%%%%%%%%
Note that $\Phi_\lambda^\sigma$ is the unique formal power series $\sum_{\gamma\in Q_-}\Gamma_\gamma^\sigma(\lambda)\xi_{\lambda+\gamma}$ with $\Gamma_\gamma^\sigma(\lambda)\in\textup{End}(V_\sigma)$ and $\Gamma_\gamma^\sigma(\lambda)=\textup{id}_{V_\sigma}$ 
satisfying $\widehat{\Pi}(\Omega)\Phi_\lambda^\sigma=(\lambda,\lambda+2\rho)\Phi_\lambda^\sigma$.
The $\sigma$-Harish-Chandra series in addition satisfies the eigenvalue equations $\widehat{\Pi}(z)\Phi_\lambda^\sigma=\zeta_\lambda(z)\Phi_\lambda^\sigma$ for all 
$z\in Z(\mathfrak{g})$. 

Endow $\textup{End}(V_\sigma)$ with the norm topology. The recursion relations arising from the 
eigenvalue equation $\widehat{\Pi}(\Omega)\Phi_\lambda^\sigma=\zeta_\lambda(\Omega)\Phi_\lambda^\sigma$
imply growth estimates for the Harish-Chandra coefficients $\Gamma_\gamma^\sigma(\lambda)$. It leads to the following result
(cf. \cite{W} and references therein).
%%%%%%%%%%%%%%%%%%%%%%%%%%%%%%%%%
\begin{prop}\label{holomorphic}
Let $\lambda\in\mathfrak{h}_{\textup{HC}}^*$. Then
\[
\Phi_\lambda^\sigma(a):=\sum_{\gamma\in Q_-}\Gamma_\gamma^\sigma(\lambda)a^{\lambda+\gamma},
\qquad a\in A_+
\]
defines an $\textup{End}(V_\sigma)$-valued analytic function on $A_+$. 
\end{prop}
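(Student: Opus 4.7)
The plan is to prove convergence of the formal series by deriving the recursion relations for the coefficients $\Gamma_\gamma^\sigma(\lambda)$ from the eigenvalue equation $\widehat{\Pi}(\Omega)\Phi_\lambda^\sigma = (\lambda,\lambda+2\rho)\Phi_\lambda^\sigma$ and then bounding $\|\Gamma_\gamma^\sigma(\lambda)\|$ polynomially in the height of $\gamma$, which is enough because the monomials $\xi_{\lambda+\gamma}$ decay exponentially at infinity in $A_+$.

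First I would expand the coefficients of $\widehat{\Pi}(\Omega)$ from Corollary \ref{corR1} as power series at infinity in $A_+$, using $(1-\xi_{-2\alpha})^{-1} = \sum_{m\geq 0} \xi_{-2m\alpha}$ for $\alpha \in R^+$. This writes
\[
\widehat{\Pi}(\Omega) = D_0 + \sum_{\beta \in Q_+\setminus\{0\}} \xi_{-\beta}\, D_\beta,
\]
where $D_0 = \Delta + \partial_{t_\rho}$ and each $D_\beta \in (\mathbb{C}\otimes U(\mathfrak{h}))\oplus U(\mathfrak{k})^{\otimes 2}$ is a first-order polynomial in $\{\partial_{x_s}\}$ with constant coefficient in $U(\mathfrak{k})^{\otimes 2}$, whose norms are uniformly bounded in $\beta$. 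Substituting $\Phi_\lambda^\sigma = \sum_{\gamma \in Q_-}\Gamma_\gamma^\sigma(\lambda)\xi_{\lambda+\gamma}$ and matching the coefficient of $\xi_{\lambda+\gamma}$ yields the recursion
\[
\bigl(2(\lambda+\rho)+\gamma,\gamma\bigr)\Gamma_\gamma^\sigma(\lambda) = -\sum_{\substack{\beta \in Q_+\setminus\{0\}\\ \gamma+\beta \in Q_-}} \sigma\!\bigl(D_\beta[\lambda+\gamma+\beta]\bigr)\,\Gamma_{\gamma+\beta}^\sigma(\lambda),
\]
where $D_\beta[\mu]$ denotes $D_\beta$ with each $\partial_{x_s}$ replaced by $\mu(x_s)$. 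Since $\lambda \in \mathfrak{h}_{\textup{HC}}^*$, the scalar factor on the left is nonzero for every $\gamma \in Q_-\setminus\{0\}$, so the relation determines $\Gamma_\gamma^\sigma(\lambda)$ in terms of $\Gamma_{\gamma'}^\sigma(\lambda)$ with $\gamma' - \gamma \in Q_+\setminus\{0\}$.

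Writing $\gamma = -\sum_{j=1}^r n_j\alpha_j$ with $n_j \in \mathbb{Z}_{\geq 0}$ and $|\gamma| := \sum_j n_j$, the denominator obeys $|(2(\lambda+\rho)+\gamma,\gamma)| \geq c\,|\gamma|^2$ for all $|\gamma|$ larger than some $\lambda$-dependent constant, while $\|\sigma(D_\beta[\lambda+\gamma+\beta])\| \leq C_\sigma(1+|\gamma|)$ uniformly in $\beta$. Induction on $|\gamma|$ then yields a polynomial bound $\|\Gamma_\gamma^\sigma(\lambda)\| \leq C_\lambda(1+|\gamma|)^{N}$ for some $N$ depending only on the rank $r$ and $|R|$. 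For any compact $\mathcal{K} \subset A_+$ there exists $q \in (0,1)$ with $|a^{-\alpha_j}| \leq q$ for every $a \in \mathcal{K}$ and every $j$, so $|a^{\lambda+\gamma}| \leq |a^\lambda|\, q^{|\gamma|}$, and the polynomial growth is dominated by geometric decay, giving absolute and locally uniform convergence on $A_+$.

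The main obstacle is verifying that the induction closes with the claimed polynomial bound, which requires control of two competing effects: the denominator grows like $|\gamma|^2$ while the sum on the right runs over infinitely many $\beta \in Q_+\setminus\{0\}$ with the weight shift $\gamma+\beta$ potentially close to $\gamma$. This is resolved because each $D_\beta$ arises from a geometric series indexed by $\alpha \in R^+$, so the sum can be reorganised as a finite sum over $\alpha \in R^+$ of geometric series in $\xi_{-2\alpha}$, yielding (after summing the induction hypothesis) a controlled $O(|\gamma|)$ total contribution that is absorbed by the quadratic denominator. This quadratic-beats-linear argument is the standard Gangolli--Harish-Chandra estimate, applied here in the $U(\mathfrak{k})^{\otimes 2}$-valued setting using the operator norm on $\textup{End}(V_\sigma)$.
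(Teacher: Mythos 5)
Your route is the same one the paper has in mind: the paper gives no detailed proof, only the remark that the recursion relations coming from $\widehat{\Pi}(\Omega)\Phi_\lambda^\sigma=(\lambda,\lambda+2\rho)\Phi_\lambda^\sigma$ yield growth estimates for the coefficients, with a citation to Warner; what you wrote is a reconstruction of that standard Gangolli--Harish-Chandra argument in the $\textup{End}(V_\sigma)$-valued setting, and its overall structure (recursion with nonvanishing denominators $(2(\lambda+\rho)+\gamma,\gamma)$ for $\lambda\in\mathfrak{h}^*_{\textup{HC}}$, quadratic growth of the denominator, polynomial-type bounds on $\Gamma_\gamma^\sigma(\lambda)$, geometric decay of $\xi_{\lambda+\gamma}$ on compacta of $A_+$) is correct.

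Two quantitative points need repair, though neither changes the outcome. First, the coefficients $D_\beta$ are \emph{not} uniformly bounded in $\beta$: expanding $(\xi_\alpha-\xi_{-\alpha})^{-2}=\sum_{m\geq 1}m\,\xi_{-2m\alpha}$ and $(\xi_\alpha+\xi_{-\alpha})(\xi_\alpha-\xi_{-\alpha})^{-2}=\sum_{m\geq 0}(2m+1)\xi_{-(2m+1)\alpha}$ shows the constant term of $D_\beta$ grows linearly in the height of $\beta$; your working bound $\|\sigma(D_\beta[\lambda+\gamma+\beta])\|\leq C_\sigma(1+|\gamma|)$ survives only because the relevant $\beta$ satisfy $\gamma+\beta\in Q_-$, so their height is at most $|\gamma|$. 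Second, the closing of the induction is not a matter of an ``$O(|\gamma|)$ total contribution absorbed by the quadratic denominator'': with per-term coefficients of size $O(1+|\gamma|)$ and $O(|\gamma|)$ admissible pairs $(\alpha,m)$, the naive count gives an $O(1)$ factor after division by $c|\gamma|^2$, which does not reproduce the same constant $C_\lambda$. The standard fix is to exploit the decay of the induction bound along each geometric series, $\sum_{m}(1+|\gamma|-2m\,\mathrm{ht}\,\alpha)^{N}\lesssim (1+|\gamma|)^{N+1}/(N+1)$, and then choose the exponent $N$ large (depending on the root system and on $\|\sigma(y_\alpha)\|$, not just on $r$ and $|R|$) so that the resulting factor is $\leq 1$ for $|\gamma|$ beyond a $\lambda$-dependent threshold, the finitely many remaining coefficients being absorbed into $C_\lambda$; alternatively one can settle for the weaker bounds $\|\Gamma_\gamma^\sigma(\lambda)\|\leq C_\epsilon e^{\epsilon|\gamma|}$ for every $\epsilon>0$, which close trivially and still give locally uniform convergence on $A_+$.
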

%%%%%%%%%%%%%%%%%%%%%%%%%%%%%%%%
\begin{rema}\label{remaHCspherplus} 
Set $G_{\textup{reg}}:=KA_+K\subset G$, 
which is an open dense subset of $G$. For $\lambda\in\mathfrak{h}_{\textup{HC}}^*$ and $v\in V_\sigma^M$ the function
\[
H_{\lambda}^{v}(k_1ak_2^{-1}):=\sigma(k_1,k_2)\Phi_\lambda^\sigma(a)v\qquad
(a\in A_+,\,\, k_1,k_2\in K)
\]
is a well defined smooth $V_\sigma$-valued function on $G_{\textup{reg}}$ satisfying 
%We have $k_\ell a k_r^{-1}=k_\ell^\prime a^\prime k_r^\prime{}^{-1}$ for $k_\ell, k_\ell^\prime, k_r, k_r^\prime\in K$ and $a,a^\prime\in A_+$ iff $a^\prime=a$ and $k_\ell^{-1}k_\ell^\prime=k_r^{-1}k_r^\prime\in M$.
%If $\sigma: K\times K\rightarrow \textup{GL}(V_\sigma)$ is a finite dimensional $K\times K$-representation,
%then a smooth function $f: G_{\textup{reg}}\rightarrow V_\sigma$ is called a $\sigma$-spherical function on $G_{\textup{reg}}$ if 
\[
H_\lambda^v(k_1gk_2^{-1})=\sigma(k_1,k_2)H_\lambda^v(g)\qquad \forall\, g\in G_{\textup{reg}},\,\, \forall\, k_1,k_2\in K.
\]
%defines a $\sigma$-spherical function on $G_{\textup{reg}}$.
 It in general does not extend to a $\sigma$-spherical function on $G$.
  \end{rema}
%%%%%%%%%%%%%%%%%%%%%%%%%%%%%%%%%%%%%

The Harish-Chandra series immediately provide "asymptotically free" common eigenfunctions for the quantum Hamiltonians $H_z^\sigma$ ($z\in Z(\mathfrak{g})$) of the quantum $\sigma$-spin hyperbolic Calogero-Moser system.

%%%%%%%%%%%%%%%%%%%%%%%%%%%%%%%%%%%%
\begin{thm}\label{normalizedHCseriesthm}
Fix $\lambda\in\mathfrak{h}_{\textup{HC}}^*+\rho$. The $\textup{End}(V_\sigma)$-valued analytic
function 
\begin{equation}\label{normalizedHCseries}
\mathbf{\Phi}_\lambda^\sigma(a):=\delta(a)\Phi_{\lambda-\rho}^\sigma(a),\qquad a\in A_+
\end{equation}
has a series expansion of the form
\[
\mathbf{\Phi}_\lambda^\sigma=\sum_{\gamma\in Q_-}
\mathbf{\Gamma}_\gamma^\sigma(\lambda)\xi_{\lambda+\gamma}
\in\textup{End}(V_\sigma)[[\xi_{-\alpha_1},\ldots,\xi_{-\alpha_\rr}]]\xi_{\lambda}
\]
with $\mathbf{\Gamma}_\gamma^\sigma(\lambda)\in\textup{End}(V_\sigma)$ and $\mathbf{\Gamma}_0^\sigma(\lambda)=\textup{id}_{V_\sigma}$. It satisfies the Schr{\"o}dinger equation
\[
\mathbf{H}^\sigma\bigl(\mathbf{\Phi}_\lambda^\sigma\bigr)=-\frac{(\lambda,\lambda)}{2}
\mathbf{\Phi}_\lambda^\sigma
\]
as well as the eigenvalue equations
\[
H_z^\sigma\mathbf{\Phi}_\lambda^\sigma=\zeta_{\lambda-\rho}(z)\mathbf{\Phi}_\lambda^\sigma\qquad \forall\, z\in
Z(\mathfrak{g})
\]
as $\textup{End}(V_\sigma)$-analytic functions on $A_+$.
\end{thm}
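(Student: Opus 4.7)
The proof naturally splits into a series-expansion claim and the two eigenvalue statements, and for all three parts the strategy is to transfer known facts about $\Phi_{\lambda-\rho}^\sigma$ through the gauge $\delta$.

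For the series expansion I would first note that by \eqref{deltagauge} the function $\xi_{-\rho}\delta = \prod_{\alpha\in R^+}(1-\xi_{-2\alpha})^{1/2}$ expands on $A_+$ as an element of $\mathbb{C}[[\xi_{-\alpha_1},\dots,\xi_{-\alpha_r}]]$ with constant term $1$; write $\xi_{-\rho}\delta=\sum_{\beta\in Q_-}c_\beta\xi_\beta$ with $c_0=1$. Since $\lambda-\rho\in\mathfrak{h}_{\textup{HC}}^*$, Proposition \ref{cordefHC} (combined with Proposition \ref{holomorphic}) gives $\Phi_{\lambda-\rho}^\sigma=\sum_{\gamma\in Q_-}\Gamma_\gamma^\sigma(\lambda-\rho)\xi_{\lambda-\rho+\gamma}$, convergent on $A_+$, with leading coefficient $\textup{id}_{V_\sigma}$. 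Multiplying the two expansions and collecting in powers of $\xi_\lambda\xi_\gamma$ yields
\[
\mathbf{\Phi}_\lambda^\sigma=\sum_{\gamma\in Q_-}\mathbf{\Gamma}_\gamma^\sigma(\lambda)\xi_{\lambda+\gamma},\qquad \mathbf{\Gamma}_\gamma^\sigma(\lambda)=\sum_{\substack{\beta,\eta\in Q_-\\ \beta+\eta=\gamma}}c_\beta\,\Gamma_\eta^\sigma(\lambda-\rho),
\]
so in particular $\mathbf{\Gamma}_0^\sigma(\lambda)=c_0\cdot\textup{id}_{V_\sigma}=\textup{id}_{V_\sigma}$.

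For the eigenvalue equations the key observation is the conjugation identity \eqref{Hz}, which gives $H_z^\sigma\circ\delta=\delta\circ\widehat{\Pi}^\sigma(z)$ as operators acting on $\textup{End}(V_\sigma)$-valued analytic functions on $A_+$. Applying this to $\Phi_{\lambda-\rho}^\sigma$ and using Proposition \ref{Casimirs} (in its $\sigma$-form, which follows by applying $\sigma$ to the identity proved there) yields
\[
H_z^\sigma(\mathbf{\Phi}_\lambda^\sigma)=\delta\,\widehat{\Pi}^\sigma(z)\Phi_{\lambda-\rho}^\sigma=\zeta_{\lambda-\rho}(z)\,\delta\Phi_{\lambda-\rho}^\sigma=\zeta_{\lambda-\rho}(z)\mathbf{\Phi}_\lambda^\sigma
\]
for every $z\in Z(\mathfrak{g})$, which is the second eigenvalue claim.

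The Schr\"odinger equation then follows by specialising to $z=\Omega$ and using the definition $\mathbf{H}=-\tfrac{1}{2}(H_\Omega+\|\rho\|^2)$ from \eqref{fatH}, together with the Harish-Chandra formula $\zeta_{\lambda-\rho}(\Omega)=(\lambda-\rho,\lambda-\rho+2\rho)=(\lambda,\lambda)-\|\rho\|^2$. This gives
\[
\mathbf{H}^\sigma(\mathbf{\Phi}_\lambda^\sigma)=-\tfrac{1}{2}\bigl(\zeta_{\lambda-\rho}(\Omega)+\|\rho\|^2\bigr)\mathbf{\Phi}_\lambda^\sigma=-\tfrac{(\lambda,\lambda)}{2}\mathbf{\Phi}_\lambda^\sigma,
\]
as required. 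The only real subtlety is to make sure the formal-series and analytic pictures agree under the gauge; since $\lambda-\rho\in\mathfrak{h}_{\textup{HC}}^*$ guarantees convergence of $\Phi_{\lambda-\rho}^\sigma$ on $A_+$ (Proposition \ref{holomorphic}) and $\delta$ is an analytic invertible factor there, no issue arises, and all identities hold both as formal power series in $\xi_{-\alpha_1},\dots,\xi_{-\alpha_r}$ and as $\textup{End}(V_\sigma)$-valued analytic identities on $A_+$.
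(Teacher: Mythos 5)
Your proof is correct and follows essentially the same route as the paper, whose own argument is just the one-line observation that the claim is immediate from Proposition \ref{Casimirs} together with the definitions \eqref{Hz} and \eqref{fatH}; you have simply spelled out the gauge conjugation $H_z^\sigma\circ\delta=\delta\circ\widehat{\Pi}^\sigma(z)$, the central character value $\zeta_{\lambda-\rho}(\Omega)=(\lambda,\lambda)-\|\rho\|^2$, and the expansion of $\xi_{-\rho}\delta$ with constant term $1$. No gaps.
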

%%%%%%%%%%%%%%%%%%%%%%%%%%%%%%%%%%%%
\begin{proof}
This is an immediate consequence of Proposition \ref{Casimirs} and the 
definitions of the differential 
operators $\mathbf{H}^\sigma$ and $H_z^\sigma$ ($z\in Z(\mathfrak{g})$).
\end{proof}
%%%%%%%%%%%%%%%%%%%%%%%%%%%%%%%%%%%%%

%%%%%%%%%%%%%%%%%%%%%%%%%%%%%%%%%%%%%%%%%%%%%
\section{Principal series representations}\label{SectionRepTh}
%%%%%%%%%%%%%%%%%%%%%%%%%%%%%%%%%%%%%%%%%%%%%%

We keep the conventions of the previous section. In particular, $(\mathfrak{g}_0,\mathfrak{h}_0,\theta_0)$
is a triple with $\mathfrak{g}_0$ a split real semisimple Lie algebra, $\mathfrak{h}_0$ a split
Cartan subalgebra and $\theta_0$ a Cartan involution such that $\theta_0|_{\mathfrak{h}_0}=
-\textup{id}_{\mathfrak{h}_0}$, and $(G,K)$ is the associated non-compact split symmetric pair.
We fix throughout this section two finite dimensional $K$-representations $\sigma_\ell: K\rightarrow\textup{GL}(V_\ell)$ and $\sigma_r: K\rightarrow\textup{GL}(V_r)$. We write $(\cdot,\cdot)_{V_\ell}$ and $(\cdot,\cdot)_{V_r}$ for scalar products on $V_\ell$ and $V_r$ turning $\sigma_\ell$ and $\sigma_r$ into unitary representations of $K$. 
We view $\textup{Hom}(V_r,V_\ell)$ as finite dimensional $K\times K$-representation with representation map
$\sigma: K\times K\rightarrow\textup{GL}(\textup{Hom}(V_r,V_\ell))$ given by
\begin{equation}\label{tauaction}
\sigma(k_\ell,k_r)T:=\sigma_\ell(k_\ell)T\sigma_r(k_r^{-1})
\end{equation}
for $k_\ell,k_r\in K$ and $T\in \textup{Hom}(V_r,V_\ell)$. It is isomorphic to the tensor product representation $V_\ell\otimes V_r^*$. 
For details on the first two subsections, see \cite[Chpt. 8]{Kn0}.

%%%%%%%%%%%%%%%%%%%%%%%%%%%%%%%%%%%%%%
\subsection{Admissible representations and associated spherical functions}\label{ssss1}
%%%%%%%%%%%%%%%%%%%%%%%%%%%%%%%%%%%%%%

Let $K^\wedge$ be the equivalence classes of the irreducible unitary representations of $K$. Recall that a representation $\pi: G\rightarrow \textup{GL}(\mathcal{H})$ of $G$ on a Hilbert space $\mathcal{H}$ is called {\it admissible} if the restriction $\pi|_K$ of $\pi$ to $K$ is unitary and if the $\tau$-isotypical component $\mathcal{H}(\tau)$ of $\pi|_K$ is finite dimensional for all $\tau\in K^\wedge$. 

Let $\pi: G\rightarrow\textup{GL}(\mathcal{H})$ be an admissible representation. Recall that a vector $v\in\mathcal{H}$ is called {\it smooth} if $g\mapsto \pi(g)v$ defines a smooth map $G\rightarrow\mathcal{H}$. The subspace $\mathcal{H}^\infty\subseteq\mathcal{H}$ of smooth vectors is $G$-stable and dense.
%\[
%\mathcal{H}^\infty:=\{v\in\mathcal{H} \,\, | \,\, 
%g\mapsto \pi(g)v \hbox{ is smooth} \} 
%\] 
%of smooth vectors in $\mathcal{H}$ is a $G$-stable dense subspace
%of $\mathcal{H}$. 
Differentiating the $G$-action on $\mathcal{H}^\infty$ turns $\mathcal{H}^\infty$
into a left $U(\mathfrak{g})$-module. We write $x\mapsto x_{\mathcal{H}^{\infty}}$ for the corresponding action of $x\in U(\mathfrak{g})$. 

The algebraic direct sum
\[
\mathcal{H}^{K-\textup{fin}}:=\bigoplus_{\tau\in K^\wedge}\mathcal{H}(\tau)\subseteq\mathcal{H}
\]
is the dense subspace of $K$-finite vectors in $\mathcal{H}$. It is contained 
in $\mathcal{H}^\infty$ since $\pi$ is admissible, and it inherits a $(\mathfrak{g},K)$-module structure from $\mathcal{H}^\infty$. The $(\mathfrak{g},K)$-module
$\mathcal{H}^{K-\textup{fin}}$ is called the Harish-Chandra module of $\mathcal{H}$.

For $\phi_\ell\in\textup{Hom}_K(\mathcal{H},V_\ell)$ and $\phi_r\in\textup{Hom}_K(V_r,\mathcal{H})$
we now obtain $\sigma$-spherical functions
\[
f^{\phi_\ell,\phi_r}_{\mathcal{H}}\in C^\infty_\sigma(G)
\]
by
\[
f^{\phi_\ell,\phi_r}_{\mathcal{H}}(g):=\phi_\ell\circ\pi(g)\circ\phi_r,\qquad g\in G.
\]
The $\sigma$-spherical functions $f^{\phi_\ell,\phi_r}_{\mathcal{H}}$ are actually $\textup{Hom}(V_r,V_\ell)$-valued real analytic functions on $G$, see, e.g., \cite[Thm. 8.7]{Kn0}. Furthermore,
$f_{\mathcal{H}}^{\phi_\ell,\phi_r}|_{A}$ takes values in $\textup{Hom}_M(V_r,V_\ell)$.

Since $V_\ell$ and $V_r$ are finite dimensional, we have canonical isomorphisms
\begin{equation}\label{algvsanal}
\begin{split}
\textup{Hom}_K(\mathcal{H},V_\ell)&\simeq\textup{Hom}_{\mathfrak{k}}(\mathcal{H}^\infty,V_\ell)
\simeq \textup{Hom}_{\mathfrak{k}}(\mathcal{H}^{K-\textup{fin}},V_\ell),\\
\textup{Hom}_K(V_r,\mathcal{H})&\simeq\textup{Hom}_{\mathfrak{k}}(V_r,\mathcal{H}^\infty)\simeq \textup{Hom}_{\mathfrak{k}}(V_r,\mathcal{H}^{K-\textup{fin}}).
\end{split}
\end{equation}

The $\sigma$-spherical function $f^{\phi_\ell,\phi_r}_{\mathcal{H}}$ can be expressed in terms of matrix coefficients of $\pi$ as follows.
Let $\{v_i\}_i$ and $\{w_j\}_j$ be linear bases of $V_\ell$ and $V_r$, respectively. Expand
$\phi_\ell\in\textup{Hom}_K(\mathcal{H},V_\ell)$ and $\phi_r\in\textup{Hom}_K(V_r,\mathcal{H})$ as
\[
\phi_\ell=\sum_i\langle \cdot,f_i\rangle_{\mathcal{H}}v_i,\qquad
\phi_r=\sum_j(\cdot,w_j)_{V_r}h_j
\]
with $f_i,h_j\in\mathcal{H}^{K-\textup{fin}}$, where $\langle\cdot,\cdot\rangle_{\mathcal{H}}$ is the scalar product of $\mathcal{H}$. The fact that $\phi_\ell$ and $\phi_r$ are $K$-intertwiners implies that $\sum_if_i\otimes v_i$ and
$\sum_jw_j\otimes h_j$ are $K$-fixed in $\mathcal{H}\otimes V_\ell$ and $V_r\otimes \mathcal{H}$,
respectively. The $\sigma$-spherical function $f^{\phi_\ell,\phi_r}_{\mathcal{H}}\in C^\infty_\sigma(G)$ is then given by
\begin{equation}\label{sphermatrix}
f^{\phi_\ell,\phi_r}_{\mathcal{H}}(g)=\sum_{i,j}\langle\pi(g)h_j,f_i\rangle_{\mathcal{H}}(\cdot,w_j)_{V_r}v_i.
\end{equation}
Clearly, for an admissible representation $(\pi,\mathcal{H})$, the subspace of $\sigma$-spherical functions spanned by $f^{\phi_\ell,\phi_r}_{\mathcal{H}}$ ($\phi_\ell\in\textup{Hom}_K(\mathcal{H}, V_\ell)$, $\phi_r\in\textup{Hom}_K(V_r,\mathcal{H})$),
is finite dimensional.

%%%%%%%%%%%%%%%%%%%%%%%%%%%%%%%%%%%%%%%%%%%%
\subsection{Principal series representations and $K$-intertwiners}\label{subsectionPSR}\label{ssss2}
%%%%%%%%%%%%%%%%%%%%%%%%%%%%%%%%%%%%%%%%%%%%
Recall that $M:=Z_K(\mathfrak{h}_0)\subseteq K$ is a finite group, since $\mathfrak{g}_0$ is split.
Furthermore, if $G$ has a complexification then $M$ is abelian (see \cite[Thm. 7.53]{Kn}).
We fix a finite dimensional irreducible representation $\xi: M\rightarrow \textup{GL}(L_\xi)$. Write $\langle\cdot,\cdot\rangle_{\xi}$ for the scalar product on $L_\xi$ turning it into a unitary representation.
Fix a linear functional $\lambda\in\mathfrak{h}^*$ and extend it to a representation 
$\eta_{\lambda}^{(\xi)}: P\rightarrow\textup{GL}(L_\xi)$ of the minimal parabolic subgroup $P=MAN_+$ of $G$ by
\[
\eta_{\lambda}^{(\xi)}(man):=a^{\lambda}\xi(m)\qquad (m\in M, a\in A, n\in N_+).
\] 
Consider the pre-Hilbert space $U^{(\xi)}_{\lambda}$ consisting of continuous, compactly supported 
functions $f: G\rightarrow L_\xi$ satisfying 
\[
f(gp)=\eta_{\lambda+\rho}^{(\xi)}(p^{-1})f(g)\qquad (g\in G, p\in P)
\]
with scalar product
\[
\langle f_1,f_2\rangle^{(\xi)}_{\lambda}:=\int_K\langle f_1(x), f_2(x)\rangle_\xi dx\qquad
(f_1,f_2\in U^{(\xi)}_{\lambda}).
\]
Consider the action of $G$ on $U^{(\xi)}_{\lambda}$ by $(\pi^{(\xi)}_\lambda(g)f)(g^\prime):=f(g^{-1}g^\prime)$ for $g,g^\prime\in G$ and 
$f\in U^{(\xi)}_{\lambda}$. Its extension to an admissible representation $\pi^{(\xi)}_{\lambda}: G\rightarrow
\textup{GL}(\mathcal{H}^{(\xi)}_{\lambda})$, with $\mathcal{H}^{(\xi)}_{\lambda}$ the Hilbert space completion of $U^{(\xi)}_{\lambda}$, is called the {\it principal series representation} of $G$. 
The representation $\pi_\lambda^{(\xi)}$ is
unitary if $\eta_\lambda^{(\xi)}$ is unitary, i.e., if $\lambda(\mathfrak{h}_0)\subset i\mathbb{R}$.

Analogously, let $\eta_\lambda: AN_+\rightarrow\mathbb{C}^*$
be the one-dimensional representation defined by $\eta_\lambda(an):=a^{\lambda}$ for $a\in A$ and $n\in N_+$, and consider the pre-Hilbert space $U_{\lambda}$ consisting of continuous, compactly supported 
functions $f: G\rightarrow \mathbb{C}$ satisfying 
\[
f(gb)=\eta_{\lambda+\rho}(b^{-1})f(g)\qquad (g\in G, b\in AN_+)
\]
with scalar product
\[
\langle f_1,f_2\rangle_{\lambda}:=\int_Kf_1(x)\overline{f_2(x)}\, dx\qquad
(f_1,f_2\in U_{\lambda}).
\]
Turning $U_\lambda$ into a $G$-representation by $(\pi_{\lambda}(g)f)(g^\prime):=f(g^{-1}g^\prime)$ for $g,g^\prime\in G$ and completing, gives an admissible representation $\pi_{\lambda}: G\rightarrow
\textup{GL}(\mathcal{H}_{\lambda})$.
Note that $\pi_\lambda|_K: K\rightarrow\textup{GL}(\mathcal{H}_\lambda)$ is isomorphic to the left regular representation
of $K$ on $L^2(K)$. In particular, $\textup{dim}(\mathcal{H}_\lambda(\tau))=\textup{deg}(\tau)^2$
for all $\tau\in K^\wedge$, where $\textup{deg}(\tau)$ is the degree of $\tau$. 
Furthermore, $\mathcal{H}_\lambda\simeq\bigoplus_{\xi\in M^\wedge}\bigl(\mathcal{H}_\lambda^{(\xi)}\bigr)^{\oplus\textup{deg}(\xi)}$. 

Define for
$\phi_\ell\in\textup{Hom}_K(\mathcal{H}_\lambda,V_\ell)$ the adjoint map $\phi^*_\ell: V_\ell\rightarrow
\mathcal{H}_\lambda$ by
\[
(\phi_\ell(f),v)_{V_\ell}=\langle f,\phi_\ell^*(v)\rangle_\lambda\qquad \forall\, f\in\mathcal{H}_\lambda,\,\, \forall
v\in V_\ell.
\]
Since $\mathcal{H}_\lambda$ is unitary as $K$-representation for all $\lambda\in\mathfrak{h}^*$,
the assignment $\phi_\ell\mapsto \phi_\ell^*$ defines a conjugate linear isomorphism from
$\textup{Hom}_K(\mathcal{H}_\lambda,V_\ell)$ onto $\textup{Hom}_K(V_\ell,\mathcal{H}_\lambda)$.
 
The $\sigma$-spherical functions $f_{\mathcal{H}_\lambda}^{\phi_\ell,\phi_r}$ obtained from the $G$-representation
$\mathcal{H}_\lambda$
using the $K$-inter\-twi\-ners $\phi_\ell\in\textup{Hom}_K(\mathcal{H}_\lambda,V_{\ell})$ and
$\phi_r\in\textup{Hom}_K(V_{r},\mathcal{H}_\lambda)$ now admit the following explicit description
in terms of the Eisenstein integral.
%%%%%%%%%%%%%%%%%%%%%%%%%%%%%%%%%%%%%%%%
\begin{prop}\label{relEisPrin}
Fix $\lambda\in\mathfrak{h}^*$. 
\begin{enumerate}
\item[{\bf a.}] The map
$\j_{\lambda,V_r}: \textup{Hom}_K(V_r,\mathcal{H}_\lambda)\rightarrow V_r^*
$, 
\[\j_{\lambda,V_r}(\phi_r)(v):=\phi_r(v)(1)\qquad (v\in V_r),
\]
is a linear isomorphism.
\item[{\bf b.}] For
$\phi_\ell\in\textup{Hom}_K(\mathcal{H}_\lambda,V_\ell)$  
let $\iota_{\lambda,V_\ell}(\phi_\ell)\in V_\ell$ be the unique vector such that
\[
\bigl(v,\iota_{\lambda,V_\ell}(\phi_\ell)\bigr)_{V_\ell}=\phi_\ell^*(v)(1)\qquad \forall\, v\in V_\ell.
\]
The resulting map 
$\iota_{\lambda,V_\ell}: \textup{Hom}_K(\mathcal{H}_\lambda,V_\ell)\rightarrow
V_\ell$ is a linear isomorphism.
\item[{\bf c.}] The assignment $\phi_\ell\otimes\phi_r\mapsto 
T^{\phi_\ell,\phi_r}_\lambda:=
\iota_{\lambda,V_\ell}(\phi_\ell)\otimes\j_{\lambda,V_r}(\phi_r)$ defines
a linear isomorphism
\begin{equation*}
\textup{Hom}_K(\mathcal{H}_\lambda,V_{\ell})\otimes\textup{Hom}_K(V_{r},\mathcal{H}_\lambda)\overset{\sim}{\longrightarrow} V_{\ell}\otimes V_{r}^*\simeq\textup{Hom}(V_{r},V_{\ell}).
\end{equation*}
Furthermore,
\begin{equation}\label{relEis}
f_{\mathcal{H}_\lambda}^{\phi_\ell,\phi_r}(g)=E_\lambda^\sigma(g)T^{\phi_\ell,\phi_r}_\lambda\qquad (g\in G)
\end{equation}
for $\phi_\ell\in\textup{Hom}_K(\mathcal{H}_\lambda,V_{\ell})$ and
$\phi_r\in\textup{Hom}_K(V_{r},\mathcal{H}_\lambda)$, with $E_\lambda^\sigma(g)$
the Eisenstein integral \eqref{Eisenstein}.
\end{enumerate}
\end{prop}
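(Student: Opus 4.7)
The plan for parts \textbf{a} and \textbf{b} is to invoke Frobenius reciprocity. The Iwasawa decomposition $G=KAN$ together with the transformation rule $f(gan)=a^{-\lambda-\rho}f(g)$ identifies $\mathcal{H}_\lambda|_K$ with the left regular representation of $K$ on $L^2(K)$, so Peter--Weyl yields $\textup{Hom}_K(V_r,\mathcal{H}_\lambda)\cong V_r^*$ via evaluation at $1\in G$. This makes sense because $V_r$ is finite dimensional, so the image of $\phi_r$ is $K$-finite, hence smooth, hence pointwise defined. The inverse sends $f\in V_r^*$ to the intertwiner $\phi_f(v)(kan):=a^{-\lambda-\rho}f(\sigma_r(k^{-1})v)$, which proves \textbf{a}. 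For \textbf{b}, the adjoint map $\phi_\ell\mapsto\phi_\ell^*$ is a conjugate-linear bijection $\textup{Hom}_K(\mathcal{H}_\lambda,V_\ell)\to\textup{Hom}_K(V_\ell,\mathcal{H}_\lambda)$ by unitarity of $\pi_\lambda|_K$ and $\sigma_\ell$. Composing \textbf{a} (with $V_r$ replaced by $V_\ell$) with the conjugate-linear Riesz isomorphism $V_\ell^*\to V_\ell$ induced by $(\cdot,\cdot)_{V_\ell}$ gives the complex-linear isomorphism $\iota_{\lambda,V_\ell}$.

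For part \textbf{c}, the isomorphism statement follows immediately by tensoring \textbf{a} and \textbf{b}. To verify the Eisenstein formula \eqref{relEis}, I would test it against arbitrary $v\in V_r$ and $w\in V_\ell$ by pairing both sides with $w$ through $(\cdot,\cdot)_{V_\ell}$. The left-hand side unfolds as
\[
(\phi_\ell(\pi_\lambda(g)\phi_r(v)),w)_{V_\ell}=\langle \pi_\lambda(g)\phi_r(v),\phi_\ell^*(w)\rangle_\lambda=\int_K\phi_r(v)(g^{-1}x)\overline{\phi_\ell^*(w)(x)}\,dx.
\]
Writing $g^{-1}x=k(g^{-1}x)a(g^{-1}x)n(g^{-1}x)$, the transformation rule for $\mathcal{H}_\lambda$ and the $K$-equivariance of $\phi_r$ give
\[
\phi_r(v)(g^{-1}x)=a(g^{-1}x)^{-\lambda-\rho}\,\j_{\lambda,V_r}(\phi_r)\bigl(\sigma_r(k(g^{-1}x)^{-1})v\bigr),
\]
and unitarity of $\sigma_\ell$ combined with the defining relation of $\iota_{\lambda,V_\ell}$ yields $\overline{\phi_\ell^*(w)(x)}=(\sigma_\ell(x)\iota_{\lambda,V_\ell}(\phi_\ell),w)_{V_\ell}$.

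On the right-hand side, $T_\lambda^{\phi_\ell,\phi_r}=\iota_{\lambda,V_\ell}(\phi_\ell)\otimes\j_{\lambda,V_r}(\phi_r)\in V_\ell\otimes V_r^*$ acts on $V_r$ as the rank-one map $v\mapsto\j_{\lambda,V_r}(\phi_r)(v)\,\iota_{\lambda,V_\ell}(\phi_\ell)$; applying $E_\lambda^\sigma(g)$ via \eqref{Eisenstein} together with the $K\times K$-action \eqref{tauaction} and then pairing with $w$ produces exactly the same integral, so \eqref{relEis} follows. The main obstacle is not conceptual but careful bookkeeping: one must simultaneously track the Iwasawa decomposition, the unitary/conjugate-linear passages between $\phi_\ell$, $\phi_\ell^*$, and $\iota_{\lambda,V_\ell}(\phi_\ell)$, and the conversion between the tensor product $V_\ell\otimes V_r^*$ and $\textup{Hom}(V_r,V_\ell)$ under which the $\sigma$-action takes the form \eqref{tauaction}. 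Once these identifications are properly set up, the matching of both sides is term-by-term.
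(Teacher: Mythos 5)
Your argument is correct, and for parts \textbf{a} and \textbf{b} it is essentially the paper's: both rest on the identification $\mathcal{H}_\lambda|_K\simeq L^2(K)$ and on the conjugate-linear passage $\phi_\ell\mapsto\phi_\ell^*$ followed by the Riesz map (the paper proves \textbf{a} by injectivity plus a dimension count for irreducible $\sigma_r$, whereas you exhibit the Frobenius-reciprocity inverse $\phi_f(v)(kan)=a^{-\lambda-\rho}f(\sigma_r(k^{-1})v)$ — the same content, packaged slightly differently, and your remark that the image of $\phi_r$ is $K$-finite, hence given by genuine continuous functions, is exactly the point needed to make evaluation at $1$ legitimate). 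Where you genuinely diverge is part \textbf{c}: the paper first reduces to $\sigma_\ell,\sigma_r\in K^\wedge$, expands $\phi_\ell=\sum_i\langle\cdot,f_i\rangle_\lambda v_i$ and $\phi_r=\sum_j(\cdot,w_j)_{V_r}h_j$, obtains \eqref{relEis} with an auxiliary tensor $\widetilde{T}_\lambda^{\phi_\ell,\phi_r}$ built from the values $f_i(1),h_j(1)$, and then needs a Schur-orthogonality argument (pairing $\phi_\ell^*(v)$ against the character function $\widetilde{\chi}_{\lambda,V_\ell}$) to identify $\widetilde{T}_\lambda^{\phi_\ell,\phi_r}$ with $T_\lambda^{\phi_\ell,\phi_r}=\iota_{\lambda,V_\ell}(\phi_\ell)\otimes\j_{\lambda,V_r}(\phi_r)$. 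Your pairing argument short-circuits this: the adjoint relation gives $(\phi_\ell(\pi_\lambda(g)\phi_r(v)),w)_{V_\ell}=\int_K\phi_r(v)(g^{-1}x)\overline{\phi_\ell^*(w)(x)}\,dx$, and the two identities $\phi_r(v)(g^{-1}x)=a(g^{-1}x)^{-\lambda-\rho}\,\j_{\lambda,V_r}(\phi_r)(\sigma_r(k(g^{-1}x)^{-1})v)$ and $\overline{\phi_\ell^*(w)(x)}=(\sigma_\ell(x)\iota_{\lambda,V_\ell}(\phi_\ell),w)_{V_\ell}$ follow directly from the transformation rule, $K$-equivariance and the definition of $\iota_{\lambda,V_\ell}$; I checked both and they are right, so the two sides of \eqref{relEis} match term by term without any orthogonality relations and without reducing to irreducibles. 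What the paper's longer route buys is the explicit expression of the Eisenstein amplitude in terms of the coefficients $f_i(1),h_j(1)$, which it reuses (e.g.\ in Remark \ref{relEisPrinRem}\,\textbf{b} for the $\xi$-isotypic principal series); what your route buys is brevity and uniformity in $\sigma_\ell,\sigma_r$.
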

%%%%%%%%%%%%%%%%%%%%%%
\begin{proof}
Our choice of parametrisation of the $\sigma$-spherical functions associated to $\pi_\lambda$, which
deviates from the standard choice (see, e.g., \cite[\S 8.2]{Kn0}), plays an important in Section
\ref{SectionbKZB} when discussing the applications to asymptotic boundary KZB equations. We provide here a proof directly in terms of our present conventions.

We will assume without loss of generality that $\sigma_\ell, \sigma_r\in K^\wedge$.\\
{\bf a.} Since $\textup{dim}(\textup{Hom}_K(V_r,\mathcal{H}_\lambda))=\textup{deg}(\sigma_r)$
it suffices to show that $\j_{\lambda,V_r}$ is injective. 
Fix an orthonormal basis $\{v_i\}_i$ of $V_{r}$.
Let $\phi_r\in\textup{Hom}_K(V_r,\mathcal{H}_\lambda)$ and consider its expansion $\phi_r=
\sum_j(\cdot,v_j)_{V_r} h_j$
with $h_j\in\mathcal{H}_\lambda^{K-\textup{fin}}$. Then
\begin{equation}\label{jALT}
\j_{\lambda,V_r}(\phi_r)=\sum_jh_j(1)(\cdot,v_j)_{V_r}.
\end{equation}
Furthermore, for each index $j$ we have
\begin{equation}\label{transfoK}
h_j(x)=\sum_ih_i(1)(v_j,\sigma_r(x)v_i)_{V_r}\qquad \forall\, x\in K
\end{equation}
since $\phi_r$ is a $K$-intertwiner. 

Suppose now that $\j_{\lambda,V_r}(\phi_r)=0$. Then $h_j(1)=0$ for all $j$ by \eqref{jALT}. By
\eqref{transfoK} we conclude that $h_j(kan)=a^{-\lambda-\rho}h_j(k)=0$ for $k\in K$, $a\in A$ and $n\in N_+$, so $\phi_r=0$.\\
{\bf b.} This immediately follows from part {\bf a} and the fact that 
\[
(v,\iota_{\lambda,V_\ell}(\phi_\ell))_{V_\ell}=\j_{\lambda,V_\ell}(\phi^*_\ell)(v)
\]
for $v\in V_\ell$ and $\phi_\ell\in\textup{Hom}_K(\mathcal{H}_\lambda,V_\ell)$.\\
{\bf c.} The first statement immediately follows from {\bf a} and {\bf b}.
Let $\{v_i\}_i$ be an orthonormal basis of $V_{\ell}$ and $\{w_j\}_j$
an orthonormal basis of $V_{r}$.
For $\phi_\ell=\sum_i\langle \cdot,f_i\rangle_\lambda v_i\in\textup{Hom}_K(\mathcal{H}_\lambda,V_{\ell})$ and $\phi_r=\sum_j(\cdot,w_j)_{V_r}h_j\in \textup{Hom}_K(V_{r},\mathcal{H}_\lambda)$ with  $f_i,h_j\in\mathcal{H}_\lambda^{K-\textup{fin}}$ a direct computation gives
\[
f_{\mathcal{H}_\lambda}^{\phi_\ell,\phi_r}(g)=E_\lambda^\sigma(g)\widetilde{T}^{\phi_\ell,\phi_r}_\lambda
\]
with $\widetilde{T}^{\phi_\ell,\phi_r}_\lambda\in\textup{Hom}(V_r,V_\ell)$ given by
\[
\widetilde{T}_\lambda^{\phi_\ell,\phi_r}(w)=\sum_{i,j}h_j(1)\overline{f_i(1)}(w,w_j)_{V_r}v_i\qquad (w\in V_{r}).
\]
By \eqref{jALT} this can be rewritten as
\[
\widetilde{T}_\lambda^{\phi_\ell,\phi_r}(w)=\j_{\lambda,V_r}(\phi_r)(w)\sum_i\overline{f_i(1)}v_i
\qquad w\in V_r,
\]
hence it suffices to show that
\begin{equation}\label{iotaALT}
\iota_{\lambda,V_\ell}(\phi_\ell)=\sum_i\overline{f_i(1)}v_i.
\end{equation}
Define $\widetilde{\chi}_{\lambda,V_\ell}\in \mathcal{H}_\lambda^{K-\textup{fin}}$ by 
\[
\widetilde{\chi}_{\lambda,V_\ell}(kan):=\textup{deg}(\sigma_\ell)a^{-\lambda-\rho}\chi_{V_\ell}(k)\qquad
(k\in K, a\in A, n\in N_+),
\]
with $\chi_{V_\ell}$ the character of $V_\ell$. Fix $v\in V_{\ell}$. Since
$\phi^*_\ell(v)\in\mathcal{H}_\lambda(\sigma_\ell)$, its restriction $\phi_\ell^*(v)|_K$ to $K$ lies in the $\sigma_\ell$-isotypical component of $L^2(K)$ with respect to the left-regular $K$-action. By the Schur orthogonality relations we then have
\[
\phi^*_\ell(v)(1)=\textup{deg}(\sigma_\ell)\int_Kdx\,\phi^*_\ell(v)(x)\overline{\chi_{V_\ell}(x)}=
\langle\phi^*_\ell(v),\widetilde{\chi}_{\lambda,V_\ell}\rangle_\lambda=(v,\phi_\ell(\widetilde{\chi}_{\lambda,V_\ell}))_{V_\ell}.
\]
This show that
\[
\iota_{\lambda,V_\ell}(\phi_\ell)=
\phi_\ell(\widetilde{\chi}_{\lambda,V_\ell}).
\]
Now substitute $\phi_\ell=\sum_i\langle \cdot,f_i\rangle_\lambda v_i$ and use that 
$f_i\in\mathcal{H}_\lambda(\sigma^*_\ell)$ with $\sigma^*_\ell$ the irreducible 
$K$-re\-pre\-sen\-ta\-tion dual to $\sigma_\ell$, we get \eqref{iotaALT} by another application of the Schur's 
orthogonality relations,
\begin{equation*}
\iota_{\mu,V_\ell}(\phi_\ell)=\phi_\ell(\widetilde{\chi}_{\lambda,V_\ell})
=\sum_i\textup{deg}(\sigma_\ell)\Bigl(\int_K dx\, \chi_{V_\ell}(x)
\overline{f_i(x)}\Bigr) v_i=
\sum_i\overline{f_i(1)}v_i.
\end{equation*}
\end{proof}
%%%%%%%%%%%%%%%%%%%%%%
\begin{rema}\label{relEisPrinRem}
{\bf a.} For $a\in A$ and $m\in M$ one has
\[
\sigma(m,m)E_\lambda^\sigma(a)=E_\lambda^\sigma(a).
\]
In particular, $E_\lambda^\sigma(a)$ maps $\textup{Hom}(V_r,V_\ell)$ into $\textup{Hom}_M(V_r,V_\ell)$.\\
{\bf b.} 
For $\xi\in M^\wedge$ and intertwiners $\phi_\ell\in\textup{Hom}_K(\mathcal{H}^{(\xi)}_{\lambda},V_{\ell})$ and $\phi_r\in\textup{Hom}_K(V_{r},\mathcal{H}^{(\xi)}_{\lambda})$, write
\[
\phi_\ell=\sum_i\langle \cdot,f_i\rangle^{(\xi)}_\lambda v_i,\qquad
\phi_r=\sum_j(\cdot,w_j)_{V_r}h_j
\]
with $f_i,h_j\in \mathcal{H}^{(\xi),K-\textup{fin}}_{\lambda}$. Then 
\begin{equation}\label{relEisxi}
f^{\phi_\ell,\phi_r}_{\mathcal{H}_{\lambda}^{(\xi)}}(g)=E_\lambda^\sigma(g)T^{(\xi),\phi_\ell,\phi_r}_{\lambda}\qquad (g\in G)
\end{equation}
with $T^{(\xi),\phi_\ell,\phi_r}_{\lambda}\in\textup{Hom}_M(V_{r},V_{\ell})$ the $M$-intertwiner 
\[
T^{(\xi),\phi_\ell,\phi_r}_{\lambda}(w):=\sum_{i,j}\langle h_j(1),f_i(1)\rangle_{\lambda}^{(\xi)}(w,w_j)_{V_r}v_i
\qquad (w\in V_{r}).
\]
\end{rema}
%%%%%%%%%%%%%%%%%%%%%%%%%%

%%%%%%%%%%%%%%%%%%%%%%%%%%%%%%%%%%%
\subsection{Algebraic principal series representations}\label{ssss3}
%%%%%%%%%%%%%%%%%%%%%%%%%%%%%%%%%%%
We first introduce some general facts and notations regarding $\mathfrak{g}$-modules, following \cite{Di}.

Let $V$ be a $\mathfrak{g}$-module with representation map $\tau: \mathfrak{g}\rightarrow
\mathfrak{gl}(V)$. The representation map of $V$, viewed as $U(\mathfrak{g})$-module, will also be denoted by $\tau$. The dual of $V$ is defined by
\[
(\tau^*(x)f)(v)=-f(\tau(X)v),\qquad x\in \mathfrak{g},\,\, f\in V^*,\,\, v\in V.
\]

Fix a Lie subalgebra $\mathfrak{l}\subseteq\mathfrak{g}$ which is reductive in $\mathfrak{g}$. This means that $\mathfrak{g}$, viewed as $\mathfrak{l}$-module via the adjoint action, is semisimple (in particular, $\mathfrak{l}$ is a reductive Lie algebra). In this paper $\mathfrak{l}$ will either be the fix-point Lie subalgebra $\mathfrak{k}$ of the Chevalley involution $\theta$, which is reductive in $\mathfrak{g}$ by \cite[Prop. 1.13.3]{Di}, or it is the Cartan subalgebra $\mathfrak{h}$. Let $\mathfrak{l}^\wedge$ be the isomorphism classes of the finite dimensional irreducible $\mathfrak{l}$-modules. For $\tau\in\mathfrak{l}^\wedge$ we write
$\textup{deg}(\tau)$ for the degree of $\tau$ and 
$V(\tau)$ for the $\tau$-isotypical component of $V$. The isotypical component $V(\tau)$ decomposes in a direct sum of copies of $\tau$. The number of copies, denoted by $\textup{mtp}(\tau,V)\in\mathbb{Z}_{\geq 0}\cup\{\infty\}$, is called the multiplicity of $\tau$ in $V$. Set 
\[
V^{\mathfrak{l}-\textup{HC}}:=\sum_{\tau\in\mathfrak{l}^\wedge}V(\tau),
\]
which is a $\mathfrak{g}$-submodule of $V$ by \cite[Prop. 1.7.9]{Di} (the sum is direct). 
A $\mathfrak{g}$-module $V$ is called a Harish-Chandra module with respect to $\mathfrak{l}$ if $V=V^{\mathfrak{l}-\textup{HC}}$. A Harish-Chandra module $V$ is called admissible if $\textup{mtp}(\tau,V)<\infty$ for all $\tau\in\mathfrak{l}^\wedge$. 

Note that $V^{\mathfrak{l}-\textup{HC}}$ is the sum of all the finite dimensional {\it semisimple} $\mathfrak{l}$-submodules of $V$. It is contained in the $\mathfrak{g}$-submodule
\[
V^{\mathfrak{l}-\textup{fin}}:=\{v\in V \,\,\, | \,\,\, \dim\bigl(U(\mathfrak{l})v\bigr)<\infty \, \}
\]
of $V$, which alternatively can be defined as the sum of all the finite dimensional $\mathfrak{l}$-submodules of $V$. Note that $V^{\mathfrak{l}-\textup{HC}}=V^{\mathfrak{l}-\textup{fin}}$
if and only if the center $Z(\mathfrak{l})$ of $\mathfrak{l}$ acts semisimply on $V^{\mathfrak{l}-\textup{fin}}$. This in particular holds true when $\mathfrak{l}$ is semisimple.

For $\mathfrak{l}$-modules $U,V$ with $U$ or $V$ finite dimensional we identify
\begin{equation}\label{identify}
U\otimes V^*\simeq\textup{Hom}(V,U)
\end{equation}
as vector spaces by $u\otimes f\mapsto f(\cdot)u$. With the 
$U(\mathfrak{l})\otimes U(\mathfrak{l}$)-module structure on $\textup{Hom}(V,U)$ defined by
\[
((x\otimes z)f)(v):=xf(S(z)v)
\]
for $x,z\in U(\mathfrak{l})$, $v\in V$ and $f\in\textup{Hom}(V,U)$,
it is an isomorphism of $U(\mathfrak{l})\otimes U(\mathfrak{l})$-modules.

Differentiating the multiplicative character $\eta_\lambda: AN_+\rightarrow\mathbb{C}^*$ of the previous subsection gives a one-dimensional $\mathfrak{b}$-module, whose representation map we also denote by $\eta_\lambda$. Then $\eta_\lambda: \mathfrak{b}\rightarrow\mathbb{C}$ is concretely given by
\[
\eta_\lambda(h+u):=\lambda(h),\qquad h\in\mathfrak{h},\,\, u\in\mathfrak{n}_+.
\]
We write $\mathbb{C}_\lambda$ for the associated one-dimensional $U(\mathfrak{b})$-module.

%%%%%%%%%%%%%%%%%%%%%%%%%%%%%%%%%%%%%%%%%%%%%%
\begin{defi}
Let $\lambda\in\mathfrak{h}^*$. Write
\[
Y_\lambda:=\textup{Hom}_{U(\mathfrak{b})}\bigl(U(\mathfrak{g}),\mathbb{C}_{\lambda+\rho}\bigr)
\]
for the space of linear functionals $f: U(\mathfrak{g})\rightarrow \mathbb{C}$ satisfying
$f(xz)=\eta_{\lambda+\rho}(x)f(z)$ for $x\in U(\mathfrak{b})$ and $z\in U(\mathfrak{g})$. We view $Y_\lambda$ as $\mathfrak{g}$-module by 
\[
(yf)(z):=f(zy),\qquad y\in\mathfrak{g},\,\, z\in U(\mathfrak{g}).
\] 
\end{defi}
%%%%%%%%%%%%%%%%%%%%%%%%%%%%%%%%%%%%%%%%%%%%%

By \cite[Chpt. 9]{Di}, the Harish-Chandra module $Y_\lambda^{\mathfrak{k}-\textup{HC}}$ is admissible with $\textup{mtp}(\tau, Y_\lambda^{\mathfrak{k}-\textup{HC}})=\textup{deg}(\tau)$
for all $\tau\in\mathfrak{k}^\wedge$. Consider $K^\wedge$ as subset of $\mathfrak{k}^\wedge$. Note that the inclusion $K^\wedge\hookrightarrow\mathfrak{k}^\wedge$ is strict unless $K$ is simply connected and semisimple. 
%%%%%%%%%%%%%%%%%%%%%%%%%%%%%%%%%%%%%%%%%%%%%%
\begin{prop}\label{relAnalAlg}
For $\lambda\in\mathfrak{h}^*$ we have an injective morphism of $\mathfrak{g}$-modules
\begin{equation}\label{injpsr}
\mathcal{H}_\lambda^{K-\textup{fin}}\hookrightarrow Y_\lambda^{\mathfrak{k}-\textup{HC}},
\qquad f\mapsto \widetilde{f}
\end{equation}
with
\[
\widetilde{f}(z):=\bigl(r_*(S(z))f\bigr)(1)
\]
for $f\in \mathcal{H}_\lambda^{K-\textup{fin}}$ and $z\in U(\mathfrak{g})$. For $\tau\in K^\wedge$ the embedding restricts
to an isomorphism
\begin{equation}\label{HY}
\mathcal{H}_\lambda(\tau)\overset{\sim}{\longrightarrow} Y_\lambda(\tau)
\end{equation}
of $\mathfrak{k}$-modules.
\end{prop}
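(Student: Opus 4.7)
The plan is to decompose the proposition into five verifications: (i) $\widetilde{f}$ lies in $Y_\lambda$, (ii) the map is $\mathfrak{g}$-equivariant, (iii) it takes $\mathfrak{k}$-finite vectors to $\mathfrak{k}$-finite vectors, (iv) it is injective, and (v) each restriction to a $K$-isotypical component is surjective onto $Y_\lambda(\tau)$ for $\tau\in K^\wedge$.

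For (i), I would first check that for $x\in\mathfrak{b}_0$ the $AN$-equivariance of $f$ yields $r_\ast(x)f=-\eta_{\lambda+\rho}(x)f$, and therefore $r_\ast(S(x))f=\eta_{\lambda+\rho}(x)f$. Extending $\mathbb{C}$-linearly to $\mathfrak{b}$ and inducting on PBW-monomials (using that $S$ is an anti-algebra automorphism and $r_\ast$ an algebra morphism) gives $r_\ast(S(x))f=\eta_{\lambda+\rho}(x)f$ for every $x\in U(\mathfrak{b})$. Then
\[
\widetilde{f}(xz)=\bigl(r_\ast(S(xz))f\bigr)(1)=\bigl(r_\ast(S(z))r_\ast(S(x))f\bigr)(1)=\eta_{\lambda+\rho}(x)\widetilde{f}(z),
\]
so $\widetilde{f}\in Y_\lambda$. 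For (ii), I use that the left-regular action (whose differential is $\pi_\lambda$) commutes with the right-regular action $r_\ast$, and that at the identity one has $(\pi_\lambda(y)F)(1)=(r_\ast(S(y))F)(1)$ for $y\in\mathfrak{g}$ and any smooth vector $F$. Applying this with $F=r_\ast(S(z))f$ yields
\[
\bigl(\pi_\lambda(y)f\bigr)^\sim(z)=\bigl(r_\ast(S(z))\pi_\lambda(y)f\bigr)(1)=\bigl(\pi_\lambda(y)r_\ast(S(z))f\bigr)(1)=\bigl(r_\ast(S(y)S(z))f\bigr)(1)=\widetilde{f}(zy),
\]
matching the $\mathfrak{g}$-action on $Y_\lambda$.

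Statement (iii) is then automatic: since $f\mapsto\widetilde{f}$ is a $\mathfrak{k}$-morphism, it carries $U(\mathfrak{k})f\subseteq\mathcal{H}_\lambda^{K-\textup{fin}}$ (which is finite-dimensional by assumption) to $U(\mathfrak{k})\widetilde{f}$ (also finite-dimensional), so $\widetilde{f}\in Y_\lambda^{\mathfrak{k}-\textup{fin}}$. For (iv), $\widetilde{f}=0$ means $(r_\ast(z)f)(1)=0$ for every $z\in U(\mathfrak{g})$; invoking the standard fact that $K$-finite vectors in an admissible representation are real-analytic vectors (see e.g.\ the analyticity statement already used in Subsection \ref{ssss1}), this forces all derivatives of $f$ to vanish at $1\in G$, and connectedness of $G$ gives $f\equiv 0$.

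Finally, for (v), I would compare dimensions of the $\tau$-isotypical components on both sides. On the left, $\mathcal{H}_\lambda|_K$ is the left-regular representation of $K$ on $L^2(K)$, so by Peter--Weyl $\dim\mathcal{H}_\lambda(\tau)=\deg(\tau)^2$. On the right, the cited result $\textup{mtp}(\tau, Y_\lambda^{\mathfrak{k}-\textup{fin}})=\deg(\tau)$ from \cite[Chpt.\ 9]{Di} gives $\dim Y_\lambda(\tau)=\deg(\tau)^2$. Combined with the injectivity established in (iv) and the $\mathfrak{k}$-equivariance of (ii), the restriction $\mathcal{H}_\lambda(\tau)\to Y_\lambda(\tau)$ must be an isomorphism. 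The main obstacle is step (iv): it is the only place where analytic input (as opposed to purely algebraic manipulation) is essential, and the remaining work depends on it to close the dimension count.
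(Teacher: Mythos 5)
Your proposal is correct and follows essentially the same route as the paper's proof: the transformation property $r_\ast(S(x))f=\eta_{\lambda+\rho}(x)f$ for $x\in U(\mathfrak{b})$ plus analyticity of $K$-finite vectors gives well-definedness and injectivity, a direct computation gives $\mathfrak{g}$-equivariance, and the isomorphism \eqref{HY} follows from the dimension count $\dim\mathcal{H}_\lambda(\tau)=\deg(\tau)^2=\dim Y_\lambda(\tau)$. You merely spell out the "direct computation" and the injectivity argument that the paper leaves implicit.
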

%%%%%%%%%%%%%%%%%%%%%%%%%%%%%%%%%%%%%%%%%%%%%
\begin{proof}
Let $f\in\mathcal{H}_\lambda^{K-\textup{fin}}$. Then $f: G\rightarrow\mathbb{C}$ is analytic and satisfies
$r_*(S(x))f=\eta_{\lambda+\rho}(x)f$ for all $x\in U(\mathfrak{b})$. Hence \eqref{injpsr} is a well defined injective linear map. A direct computation shows that \eqref{injpsr} intertwines the $\mathfrak{g}_0$-action. This proves the first part of the proposition. 

For $\tau\in K^\wedge$ we have $\textup{dim}\bigl(\mathcal{H}_\lambda(\tau)\bigr)=
\textup{deg}(\tau)^2=\textup{dim}\bigl(Y_\lambda(\tau)\bigr)$, hence \eqref{HY} follows from the first part of the proposition.
\end{proof}
%%%%%%%%%%%%%%%%%%%%%%%%%%%%%%
\begin{rema}
The embedding \eqref{injpsr} is an isomorphism if $K$ is simply connected and semisimple.
In general, the algebraic description of the $(\mathfrak{g}_0,K)$-modules $\mathcal{H}_\lambda^{K-\textup{fin}}$ and 
$\mathcal{H}^{(\xi),K-\textup{fin}}_\lambda$ within $Y_\lambda^{\mathfrak{k}-\textup{HC}}$ amounts to 
taking the direct sum of isotypical components $Y_\lambda(\tau)$ for $\tau$ running over suitable subsets of $K^\wedge$ (see \cite[\S 9.3]{Di}).
\end{rema}
%%%%%%%%%%%%%%%%%%%%%%%%%%%%%
Let $\phi_\ell\in\textup{Hom}_K(\mathcal{H}_\lambda,V_\ell)$, $\phi_r\in\textup{Hom}_K(V_r,\mathcal{H}_\lambda)$.
The associated $\sigma$-spherical function $f_{\mathcal{H}_\lambda}^{\phi_\ell,\phi_r}\in C_\sigma^\infty(G)$
is an {\it elementary $\sigma$-spherical function}, meaning that it a common eigenfunction of the biinvariant differential operators on $G$. Indeed, by Proposition \ref{relAnalAlg} it suffices to note that $Y_\lambda$ admits a central character. This follows from \cite[Thm. 9.3.3]{Di},
\begin{equation}\label{ccY}
zf=\zeta_{\lambda-\rho}(z)f\qquad (z\in Z(\mathfrak{g}),\,\, f\in Y_\lambda).
\end{equation}
This also follows from the observation that $Y_\lambda$ is isomorphic to $M_{-\lambda-\rho}^*$ (see Lemma \ref{relCC}) and the fact that 
$\zeta_{\mu-\rho}=\zeta_{w\mu-\rho}$ for $w\in W$.

As a consequence, the restriction $f_{\mathcal{H}_\lambda}^{\phi_\ell,\phi_r}|_{A_{\textup{reg}}}$ of $f_{\mathcal{H}_\lambda}^{\phi_\ell,\phi_r}\in
C_\sigma^\infty(G)$ to $A_{\textup{reg}}$
are common eigenfunctions of $\widehat{\Pi}^\sigma(z)$ ($z\in Z(\mathfrak{g})$),
\begin{equation}\label{EisensteinintDE}
\widehat{\Pi}^\sigma(z)\bigl(f_{\mathcal{H}_\lambda}^{\phi_\ell,\phi_r}|_{A_{\textup{reg}}}\bigr)=\zeta_{\lambda-\rho}(z)
f_{\mathcal{H}_\lambda}^{\phi_\ell,\phi_r}|_{A_{\textup{reg}}}\qquad \forall z\in Z(\mathfrak{g})
\end{equation}
(it is sometimes more natural to write the eigenvalue as $\zeta_{w_0(\lambda+\rho)}(z)$ with
$w_0\in W$ the longest Weyl group element).
By Proposition \ref{relEisPrin} it follows that the restriction
$E_\lambda^\sigma|_{A_{\textup{reg}}}$ of the Eisenstein integral to $A_{\textup{reg}}$ is an $\textup{End}(\textup{Hom}(V_r,V_\ell))$-valued smooth function on $A_{\textup{reg}}$ satisfying the differential equations
\begin{equation}\label{EisensteinDE}
\widehat{\Pi}^\sigma(z)\bigl(E_\lambda^\sigma|_{A_{\textup{reg}}}\bigr)=\zeta_{\lambda-\rho}(z)
E_\lambda^\sigma|_{A_{\textup{reg}}}\qquad \forall z\in Z(\mathfrak{g}).
\end{equation}

%%%%%%%%%%%%%%%%%%%%%%%%%%%%%%%%%%%%%%%%
\begin{cor}
The normalised smooth $\textup{End}(\textup{Hom}(V_r,V_\ell))$-valued function on $A_+$ defined by
\begin{equation*}
\begin{split}
\mathbf{E}_\lambda^\sigma(a^\prime):=\delta(a^\prime)E_\lambda^\sigma(a^\prime)=
\delta(a^\prime)\int_Kdx\,\xi_{-\lambda-\rho}(a(a^\prime{}^{-1}x))\sigma(x,k(a^\prime{}^{-1}x))
\end{split}
\end{equation*}
for $a^\prime\in A_+$ 
is a common $\textup{End}(\textup{Hom}(V_r,V_\ell))$-valued eigenfunction for the quantum Hamiltonians of the quantum $\sigma$-spin hyperbolic Calogero-Moser system,
\[
\mathbf{H}^\sigma\bigl(\mathbf{E}_\lambda^{\sigma}\bigr)=-\frac{(\lambda,\lambda)}{2}
\mathbf{E}_\lambda^{\sigma}
\]
and 
\[
H_z^\sigma\bigl(\mathbf{E}_\lambda^{\sigma}\bigr)=\zeta_{\lambda-\rho}(z)
\mathbf{E}_\lambda^{\sigma}\qquad (z\in Z(\mathfrak{g})).
\]
\end{cor}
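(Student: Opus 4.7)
The strategy is a direct computation: conjugate the eigenvalue equation \eqref{EisensteinDE} by the gauge factor $\delta$, and then convert the eigenvalue for $\Omega$ into the stated form using the explicit formula for the central character.

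First I would recall that by Theorem \ref{thmRAD}\,{\bf a} and \eqref{ccY} (applied via the embedding of Proposition \ref{relAnalAlg} and the expression \eqref{relEis} for $f_{\mathcal{H}_\lambda}^{\phi_\ell,\phi_r}$ as $E_\lambda^\sigma(\cdot)T_\lambda^{\phi_\ell,\phi_r}$), one has \eqref{EisensteinDE}, i.e.
\[
\widehat{\Pi}^\sigma(z)\bigl(E_\lambda^\sigma|_{A_{\textup{reg}}}\bigr)=\zeta_{\lambda-\rho}(z)\,E_\lambda^\sigma|_{A_{\textup{reg}}}\qquad \forall\, z\in Z(\mathfrak{g}),
\]
as an identity of $\textup{End}(\textup{Hom}(V_r,V_\ell))$-valued functions, since the rank-one operators $T_\lambda^{\phi_\ell,\phi_r}$ exhaust $\textup{Hom}(V_r,V_\ell)$ by Proposition \ref{relEisPrin}\,{\bf c}. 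Restricting to the positive chamber $A_+\subset A_{\textup{reg}}$ the analytic function $E_\lambda^\sigma|_{A_+}$ therefore satisfies the same eigenvalue equations.

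Next I would apply $H_z^\sigma=\delta\circ\widehat{\Pi}^\sigma(z)\circ\delta^{-1}$ to $\mathbf{E}_\lambda^\sigma=\delta E_\lambda^\sigma|_{A_+}$: the two $\delta$'s absorb each other, giving
\[
H_z^\sigma\bigl(\mathbf{E}_\lambda^\sigma\bigr)=\delta\,\widehat{\Pi}^\sigma(z)\bigl(E_\lambda^\sigma|_{A_+}\bigr)=\zeta_{\lambda-\rho}(z)\,\delta\, E_\lambda^\sigma|_{A_+}=\zeta_{\lambda-\rho}(z)\,\mathbf{E}_\lambda^\sigma,
\]
which gives the second statement. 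For the Schr\"odinger operator I would specialize to $z=\Omega$, use the definition $\mathbf{H}^\sigma=-\tfrac{1}{2}(H_\Omega^\sigma+\|\rho\|^2)$ (obtained by applying $(\textup{id}\otimes\sigma)$ to \eqref{fatH}), and then evaluate the central character:
\[
\zeta_{\lambda-\rho}(\Omega)=(\lambda-\rho,\lambda-\rho+2\rho)=(\lambda-\rho,\lambda+\rho)=(\lambda,\lambda)-\|\rho\|^2.
\]
Combining,
\[
\mathbf{H}^\sigma\bigl(\mathbf{E}_\lambda^\sigma\bigr)=-\tfrac{1}{2}\bigl(\zeta_{\lambda-\rho}(\Omega)+\|\rho\|^2\bigr)\mathbf{E}_\lambda^\sigma=-\tfrac{(\lambda,\lambda)}{2}\mathbf{E}_\lambda^\sigma.
\]

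There is no real obstacle here: the proof is a two-line conjugation argument once one recognizes that the eigenvalue for $E_\lambda^\sigma$ has already been recorded in \eqref{EisensteinDE}, and that the shift by $\|\rho\|^2$ in the definition of $\mathbf{H}$ is exactly calibrated so that $\zeta_{\lambda-\rho}(\Omega)+\|\rho\|^2=(\lambda,\lambda)$. The only point worth flagging is the book-keeping of the $\rho$-shift, which arises because $\mathcal{H}_\lambda$ is induced from $\eta_{\lambda+\rho}$ so that its central character is $\zeta_{\lambda-\rho}$ rather than $\zeta_\lambda$; this is the same $\rho$-shift that motivated the normalization $\mathbf{\Phi}_\lambda^\sigma=\delta\Phi_{\lambda-\rho}^\sigma$ in Theorem \ref{normalizedHCseriesthm}, so that the corollary reads as the natural analogue of Theorem \ref{normalizedHCseriesthm} at the level of Eisenstein integrals.
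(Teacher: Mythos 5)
Your proof is correct and follows the same route the paper intends: the corollary is stated as an immediate consequence of the eigenvalue equations \eqref{EisensteinDE} (themselves obtained from \eqref{EisensteinintDE} and Proposition \ref{relEisPrin}\,{\bf c}) together with the gauge definitions \eqref{Hz} and \eqref{fatH}, exactly as in the proof of Theorem \ref{normalizedHCseriesthm}. Your explicit check $\zeta_{\lambda-\rho}(\Omega)=(\lambda-\rho,\lambda+\rho)=(\lambda,\lambda)-\|\rho\|^2$ is precisely the calibration the paper relies on.
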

%%%%%%%%%%%%%%%%%%%%%%%%%%%%%%%%%%%%%%%%
\begin{rema}
For sufficiently generic $\lambda\in\mathfrak{h}^*$, the $\sigma$-Harish-Chandra series $\Phi_{w\lambda-\rho}^\sigma$ \textup{(}$w\in W$\textup{)} exist and satisfy the same differential equations 
\eqref{EisensteinDE} on $A_+$
as $E_\lambda^\sigma|_{A_+}$. Harish-Chandra's \cite{HCe} 
proved for generic $\lambda\in\mathfrak{h}^*$,
\begin{equation}\label{cfunctionexpansion}
E_\lambda^\sigma(a)T=\sum_{w\in W}c^\sigma(w;\lambda)\Phi_{w\lambda-\rho}^\sigma(a)T
\end{equation}
for $a\in A_+$ and $T\in\textup{Hom}_M(V_r,V_\ell)$,
with leading coefficients $c^\sigma(w;\lambda)\in\textup{End}(\textup{Hom}_M(V_r,V_\ell))$ called $c$-functions (see \cite[Thm. 5]{HCe}). The $c$-function expansion \eqref{cfunctionexpansion}
plays an important role in the harmonic analysis on $G$. 
\end{rema}

For the left hand side of \eqref{cfunctionexpansion}, Remark \ref{relEisPrinRem}\,{\bf b}
provides a representation theoretic interpretation in terms of the principal series representation of $G$. In the next section we obtain a similar representation theoretic interpretation for the $\sigma$-Harish-Chandra series
$\Phi_{w\lambda-\rho}^\sigma$ in terms of Verma modules.
%%%%%%%%%%%%%%%%%%%%%%%%%%%%%%
\section{Formal elementary $\sigma$-spherical functions}\label{RTHC}
%%%%%%%%%%%%%%%%%%%%%%%%%%%%%%%
We fix in this section two finite dimensional semisimple representations
$\sigma_\ell: \mathfrak{k}\rightarrow\mathfrak{gl}(V_\ell)$ and $\sigma_r: \mathfrak{k}\rightarrow
\mathfrak{gl}(V_r)$. We write $\sigma$ for the $\mathfrak{k}\oplus\mathfrak{k}$-representation map $\sigma_\ell\otimes\sigma_r^*$ of the resulting semisimple $\mathfrak{k}\oplus\mathfrak{k}$-module $V_\ell\otimes V_r^*$. 

%%%%%%%%%%%%%%%%%%%%%%%%%%%%%%%%%%%%%%%%%%%%
\subsection{Verma modules}
%%%%%%%%%%%%%%%%%%%%%%%%%%%%%%%%%%%%%%%%%%%%%

In this subsection we relate the algebraic principal series representations to Verma modules. 
Let $V$ be a $\mathfrak{g}$-module $V$. Write
\[
V[\mu]:=\{v\in V \,\, | \,\, hv=\mu(h)v\quad \forall\, h\in\mathfrak{h}\}
\]
for the weight space of $V$ of weight $\mu\in\mathfrak{h}^*$.
Then
\[
\overline{V}:=\prod_{\mu\in\mathfrak{h}^*}V[\mu]
\]
inherits from $V$ the structure of a 
$\mathfrak{g}$-module as follows. Let $v=(v[\mu])_{\mu\in\mathfrak{h}^*}\in \overline{V}$ 
and $z_\alpha\in\mathfrak{g}_\alpha$ ($\alpha\in R\cup\{0\}$), where $\mathfrak{g}_0:=\mathfrak{h}$.
Then $z_\alpha v=((z_\alpha v)[\mu])_{\mu\in\mathfrak{h}^*}$
with 
\[
(z_\alpha v)[\mu]:=z_\alpha v[\mu-\alpha].
\]
Clearly $V\subseteq \overline{V}$ as $\mathfrak{g}$-submodule. Note that $\overline{V}^{\mathfrak{h}-\textup{HC}}=V$ for $\mathfrak{h}$-semisimple $\mathfrak{g}$-modules $V$. 

For $\mu\in\mathfrak{h}^*$ write 
\begin{equation}\label{lambdaiso}
\textup{proj}^\mu_V: \overline{V}\twoheadrightarrow V[\mu],
\qquad v\mapsto v[\mu]
\end{equation}
for the canonical projection, 
and $\textup{incl}^\mu_V: V[\mu]\hookrightarrow V$ for the inclusion map. We omit the sublabel 
$V$ from the notations $\textup{proj}^\mu_V$ and $\textup{incl}^\mu_V$ if the representation $V$ is clear from the context.

%%%%%%%%%%%%%%%%%%%%%%%%%%%%%%%%%%%%
\begin{defi}
The Verma module $M_\lambda$ with highest weight $\lambda\in\mathfrak{h}^*$ is the induced $\mathfrak{g}$-module
\[
M_\lambda:=U(\mathfrak{g})\otimes_{U(\mathfrak{b})}\mathbb{C}_\lambda.
\]
\end{defi}
%%%%%%%%%%%%%%%%%%%%%%%%%%%%%%%%%%%%%%%%
The Verma module $M_\lambda$ and its irreducible quotient $L_\lambda$ are highest weight modules of highest weight $\lambda$.
In particular, they are $\mathfrak{h}$-diagonalizable with finite dimensional weight spaces. The weight decompositions are $M_{\lambda}=\bigoplus_{\mu\leq\lambda}M_{\lambda}[\mu]$ and $L_{\lambda}=\bigoplus_{\mu\leq\lambda}L_{\lambda}[\mu]$ with $\leq$ the dominance order on $\mathfrak{h}^*$ with respect to $R^+$ and with one-dimensional highest weight spaces
$M_\lambda[\lambda]$ and $L_\lambda[\lambda]$. We fix once and for all a highest weight vector $0\not=m_\lambda\in M_\lambda[\lambda]$, and write $0\not=\ell_\lambda\in L_\lambda[\lambda]$ for its projection onto $L_\lambda$. Note that $M_\lambda$ and $L_\lambda$ admit the central character $\zeta_\lambda$. 

The set $\mathfrak{h}^*_{\textup{irr}}$ of highest weights $\lambda$ for which $M_\lambda$ is irreducible is given by
\[
\mathfrak{h}_{\textup{irr}}^*=\{\lambda\in\mathfrak{h}^* \,\,\, | \,\,\,
(\lambda+\rho,\alpha^\vee)\not\in\mathbb{Z}_{>0} \quad \forall\, \alpha\in R^+\},
\]
with $\alpha^\vee:=2\alpha/\|\alpha\|^2$ the co-root of $\alpha$.
Note that $\mathfrak{h}_{\textup{HC}}^*\subseteq \mathfrak{h}_{\textup{irr}}^*$.

For a $\mathfrak{g}$-module $V$ write ${}^\theta V$ for $V$ endowed with the $\theta$-twisted
$\mathfrak{g}$-module structure  
\[
x\ast v:=\theta(x)v,\qquad x\in \mathfrak{g},\,\, v\in V.
\]

%%%%%%%%%%%%%%%%%%%%%%%%%%
\begin{lem}\label{relCC}
Let $\lambda\in\mathfrak{h}^*$. 
\begin{enumerate}
\item[{\bf a.}] We have 
\[
M_{\lambda}^*\overset{\sim}{\longrightarrow}Y_{-\lambda-\rho}
\]
as $\mathfrak{g}$-modules, with the isomorphism $f\mapsto\widehat{f}$ given by
$\widehat{f}(x):=f(S(x)m_{\lambda})$
for $f\in M_{\lambda}^*$ and $x\in U(\mathfrak{g})$. 
\item[{\bf b.}] If $\lambda\in\mathfrak{h}_{\textup{irr}}^*$, then
\[
 \overline{M}_\lambda\simeq {}^\theta Y_{-\lambda-\rho}
\]
as $\mathfrak{g}$-modules. In particular, $M_\lambda\simeq {}^\theta Y_{-\lambda-\rho}^{\mathfrak{h}-\textup{HC}}$ as $\mathfrak{g}$-modules.
\end{enumerate}
\end{lem}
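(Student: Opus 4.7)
For part (a) the argument splits into three verifications. First, $\widehat{f}\in Y_{-\lambda-\rho}$: since $S$ preserves $U(\mathfrak{b})$ and $\eta_\lambda\circ S=\eta_{-\lambda}$ as algebra characters of $U(\mathfrak{b})$ (a one-line check on the generators $\mathfrak{h}\oplus\mathfrak{n}$), one has $S(y)m_\lambda=\eta_{-\lambda}(y)m_\lambda$ for $y\in U(\mathfrak{b})$, so that $\widehat{f}(yz)=f(S(z)S(y)m_\lambda)=\eta_{-\lambda}(y)\widehat{f}(z)$ as required. Second, $\mathfrak{g}$-equivariance is a direct calculation using the dual action $(yf)(v)=f(S(y)v)$ on $M_\lambda^*$: $\widehat{yf}(x)=f(S(y)S(x)m_\lambda)=f(S(xy)m_\lambda)=\widehat{f}(xy)=(y\widehat{f})(x)$. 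Third, both $M_\lambda^*$ and $Y_{-\lambda-\rho}$ identify linearly with $U(\mathfrak{n}^-)^*$, the former via PBW ($M_\lambda\simeq U(\mathfrak{n}^-)m_\lambda$) and the latter via restriction along $U(\mathfrak{g})=U(\mathfrak{b})U(\mathfrak{n}^-)$; under these identifications $f\mapsto\widehat{f}$ becomes pre-composition with the linear bijection $S|_{U(\mathfrak{n}^-)}$, hence is itself a bijection.

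For part (b) my plan is to reduce to showing $\overline{M}_\lambda\simeq ({}^\theta M_\lambda)^*$ as $\mathfrak{g}$-modules, since combining this with part (a) (after the trivial identification $({}^\theta V)^*={}^\theta(V^*)$) produces the desired $\overline{M}_\lambda\simeq {}^\theta Y_{-\lambda-\rho}$. The key tool is the Shapovalov form $B:M_\lambda\times M_\lambda\to\mathbb{C}$, the unique bilinear form with $B(m_\lambda,m_\lambda)=1$ contravariant with respect to the Chevalley anti-involution $\omega=\theta\circ S$, i.e.\ $B(xv,w)=B(v,\omega(x)w)$. Because $B$ pairs weight spaces only with themselves and each weight space of $M_\lambda$ is finite-dimensional, $B$ extends canonically to a bilinear pairing $\overline{M}_\lambda\times M_\lambda\to\mathbb{C}$. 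The Shapovalov determinant formula implies that the restriction of $B$ to each weight space of $M_\lambda$ is nondegenerate precisely when $\lambda\in\mathfrak{h}_{\textup{irr}}^*$, and then a straightforward weight-component argument shows that $v\mapsto B(v,\cdot)$ is a linear isomorphism $\overline{M}_\lambda\overset{\sim}{\to}M_\lambda^*$. A short calculation combining $\omega$-contravariance with the dual-action formula $(yf)(w)=f(S(y)w)$ gives $B(xv,\cdot)=\theta(x)\cdot B(v,\cdot)$ in $M_\lambda^*$, which is precisely the $\mathfrak{g}$-equivariance condition with respect to the $\theta$-twisted dual $({}^\theta M_\lambda)^*$.

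The ``in particular'' statement then follows by restricting the isomorphism to $\mathfrak{h}$-finite parts: $\overline{M}_\lambda^{\mathfrak{h}-\textup{fin}}=M_\lambda$, since the $\mathfrak{h}$-semisimple part of $\prod_\mu M_\lambda[\mu]$ is the direct sum $\bigoplus_\mu M_\lambda[\mu]$, and taking $\mathfrak{h}$-finite parts commutes with the $\theta$-twist (which merely negates weights). The main delicate point I expect is the consistent bookkeeping of the three (anti-)involutions $S$, $\theta$, and $\omega=\theta\circ S$: one must verify carefully that the Shapovalov form, naturally $\omega$-contravariant, induces an isomorphism specifically with the $\theta$-twisted dual rather than with some other twist. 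The hypothesis $\lambda\in\mathfrak{h}_{\textup{irr}}^*$ enters exactly through the Shapovalov determinant formula as the condition ensuring nondegeneracy of $B$ on every weight space.
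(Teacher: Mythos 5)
Your proof is correct and follows essentially the same route as the paper: part {\bf a} is the direct verification behind the paper's citation of Dixmier, and part {\bf b} uses the contravariant (Shapovalov) pairing to identify $\overline{M}_\lambda$ with ${}^\theta(M_\lambda^*)$ and then invokes part {\bf a}. The only cosmetic difference is that the paper obtains nondegeneracy of the form by working on $L_\lambda$ (where it is automatic) and using $M_\lambda=L_\lambda$ for $\lambda\in\mathfrak{h}^*_{\textup{irr}}$, whereas you appeal to the Shapovalov determinant formula --- equivalent facts, since the radical of the form is the maximal proper submodule.
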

%%%%%%%%%%%%%%%%%%%%%%%%%%
\begin{proof}
{\bf a.} This is immediate (it is a special case of \cite[Prop. 5.5.4]{Di}).\\
{\bf b.}
The Shapovalov form is the nondegenerate symmetric bilinear form on $L_\lambda$
satisfying 
\[
B_\lambda(xu,v)=-B_\lambda(u,\theta(x)v)
\]
for $x\in\mathfrak{g}$ and
$u,v\in L_\lambda$ and normalised by $B_\lambda(\ell_\lambda,\ell_\lambda)=1$. It induces an isomorphism of $\mathfrak{g}$-modules
\begin{equation}\label{Shapovaloviso}
{}^\theta \overline{L}_\lambda\overset{\sim}{\longrightarrow}
L_\lambda^*
\end{equation}
mapping $(v[\mu])_{\mu\in\mathfrak{h}^*}\in \overline{L}_\lambda$ to the linear functional 
$u\mapsto\sum_{\mu\in\mathfrak{h}^*}B_\lambda(v[\mu],u)$ on $L_\lambda$.
If $\lambda\in\mathfrak{h}_{\textup{irr}}^*$ then $M_\lambda=L_\lambda$ and the result
follows part {\bf a} of the lemma.
\end{proof}
%%%%%%%%%%%%%%%%%%%%%%%%%%%
\begin{rema}\label{AA}
{\bf a.} The dual $M^\vee$ of a module $M$ in category $\mathcal{O}$ is defined by 
$M^\vee:={}^\theta M^{*,\mathfrak{h}-\textup{HC}}$. The final conclusion of
part {\bf b} of the lemma corresponds to the well known fact that $L_\lambda^\vee\simeq L_\lambda$.
\\
{\bf b.}
Combining Proposition \ref{relAnalAlg} and Lemma \ref{relCC}\,{\bf a}, we have $\lambda\in\mathfrak{h}^*$ an embedding of
$\mathfrak{g}$-modules 
\[
\mathcal{H}_{\lambda}^{K-\textup{fin}}\hookrightarrow M_{-\lambda-\rho}^*,\qquad
f\mapsto \breve{f},
\]
with $\breve{f}(xm_{-\lambda-\rho}):=(r_\ast(x)f)(1)$ for all $x\in U(\mathfrak{g})$. It restricts to 
an isomorphism $\mathcal{H}_\lambda(\tau)\overset{\sim}{\longrightarrow} M_{-\lambda-\rho}^*(\tau)$ for each $\tau\in K^\wedge$.
\end{rema}
%%%%%%%%%%%%%%%%%%%%%%%%%%%%%%%%%%
\subsection{Spaces of $\mathfrak{k}$-intertwiners} 
%%%%%%%%%%%%%%%%%%%%%%%%%%%%%%%%%%
In Subsection \ref{subsectionPSR} we have constructed linear isomorphisms $\iota_{\lambda,V_\ell}: \textup{Hom}_K(\mathcal{H}_\lambda,V_\ell) \overset{\sim}{\longrightarrow} V_\ell$ and $\j_{\lambda,V_r}:
\textup{Hom}_K(V_r,\mathcal{H}_\lambda)\overset{\sim}{\longrightarrow} V_r^*$,
with $\mathcal{H}_\lambda$ the principal series representation. For Verma modules we have the following analogous result. Write $m_\lambda^*$ for the linear functional on $\overline{M}_\lambda$
that vanishes on $\prod_{\mu<\lambda}M_\lambda[\mu]$ and maps $m_\lambda$ to $1$.

%%%%%%%%%%%%%%%%%%%%%%%%%%%%%%%%%%%%%%%%
\begin{prop}\label{relEisPrinalg}
Fix $\lambda\in\mathfrak{h}^*$.
\begin{enumerate}
\item[{\bf a.}] The map
$\textup{ev}_{\lambda,V_\ell}: \textup{Hom}_{\mathfrak{k}}(M_\lambda,V_\ell)\rightarrow V_\ell$,
\[
\textup{ev}_{\lambda,V_\ell}(\phi_\ell):=
\phi_\ell(m_\lambda),
\]
is a linear isomorphism.
\item[{\bf b.}] 
The linear map
$\textup{hw}_{\lambda,V_r}: \textup{Hom}_{\mathfrak{k}}(V_r,\overline{M}_\lambda)\rightarrow V_r^*$, 
defined by 
\[\textup{hw}_{\lambda,V_r}(\phi_r)(v):=m_\lambda^*(\phi_r(v)) \qquad (v\in V_r),
\]
is a linear isomorphism when $\lambda\in\mathfrak{h}_{\textup{irr}}^*$.
\end{enumerate}
\end{prop}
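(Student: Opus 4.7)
The proof rests on the Iwasawa-refined Poincar{\'e}-Birkhoff-Witt factorisations
\[
U(\mathfrak{g}) \simeq U(\mathfrak{k}) \otimes U(\mathfrak{b}), \qquad U(\mathfrak{g}) \simeq U(\mathfrak{b}) \otimes U(\mathfrak{k}),
\]
supplied by $\mathfrak{g} = \mathfrak{k} \oplus \mathfrak{b}$ via multiplication.

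\textbf{Part (a).} The first factorisation realises $U(\mathfrak{g})$ as a free right $U(\mathfrak{b})$-module on $U(\mathfrak{k})$, on which left multiplication by $y \in \mathfrak{k}$ acts in the first tensor factor. Tensoring over $U(\mathfrak{b})$ with $\mathbb{C}_\lambda$ therefore yields a $\mathfrak{k}$-module isomorphism $M_\lambda \simeq U(\mathfrak{k})$ sending $m_\lambda \mapsto 1$, where $U(\mathfrak{k})$ carries the left-multiplication action of $\mathfrak{k}$. Any $\mathfrak{k}$-equivariant map $\phi: U(\mathfrak{k}) \to V_\ell$ then satisfies $\phi(x) = x \phi(1)$ and is thus freely specified by $\phi(1) \in V_\ell$, proving that $\textup{ev}_{\lambda, V_\ell}: \textup{Hom}_{\mathfrak{k}}(M_\lambda, V_\ell) \overset{\sim}{\to} V_\ell$.

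\textbf{Part (b).} Since $\lambda \in \mathfrak{h}^*_{\textup{irr}}$, Lemma \ref{relCC}\,{\bf b} together with $\theta|_{\mathfrak{k}} = \textup{id}_{\mathfrak{k}}$ supplies a $\mathfrak{k}$-module isomorphism $\overline{M}_\lambda \simeq Y_{-\lambda-\rho}$. The second factorisation, by restriction of functionals from $U(\mathfrak{g})$ to $U(\mathfrak{k})$, provides a further $\mathfrak{k}$-module isomorphism $Y_{-\lambda-\rho} \simeq U(\mathfrak{k})^*$ with $\mathfrak{k}$-action $(y g)(z) := g(zy)$. Iterating the intertwining relation $\phi(yv)(z) = \phi(v)(zy)$ yields $\phi(v)(w) = \phi(wv)(1)$ for every $w \in U(\mathfrak{k})$, proving injectivity of $\phi \mapsto [v \mapsto \phi(v)(1)]$. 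For surjectivity I would construct an explicit inverse: given $f \in V_r^*$, set $\phi_f(v)(w) := f(wv)$ on $U(\mathfrak{k})$ and extend to $U(\mathfrak{g})$ by $U(\mathfrak{b})$-equivariance using Iwasawa PBW; a direct check shows $\phi_f$ is a $\mathfrak{k}$-intertwiner $V_r \to Y_{-\lambda-\rho}$ with $\phi_f(v)(1) = f(v)$.

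\textbf{Compatibility.} The remaining step, and the only genuinely technical point, is to verify that the composite isomorphism $\textup{Hom}_{\mathfrak{k}}(V_r, \overline{M}_\lambda) \overset{\sim}{\to} \textup{Hom}_{\mathfrak{k}}(V_r, Y_{-\lambda-\rho}) \overset{\sim}{\to} V_r^*$ agrees with $\textup{hw}_{\lambda, V_r}$. Unwinding the proof of Lemma \ref{relCC}\,{\bf b}, an element $v = (v[\mu])_\mu \in \overline{M}_\lambda$ is sent to the functional $x \mapsto \sum_\mu B_\lambda(v[\mu], S(x) m_\lambda)$ in $Y_{-\lambda-\rho}$. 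Evaluating at $x = 1$ and using contravariance of the Shapovalov form $B_\lambda$, only the weight-$\lambda$ summand survives and equals $B_\lambda(v[\lambda], m_\lambda) = m_\lambda^*(v)$ by the normalisation $B_\lambda(m_\lambda, m_\lambda) = 1$. This identifies the composite with $\textup{hw}_{\lambda, V_r}$ and completes the proof.
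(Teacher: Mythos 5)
Your proof is correct, but it runs along a different track than the paper's. The paper reduces both parts to \emph{injectivity} plus a dimension count: by Lemma \ref{relCC} the intertwiner spaces are identified with $\textup{Hom}_{\mathfrak{k}}(V_\ell^*,Y_{-\lambda-\rho})$ resp. $\textup{Hom}_{\mathfrak{k}}(V_r,Y_{-\lambda-\rho})$, whose dimensions are $\deg(\sigma_\ell)$, $\deg(\sigma_r)$ by the multiplicity result $\textup{mtp}(\tau,Y_\mu^{\mathfrak{k}-\textup{fin}})=\deg(\tau)$ from Dixmier; injectivity of $\textup{ev}_{\lambda,V_\ell}$ then follows from $M_\lambda=U(\mathfrak{k})m_\lambda$ (Iwasawa), and injectivity of $\textup{hw}_{\lambda,V_r}$ from a weight argument inside $\overline{M}_\lambda$: a maximal weight $\nu$ in the support of $\phi_r$ produces a highest weight vector of weight $\nu$ in $M_\lambda$, which forces $\nu=\lambda$ by irreducibility. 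You instead prove bijectivity directly: in (a) the Iwasawa PBW factorisation $U(\mathfrak{g})\simeq U(\mathfrak{k})\otimes U(\mathfrak{b})$ exhibits $M_\lambda$ as a \emph{free} rank-one $U(\mathfrak{k})$-module on $m_\lambda$, so the evaluation map is an isomorphism by Frobenius-type nonsense, with no appeal to $Y_{-\lambda-\rho}$ or multiplicities; in (b) you identify $Y_{-\lambda-\rho}|_{\mathfrak{k}}$ with the full dual $U(\mathfrak{k})^*$ via the opposite factorisation, give an explicit inverse $f\mapsto\phi_f$, and then pay the price the paper avoids, namely an explicit unwinding of the Shapovalov-form identification of Lemma \ref{relCC}\,{\bf b} (evaluation at $1$, orthogonality of weight spaces, $B_\lambda(m_\lambda,m_\lambda)=1$) to check that the composite really is $\textup{hw}_{\lambda,V_r}$. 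The trade-off: your route is more constructive and independent of the multiplicity statement for $Y_\mu^{\mathfrak{k}-\textup{fin}}$, while the paper's weight-maximality argument stays entirely inside $\overline{M}_\lambda$ and never needs to trace the isomorphism of Lemma \ref{relCC}\,{\bf b} explicitly; both hinge on the same inputs (Iwasawa decomposition, irreducibility of $M_\lambda$ in part (b)).
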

%%%%%%%%%%%%%%%%%%%%%%%%%%%%%%%%%%%%%%
\begin{proof}
We assume without loss of generality that $\sigma_\ell,\sigma_r\in\mathfrak{k}^\wedge$.\\
{\bf a.} By Lemma \ref{relCC}\,{\bf a} we have 
\[
\textup{Hom}_{\mathfrak{k}}(M_\lambda,V_\ell)\simeq
\textup{Hom}_{\mathfrak{k}}(V_{\ell}^*, Y_{-\lambda-\rho})
\]
as vector spaces. The latter space is of dimension $\textup{deg}(\sigma_\ell)$. Hence it suffices
to show that $\textup{ev}_{\lambda,V_\ell}$ is injective.
This follows from $M_\lambda=U(\mathfrak{k})m_\lambda$, which is an immediate consequence of the Iwasawa decomposition $\mathfrak{g}_0=\mathfrak{k}_0\oplus\mathfrak{h}_0\oplus \mathfrak{n}_{0,+}$ of $\mathfrak{g}_0$.\\
{\bf b.} Since $\lambda\in\mathfrak{h}_{\textup{irr}}^*$ we have
$\textup{Hom}_{\mathfrak{k}}(V_r,\overline{M}_\lambda)\simeq
\textup{Hom}_{\mathfrak{k}}(V_r,Y_{-\lambda-\rho})$ as vector spaces by
Lemma \ref{relCC}, and the latter space is of dimension $\textup{deg}(\sigma_r)$.
It thus suffices to show that $\textup{hw}_{\lambda,V_r}$ is injective. 
Let $\phi_r\in\textup{Hom}_{\mathfrak{k}}(V_r,\overline{M}_\lambda)$ be a nonzero intertwiner and consider the nonempty set 
\[
\mathcal{P}:=\{\mu\in\mathfrak{h}^* \,\, | \,\, \textup{proj}_{M_\lambda}^\mu(\phi_r(v))\not=0 \,\,\, \textup{ for some }\, v\in V_r \}.
\]
Take a maximal element $\nu\in\mathcal{P}$ with respect to the dominance order $\leq$ on $\mathfrak{h}^*$. 
Fix $v\in V_r$ with $\textup{proj}_{M_\lambda}^\nu(\phi_r(v))\not=0$. Suppose that $e_\alpha(\phi_r(v)[\nu])\not=0$ in $M_\lambda$ for some $\alpha\in R^+$. Then 
$\textup{proj}_{M_\lambda}^{\nu+\alpha}(\phi_r(y_\alpha v))\not=0$, but this contradicts the fact that $\nu+\alpha\not\in\mathcal{P}$.
It follows that $\textup{proj}^\nu_{M_\lambda}(\phi_r(v))$
is a highest weight vector in $M_\lambda$ of highest weight $\nu$. This forces $\nu=\lambda$ since $M_\lambda$ is irreducible, hence $\textup{hw}_{\lambda,V_r}(\phi_r)(v)\not=0$. It follows that $\textup{hw}_{\lambda,V_r}$ is injective, which completes the proof.
\end{proof}
%%%%%%%%%%%%%%%%%%%%%%%%%%%%%%%%%%%%%%
\begin{defi}\label{intertwinerparametrization}
Let 
$\lambda\in\mathfrak{h}^*$. 
\begin{enumerate}
\item[{\bf a.}]
We call $\textup{ev}_{\lambda,V}(\phi_{\ell})$
the expectation value of the intertwiner $\phi_\ell\in\textup{Hom}_{\mathfrak{k}}(M_\lambda,V_\ell)$. We write
$\phi_{\ell,\lambda}^v\in\textup{Hom}_{\mathfrak{k}}(M_\lambda,V_\ell)$ for the $\mathfrak{k}$-intertwiner with expectation value $v\in V_\ell$. 
\item[{\bf b.}] We call
$\textup{hw}_{\lambda,V_r}(\phi_r)$ the highest weight component of the intertwiner $\phi_r\in\textup{Hom}_{\mathfrak{k}}(V_r,\overline{M}_\lambda)$. If $\lambda\in\mathfrak{h}_{\textup{irr}}^*$  then we write
$\phi_{r,\lambda}^f\in\textup{Hom}_{\mathfrak{k}}(V_r,\overline{M}_\lambda)$ for
the intertwiner with highest weight component $f\in V_r^*$.
\end{enumerate}
\end{defi}
%%%%%%%%%%%%%%%%%%%%%%%%%%%%%%%%%%%%%%%%%

The exact relation with the intertwiners from Proposition
\ref{relEisPrin} is as follows. Consider for $\sigma_\ell: K\rightarrow\textup{GL}(V_\ell)$ a finite dimensional $K$-representation the chain of linear isomorphisms
\[
\textup{Hom}_K(V_\ell^*,\mathcal{H}_{-\lambda-\rho})
\overset{\sim}{\longrightarrow}\textup{Hom}_{\mathfrak{k}}(V_\ell^*,M_\lambda^*)
\overset{\sim}{\longrightarrow}\textup{Hom}_{\mathfrak{k}}(M_\lambda,V_\ell)
\overset{\sim}{\longrightarrow} V_\ell.
\]
The first isomorphism is the pushforward of the map defined in Remark \ref{AA}, the second map is transposition and the third map is $\textup{ev}_{\lambda,V_\ell}$. Their composition
is the linear isomorphism $\j_{-\lambda-\rho,V_\ell^*}: \textup{Hom}_K(V_\ell^*,\mathcal{H}_{-\lambda-\rho})\overset{\sim}{\longrightarrow} V_\ell$ defined in Proposition \ref{relEisPrin}.
Similarly, for $\lambda\in\mathfrak{h}_{\textup{irr}}^*$ and $\sigma_r: K\rightarrow\textup{GL}(V_r)$ a finite dimensional $K$-representation we have the
chain of linear isomorphisms
\[
\textup{Hom}_K(V_r,\mathcal{H}_{-\lambda-\rho})
\overset{\sim}{\longrightarrow}\textup{Hom}_{\mathfrak{k}}(V_r,M_\lambda^*)
\overset{\sim}{\longrightarrow}\textup{Hom}_{\mathfrak{k}}(V_r,\overline{M}_\lambda)
\overset{\sim}{\longrightarrow} V_r^*.
\]
In this case the first isomorphism is the pushforward of the map defined in Remark \ref{AA},
the second isomorphism is the pushforward of the $\mathfrak{g}$-intertwiner 
$M_\lambda^*\overset{\sim}{\longrightarrow} {}^\theta\overline{M}_\lambda$ realized by the
Shapovalov form (see Lemma \ref{relCC}\,{\bf b} and its proof), 
and the third map is $\textup{hw}_{\lambda,V_r}$. Their composition
is the linear isomorphism $\j_{-\lambda-\rho,V_r}: \textup{Hom}_K(V_r,\mathcal{H}_{-\lambda-\rho})\overset{\sim}{\longrightarrow} V_r^*$ defined in Proposition \ref{relEisPrin}.

The following corollary is the analogue of Proposition \ref{relEisPrin}\,{\bf c} for Verma modules.
%%%%%%%%%%%%%%%%%%%%%%%%%%%%%%%%%%%%%%%%%%%%%%
\begin{cor}\label{relEisVerma}
Let $V_\ell$ and $V_r$ be finite dimensional semisimple $\mathfrak{k}$-modules. The linear map
\begin{equation*}
\textup{Hom}_{\mathfrak{k}}(M_\lambda,V_{\ell})\otimes\textup{Hom}_{\mathfrak{k}}(V_{r},\overline{M}_\lambda)\rightarrow V_{\ell}\otimes V_{r}^*\simeq \textup{Hom}(V_{r},V_{\ell})
\end{equation*}
defined by $\phi_\ell\otimes\phi_r\mapsto 
S^{\phi_\ell,\phi_r}_\lambda:=
\textup{ev}_{\lambda,V_\ell}(\phi_\ell)\otimes\textup{hw}_{\lambda,V_r}(\phi_r)$, is
a linear isomorphism when $\lambda\in\mathfrak{h}_{\textup{irr}}^*$. 
\end{cor}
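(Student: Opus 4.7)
The plan is that this corollary is essentially immediate from Proposition~\ref{relEisPrinalg}. Under the identification $V_\ell \otimes V_r^* \simeq \textup{Hom}(V_r, V_\ell)$ via $v \otimes f \mapsto f(\cdot)v$, the map $\phi_\ell \otimes \phi_r \mapsto S^{\phi_\ell,\phi_r}_\lambda$ in the corollary is by construction nothing but the tensor product of the two linear maps
\[
\textup{ev}_{\lambda,V_\ell}: \textup{Hom}_{\mathfrak{k}}(M_\lambda, V_\ell) \rightarrow V_\ell, \qquad
\textup{hw}_{\lambda,V_r}: \textup{Hom}_{\mathfrak{k}}(V_r, \overline{M}_\lambda) \rightarrow V_r^*.
\]
I would simply observe this factorisation directly from the definition of $S^{\phi_\ell,\phi_r}_\lambda$, and then invoke the two parts of Proposition~\ref{relEisPrinalg}.

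For the invocation, Proposition~\ref{relEisPrinalg}\,\textbf{a} gives that $\textup{ev}_{\lambda,V_\ell}$ is a linear isomorphism for every $\lambda\in\mathfrak{h}^*$, and Proposition~\ref{relEisPrinalg}\,\textbf{b} gives that $\textup{hw}_{\lambda,V_r}$ is a linear isomorphism provided $\lambda\in\mathfrak{h}^*_{\textup{irr}}$, which is exactly the hypothesis of the corollary. Since both factors are isomorphisms between finite-dimensional vector spaces (of dimensions $\deg(\sigma_\ell)$ and $\deg(\sigma_r)$, respectively, as recorded in the proof of Proposition~\ref{relEisPrinalg}), their tensor product is a linear isomorphism onto $V_\ell \otimes V_r^*$. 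Transporting this through the canonical identification with $\textup{Hom}(V_r, V_\ell)$ yields the claim.

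There is really no obstacle here: the only content is the tensor-product factorisation, and the heavy lifting (in particular the use of the Iwasawa decomposition $\mathfrak{g}_0 = \mathfrak{k}_0 \oplus \mathfrak{h}_0 \oplus \mathfrak{n}_0$ to see that $M_\lambda = U(\mathfrak{k})m_\lambda$, and the use of irreducibility of $M_\lambda$ for $\lambda\in\mathfrak{h}^*_{\textup{irr}}$ to force any nonzero $\mathfrak{k}$-intertwiner $\phi_r$ to have nontrivial highest-weight component) has already been carried out in the proof of Proposition~\ref{relEisPrinalg}. Hence the proof is a single line of text, whose only subtle point is to write the definition of $S^{\phi_\ell,\phi_r}_\lambda$ in the form $\textup{ev}_{\lambda,V_\ell}(\phi_\ell) \otimes \textup{hw}_{\lambda,V_r}(\phi_r)$ and recognise it as the tensor product of the two maps above.
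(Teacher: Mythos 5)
Your proposal is correct and matches the paper's (implicit) argument: the corollary is stated without proof precisely because the map is visibly $\textup{ev}_{\lambda,V_\ell}\otimes\textup{hw}_{\lambda,V_r}$, and Proposition \ref{relEisPrinalg} supplies both factor isomorphisms, the second under the hypothesis $\lambda\in\mathfrak{h}^*_{\textup{irr}}$. Nothing further is needed.
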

%%%%%%%%%%%%%%%%%%%%%%%%%%%%%%%%%%%%%%%%%%%%%%
In Subsection \ref{HCrepSec} we will give a representation interpretation of the analytic $\textup{Hom}(V_r,V_\ell)$-valued function $a\mapsto \Phi_\lambda^\sigma(a)S_\lambda^{\phi_\ell,\phi_r}$
 for $\phi_\ell\in\textup{Hom}_{\mathfrak{k}}(M_\lambda,V_{\ell})$,  
 $\phi_r\in \textup{Hom}_{\mathfrak{k}}(V_{r},\overline{M}_\lambda)$ and $a\in A_+$,
 with $\sigma$ the representation map of the $\mathfrak{k}\oplus\mathfrak{k}$-module
 $V_\ell\otimes V_r^*\simeq \textup{Hom}(V_r,V_\ell)$.
%%%%%%%%%%%
\subsection{The construction of the formal elementary spherical functions}
%%%%%%%%%%%%%
We first introduce $(\mathfrak{h},\mathfrak{k})$-finite and $(\mathfrak{k},\mathfrak{h})$-finite
matrix coefficients of Verma modules. Let $\lambda,\mu\in\mathfrak{h}^*$ with $\mu\leq\lambda$. Recall the projection and inclusion maps $\textup{incl}_{M_\lambda}^\mu$
and $\textup{proj}_{M_\lambda}^\mu$. They are $\mathfrak{h}$-intertwiners
\[
\textup{incl}_{M_\lambda}^\mu\in\textup{Hom}_{\mathfrak{h}}(M_\lambda[\mu],M_\lambda),
\qquad \textup{proj}_{M_\lambda}^\mu\in\textup{Hom}_{\mathfrak{h}}(
\overline{M}_\lambda,M_\lambda[\mu]).
\]
Let $\lambda,\mu\in\mathfrak{h}^*$ with $\mu\leq\lambda$, and fix
$\mathfrak{k}$-intertwiners
$\phi_\ell\in\textup{Hom}_{\mathfrak{k}}(M_\lambda,V_\ell)$ and 
$\phi_r\in\textup{Hom}_{\mathfrak{k}}(V_r,\overline{M}_\lambda)$.
We write 
\[
\phi_\ell^\mu:=\phi_\ell\circ \textup{incl}_{M_\lambda}^\mu\in\textup{Hom}(M_\lambda[\mu],V_\ell),
\qquad
\phi^\mu_r:=\textup{proj}_{M_\lambda}^\mu\circ\phi_r\in\textup{Hom}(V_r,M_\lambda[\mu])
\]
for the weight-$\mu$ components of $\phi_\ell$ and $\phi_r$, respectively.
The map $\phi^\mu_\ell$ encodes $(\mathfrak{k},\mathfrak{h})$-finite matrix coefficients of $M_\lambda$ of type $(\sigma_\ell,\mu)$, and $\phi^\mu_r$ the
$(\mathfrak{h},\mathfrak{k})$-finite matrix coefficients of $M_\lambda$ of type 
$(\mu,\sigma_r)$.
Formal elementary spherical functions are now defined to be the generating series of the compositions 
$\phi_\ell^\mu\circ\phi_r^\mu$ of the weight compositions of the $\mathfrak{k}$-intertwiners $\phi_\ell$
and $\phi_r$:
%%%%%%%%%%%%%%%%%%%%%%%%%%%%%%%%%%%%%%%%%%
\begin{defi}\label{Glambda}
Let $\lambda\in\mathfrak{h}^*$.
For $\phi_\ell\in\textup{Hom}_{\mathfrak{k}}(M_\lambda,V_\ell)$ and 
$\phi_r\in\textup{Hom}_{\mathfrak{k}}(V_r,\overline{M}_\lambda)$ let
\[
F_{M_\lambda}^{\phi_\ell,\phi_r}\in (V_\ell\otimes V_r^*)[[\xi_{-\alpha_1},\ldots,\xi_{-\alpha_\rr}]]\xi_\lambda
\]
be the formal $V_\ell\otimes\mathbf{U}\otimes V_r^*$-valued power series
\[
F_{M_\lambda}^{\phi_\ell,\phi_r}:=\sum_{\mu\leq\lambda}(\phi_\ell\circ\phi_r^\mu)\xi_\mu
=\sum_{\mu\leq\lambda}(\phi_\ell^\mu\circ\phi_r^\mu)\xi_\mu.
\]
We call $F_{M_\lambda}^{\phi_\ell,\phi_r}$ the formal elementary $\sigma$-spherical function 
associated to $M_\lambda$, $\phi_\ell$ and $\phi_r$.
\end{defi}
%%%%%%%%%%%%%%%%%%%%%%%%%%%%%%%%%%%%%%
Note that under the natural identification $\textup{Hom}(V_r,V_\ell)\simeq V_\ell\otimes V_r^*$
we have
\[
\phi_\ell^\lambda\circ\phi_r^\lambda=\textup{ev}_{\lambda,V_\ell}(\phi_\ell)\otimes
\textup{hw}_{\lambda,V_r}(\phi_r)=S_{\lambda}^{\phi_\ell,\phi_r},
\]
hence $F_{M_\lambda}^{\phi_\ell,\phi_r}$ has leading coefficient $S_{\lambda}^{\phi_\ell,\phi_r}$.
By Corollary \ref{relEisVerma}, if $\lambda\in\mathfrak{h}_{\textup{irr}}^*$ and 
$v\in V_\ell$, $f\in V_r^*$, the formal elementary $\sigma$-spherical function $F_{M_\lambda}^{\phi_{\ell,\lambda}^v,\phi_{r,\lambda}^f}$ is the unique formal elementary $\sigma$-spherical function associated to $M_\lambda$
with leading coefficient $v\otimes f$. We will denote it by $F_{M_\lambda}^{v,f}$.

%%%%%%%%%%%%%%%%%%%%%%%%%%%%%%%%%%%%%%%
\subsection{Relation to $\sigma$-Harish-Chandra series}\label{HCrepSec}
%%%%%%%%%%%%
Recall the Harish-Chandra coefficients
$\Gamma_\lambda^\sigma(\mu)\in V_\ell\otimes V_r^*$ in the power series expansion of the $\sigma$-Harish-Chandra series $\Phi_\lambda^\sigma$, see Proposition \ref{cordefHC}. 
We have the following main result of Section \ref{RTHC}.

%%%%%%%%%%%%%%%%%%%%%%%%%%%%%%%%%%%%%%%%
\begin{thm}\label{mainTHMF}
Let $\lambda\in\mathfrak{h}^*$, $\phi_\ell\in\textup{Hom}_{\mathfrak{k}}(M_\lambda,V_\ell)$
and $\phi_r\in\textup{Hom}_{\mathfrak{k}}(V_r,\overline{M}_\lambda)$.
\begin{enumerate}
\item[{\bf a.}] For $z\in Z(\mathfrak{g})$,
\begin{equation}\label{todo2}
\widehat{\Pi}^\sigma(z)F_{M_\lambda}^{\phi_\ell,\phi_r}=\zeta_\lambda(z)F_{M_\lambda}^{\phi_\ell,\phi_r}
\end{equation}
as identity in $(V_\ell\otimes V_r^*)[[\xi_{-\alpha_1},\ldots,\xi_{-\alpha_\rr}]]\xi_\lambda$.
\item[{\bf b.}] For $\lambda\in\mathfrak{h}_{\textup{HC}}^*$
and $\mu\leq\lambda$ 
we have
\begin{equation}\label{mixedmatrixformula}
\phi_\ell\circ\phi_r^\mu=\phi^\mu_\ell\circ\phi_r^\mu=\Gamma_\mu^\sigma(\lambda)S_\lambda^{\phi_\ell,
\phi_r}.
\end{equation}
\item[{\bf c.}] For $\lambda\in\mathfrak{h}_{\textup{HC}}^*$, $v\in V_\ell$ and $f\in V_r^*$ we have
\begin{equation}\label{relationFspher}
F_{M_\lambda}^{v,f}=\Phi_\lambda^\sigma(\cdot)(v\otimes f).
\end{equation}
In particular, $F_{M_\lambda}^{v,f}$ is a $V_\ell\otimes V_r^*$-valued analytic function on $A_+$.
\end{enumerate}
\end{thm}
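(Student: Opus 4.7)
The heart of the argument is part (a); (b) and (c) then follow by uniqueness of Harish-Chandra series. My plan for (a) is to realise $F_{M_\lambda}^{\phi_\ell,\phi_r}$ as a \emph{formal matrix coefficient} of the Verma module and then transfer the identity $z\cdot m=\zeta_\lambda(z)m$ on $\overline{M}_\lambda$ to the radial side. For each $a\in A_{\textup{reg}}$ introduce the formal weight-grading operator $\pi(a)\in\textup{End}(\overline{M}_\lambda)$ defined by $\pi(a)m:=a^\mu m$ for $m\in M_\lambda[\mu]$. Extending $\phi_\ell$ componentwise from $M_\lambda$ to $\overline{M}_\lambda$, Definition \ref{Glambda} rewrites as
\[
F_{M_\lambda}^{\phi_\ell,\phi_r}(a)(v)\;=\;\phi_\ell\bigl(\pi(a)\phi_r(v)\bigr),\qquad v\in V_r,
\]
viewed as a $V_\ell$-valued formal series of the form $\sum_{\mu\leq\lambda}(\cdot)\,\xi_\mu$. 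A direct weight-space computation also gives the key commutation $x\pi(a)=\pi(a)\textup{Ad}_{a^{-1}}(x)$ in $\textup{End}(\overline{M}_\lambda)$ for all $x\in U(\mathfrak{g})$.

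Write $\Pi(z)=\sum_i f_i\otimes h_i\otimes x_i\otimes y_i$ with $f_i\in\mathcal{R}$, $h_i\in U(\mathfrak{h})$ and $x_i,y_i\in U(\mathfrak{k})$. Unpacking $\widehat{\Pi}^\sigma(z)$ under the identification $V_\ell\otimes V_r^*\simeq\textup{Hom}(V_r,V_\ell)$ (using $S^2=\textup{id}$) yields
\[
\bigl(\widehat{\Pi}^\sigma(z)F_{M_\lambda}^{\phi_\ell,\phi_r}\bigr)(a)(v)\;=\;\sum_i f_i(a)\,\sigma_\ell(x_i)\bigl(r^A_\ast(h_i)F_{M_\lambda}^{\phi_\ell,\phi_r}\bigr)(a)\bigl(\sigma_r(y_i)v\bigr).
\]
Using that elements of $U(\mathfrak{h})\simeq S(\mathfrak{h})$ act on $M_\lambda[\mu]$ by evaluation at $\mu$, together with the $\mathfrak{k}$-intertwining properties of $\phi_\ell$ and $\phi_r$, the right-hand side collapses to $\sum_i f_i(a)\,\phi_\ell\bigl(x_i\pi(a)h_iy_i\phi_r(v)\bigr)$. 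Applying $x_i\pi(a)=\pi(a)\textup{Ad}_{a^{-1}}(x_i)$, pulling $\pi(a)$ and the linear map $\phi_\ell$ outside the sum, and invoking the defining identity $\sum_i f_i(a)\textup{Ad}_{a^{-1}}(x_i)h_iy_i=z$ in $U(\mathfrak{g})$ (Theorem \ref{infKAK}), this simplifies to $\phi_\ell(\pi(a)z\phi_r(v))$. Since $Z(\mathfrak{g})$ acts on $\overline{M}_\lambda$ by the central character $\zeta_\lambda$, the expression equals $\zeta_\lambda(z)F_{M_\lambda}^{\phi_\ell,\phi_r}(a)(v)$, proving (a).

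For (c), specialise (a) to $z=\Omega$: the formal series $F_{M_\lambda}^{v,f}$ is a $V_\ell\otimes V_r^*$-valued formal power series eigenfunction of $\widehat{\Pi}^\sigma(\Omega)$ with eigenvalue $(\lambda,\lambda+2\rho)=\zeta_\lambda(\Omega)$ and leading coefficient $v\otimes f$. For $\lambda\in\mathfrak{h}_{\textup{HC}}^*$ the vector-valued analogue of the recursion in Proposition \ref{cordefHC} (as noted in footnote 2) determines such a series uniquely from its leading coefficient, so $F_{M_\lambda}^{v,f}=\Phi_\lambda^\sigma(\cdot)(v\otimes f)$; Proposition \ref{holomorphic} then supplies the analyticity on $A_+$. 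Part (b) is the same uniqueness argument applied to general $(\phi_\ell,\phi_r)$, yielding $F_{M_\lambda}^{\phi_\ell,\phi_r}=\Phi_\lambda^\sigma(\cdot)S_\lambda^{\phi_\ell,\phi_r}$, after which matching the $\xi_\mu$-coefficients gives the claim; the auxiliary equality $\phi_\ell\circ\phi_r^\mu=\phi_\ell^\mu\circ\phi_r^\mu$ is immediate since $\phi_r^\mu$ takes values in $M_\lambda[\mu]$. The principal obstacle is simply making the formal manipulations above rigorous: one must justify treating $\pi(a)\phi_r(v)$ and $\phi_\ell(\pi(a)\phi_r(v))$ as formal power series in $V_\ell[[\xi_{-\alpha_1},\ldots,\xi_{-\alpha_r}]]\xi_\lambda$, and check that the radial-component identity of Theorem \ref{infKAK} and the commutation $x\pi(a)=\pi(a)\textup{Ad}_{a^{-1}}(x)$ survive this interpretation, both of which reduce to routine weight-space bookkeeping.
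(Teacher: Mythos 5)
Your argument is correct and is essentially the paper's own proof in a different packaging: the paper likewise substitutes the radial-component decomposition of Theorem \ref{infKAK} (in the equivalent form $x=\sum_j f_j(a)\,y_jh_j\textup{Ad}_a(z_j)$), uses the $\mathfrak{k}$-intertwining of $\phi_\ell,\phi_r$ together with the central character of $M_\lambda$ to prove part (a), and then obtains (b) and (c) from the uniqueness of the $\sigma$-Harish-Chandra series (Proposition \ref{cordefHC}) and Proposition \ref{holomorphic}, exactly as you do. The one step you defer as ``routine weight-space bookkeeping'' --- justifying that identities obtained by evaluating at points $a\in A_+$ hold as formal power series, with $\pi(a)$ and $\phi_\ell$ interpreted formally --- is precisely the $\Lambda_m$-truncation and $\xi_\nu$-coefficient-matching argument that occupies most of the paper's proof, and your observation that each coefficient receives only finitely many contributions is indeed the mechanism that makes it work.
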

%%%%%%%%%%%%%%%%%%%%%%%%%%%%%%%%%%%%%%%%
\begin{proof}
Since $\mathfrak{h}_{\textup{HC}}^*\subseteq\mathfrak{h}_{\textup{irr}}^*$, part {\bf b} and {\bf c} of the theorem directly follow from part {\bf a}, Proposition \ref{cordefHC}, Proposition \ref{holomorphic} and the fact that the leading coefficient of $F_{M_\lambda}^{v\otimes f}$ is $v\otimes f$. It thus suffices to prove \eqref{todo2}.

Consider the $Q$-grading $U(\mathfrak{g})=\bigoplus_{\gamma\in Q}U[\gamma]$  with 
$U[\gamma]\subset U(\mathfrak{g})$ the subspace consisting of elements 
$x\in U(\mathfrak{g})$ satisfying $\textup{Ad}_a(x)=a^\gamma x$ for all $a\in A$. 
Set
\[
\Lambda:=\{\mu\in\mathfrak{h}^* \,\,\, | \,\,\, \mu\leq\lambda \},
\qquad
\Lambda_m:=\{\mu\in\Lambda\,\,\, | \,\,\, (\lambda-\mu,\rho^\vee)\leq m\}
\]
for $m\in\mathbb{Z}_{\geq 0}$, where $\rho^\vee=\frac{1}{2}\sum_{\alpha\in R^+}\alpha^\vee$. Then $(\lambda-\mu,\rho^\vee)$ is the height of $\lambda-\mu\in\sum_{i=1}^\rr\mathbb{Z}_{\geq 0}\alpha_i$ with respect to the basis $\{\alpha_1,\ldots,\alpha_\rr\}$ of $R$. We will prove that
\begin{equation}\label{todoformal2}
\widehat{\Pi}^{\sigma}(x)F_{M_\lambda}^{\phi_\ell,\phi_r}=\sum_{\mu\in\Lambda}\phi_\ell(x\phi_r^\mu)\xi_\mu
\qquad (x\in U[0])
\end{equation}
in $\textup{Hom}(V_r,V_\ell)[[\xi_{-\alpha_1},\ldots,\xi_{-\alpha_\rr}]]\xi_\lambda$.
This implies \eqref{todo2}, since $Z(\mathfrak{g})\subseteq U[0]$ and $M_\lambda$ admits the central character $\zeta_\lambda$. 

Fix $x\in U[0]$ and write $\Pi(x)=\sum_{j\in J}f_j\otimes h_j\otimes y_j\otimes z_j$ with
$f_j\in\mathcal{R}$, $h_j\in U(\mathfrak{h})$ and $y_j, z_j\in U(\mathfrak{k})$.
By Theorem \ref{infKAK} we get the infinitesimal Cartan decomposition 
\begin{equation}\label{infKAKspecific}
x=\textup{Ad}_a(x)=\sum_{j\in J}f_j(a)y_jh_j\textup{Ad}_a(z_j)\qquad (a\in A_{+})
\end{equation}
of $x\in U[0]$. We will now substitute this decomposition in the truncated version 
$\sum_{\lambda\in\Lambda_m}\phi_\ell(x\phi_r^\mu)\xi_\mu$ of the right hand side of \eqref{todoformal2}.

Note that $\sum_{\lambda\in\Lambda_m}\phi_\ell(x\phi_r^\mu)\xi_\mu\in 
\textup{Hom}(V_r,V_\ell)[\xi_{-\alpha_1},\ldots,\xi_{-\alpha_\rr}]\xi_\lambda$ is a trigonometric quasi-po\-ly\-no\-mial, hence it can be
evaluated at $a\in A_{+}$. Substituting \eqref{infKAKspecific} and using that
$\phi_\ell$ is a $\mathfrak{k}$-intertwiner we obtain the formula
\begin{equation}\label{blob1}
\sum_{\lambda\in\Lambda_m}\phi_\ell(x\phi_r^\mu)a^\mu=
\sum_{j\in J}\sum_{\mu\in\Lambda_m}f_j(a)\sigma_\ell(y_j)
\phi_\ell(h_j\textup{Ad}_a(z_j)\phi_r^\mu)a^\mu
\end{equation}
in $\textup{Hom}(V_r,V_\ell)$. Now expand $z_j=\sum_{\gamma\in I_j}z_j[\gamma]$ along the $Q$-grading of $U(\mathfrak{g})$ with $z_j[\gamma]\in U[\gamma]$ (but no longer in $U(\mathfrak{k})$).
Here $I_j\subset Q$ denotes the finite set of weights for which $z_j[\gamma]\not=0$. Then \eqref{blob1} implies
\begin{equation*}
\sum_{\mu\in\Lambda_m}\phi_\ell(x\phi_r^\mu)a^\mu
=\sum_{j\in J}\sum_{\gamma\in I_j}\sum_{\mu\in\Lambda_m}(\mu+\gamma)(h_j)
\sigma_\ell(y_j)\phi_\ell(z_j[\gamma]\phi_r^\mu)f_j(a)a^{\mu+\gamma}
\end{equation*}
for all $a\in A_{+}$. Let $\eta\geq\lambda$ such that 
$\lambda+\gamma\leq \eta$ for all $\gamma\in I:=\cup_{j\in J}I_j$. Then we conclude that
\begin{equation}\label{blob3}
\sum_{\mu\in\Lambda_m}\phi_\ell(x\phi_r^\mu)\xi_\mu=
\sum_{j\in J}\sum_{\gamma\in I_j}\sum_{\mu\in\Lambda_m}(\mu+\gamma)(h_j)\sigma_\ell(y_j)
\phi_\ell(z_j[\gamma]\phi_r^\mu)f_j\xi_{\mu+\gamma}
\end{equation}
in $\textup{Hom}(V_r,V_\ell)[[\xi_{-\alpha_1},\ldots,\xi_{-\alpha_\rr}]]\xi_\eta$ (here 
the $f_j\in\mathcal{R}$ are represented by their convergent power series 
$f_j=\sum_{\beta\in Q_-}
c_{j,\beta}\xi_\beta$ on $A_+$ ($c_{j,\beta}\in\mathbb{C}$)). We now claim that \eqref{blob3} is valid with the truncated sum over $\Lambda_m$ replaced by the sum over $\Lambda$,
\begin{equation}\label{blob4}
\sum_{\mu\in\Lambda}\phi_\ell(x\phi_r^\mu)\xi_\mu=
\sum_{j\in J}\sum_{\gamma\in I_j}\sum_{\mu\in\Lambda}(\mu+\gamma)(h_j)\sigma_\ell(y_j)
\phi_\ell(z_j[\gamma]\phi_r^\mu)f_j\xi_{\mu+\gamma}
\end{equation}
in $\textup{Hom}(V_r,V_\ell)[[\xi_{-\alpha_1},\ldots,\xi_{-\alpha_\rr}]]\xi_\eta$.

Fix $\nu\in\mathfrak{h}^*$ with $\nu\leq \eta$. 
It suffices to show that the $\xi_\nu$-component of the left 
(resp. right) hand side of \eqref{blob4} is the same as the $\xi_\nu$-component
of the left (resp. right) hand side of \eqref{blob3} when $m\in\mathbb{Z}_{\geq 0}$ satisfies
$(\eta-\nu,\rho^\vee)\leq m$. 

Choose $m\in\mathbb{Z}_{\geq 0}$ with $(\eta-\nu,\rho^\vee)\leq m$. The $\xi_\nu$-component of the left hand side of \eqref{blob4}
is zero if $\nu\not\in\Lambda$ and $\phi_\ell(x\phi_r^\nu)$ otherwise.
Since $(\lambda-\nu,\rho^\vee)\leq (\eta-\nu,\rho^\vee)\leq m$, this coincides with the $\xi_\nu$-component of the left hand side of \eqref{blob3}. The $\xi_\nu$-component of the right hand side of \eqref{blob4} is
\begin{equation}\label{blob5}
\sum_{(j,\beta,\gamma,\mu)}(\mu+\gamma)(h_j)c_{j,\beta}\sigma_\ell(y_j)
\phi_\ell(z_j[\gamma]\phi_r^\mu)\in V_\ell\otimes V_r^*
\end{equation}
with the sum over the finite set of four tuples $(j,\beta,\gamma,\mu)\in J\times Q_-\times I\times 
\Lambda$ satisfying $\gamma\in I_j$ and $\mu+\gamma+\beta=\nu$. For such a four tuple
we have 
\[
(\lambda-\mu,\rho^\vee)=(\lambda+\gamma-\nu+\beta,\rho^\vee)\leq
(\eta-\nu,\rho^\vee)\leq m,
\]
from which it follows that \eqref{blob5} is also the $\xi_\nu$-component of the right hand side of \eqref{blob3}. This concludes the proof of \eqref{blob4}.

Since $\phi_r$ is a $\mathfrak{k}$-intertwiner,
we have for fixed $\nu\in\mathfrak{h}^*$,
\[
\sum_{\stackrel{(\mu,\gamma)\in \Lambda\times I_j:}{\mu+\gamma=\nu}}
z_j[\gamma]\phi_r^\mu=\phi_r^\nu \sigma_r(z_j)
\]
in $\textup{Hom}(V_r,M_\lambda[\nu])$ (in particular, it is zero when $\nu\not\in\Lambda$).
Hence \eqref{blob4} simplifies to
\begin{equation}\label{blob6}
\begin{split}
\sum_{\mu\in\Lambda}\phi_\ell(x\phi_r^\mu)\xi_\mu&=
\sum_{j\in J}\sum_{\nu\in\Lambda}\bigl(\sigma_\ell(y_j)\phi_\ell(\phi_r^\nu)\sigma_r^*(z_j)\bigr)f_j\nu(h_j)\xi_\nu\\
&=\widehat{\Pi}^\sigma(x)F_{M_\lambda}^{\phi_\ell,\phi_r}
\end{split}
\end{equation}
in $\textup{Hom}(V_r,V_\ell)[[\xi_{-\alpha_1},\ldots,\xi_{-\alpha_\rr}]]\xi_\lambda$, as desired.
\end{proof}
%%%%%%%%%%%%%%%%%%%%%%%%%%%%%%%%%%%
Recall the normalisation factor $\delta$, defined by \eqref{deltagauge}. We will also view $\delta$ as formal series
in $\mathbb{C}[[\xi_{-\alpha_1},\ldots,\xi_{-\alpha_\rr}]]\xi_\rho$ through its power series expansion at infinity within $A_+$. 
%%%%%%%%%%%%%%%%%%%%%%%%%%%%%%%%%%%%%%%%%%
\begin{defi}
Let $\lambda\in\mathfrak{h}^*$ and
fix $\mathfrak{k}$-intertwiners $\phi_\ell\in\textup{Hom}_{\mathfrak{k}}(M_{\lambda-\rho},V_\ell)$ and 
$\phi_r\in\textup{Hom}_{\mathfrak{k}}(V_r,\overline{M}_{\lambda-\rho})$. We call
\[
\mathbf{F}_\lambda^{\phi_\ell,\phi_r}:=\delta F_{M_{\lambda-\rho}}^{\phi_\ell,\phi_r}\in
(V_\ell\otimes V_r^*)[[\xi_{-\alpha_1},\ldots,\xi_{-\alpha_\rr}]]\xi_{\lambda}
\]
the normalised formal elementary $\sigma$-spherical function of weight $\lambda$.
We furthermore write  for $\lambda\in\mathfrak{h}^*_{\textup{irr}}+\rho$ and
$v\in V_\ell$, $f\in V_r^*$,
\[
\mathbf{F}_\lambda^{v, f}:=
\delta F_{M_{\lambda-\rho}}^{v\otimes f},
 \]
which is the normalised formal elementary $\sigma$-spherical function of weight $\lambda$
with leading coefficient $v\otimes f$.
\end{defi}
%%%%%%%%%%%%%%%%%%%%%%%%%%%%%%%%%%%%%%%%%%%
By Theorem \ref{mainTHMF}\,{\bf c} we have 
for $\lambda\in\mathfrak{h}_{\textup{HC}}^*+\rho$, $v\in V_\ell$ and $f\in V_r^*$,
\[
\mathbf{F}_\lambda^{v,f}=\mathbf{\Phi}_\lambda^\sigma(\cdot)(v\otimes f).
\]
In particular, $\mathbf{F}_\lambda^{v,f}$ is an $V_\ell\otimes V_r^*$-valued analytic function on $A_+$ when
$\lambda\in\mathfrak{h}_{\textup{HC}}^*+\rho$.

Theorem \ref{normalizedHCseriesthm} now immediately gives the interpretation of
$\mathbf{F}_\lambda^{\phi_\ell,\phi_r}$ as formal eigenstates for the $\sigma$-spin
quantum hyperbolic Calogero-Moser system for all weights $\lambda\in\mathfrak{h}^*$.
%%%%%%%%%%%%%%%%%%%%%%%%%%%%%%%%%%%%%%%%%%%%
\begin{thm}\label{thmnorm}
Let $\lambda\in\mathfrak{h}^*$, $\phi_\ell\in\textup{Hom}_{\mathfrak{k}}(M_\lambda,V_\ell)$
and $\phi_r\in\textup{Hom}_{\mathfrak{k}}(V_r,\overline{M}_\lambda)$. The 
normalised formal elementary 
$\sigma$-spherical function $\mathbf{F}_\lambda^{\phi_\ell,\phi_r}$ of weight
$\lambda$ satisfies the Schr{\"o}dinger equation
\[
\mathbf{H}^\sigma(\mathbf{F}_\lambda^{\phi_\ell,\phi_r})=-\frac{(\lambda,\lambda)}{2}
\mathbf{F}_\lambda^{\phi_\ell,\phi_r}
\]
as well as the eigenvalue equations 
\begin{equation}\label{todo2gauged}
H_z^\sigma(\mathbf{F}_\lambda^{\phi_\ell,\phi_r})=\zeta_{\lambda-\rho}(z)\mathbf{F}_\lambda^{\phi_\ell,\phi_r},
\qquad z\in Z(\mathfrak{g})
\end{equation}
in $(V_\ell\otimes V_r^*)[[\xi_{-\alpha_1},\ldots,\xi_{-\alpha_\rr}]]\xi_{\lambda}$.
\end{thm}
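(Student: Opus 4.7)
The plan is to reduce Theorem \ref{thmnorm} to Theorem \ref{mainTHMF}\,{\bf a} (applied to the Verma module $M_{\lambda-\rho}$) by conjugating with the scalar factor $\delta$. The key observation is that since $\delta=\xi_\rho\prod_{\alpha\in R^+}(1-\xi_{-2\alpha})^{1/2}$, its expansion at infinity is invertible in $\mathbb{C}[[\xi_{-\alpha_1},\dots,\xi_{-\alpha_r}]]\xi_\rho$, so multiplication by $\delta$ defines a bijection
\[
(V_\ell\otimes V_r^*)[[\xi_{-\alpha_1},\dots,\xi_{-\alpha_r}]]\xi_{\lambda-\rho}\;\overset{\sim}{\longrightarrow}\;(V_\ell\otimes V_r^*)[[\xi_{-\alpha_1},\dots,\xi_{-\alpha_r}]]\xi_\lambda,
\]
sending $F_{M_{\lambda-\rho}}^{\phi_\ell,\phi_r}$ to $\mathbf{F}_\lambda^{\phi_\ell,\phi_r}$.

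Next I invoke Theorem \ref{mainTHMF}\,{\bf a} with highest weight $\lambda-\rho$ to obtain
\[
\widehat{\Pi}^\sigma(z)\,F_{M_{\lambda-\rho}}^{\phi_\ell,\phi_r}=\zeta_{\lambda-\rho}(z)\,F_{M_{\lambda-\rho}}^{\phi_\ell,\phi_r}\qquad(z\in Z(\mathfrak{g}))
\]
in $(V_\ell\otimes V_r^*)[[\xi_{-\alpha_1},\dots,\xi_{-\alpha_r}]]\xi_{\lambda-\rho}$. By the definition \eqref{Hz} of the gauged quantum Hamiltonians, $H_z^\sigma=\delta\circ\widehat{\Pi}^\sigma(z)\circ\delta^{-1}$. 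Applying this identity to $\mathbf{F}_\lambda^{\phi_\ell,\phi_r}$ yields
\[
H_z^\sigma\bigl(\mathbf{F}_\lambda^{\phi_\ell,\phi_r}\bigr)=\delta\cdot\widehat{\Pi}^\sigma(z)\,F_{M_{\lambda-\rho}}^{\phi_\ell,\phi_r}=\zeta_{\lambda-\rho}(z)\,\mathbf{F}_\lambda^{\phi_\ell,\phi_r},
\]
which is exactly \eqref{todo2gauged}. The Schr{\"o}dinger equation is then the special case $z=\Omega$: using $\mathbf{H}^\sigma=-\frac{1}{2}(H_\Omega^\sigma+\|\rho\|^2\,\textup{id})$ from \eqref{fatH} together with the identity $\zeta_{\lambda-\rho}(\Omega)=(\lambda-\rho,\lambda+\rho)=(\lambda,\lambda)-\|\rho\|^2$, the eigenvalue for $\mathbf{H}^\sigma$ collapses to $-(\lambda,\lambda)/2$.

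The only point requiring a bit of care is to interpret the conjugation $\delta\circ\widehat{\Pi}^\sigma(z)\circ\delta^{-1}$ at the level of formal power series rather than analytic functions on $A_+$: one must check that the coefficients of $\widehat{\Pi}^\sigma(z)$, viewed as elements of $\mathcal{R}\otimes\textup{End}(V_\sigma)$, act componentwise on the completion $(V_\ell\otimes V_r^*)[[\xi_{-\alpha_1},\dots,\xi_{-\alpha_r}]]\xi_\mu$ for each $\mu\leq\lambda-\rho$, so that multiplication by $\delta^{\pm 1}$ commutes with the action of $H_z^\sigma$ in the appropriate sense. This is routine because $\delta$ and $\delta^{-1}$ are scalar formal power series and the gauge identity $H_z^\sigma=\delta\circ\widehat{\Pi}^\sigma(z)\circ\delta^{-1}$ holds at the level of coefficients in $\mathcal{R}\otimes\textup{End}(V_\sigma)$. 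I do not foresee any substantive obstacle beyond this bookkeeping step; the full content of Theorem \ref{thmnorm} is packaged into Theorem \ref{mainTHMF}\,{\bf a} combined with a scalar gauge transformation.
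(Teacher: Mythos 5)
Your proposal is correct and follows essentially the same route as the paper: the paper's proof likewise combines the eigenvalue equations \eqref{todo2} of Theorem \ref{mainTHMF}\,{\bf a} (applied with highest weight $\lambda-\rho$) with the gauge construction $H_z=\delta\circ\widehat{\Pi}(z)\circ\delta^{-1}$ and $\mathbf{H}=-\frac{1}{2}(H_\Omega+\|\rho\|^2)$ from Subsection \ref{vvCM}. Your eigenvalue computation $\zeta_{\lambda-\rho}(\Omega)=(\lambda,\lambda)-\|\rho\|^2$ and the remark that the conjugation by the scalar series $\delta$ is harmless at the level of formal power series are exactly the (implicit) bookkeeping in the paper's two-line proof.
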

%%%%%%%%%%%%%%%%%%%%%%%%%%%%%%%%%%%%%%%%%%
\begin{proof}
This follows from the differential equations \eqref{todo2} for the formal elementary
$\sigma$-spherical functions, and the results in Subsection \ref{vvCM}.
\end{proof}
%%%%%%%%%%%%%%%%%%%%%%%%%%%%%%%%%%
\begin{rema}
{\bf a.} 
Let $\lambda\in\mathfrak{h}_{\textup{HC}}^*$ and fix $V_\ell,V_r$ finite dimensional $K$-representations. Let $\sigma$ be the resulting tensor product representation of $K\times K$ on
$V_\sigma:=V_\ell\otimes V_r^*\simeq \textup{Hom}(V_r,V_\ell)$. For 
$T\in\textup{Hom}_M(V_r,V_\ell)$ write $H_\lambda^T$ for the $V_\sigma$-valued
smooth
function on $G_{\textup{reg}}$ constructed from the Harish-Chandra series $\Phi_\lambda^\sigma$ by
\[
H_\lambda^T(k_1ak_2^{-1}):=\sigma(k_1,k_2)\Phi_\lambda^\sigma(a)T
\qquad (a\in A_+,\,\, k_1,k_2\in K),
\]
see Remark  \ref{remaHCspherplus}. Then \eqref{relationFspher} gives an interpretation of $H_\lambda^T$ as formal elementary $\sigma$-spherical function associated with $M_\lambda$. 
This should be compared with 
\eqref{relEisxi},
%\eqref{relEis}, 
which gives an interpretation of the Eisenstein integral as spherical function associated to the principal series representation of $G$.\\
{\bf b.} In \cite[Thm. 4.4]{K} Kolb proved an affine rank one analogue of Theorem \ref{mainTHMF} for the pair $(\widehat{\mathfrak{sl}}_2,\widehat{\theta})$, where $\widehat{\theta}$ the Chevalley involution on the affine Lie algebra $\widehat{\mathfrak{sl}}_2$ associated to $\mathfrak{sl}_2$. 
The generalisation of Theorem \ref{mainTHMF} to arbitrary split affine symmetric pairs
will be discussed in a follow-up paper.
\end{rema}
%%%%%%%%%%%%%%%%%%%%%%%%%%%%%%%%%%%%%%

%%%%%%%%%%%%%%%%%%%%%%%%%%%%%%%%%%%%%%%
\subsection{The rank one example}\label{rankoneSection}
%%%%%%%%%%%%%%%%%%%%%%%%%%%%%%%%%%%%%%%
In this subsection we consider $\mathfrak{g}_0=\mathfrak{sl}(2;\mathbb{R})$ 
with linear basis
\[
H=\left(\begin{matrix} 1 & 0\\ 0 & -1\end{matrix}\right), \quad
E=\left(\begin{matrix} 0 & 1\\ 0 & 0\end{matrix}\right),\quad
F=\left(\begin{matrix}  0 & 0\\ 1 & 0\end{matrix}\right),
\] 
and we take $\mathfrak{h}_0=\mathbb{R}H$ as split Cartan subalgebra.  Then $\theta_0(x)=-x^t$
with $x^t$ the transpose of $x\in\mathfrak{g}_0$. Note that $\frac{H}{2\sqrt{2}}\in\mathfrak{h}_0$ has norm one with respect to the Killing form. Let $\alpha$ be the unique positive root, satisfying $\alpha(H)=2$. Then $t_\alpha=\frac{H}{4}$, and we can take $e_\alpha=\frac{E}{2}$ and $e_{-\alpha}=\frac{F}{2}$. With this choice we have $\mathfrak{k}_0=\mathbb{R}y$ with $y:=y_\alpha=\frac{E}{2}-\frac{F}{2}$.

We identify $\mathfrak{h}^*\overset{\sim}{\longrightarrow} \mathbb{C}$ by the map $\lambda\mapsto \lambda(H)$. The positive root $\alpha\in\mathfrak{h}^*$ then corresponds to $2$. The bilinear form
on $\mathfrak{h}^*$ becomes $(\lambda,\mu)=\frac{1}{8}\lambda\mu$ for $\lambda,\mu\in\mathbb{C}$. Furthermore, 
$\mathfrak{h}^*_{\textup{irr}}=\mathfrak{h}^*_{\textup{HC}}$ becomes $\mathbb{C}\setminus
\mathbb{Z}_{\geq 0}$. We also identify $A\overset{\sim}{\longrightarrow} \mathbb{R}_{>0}$ by $\exp_A(sH)\mapsto e^s$  ($s\in\mathbb{R}$). With these identifications, the multiplicative character 
$\xi_\lambda$ on $A$ ($\lambda\in\mathfrak{h}^*$) becomes $\xi_\lambda(a)=a^{\lambda}$ for $a\in\mathbb{R}_{>0}$ and $\lambda\in\mathbb{C}$. 

For $\nu\in\mathbb{C}$ let $\chi_\nu\in\mathfrak{k}^\wedge$ be the one-dimensional representation mapping $y$ to $\nu$. We write $\mathbb{C}_\nu$ for $\mathbb{C}$ regarded as $\mathfrak{k}$-module by $\chi_\nu$. For $\lambda\in\mathbb{C}\setminus\mathbb{Z}_{\geq 0}$ and 
$\nu_\ell, \nu_r\in\mathbb{C}$, the scalar-valued Harish-Chandra series $\Phi_\lambda^{\chi_{\nu_\ell}\otimes \chi_{\nu_r}}$ is the unique analytic function on $A_+$ admitting a power series of the form
\begin{equation}\label{asymptotics}
\Phi_\lambda^{\chi_{\nu_\ell}\otimes\chi_{\nu_r}}(a)=a^\lambda \sum_{k\geq 0}
\Gamma_{-2k}^{\chi_{\nu_\ell}\otimes\chi_{\nu_r}}(\lambda)a^{-2k}\qquad
(a\in A_+)
\end{equation}
with $\Gamma_0^{\chi_{\nu_\ell}\otimes\chi_{\nu_r}}(\lambda)=1$ that satisfies the differential equation
\begin{equation}\label{GHE}
\widehat{\Pi}(\Omega)\Phi_\lambda^{\chi_{\nu_\ell}\otimes\chi_{\nu_r}}=\frac{\lambda(\lambda+2)}{8}\Phi_\lambda^{\chi_{\nu_\ell}\otimes\chi_{\nu_r}}
\end{equation}
(see Proposition \ref{cordefHC}). By Corollary \ref{corR1},
\begin{equation*}
\begin{split}
(\textup{id}_{\mathbb{D}_{\mathcal{R}}}&\otimes\chi_{\nu_\ell}\otimes\chi_{\nu_r})\widehat{\Pi}(\Omega)=
\frac{1}{8}\Bigl(a\frac{d}{da}\Bigr)^2+\frac{1}{4}\left(\frac{a^2+a^{-2}}{a^2-a^{-2}}\right)
a\frac{d}{da}+\frac{2(\nu_\ell+a^2\nu_r)(\nu_\ell+a^{-2}\nu_r)}{(a^2-a^{-2})^2}\\
&=\frac{1}{8}\left(\Bigl(a\frac{d}{da}\Bigr)^2+\left(
\frac{a+a^{-1}}{a-a^{-1}}+\frac{a-a^{-1}}{a+a^{-1}}\right)a\frac{d}{da}
+\frac{4(\nu_\ell+\nu_r)^2}{(a-a^{-1})^2}-\frac{4(\nu_\ell-\nu_r)^2}{(a+a^{-1})^2}\right).
\end{split}
\end{equation*}
The equation \eqref{GHE} is the second-order differential equation that is solved by the associated Jacobi function
(cf., e.g., \cite[\S 4.2]{Ko2}), and $\Phi_\lambda^{\chi_{\nu_\ell}\otimes\chi_{\nu_r}}$ 
is the corresponding asymptotically free solution. 

An explicit expression for
$\Phi_{\lambda}^{\chi_{\nu_\ell}\otimes\chi_{\nu_r}}$ can be now derived as follows.
Rewrite \eqref{GHE} as a second order differential 
equation for $h\Phi_\lambda^{\chi_{\nu_\ell}\otimes\chi_{\nu_r}}$ with
\[
h(a):=(a+a^{-1})^{i(\nu_r-\nu_\ell)}(a-a^{-1})^{-i(\nu_\ell+\nu_r)}.
\]
One then recognizes the resulting differential equation as 
the second-order differential equation \cite[(2.10)]{Ko2} satisfied by the Jacobi function (which is the Gauss' hypergeometric differential equation after an appropriate change of coordinates). Its solutions can be expressed in terms of the
Gauss' hypergeometric series 
\begin{equation}\label{Gauss}
{}_2F_1(a,b;c \, \vert\, s):=\sum_{k=0}^{\infty}\frac{(a)_k(b)_k}{(c)_kk!}s^k,
\end{equation}
where $(a)_k:=a(a+1)\cdots (a+k-1)$ is the Pochhammer symbol (the series \eqref{Gauss} converges for $|s|<1$). 
Then $\Phi_\lambda^{\chi_{\nu_\ell}\otimes\chi_{\nu_r}}$ corresponds to the solution \cite[(2.15)]{Ko2} of the second-order differential equation \cite[(2.10)]{Ko2}. Performing the straightforward  computations gives the following result.
%%%%%%%%%%%%%%%%%%%%%%%%%%%%%%%%%%%%%%%%%%%%%%
\begin{prop}\label{GaussExpres}
For $\lambda\in\mathbb{C}\setminus\mathbb{Z}_{\geq 0}$ we have
\begin{equation*}
\Phi_\lambda^{\chi_{\nu_\ell}\otimes\chi_{\nu_r}}(a)=
(a+a^{-1})^\lambda\left(\frac{a-a^{-1}}{a+a^{-1}}\right)^{i(\nu_\ell+\nu_r)}
{}_2F_1\Bigl(-\frac{\lambda}{2}+i\nu_\ell, -\frac{\lambda}{2}+i\nu_r; -\lambda \,\, | \,\, 
\frac{4}{(a+a^{-1})^2}\Bigr)
\end{equation*}
for $a>1$.
\end{prop}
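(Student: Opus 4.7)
The plan is to reduce the eigenvalue equation \eqref{GHE} to the Gauss hypergeometric differential equation via a gauge transformation and a coordinate change, then invoke the uniqueness statement in Proposition \ref{cordefHC} to identify the asymptotically free solution. Since $\mathfrak{h}^*_{\textup{HC}}$ corresponds precisely to $\lambda\in\mathbb{C}\setminus\mathbb{Z}_{\geq 0}$ in this rank one setting, Proposition \ref{cordefHC} applies, and it suffices to verify that the function on the right hand side is analytic on $A_+$, solves \eqref{GHE}, and has an expansion of the form $a^\lambda(1+O(a^{-2}))$ with leading coefficient $1$.

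First I would change to the additive coordinate $t=\log a$, so that $a\frac{d}{da}=\frac{d}{dt}$, $(a+a^{-1})/(a-a^{-1})=\coth t$, and $(a-a^{-1})/(a+a^{-1})=\tanh t$. Using the second expression for $\widehat{\Pi}(\Omega)$ in the text, \eqref{GHE} becomes
\[
\Bigl(\tfrac{d^2}{dt^2}+(\coth t+\tanh t)\tfrac{d}{dt}+\tfrac{(\nu_\ell+\nu_r)^2}{\sinh^2 t}-\tfrac{(\nu_\ell-\nu_r)^2}{\cosh^2 t}\Bigr)\Phi=\lambda(\lambda+2)\Phi.
\]
Next I perform the gauge transformation $\Phi=h^{-1}\phi$ with $h(a):=(a+a^{-1})^{i(\nu_r-\nu_\ell)}(a-a^{-1})^{-i(\nu_\ell+\nu_r)}$. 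A direct computation of the conjugation $h\circ(\cdot)\circ h^{-1}$ shows that the contributions from $(\log h)''$, $(\log h)'{}^2$ and $(\coth t+\tanh t)(\log h)'$ exactly cancel the $\sinh^{-2} t$ and $\cosh^{-2} t$ potential terms, reducing the eigenvalue equation to Koornwinder's Jacobi function ODE \cite[(2.10)]{Ko2} with parameters $(\alpha,\beta)$ and spectral parameter determined by $(\nu_\ell,\nu_r,\lambda)$.

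Second, the coordinate change $z=4/(a+a^{-1})^2=\operatorname{sech}^2 t$ maps $A_+$ diffeomorphically onto $(0,1)$ and transforms the resulting Jacobi ODE into the standard Gauss hypergeometric equation
\[
z(1-z)w''(z)+\bigl(C-(A+B+1)z\bigr)w'(z)-AB\,w(z)=0
\]
with parameters $A=-\lambda/2+i\nu_\ell$, $B=-\lambda/2+i\nu_r$ and $C=-\lambda$. Because $\lambda\notin\mathbb{Z}_{\geq 0}$ we have $C\notin\mathbb{Z}_{\leq 0}$, so ${}_2F_1(A,B;C\,\vert\,z)$ is a well defined holomorphic solution on $|z|<1$ with value $1$ at $z=0$. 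Undoing the two transformations then produces the candidate
\[
\Phi(a):=(a+a^{-1})^{\lambda}\Bigl(\tfrac{a-a^{-1}}{a+a^{-1}}\Bigr)^{i(\nu_\ell+\nu_r)}{}_2F_1\Bigl(-\tfrac{\lambda}{2}+i\nu_\ell,-\tfrac{\lambda}{2}+i\nu_r;-\lambda\,\Big\vert\,\tfrac{4}{(a+a^{-1})^2}\Bigr),
\]
which is analytic on $A_+$ and satisfies \eqref{GHE}.

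Finally, to identify $\Phi$ with $\Phi_\lambda^{\chi_{\nu_\ell}\otimes\chi_{\nu_r}}$ via Proposition \ref{cordefHC}, I compute the expansion of $\Phi(a)$ at $a=\infty$. The expansions $(a+a^{-1})^{\lambda}=a^{\lambda}(1+a^{-2})^{\lambda}=a^{\lambda}+O(a^{\lambda-2})$, $(a-a^{-1})/(a+a^{-1})=1-2a^{-2}+O(a^{-4})$, and ${}_2F_1(A,B;C\,\vert\,4/(a+a^{-1})^2)=1+O(a^{-2})$ combine to yield $\Phi(a)=a^{\lambda}+O(a^{\lambda-2})$, matching the normalization $\Gamma_0^{\chi_{\nu_\ell}\otimes\chi_{\nu_r}}(\lambda)=1$ in \eqref{asymptotics}; uniqueness then forces $\Phi=\Phi_\lambda^{\chi_{\nu_\ell}\otimes\chi_{\nu_r}}$.

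The main obstacle is the gauge calculation in the first step: one must verify that the prescribed product of singular powers in $h$ cancels \emph{both} potential terms with the correct coefficients $(\nu_\ell+\nu_r)^2$ and $-(\nu_\ell-\nu_r)^2$ simultaneously, using the identities $(\log(2\sinh t))'=\coth t$ and $(\log(2\cosh t))'=\tanh t$ together with $\coth^2 t=1+\sinh^{-2}t$ and $\tanh^2 t=1-\cosh^{-2}t$. This is a routine but somewhat delicate second-order computation, and matching the resulting parameters $(\alpha,\beta)$ to Koornwinder's normalization in \cite[\S 2]{Ko2} before passing to the hypergeometric form requires careful bookkeeping.
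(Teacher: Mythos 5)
Your proposal is correct and follows essentially the same route as the paper: gauge by the same factor $h(a)=(a+a^{-1})^{i(\nu_r-\nu_\ell)}(a-a^{-1})^{-i(\nu_\ell+\nu_r)}$, recognize Koornwinder's Jacobi/hypergeometric equation \cite[(2.10)]{Ko2} after the substitution $z=4/(a+a^{-1})^2$, and identify the asymptotically free solution. The only cosmetic difference is that you pin down the solution by checking the expansion $a^\lambda(1+O(a^{-2}))$ and invoking the uniqueness in Proposition \ref{cordefHC}, whereas the paper simply cites Koornwinder's asymptotically free solution \cite[(2.15)]{Ko2}; both are fine.
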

%%%%%%%%%%%%%%%%%%%%%%%%%%%%%%%%%%%%%%%%%%%%%
 If $\chi_{-\nu_\ell}, \chi_{-\nu_r}\in K^\wedge$ (i.e., if 
$\nu_\ell, \nu_r\in i\mathbb{Z}$), the restriction of the elementary spherical function $f_{\pi_\lambda}^{\chi_{-\nu_\ell},\chi_{-\nu_r}}$ to $A$ is an associated Jacobi function. It can also be expressed
in terms of a single ${}_2F_1$
(see \cite[\S 4.2]{Ko2} and references therein for details).

We compute now an alternative expression for $\Phi_\lambda^{\chi_{\nu_\ell}\otimes\chi_{\nu_r}}$
using its realisation as generating function for compositions of weight components of $\mathfrak{k}$-intertwiners (see Theorem \ref{mainTHMF}).

The Verma module $M_\lambda$ with highest weight $\lambda\in\mathbb{C}$ is explicitly realised as
\[
M_\lambda=\bigoplus_{k=0}^\infty\mathbb{C}u_k
\]
with $u_k:=\frac{1}{k!}F^km_\lambda$ and $\mathfrak{g}$-action $Hu_k=(\lambda-2k)u_k$, $Eu_k=(\lambda-k+1)u_{k-1}$ and $Fu_k=(k+1)u_{k+1}$, where we have set $u_{-1}:=0$. 
We will identify 
\[
\textup{Hom}_{\mathfrak{k}}(\mathbb{C}_\nu,\overline{M}_\lambda)\overset{\sim}{\longrightarrow}
\overline{M}_\lambda^{\chi_\nu},\qquad \psi\mapsto \psi(1),
\]
with $\overline{M}_\lambda^{\chi_\nu}$ the space of $\chi_\nu$-invariant vectors in $\overline{M}_\lambda$ (cf.\ Subsection \ref{chisection}). Note furthermore that
\[
\textup{Hom}_{\mathfrak{k}}(M_\lambda,\mathbb{C}_\nu)\simeq M_{\lambda}^{\ast, \chi_{-\nu}}.
\]
To apply Theorem \ref{mainTHMF} to $\Phi_\lambda^{\chi_{\nu_\ell}\otimes\chi_{\nu_r}}$, we thus need to describe the weight components of the nonzero vectors in the one-dimensional subspaces  $M_{\lambda}^{\ast, \chi_{-\nu_\ell}}$ and $\overline{M}_\lambda^{\chi_{-\nu_r}}$ for $\lambda\in\mathbb{C}\setminus\mathbb{Z}_{\geq 0}$ (here we use that
$\mathbb{C}_{\nu_r}\simeq\mathbb{C}_{-\nu_r}^*$ as $\mathfrak{k}$-modules). For this we need 
some facts about Meizner-Pollaczek polynomials, which we recall from
\cite[\S 9.7]{KS}.

Meixner-Pollaczek polynomials are orthogonal polynomials depending on two
parameters $(\lambda,\phi)$, of which we only need the special case $\phi=\frac{\pi}{2}$.
The monic Meixner-Pollaczek polynomials $\{p_k^{(\lambda)}(s) \,\, | \,\, k\in\mathbb{Z}_{\geq 0}\}$ with $\phi=\frac{\pi}{2}$ are given by
\[
p_k^{(\lambda)}(s)=(2\lambda)_k\Bigl(\frac{i}{2}\Bigr)^k{}_2F_1\bigl(-k,\lambda+is; 2\lambda \, \vert\,  2\bigr).
\]
They satisfy the three-term recursion relation
\begin{equation}\label{3termMP}
p_{k+1}^{(\lambda)}(s)-sp_k^{(\lambda)}(s)+\frac{k(k+2\lambda-1)}{4}p_{k-1}^{(\lambda)}(s)=0,
\end{equation}
where $p_{-1}^{(\lambda)}(s):=0$. 

The following result should be compared with \cite{BW, Ko}, where mixed matrix coefficients of discrete series representations of $\textup{SL}(2;\mathbb{R})$ with respect to hyperbolic and elliptic one-parameter subgroups of $\textup{SL}(2,\mathbb{R})$ are expressed in terms of Meixner-Pollaczek polynomials. 
%%%%%%%%%%%%%%%%%%%%%%%%%%%%%%%%%%%%%
\begin{lem}\label{r1}
Fix $\lambda\in\mathbb{C}\setminus\mathbb{Z}_{\geq 0}$ and $\nu\in\mathbb{C}$.
\begin{enumerate}
\item[{\bf a.}] We have
\[
\overline{M}_\lambda^{\chi_{-\nu}}=\mathbb{C}v_{\lambda; \nu}
\]
with the $(\lambda-2k)$-weight coefficient of $v_{\lambda;\nu}$ given by
\[
v_{\lambda;\nu}[\lambda-2k]=\frac{(-2)^kp_k^{(-\lambda/2)}(-\nu)}{(-\lambda)_k}u_k,
\qquad k\in\mathbb{Z}_{\geq 0}.
\]
\item[{\bf b.}] We have
\[
M_\lambda^{\ast,\chi_{-\nu}}=\mathbb{C}\psi_{\lambda;\nu}
\]
with $\psi_{\lambda;\nu}$ satisfying
\[
\psi_{\lambda;\nu}(u_k)=\frac{2^kp_k^{(-\lambda/2)}(-\nu)}{k!},\qquad k\in\mathbb{Z}_{\geq 0}.
\]
\end{enumerate}
\end{lem}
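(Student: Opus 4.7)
The plan is to convert the $\chi_{-\nu}$-invariance condition into a three-term recursion for the weight coefficients and then match it to the Meixner--Pollaczek recursion \eqref{3termMP}.

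For part \textbf{a}, write an arbitrary element of $\overline{M}_\lambda$ as $v=\sum_{n\geq 0}c_n u_n$ with $c_n\in\mathbb{C}$. The condition $v\in \overline{M}_\lambda^{\chi_{-\nu}}$ means $yv=-\nu v$, i.e.\ $(E-F)v=-2\nu v$. Using the explicit action $Eu_n=(\lambda-n+1)u_{n-1}$ and $Fu_n=(n+1)u_{n+1}$, comparing coefficients of $u_n$ gives
\[
(\lambda-n)c_{n+1}+2\nu c_n-nc_{n-1}=0,\qquad n\geq 0,
\]
with $c_{-1}:=0$. Since $\lambda\in\mathbb{C}\setminus\mathbb{Z}_{\geq 0}$, the leading coefficient $\lambda-n$ never vanishes, so this recursion determines $(c_n)_{n\geq 1}$ uniquely from $c_0$. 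Thus $\dim\overline{M}_\lambda^{\chi_{-\nu}}\leq 1$. To verify the proposed formula, set $c_n=(-2)^n p_n^{(-\lambda/2)}(-\nu)/(-\lambda)_n$ and use $(-\lambda)_{n+1}=-(-\lambda)_n(\lambda-n)$ to clear denominators; after dividing by the common factor $2(-2)^n/(-\lambda)_n$ the recursion collapses exactly to
\[
p_{n+1}^{(-\lambda/2)}(-\nu)+\nu\,p_n^{(-\lambda/2)}(-\nu)-\tfrac{n(\lambda-n+1)}{4}p_{n-1}^{(-\lambda/2)}(-\nu)=0,
\]
which is \eqref{3termMP} specialised to parameter $-\lambda/2$ at argument $-\nu$.

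For part \textbf{b}, the dual $\mathfrak{k}$-action is $(y\cdot\psi)(w)=-\psi(yw)$, so $\psi\in M_\lambda^{\ast,\chi_{-\nu}}$ translates to $\psi\bigl((E-F)u_n\bigr)=2\nu\,\psi(u_n)$ for all $n$. Setting $b_n:=\psi(u_n)$ and evaluating yields
\[
(n+1)b_{n+1}+2\nu b_n-(\lambda-n+1)b_{n-1}=0,\qquad n\geq 0,
\]
with $b_{-1}:=0$. The leading coefficient $n+1$ is always nonzero, so $(b_n)_{n\geq 1}$ is uniquely determined by $b_0$ and $\dim M_\lambda^{\ast,\chi_{-\nu}}\leq 1$. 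Inserting $b_n=2^n p_n^{(-\lambda/2)}(-\nu)/n!$ and pulling out the common factor $2^{n+1}/n!$ again reduces the relation to the Meixner--Pollaczek three-term recursion, confirming that the claimed $\psi_{\lambda;\nu}$ is well defined and nonzero.

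There is no real conceptual obstacle; the only care needed is bookkeeping of prefactors. The most error-prone step will be tracking the signs in $(-\lambda)_{n+1}=-(-\lambda)_n(\lambda-n)$ and in the Pochhammer shift $(-\lambda)_n=-(-\lambda)_{n-1}(\lambda-n+1)$ when verifying that the ansatz for $c_n$ solves the recursion, since this is where the factors $(-2)^n$ and $1/(-\lambda)_n$ conspire with the asymmetric recursion coefficients $(\lambda-n)$ and $n$ to reproduce the symmetric Meixner--Pollaczek recursion.
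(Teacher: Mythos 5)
Your proof is correct and follows essentially the same route as the paper: translate $\chi_{-\nu}$-invariance into a three-term recursion for the weight coefficients (resp.\ the values $\psi(u_n)$) and identify it, after inserting the stated prefactors, with the Meixner--Pollaczek recursion \eqref{3termMP} at parameter $-\lambda/2$ and argument $-\nu$; the paper merely builds the factor $(-2)^n/(-\lambda)_n$ into the ansatz from the start and leaves part {\bf b} to the reader, which you carry out explicitly.
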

%%%%%%%%%%%%%%%%%%%%%%%%%%%%%%%%%%%%%%%%%%%%
\begin{proof}
{\bf a.} The requirement $yv=-\nu v$ for an element $v\in \overline{M}_\lambda$ 
with weight components of the form
\[
v[\lambda-2k]=\frac{(-2)^kc_k}{(-\lambda)_k}u_k
\]
is equivalent to the condition that the coefficients $c_k\in\mathbb{C}$ ($k\geq 0$) satisfy the three-term recursion relation
\[
c_{k+1}+\nu c_k+\frac{k(k-\lambda-1)}{4}c_{k-1}=0,\qquad k\in\mathbb{Z}_{\geq 0},
\]
where $c_{-1}:=0$. By \eqref{3termMP}, the solution of this three-term recursion relation satisfying $c_0=1$ is given by  
$c_k=p_k^{(-\lambda/2)}(-\nu)$ ($k\in\mathbb{Z}_{\geq 0}$).\\
{\bf b.} The proof is similar to the proof of part {\bf a}.
\end{proof}
%%%%%%%%%%%%%%%%%%%%%%%%%%%%%%%%%%%%%%%%%%%

Let $\nu_\ell, \nu_r\in\mathbb{C}$ and $\lambda\in\mathbb{C}\setminus\mathbb{Z}_{\geq 0}$. We obtain from Lemma \ref{r1} and Theorem \ref{mainTHMF}
the following expression for the Harish-Chandra series 
$\Phi_\lambda^{\chi_{\nu_\ell}\otimes\chi_{\nu_r}}$.
%%%%%%%%%%%%%%%%%%%%%%%%%%%%%%%%%%%%%%%%%%%%
\begin{cor}
Fix $\lambda\in\mathbb{C}\setminus\mathbb{Z}_{\geq 0}$
and $\nu_\ell, \nu_r\in\mathbb{C}$. We have for $a\in \mathbb{R}_{>1}$,
\[
\Phi_\lambda^{\chi_{\nu_\ell}\otimes\chi_{\nu_r}}(a)=a^\lambda\sum_{k=0}^{\infty}
\frac{4^{k}p_k^{(-\lambda/2)}(-\nu_\ell)p_k^{(-\lambda/2)}(-\nu_r)}{(-\lambda)_kk!}
(-a^{-2})^k.
\]
\end{cor}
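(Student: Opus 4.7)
The plan is to apply Theorem \ref{mainTHMF}\,{\bf c}, which identifies the $\sigma$-Harish-Chandra series with the formal elementary spherical function $F_{M_\lambda}^{v,f}=\sum_{\mu\leq\lambda}(\phi_\ell^\mu\circ\phi_r^\mu)\xi_\mu$ of leading coefficient $v\otimes f$, together with the explicit description of the rank-one $\mathfrak{k}$-intertwiner spaces given in Lemma \ref{r1}. Since $\lambda\in\mathbb{C}\setminus\mathbb{Z}_{\geq 0}$ coincides with $\mathfrak{h}_{\textup{HC}}^*$ in the rank-one case, all hypotheses are met.

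First I would fix conventions. The representation $\sigma=\chi_{\nu_\ell}\otimes\chi_{\nu_r}$ on $U(\mathfrak{k})^{\otimes 2}$ corresponds via $\sigma=\sigma_\ell\otimes\sigma_r^\ast$ to $(\sigma_\ell,\sigma_r)=(\chi_{\nu_\ell},\chi_{-\nu_r})$, so $V_\ell=\mathbb{C}_{\nu_\ell}$ and $V_r=\mathbb{C}_{-\nu_r}$; this is consistent with the computation of $(\textup{id}_{\mathbb{D}_{\mathcal{R}}}\otimes\chi_{\nu_\ell}\otimes\chi_{\nu_r})\widehat{\Pi}(\Omega)$ displayed earlier in the subsection. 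Under the identifications $\textup{Hom}_{\mathfrak{k}}(M_\lambda,\mathbb{C}_{\nu_\ell})\simeq M_\lambda^{\ast,\chi_{-\nu_\ell}}$ and $\textup{Hom}_{\mathfrak{k}}(\mathbb{C}_{-\nu_r},\overline{M}_\lambda)\simeq\overline{M}_\lambda^{\chi_{-\nu_r}}$, I would take $\phi_\ell\leftrightarrow\psi_{\lambda;\nu_\ell}$ and $\phi_r\leftrightarrow v_{\lambda;\nu_r}$, the basis vectors produced by Lemma \ref{r1}\,{\bf b} and Lemma \ref{r1}\,{\bf a} (the latter invoked with parameter $\nu=\nu_r$, so that the $\chi_{-\nu}$-invariance there matches the required $\chi_{-\nu_r}$-invariance here).

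Next I would read off the weight components from Lemma \ref{r1}: $\phi_r^{\lambda-2n}(1)=\frac{(-2)^n p_n^{(-\lambda/2)}(-\nu_r)}{(-\lambda)_n}u_n$ and $\phi_\ell^{\lambda-2n}(u_n)=\frac{2^n p_n^{(-\lambda/2)}(-\nu_\ell)}{n!}$, so that
\[
\phi_\ell^{\lambda-2n}\circ\phi_r^{\lambda-2n}=\frac{(-4)^n\,p_n^{(-\lambda/2)}(-\nu_\ell)\,p_n^{(-\lambda/2)}(-\nu_r)}{(-\lambda)_n\,n!}.
\]
At $n=0$ this specialises to $1$, so the leading coefficient of $F_{M_\lambda}^{\phi_\ell,\phi_r}$ equals $S_\lambda^{\phi_\ell,\phi_r}=1\in V_\ell\otimes V_r^\ast\simeq\mathbb{C}$. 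Theorem \ref{mainTHMF}\,{\bf c} then identifies $\Phi_\lambda^{\chi_{\nu_\ell}\otimes\chi_{\nu_r}}$ with $F_{M_\lambda}^{\phi_\ell,\phi_r}$, viewed as an analytic function on $A_+=\mathbb{R}_{>1}$ via Proposition \ref{holomorphic}. Rewriting $(-4)^n a^{-2n}=4^n(-a^{-2})^n$ and pulling out the factor $a^\lambda$ yields the asserted identity.

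The only substantive point is the bookkeeping: translating between $\sigma$ and the pair $(\sigma_\ell,\sigma_r^\ast)$, and matching it with the $\chi_{-\nu}$-invariance convention used in Lemma \ref{r1}. Once this translation is pinned down, the corollary is a direct substitution, and no further computation beyond evaluating the composition of two explicit linear maps is required.
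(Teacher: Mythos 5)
Your proposal is correct and follows essentially the same route as the paper: identify $\phi_\ell,\phi_r$ with $\psi_{\lambda;\nu_\ell}$ and $v_{\lambda;\nu_r}$ via Lemma \ref{r1}, note the leading coefficient $S_\lambda^{\phi_\ell,\phi_r}=1$, and invoke Theorem \ref{mainTHMF} (the paper cites part {\bf b}, you cite the equivalent part {\bf c}) to read off the coefficients $\frac{(-4)^n p_n^{(-\lambda/2)}(-\nu_\ell)p_n^{(-\lambda/2)}(-\nu_r)}{(-\lambda)_n n!}$. Your bookkeeping of the conventions ($V_\ell=\mathbb{C}_{\nu_\ell}$, $V_r=\mathbb{C}_{-\nu_r}$, matching the $\chi_{-\nu}$-invariance in Lemma \ref{r1}) agrees with the paper's.
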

%%%%%%%%%%%%%%%%%%%%%%%%%%%%%%%%%%%%%%%%%%%
\begin{proof}
Note that $v_{\lambda;\nu}[\lambda]=m_\lambda$
and $\textup{ev}_{\lambda,\mathbb{C}_{\chi_\nu}}(\psi_{\lambda;\nu})=\psi_{\lambda;\nu}(m_\lambda)=1$,
hence 
\[
S^{\psi_{\lambda;\nu_\ell},v_{\lambda; \nu_r}}_\lambda=1
\]
(we identify $\textup{Hom}(\mathbb{C}_{\nu_r},\mathbb{C}_{\nu_\ell})\overset{\sim}{\longrightarrow}
\mathbb{C}$ by $T\mapsto T(1)$). 
By Theorem \ref{mainTHMF}\,{\bf b} we then get
\[
\Phi_\lambda^{\chi_{\nu_\ell}\otimes\chi_{\nu_r}}(a)=\sum_{k=0}^{\infty}\psi_{\lambda;\nu_\ell}(v_{\lambda;\nu_r}[\lambda-2k])
a^{\lambda-2k}
\]
for $a\in \mathbb{R}_{>1}$. The result now follows from Lemma \ref{r1}.
\end{proof}
%%%%%%%%%%
Combined with Proposition \ref{GaussExpres}, we reobtain the following special case of the
Poisson kernel identity \cite[\S 2.5.2 (12)]{Er} for Meixner-Pollaczek polynomials.
%%%%%%%%%%%%%%%%%%%%%%%%%%%%%%%%%%%%%%%%
\begin{cor}
We have for $a\in\mathbb{R}_{>1}$,
\begin{equation*}
\begin{split}
a^\lambda\sum_{k=0}^{\infty}&
\frac{4^{k}p_k^{(-\lambda/2)}(-\nu_\ell)p_k^{(-\lambda/2)}(-\nu_r)}{(-\lambda)_kk!}
(-a^{-2})^k=\\
&=(a+a^{-1})^\lambda\left(\frac{a-a^{-1}}{a+a^{-1}}\right)^{i(\nu_\ell+\nu_r)}
{}_2F_1\Bigl(-\frac{\lambda}{2}+i\nu_\ell, -\frac{\lambda}{2}+i\nu_r; -\lambda \,\, | \,\, 
\frac{4}{(a+a^{-1})^2}\Bigr).
\end{split}
\end{equation*}
\end{cor}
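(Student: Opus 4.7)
The proof is essentially a tautology at this point: both sides of the identity are two different expressions for the same object, namely the Harish-Chandra series $\Phi_\lambda^{\chi_{\nu_\ell}\otimes\chi_{\nu_r}}(a)$ on $a\in\mathbb{R}_{>1}$. My plan is simply to equate the two evaluations and observe that nothing more is required.

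First I would recall that $\Phi_\lambda^{\chi_{\nu_\ell}\otimes\chi_{\nu_r}}$ was characterised in Proposition \ref{cordefHC} as the unique formal power series of the form \eqref{asymptotics} with leading coefficient $\Gamma_0^{\chi_{\nu_\ell}\otimes\chi_{\nu_r}}(\lambda)=1$ satisfying the scalar eigenvalue equation \eqref{GHE}, and that by Proposition \ref{holomorphic} it defines a genuine analytic function on $A_+=\mathbb{R}_{>1}$ for $\lambda\in\mathbb{C}\setminus\mathbb{Z}_{\geq 0}=\mathfrak{h}_{\textup{HC}}^*$. This gives a single well-defined analytic function against which the two formulas can be compared.

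Next I would write down the two evaluations. Proposition \ref{GaussExpres} supplies the closed-form right-hand side in terms of a single Gauss hypergeometric ${}_2F_1$, obtained by reducing \eqref{GHE} to the Gauss hypergeometric equation via the explicit gauge $h(a)=(a+a^{-1})^{i(\nu_r-\nu_\ell)}(a-a^{-1})^{-i(\nu_\ell+\nu_r)}$ and selecting the asymptotically free solution \cite[(2.15)]{Ko2}. The immediately preceding corollary supplies the power series left-hand side, derived representation-theoretically: by Theorem \ref{mainTHMF}\,{\bf b} applied to $\sigma_\ell=\chi_{\nu_\ell}$, $\sigma_r=\chi_{\nu_r}$ one has $\Phi_\lambda^{\chi_{\nu_\ell}\otimes\chi_{\nu_r}}(a)=\sum_{n\geq 0}\psi_{\lambda;\nu_\ell}\bigl(v_{\lambda;\nu_r}[\lambda-2n]\bigr)a^{\lambda-2n}$, and Lemma \ref{r1} identifies the weight components of the $\mathfrak{k}$-intertwiners with monic Meixner-Pollaczek polynomials $p_n^{(-\lambda/2)}(-\nu)$.

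Finally, equating the two analytic expressions on $\mathbb{R}_{>1}$ yields the claimed identity. There is essentially no obstacle: both normalisations are fixed (leading coefficient $1$ in the asymptotic expansion at infinity) and both expressions are valid on the full domain $a>1$, since $4/(a+a^{-1})^2<1$ there, which is exactly the convergence domain of the ${}_2F_1$ on the right. The only point worth flagging is the identification with the Poisson kernel \cite[\S 2.5.2 (12)]{Er}, which one reads off after trivial rescaling of the Meixner-Pollaczek normalisation; no further verification of the hypergeometric identity is needed because it is forced by the uniqueness statement in Proposition \ref{cordefHC}.
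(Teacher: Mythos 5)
Your proposal is correct and matches the paper's own (essentially one-line) argument: the identity is obtained precisely by equating the Meixner--Pollaczek series expression for $\Phi_\lambda^{\chi_{\nu_\ell}\otimes\chi_{\nu_r}}$ from the preceding corollary with the closed ${}_2F_1$ form of Proposition \ref{GaussExpres}, both valid as analytic functions on $A_+=\mathbb{R}_{>1}$. Nothing further is needed.
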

%%%%%%%%%%%%%%%%%%%%%%%%%%%%%%%%%%%%%%%%%

%%%%%%%%%%%%%%%%%%%%%%%%%%%%%%%%%%%%%%%%
\begin{rema}
For a different representation theoretic interpretation of the Poisson kernel identity for Meixner-Pollaczek polynomials, see \cite[Prop. 2.1]{KJ}.
\end{rema}
%%%%%%%%%%%%%%%%%%%%%%%%%%%%%%%%%%%%%%%%

%%%%%%%%%%%%%%%%%%%%%%%%%%%%%%%%%%
\section{$N$-point spherical functions}\label{SectionbKZB}
%%%%%%%%%%%%%%%%%%%%%%%%%%%%%%%

%%%%%%%%%%%%%%%%%%%%%%%%%%%%%%
\subsection{Factorisations of the Casimir element}\label{S61}
%%%%%%%%%%%%%%%%%%%%%%%%%%%%%%

Let $M,M^\prime,U$ be $\mathfrak{g}$-modules. We will call $\mathfrak{g}$-intertwiners 
$M\rightarrow M^\prime\otimes U$ vertex operators.
This terminology is stretching the standard 
representation theoretic notion of vertex operators as commonly used in the context of Wess-Zumino-Witten conformal field theory. In that case (see, e.g., \cite{JM}), it refers to intertwiners $M\rightarrow M^\prime\otimes U(z)$ for affine Lie algebra representations $M,M^\prime$ and $U(z)$ with $M$ and $M^\prime$ highest weight representations (playing the role of auxiliary spaces), and $U(z)$ an evaluation representation (playing the role of state space).  

The space $\textup{Hom}(M,M^\prime\otimes U)$ of all linear maps $M\rightarrow M^\prime\otimes U$
admits the following left and right $U(\mathfrak{g})^{\otimes 2}$-action,
\begin{equation*}
\begin{split}
\bigl((x\otimes y)\Psi\bigr)(m):=(x\otimes y)\Psi(m),\\
\bigl(\Psi\ast (x\otimes y)\bigr)(m):=(1\otimes S(x))\Psi(ym)
\end{split}
\end{equation*}
for $x,y\in U(\mathfrak{g})$, $\Psi\in\textup{Hom}(M,M^\prime\otimes U)$ and $m\in M$, with $S$ the antipode of $U(\mathfrak{g})$. Here we suppress the representation maps if no confusion can arise. We sometimes also write $x_M$ 
for the action of $x\in U(\mathfrak{g})$ on the $\mathfrak{g}$-module $M$.
%%%%%%%%%%%%%%%%%%%%%%%%%%%%%%%%%%%%%%%%%%%%%
\begin{defi}
We say that a triple $(\tau^\ell,\tau^r,d)$ with $\tau^\ell,\tau^r\in\mathfrak{g}\otimes\mathfrak{g}$
and $d\in U(\mathfrak{g})$ is a factorisation of the Casimir element $\Omega\in Z(\mathfrak{g})$
if for all $\mathfrak{g}$-modules $M,M^\prime,U$ and for all vertex operators
$\Psi\in\textup{Hom}_{\mathfrak{g}}\bigl(M,M^\prime\otimes U\bigr)$ we have
\begin{equation}\label{operatorKZBpre}
\frac{1}{2}\bigl((\Omega\otimes 1)\Psi-\Psi\Omega\bigr)=\tau^\ell\Psi-\Psi\ast \tau^r+(1\otimes d)\Psi
\end{equation}
in $\textup{Hom}(M,M^\prime\otimes U)$. 
\end{defi}
%%%%%%%%%%%%%%%%%%%%%%%%%%%%%%%%%%%%%%%%%%%%
Suppose that $(\tau^\ell,\tau^r,d)$ is a factorisation of $\Omega$.
If $\Omega_{M}=\zeta_M\textup{id}$ and $\Omega_{M^\prime}=\zeta_{M^\prime}\textup{id}_{M^\prime}$ for some constants $\zeta_M,\zeta_{M^\prime}\in\mathbb{C}$ then we arrive at the {\it asymptotic operator Knizhnik-Zamolodchikov-Bernard (KZB) equation} for 
 vertex operators $\Psi\in\textup{Hom}_{\mathfrak{g}}(M,M^\prime\otimes U)$,
\begin{equation}\label{operatorKZB}
\frac{1}{2}(\zeta_{M^\prime}-\zeta_M)\Psi=\tau^\ell\Psi-\Psi\ast\tau^r+(1\otimes d)\Psi
\end{equation}
(compare with the operator KZ equation from \cite[Thm. 2.1]{FR}).

Consider now vertex operators $\Psi_i\in\textup{Hom}_{\mathfrak{g}}(M_i,M_{i-1}\otimes U_i)$
for $i=1,\ldots,N$. Denote $\mathbf{U}:=U_1\otimes U_2\otimes\cdots\otimes U_N$ and write
$\mathbf{\Psi}\in\textup{Hom}_{\mathfrak{g}}(M_N,M_0\otimes\mathbf{U})$ for the composition
\begin{equation}\label{multiplevertex}
\mathbf{\Psi}:=(\Psi_1\otimes\textup{id}_{U_2\otimes\cdots\otimes U_N})
\cdots (\Psi_{N-1}\otimes\textup{id}_{U_N})\Psi_N
\end{equation}
of the vertex operators $\Psi_i$.
Assume that $\Omega_{M_i}=\zeta_{M_i}\textup{id}_{M_i}$ for some constants $\zeta_{M_i}\in\mathbb{C}$
($0\leq i\leq N$). The asymptotic operator KZB equation \eqref{operatorKZB} now extends to the following system of equations for $\mathbf{\Psi}$.
%%%%%%%%%%%%%%%%%%%%%%%%%%%%%%%%%%%%%%%%%%%%%%
\begin{cor}\label{bulkKZB}
Let $(\tau^\ell,\tau^r,d)$ be a factorisation of $\Omega$ with expansions $\tau^\ell=\sum_k \alpha_k^\ell\otimes \beta_k^\ell$
and $\tau^r=\sum_m\alpha_m^r\otimes\beta_m^r$ in $\mathfrak{g}\otimes\mathfrak{g}$.
Under the above assumptions and conventions we have
\begin{equation}\label{multioperatorKZB}
\begin{split}
\frac{1}{2}(\zeta_{M_{i-1}}-\zeta_{M_i})\mathbf{\Psi}&=\Bigl(\sum_{j=1}^{i-1}\tau^\ell_{U_jU_i}-\sum_{j=i+1}^N
\tau^r_{U_iU_j}+d_{U_i}\Bigr)\mathbf{\Psi}\\
&+\sum_{k}(\alpha_k^\ell)_{M_0}(\beta_k^\ell)_{U_i}\mathbf{\Psi}
+\sum_m(\alpha_m^r)_{U_i}\mathbf{\Psi}(\beta_m^r)_{M_N}
\end{split}
\end{equation}
for $i=1,\ldots,N$.
\end{cor}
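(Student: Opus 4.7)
The strategy is to reduce to the single-vertex-operator identity \eqref{operatorKZB} and then propagate the resulting $\mathfrak{g}$-factors through the outer pieces of the composition. Decompose $\mathbf{\Psi}=\mathbf{\Psi}^L_i\circ(\Psi_i\otimes\textup{id}_{U_{i+1}\otimes\cdots\otimes U_N})\circ\mathbf{\Psi}^R_i$ with
\[
\mathbf{\Psi}^L_i:=(\Psi_1\otimes\textup{id})\cdots(\Psi_{i-1}\otimes\textup{id}),\qquad \mathbf{\Psi}^R_i:=(\Psi_{i+1}\otimes\textup{id})\cdots\Psi_N.
\]
Applying \eqref{operatorKZB} to $\Psi_i\in\textup{Hom}_{\mathfrak{g}}(M_i,M_{i-1}\otimes U_i)$, and unfolding $\Psi_i\ast\tau^r$ using $S(x)=-x$ for $x\in\mathfrak{g}$, produces
\[
\frac{1}{2}(\zeta_{M_{i-1}}-\zeta_{M_i})\Psi_i=\sum_k(\alpha_k^\ell)_{M_{i-1}}(\beta_k^\ell)_{U_i}\Psi_i+\sum_m(\alpha_m^r)_{U_i}\Psi_i(\beta_m^r)_{M_i}+d_{U_i}\Psi_i.
\]
Tensoring with $\textup{id}_{U_{i+1}\otimes\cdots\otimes U_N}$ and composing with $\mathbf{\Psi}^L_i$ on the left and $\mathbf{\Psi}^R_i$ on the right converts the left-hand side into $\frac{1}{2}(\zeta_{M_{i-1}}-\zeta_{M_i})\mathbf{\Psi}$ and immediately handles the diagonal contribution $d_{U_i}\mathbf{\Psi}$, since $d$ lives in the $i$th tensor slot only.

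The heart of the argument is propagating the outer $\mathfrak{g}$-factors to the boundary modules $M_0$ and $M_N$ using the $\mathfrak{g}$-equivariance of each $\Psi_j$ in its two equivalent forms
\[
\Psi_j\circ x_{M_j}=(x)_{M_{j-1}}\Psi_j+(x)_{U_j}\Psi_j,\qquad (x)_{M_j}\circ\Psi_{j+1}=\Psi_{j+1}\circ x_{M_{j+1}}-(x)_{U_{j+1}}\Psi_{j+1}
\]
(both just rephrasings of $\Psi_j\circ x_{M_j}=\Delta(x)\circ\Psi_j$). Iterating the first identity through $j=i-1,i-2,\ldots,1$, I expect to obtain
\[
\mathbf{\Psi}^L_i\circ(\alpha_k^\ell)_{M_{i-1}}=\Big((\alpha_k^\ell)_{M_0}+\sum_{j=1}^{i-1}(\alpha_k^\ell)_{U_j}\Big)\mathbf{\Psi}^L_i,
\]
so that pairing with $(\beta_k^\ell)_{U_i}$ (which commutes with every operator in tensor slots $<i$) and summing over $k$ rewrites the $\tau^\ell$-contribution as $\sum_k(\alpha_k^\ell)_{M_0}(\beta_k^\ell)_{U_i}\mathbf{\Psi}+\sum_{j=1}^{i-1}\tau^\ell_{U_jU_i}\mathbf{\Psi}$. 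Iterating the second identity through $j=i,i+1,\ldots,N-1$ yields symmetrically
\[
(\beta_m^r)_{M_i}\circ\mathbf{\Psi}^R_i=\mathbf{\Psi}^R_i\circ(\beta_m^r)_{M_N}-\sum_{j=i+1}^N(\beta_m^r)_{U_j}\mathbf{\Psi}^R_i,
\]
so that pairing with $(\alpha_m^r)_{U_i}$ and summing over $m$ rewrites the $\tau^r$-contribution as $\sum_m(\alpha_m^r)_{U_i}\mathbf{\Psi}(\beta_m^r)_{M_N}-\sum_{j=i+1}^N\tau^r_{U_iU_j}\mathbf{\Psi}$, in which the minus sign accounts for the $-\sum_{j=i+1}^N\tau^r_{U_iU_j}$ in the statement. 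Adding the three contributions reproduces the claimed identity.

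The whole argument is essentially bookkeeping, and I do not anticipate any substantive obstacle; the only delicate point is to confirm that each newly generated factor $(\alpha_k^\ell)_{U_j}$ (for $j<i$) or $(\beta_m^r)_{U_j}$ (for $j>i$) lands in the tensor slot indicated by its subscript, so that upon pairing with its partner it genuinely reconstitutes $\tau^\ell_{U_jU_i}$ or $\tau^r_{U_iU_j}$ with the positions ordered as in the notation. This ordering follows directly from the commutativity of operators in distinct tensor components.
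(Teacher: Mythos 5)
Your proposal is correct and follows essentially the same route as the paper's proof: isolate the $i$th vertex operator, apply the single-operator identity \eqref{operatorKZB} (equivalently \eqref{operatorKZBpre} with the central characters), and push $\alpha_k^\ell$ to $M_0$ and $\beta_m^r$ to $M_N$ via the $\mathfrak{g}$-equivariance relation $(x\otimes 1)\Psi-\Psi x=-(1\otimes x)\Psi$, which generates exactly the $\tau^\ell_{U_jU_i}$ and $-\tau^r_{U_iU_j}$ terms. Your explicit iterated propagation identities and the sign bookkeeping (including $S(x)=-x$) are all consistent with the paper's argument.
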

%%%%%%%%%%%%%%%%%%%%%%%%%%%%%%%%%%%%%%%%%%%%%
\begin{proof}
Write $\Psi_{M_i}:=\Psi_i\otimes\textup{id}_{U_{i+1}\otimes\cdots\otimes U_N}$. Then
\begin{equation}\label{Fstep}
\frac{1}{2}(\zeta_{M_{i-1}}-\zeta_{M_i})\mathbf{\Psi}=\frac{1}{2}\Psi_{M_1}\cdots\Psi_{M_{i-1}}\bigl(\Omega_{M_{i-1}}\Psi_{M_i}-
\Psi_{M_i}\Omega_{M_i}\bigr)\Psi_{M_{i+1}}\cdots\Psi_{M_N}.
\end{equation}
Now  \eqref{operatorKZBpre} gives 
\[
\frac{1}{2}\bigl(\Omega_{M_{i-1}}\Psi_{M_i}-
\Psi_{M_i}\Omega_{M_i}\bigr)=\sum_k(\alpha_k^\ell)_{M_{i-1}}(\beta_k^\ell)_{U_i}\Psi_{M_i}+
\sum_m(\alpha_m^r)_{U_i}\Psi_{M_i}(\beta_m^r)_{M_i}+d_{U_i}\Psi_{M_i}.
\]
Substitute this equation in \eqref{Fstep} and 
push the action of $\alpha_k^\ell$ on $M_{i-1}$ (resp. the action of $\beta_m^r$ on $M_i$)
through the product $\Psi_{M_1}\cdots\Psi_{M_{i-1}}$
(resp. $\Psi_{M_{i+1}}\cdots\Psi_{M_N}$) of vertex operators
using the fact that
\[
(x\otimes 1)\Psi-\Psi x=-(1\otimes x)\Psi
\]
for $x\in\mathfrak{g}$ and $\Psi\in\textup{Hom}_{\mathfrak{g}}(M,M^\prime\otimes U)$.
This immediately results in \eqref{multioperatorKZB}.
\end{proof}
%%%%%%%%%%%%%

The asymptotic operator KZB equations \eqref{multioperatorKZB} for an appropriate factorisation of $\Omega$ give rise to boundary KZB type equations that are solved by asymptotical $N$-point correlation functions for boundary WZW conformal field theory on a cylinder. Here asymptotical means that the "positions" of the local observables in the correlation functions escape to infinity. We will define the asymptotical $N$-point correlation functions directly in Subsections \ref{S63} \& \ref{intertwinersection}, and call them (formal) $N$-point spherical functions. The discussion how they arise as limits of correlation functions is postponed to a followup paper. 

We give now first two families of examples of factorisations of $\Omega$.
The first family is related to the expression \eqref{Aomega} of $\Omega$. It leads to asymptotic KZB equations for generalised weighted trace functions (see \cite{ES}). As we shall see later, this family also gives rise to the asymptotic boundary KZB equations for the (formal) $N$-point spherical functions using a reflection argument. The second family is related to the Cartan decomposition \eqref{KAKomega} of $\Omega$, and leads directly to the asymptotic boundary KZB equations.
This second derivation of the asymptotic boundary KZB equations is expected to be crucial for the generalisation to quantum groups.

Felder's  \cite{F}, \cite[\S 2]{ES} trigonometric dynamical $r$-matrix $r\in\mathcal{R}\otimes \mathfrak{g}^{\otimes 2}$
is given by
\begin{equation}\label{Felderr}
r:=-\frac{1}{2}\sum_{j=1}^\rr x_j\otimes x_j-\sum_{\alpha\in R}\frac{e_{-\alpha}\otimes e_\alpha}{1-\xi_{-2\alpha}}.
\end{equation}
Set 
\[
r^{\theta_1}:=(\theta\otimes \textup{id}_{\mathfrak{g}})r=
\frac{1}{2}\sum_{j=1}^\rr x_j\otimes x_j+\sum_{\alpha\in R}\frac{e_\alpha\otimes e_\alpha}
{1-\xi_{-2\alpha}}.
\]
For $s=\sum_is_i\otimes t_i\in\mathfrak{g}\otimes\mathfrak{g}$ we write $s_{21}:=
\sum_it_i\otimes s_i$ and $s_{21}^{\theta_2}:=(1\otimes \theta)s_{21}$. Note that
$r^{\theta_1}$ is a symmetric tensor,
\[
r_{21}^{\theta_2}=r^{\theta_1}.
\]
We will write below $r_{21}^{\theta_2}$ for the occurrences of the $\theta$-twisted $r$-matrices in the asymptotic boundary KZB equations, since this is natural when viewing the asymptotic boundary KZB equations as formal limit of integrable boundary qKZB equations (this will be discussed in future work).

Define folded $r$-matrices by
\[
r^{\pm}:=\pm r+r_{21}^{\theta_2}.
\]
Note that $r^+\in\mathcal{R}\otimes\mathfrak{k}\otimes\mathfrak{g}$ and $r^-\in
\mathcal{R}\otimes\mathfrak{p}\otimes \mathfrak{g}$.
The folded $r$-matrices $r^{\pm}$ are explicitly given by
\begin{equation}\label{explicitsigmatauexpl}
\begin{split}
r^+&=\sum_{\alpha\in R}\frac{y_\alpha\otimes e_\alpha}{1-\xi_{-2\alpha}},\\
r^-&=\sum_{j=1}^\rr x_j\otimes x_j+\sum_{\alpha\in R}\frac{(e_\alpha+e_{-\alpha})\otimes e_\alpha}
{1-\xi_{-2\alpha}}.
\end{split}
\end{equation}
%%%%%%%%%%%%%%%%%%%%%%%%%%%%%%%%%%%%%%%%%%%%%%%
\begin{prop}\label{propfactorization}
Fix $a\in A_{\textup{reg}}$. The following triples $(\tau^\ell,\tau^r,d)$ give factorisations of the
Casimir $\Omega\in Z(\mathfrak{g})$.
\begin{enumerate}
\item[{\bf a.}] 
$\tau^\ell=\tau^r=r(a)$, $d=-\frac{1}{2}\sum_{\alpha\in R^+}\Bigl(\frac{1+a^{-2\alpha}}{1-a^{-2\alpha}}
\Bigr)t_\alpha$.
\item[{\bf b.}] $\tau^\ell=r^+(a)$, $\tau^r=-r^-(a)$, $d=b(a)$
with
\begin{equation}\label{kappa}
b:=\frac{1}{2}\sum_{j=1}^\rr x_j^2-\frac{1}{2}\sum_{\alpha\in R^+}
\Bigl(\frac{1+\xi_{-2\alpha}}{1-\xi_{-2\alpha}}\Bigr)t_\alpha+\sum_{\alpha\in R}
\frac{e_\alpha^2}{1-\xi_{-2\alpha}}\in\mathcal{R}\otimes U(\mathfrak{g}).
\end{equation}
\end{enumerate}
\end{prop}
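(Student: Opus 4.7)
The plan is to verify \eqref{operatorKZBpre} by direct computation and observe that both (a) and (b) collapse to a single universal identity in $U(\mathfrak{g})$. Using the $\mathfrak{g}$-equivariance of $\Psi$ and $\Delta(\Omega)=\Omega\otimes 1+1\otimes\Omega+2\Omega^{(12)}$ with
$\Omega^{(12)}:=\sum_j x_j\otimes x_j+\sum_{\alpha\in R}e_\alpha\otimes e_{-\alpha}$
the polarisation of the Killing form, the left-hand side becomes
$$\tfrac{1}{2}\bigl((\Omega\otimes 1)\Psi-\Psi\Omega\bigr)=-\tfrac{1}{2}(1\otimes\Omega)\Psi-\Omega^{(12)}\Psi.$$
The bridge to Felder's $r$-matrix is the identity $r+r_{21}=-\Omega^{(12)}$ in $\mathcal{R}\otimes\mathfrak{g}^{\otimes 2}$, which I would establish by grouping the two sums in \eqref{Felderr} and simplifying via $\tfrac{1}{1-\xi_{2\alpha}}=-\tfrac{\xi_{-2\alpha}}{1-\xi_{-2\alpha}}$ after the substitution $\alpha\mapsto -\alpha$.

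For the right-hand side I would first prove the universal formula
$$\Psi\ast s=-s_{21}\Psi-(1\otimes m(s))\Psi\qquad \bigl(s\in\mathcal{R}\otimes\mathfrak{g}^{\otimes 2}\bigr),$$
where $m\colon U(\mathfrak{g})\otimes U(\mathfrak{g})\to U(\mathfrak{g})$ denotes the multiplication; this follows immediately from the definition $(\Psi\ast(x\otimes y))(v)=(1\otimes S(x))\Delta(y)\Psi(v)$, from $S|_{\mathfrak{g}}=-\textup{id}$, and from the primitivity of $\mathfrak{g}$ in $U(\mathfrak{g})$. Applied with $s=r$ in case (a) and $s=r^-=-r+r_{21}^{\theta_2}$ in case (b), using also that $(r_{21}^{\theta_2})_{21}=r_{21}^{\theta_2}$ since this tensor is symmetric, both cases yield
$$\tau^\ell\Psi-\Psi\ast\tau^r=(r+r_{21})\Psi+(1\otimes c)\Psi=-\Omega^{(12)}\Psi+(1\otimes c)\Psi$$
with $c=m(r)$ in (a) and $c=m(r)-m(r_{21}^{\theta_2})$ in (b). Equating both sides of \eqref{operatorKZBpre} reduces the proposition to the $U(\mathfrak{g})$-identity $c+d=-\tfrac{1}{2}\Omega$.

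A direct computation shows that $m(r_{21}^{\theta_2})=\tfrac{1}{2}\sum_j x_j^2+\sum_{\alpha\in R}\tfrac{e_\alpha^2}{1-\xi_{-2\alpha}}$ is precisely the difference $b-d$ between the proposed data of (b) and (a), so the two identities $m(r)+d=-\tfrac{1}{2}\Omega$ and $m(r)-m(r_{21}^{\theta_2})+b=-\tfrac{1}{2}\Omega$ coincide. Rearranging, what remains to prove is
$$\sum_{\alpha\in R}\frac{e_{-\alpha}e_\alpha}{1-\xi_{-2\alpha}}-\tfrac{1}{2}\sum_{\alpha\in R}e_\alpha e_{-\alpha}=-\tfrac{1}{2}\sum_{\alpha\in R^+}\frac{1+\xi_{-2\alpha}}{1-\xi_{-2\alpha}}t_\alpha,$$
which I would establish by first noting $\sum_{\alpha\in R}e_\alpha e_{-\alpha}=\sum_{\alpha\in R}e_{-\alpha}e_\alpha$ (as $\sum_{\alpha\in R}t_\alpha=0$), then splitting the resulting sum $\tfrac{1}{2}\sum_{\alpha\in R}\tfrac{1+\xi_{-2\alpha}}{1-\xi_{-2\alpha}}e_{-\alpha}e_\alpha$ into positive and negative roots and applying $\tfrac{1+\xi_{2\alpha}}{1-\xi_{2\alpha}}=-\tfrac{1+\xi_{-2\alpha}}{1-\xi_{-2\alpha}}$ together with $[e_\alpha,e_{-\alpha}]=t_\alpha$. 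The only real obstacle is bookkeeping: tracking the $\theta$-twists, the swap $s\mapsto s_{21}$, the antipode sign, and the reflections $\alpha\leftrightarrow-\alpha$ in sums over $R$; once these are organised, both factorisations emerge from this one symbolic identity.
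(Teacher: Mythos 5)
Your proposal is correct, but it follows a genuinely different route from the paper's. The paper proves {\bf a} and {\bf b} separately by substituting two explicit factorised expressions of $\Omega$ into $\tfrac12((\Omega\otimes 1)\Psi-\Psi\Omega)$: the dynamical expression \eqref{Aomega} for part {\bf a}, and the Cartan factorisation \eqref{KAKomega} (after replacing $\Omega$ by $\textup{Ad}_a(\Omega)$) for part {\bf b}, then pushing the factors through $\Psi$ with the elementary relations \eqref{firsteqn}--\eqref{secondeqn} and simplifying via identities such as $2\textup{Ad}_a(y_\alpha)-(a^\alpha+a^{-\alpha})y_\alpha=(a^\alpha-a^{-\alpha})(e_\alpha+e_{-\alpha})$, which is how $r^{\pm}$ and $b$ emerge there. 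You instead argue universally: from $\Delta(\Omega)=\Omega\otimes1+1\otimes\Omega+2\Omega^{(12)}$, the unitarity-type identity $r+r_{21}=-\Omega^{(12)}$ for Felder's $r$-matrix \eqref{Felderr}, your formula $\Psi\ast s=-s_{21}\Psi-(1\otimes m(s))\Psi$ for $s$ with both legs in $\mathfrak{g}$, and the symmetry of $r_{21}^{\theta_2}$, both cases collapse to the single identity $m(r)+d=-\tfrac12\Omega$ in $\mathcal{R}\otimes U(\mathfrak{g})$, which (using $\tfrac12\sum_{\alpha\in R}=\sum_{\alpha\in R^+}$ for the $t_\alpha$-terms) is exactly \eqref{Aomega}, and which you reprove correctly by the $[e_\alpha,e_{-\alpha}]=t_\alpha$ computation; all the intermediate checks ($m(r_{21}^{\theta_2})=b-d$, $(r^-)_{21}=-r_{21}+r_{21}^{\theta_2}$, the antipode sign) are right. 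What your approach buys is a unified treatment of {\bf a} and {\bf b} that exhibits the relations \eqref{rpm}, $r^{\pm}=\pm r+r_{21}^{\theta_2}$ and $\kappa^{\textup{core}}=m(r_{21}^{\theta_2})$, as the structural reason for the factorisations, avoiding the $\textup{Ad}_a(y_\alpha)$ bookkeeping; it is close in spirit to the paper's second (reflection) proof of Proposition \ref{bKZBresult}, which likewise exploits the symmetry of $r^{\theta_1}$ and $b=d+m(r^{\theta_1})$. What it does not give is the paper's derivation of {\bf b} directly from the Cartan factorisation \eqref{KAKomega}, which is the feature linking that case to Harish-Chandra's radial component of $\Omega$ and which the authors single out as the route expected to generalise to the quantum group setting.
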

%%%%%%%%%%%%%%%%%%%%%%%%%%%%%%%%%%%%%%%%%%%%%%%
\begin{proof}
The factorisations are obtained from the explicit expressions
 \eqref{Aomega} and 
\eqref{KAKomega} of $\Omega$ by moving Lie algebra elements in the resulting expression of $\frac{1}{2}\bigl((\Omega\otimes 1)\Psi-\Psi\Omega\bigr)$ through the vertex operator $\Psi$ 
following a particular (case-dependent) strategy. The elementary formulas we need are
\begin{equation}\label{firsteqn}
\begin{split}
(x\otimes 1)\Psi-\Psi x&=-(1\otimes x)\Psi,\\
(xy\otimes 1)\Psi-\Psi xy&=-(1\otimes x)\Psi y-(x\otimes y)\Psi
\end{split}
\end{equation}
for $x,y\in \mathfrak{g}$ and $\Psi\in\textup{Hom}_{\mathfrak{g}}(M,M^\prime\otimes U)$. Note that the second formula gives an expression of $(xy\otimes 1)\Psi-\Psi xy$
with $x$ no longer acting on $M$ and $y$ no longer acting on $M^\prime$. For case {\bf b} we also need formulas such that both $x$ and $y$ are not acting on $M^\prime$ (resp. on $M$),
\begin{equation}\label{secondeqn}
\begin{split}
(xy\otimes 1)\Psi-\Psi xy&=-(1\otimes x)\Psi y-(1\otimes y)\Psi x+(1\otimes yx)\Psi,\\
(xy\otimes 1)\Psi-\Psi xy&=-(x\otimes y)\Psi-(y\otimes x)\Psi-(1\otimes xy)\Psi
\end{split}
\end{equation}
for $x,y\in\mathfrak{g}$ and $\Psi\in\textup{Hom}_{\mathfrak{g}}(M,M^\prime\otimes U)$.
These equations are easily obtained by combining the two formulas of \eqref{firsteqn}.\\
{\bf a.} Substitute \eqref{Aomega} into $\frac{1}{2}((\Omega\otimes 1)\Psi-\Psi\Omega)$ and apply \eqref{firsteqn} to the terms. 
The resulting identity can be written as $\tau^\ell\Psi-\Psi\ast\tau^r+
(1\otimes d)\Psi$ with $(\tau^\ell,\tau^r,d)$ as stated.\\
{\bf b.} Use that
\[
\frac{1}{2}\bigl(
(\Omega\otimes 1)\Psi-\Psi\Omega\bigr)=
\frac{1}{2}\bigl((\textup{Ad}_a(\Omega)\otimes 1)\Psi-\Psi\textup{Ad}_a(\Omega)\bigr)
\]
and substitute \eqref{KAKomega} in the right hand side of this equation. For the quadratic terms $xy$ ($x,y\in\mathfrak{g}$) in the resulting formula we use the second formula of \eqref{firsteqn} when $x\in\mathfrak{k}$ and $y\in\textup{Ad}_a(\mathfrak{k})$, the first formula of \eqref{secondeqn} when both 
$x,y\in\textup{Ad}_a(\mathfrak{k})$ or both $x,y\in\mathfrak{h}$, and the second formula of \eqref{secondeqn} when both $x,y\in\mathfrak{k}$. It results in the formula \eqref{operatorKZBpre}
with $(\tau^\ell,\tau^r,d)$ given by
\begin{equation*}
\begin{split}
\tau^\ell&=\frac{1}{2}\sum_{\alpha\in R}y_\alpha\otimes\frac{((a^\alpha+a^{-\alpha})\textup{Ad}_a(y_\alpha)-2y_\alpha)}{(a^\alpha-a^{-\alpha})^2},\\
\tau^r&=-\sum_{j=1}^\rr x_j\otimes x_j+\frac{1}{2}\sum_{\alpha\in R}
\frac{(a^\alpha+a^{-\alpha})y_\alpha-2\textup{Ad}_a(y_\alpha)}{(a^\alpha-a^{-\alpha})^2}
\otimes\textup{Ad}_a(y_\alpha),\\
d&=\frac{1}{2}\sum_{j=1}^\rr x_j^2-\frac{1}{2}\sum_{\alpha\in R^+}\Bigl(\frac{1+a^{-2\alpha}}{1-a^{-2\alpha}}
\Bigr)t_\alpha+\frac{1}{2}\sum_{\alpha\in R}\frac{(\textup{Ad}_a(y_\alpha^2)-y_\alpha^2)}
{(a^\alpha-a^{-\alpha})^2}.
\end{split}
\end{equation*}
By the elementary identities 
\begin{equation*}
\begin{split}
2\textup{Ad}_a(y_\alpha)-(a^\alpha+a^{-\alpha})y_\alpha&=(a^\alpha-a^{-\alpha})(e_\alpha+e_{-\alpha}),\\
\textup{Ad}_a(y_\alpha^2)-y_\alpha^2&=(a^{2\alpha}-1)e_\alpha^2+(a^{-2\alpha}-1)e_{-\alpha}^2
\end{split}
\end{equation*}
the above expressions for $\tau^\ell, \tau^r$ and $d$ simplify to the expressions 
\eqref{explicitsigmatauexpl} and \eqref{kappa} for $r^+(a),-r^-(a)$ and $b(a)$.
\end{proof}
%%%%%%%%%%%%%%%%%%%%%%%%%%%%%%%%%%%%%%%%%%%%%
\begin{rema}\label{remarkfactorization}
The limit ${}^{\infty}r:=\lim_{a\rightarrow\infty}r(a)$ (meaning $a^{\alpha}\rightarrow\infty$ for all $\alpha\in R^+$) gives the classical $r$-matrix
\[
{}^{\infty}r=-\frac{1}{2}\sum_{j=1}^\rr x_j\otimes x_j-\sum_{\alpha\in R^+}e_{-\alpha}\otimes e_\alpha
\in\theta(\mathfrak{b})\otimes\mathfrak{b}.
\]
The corresponding limit for the folded $r$-matrices $r^{\pm}(a)$ gives
\[
{}^{\infty}r^{\pm}:=\pm {}^{\infty}r+{}^{\infty}r_{21}^{\theta_2}.
\]
As a consequence of Proposition \ref{propfactorization}
we then obtain the following two (nondynamical) factorisations of the Casimir $\Omega$,
\begin{enumerate}
\item[{\bf a.}] $(\sigma,\tau,d)=({}^{\infty}r,{}^{\infty}r,-2t_\rho)$.
\item[{\bf b.}] $(\sigma,\tau,d)=({}^{\infty}r^+,-{}^{\infty}r^-,{}^{\infty}b)$ with 
\begin{equation}
{}^{\infty}b:=\frac{1}{2}\sum_{j=1}^\rr x_j^2-t_\rho+\sum_{\alpha\in R^+}e_\alpha^2\in U(\mathfrak{g}).
\end{equation}
\end{enumerate}
\end{rema}
%%%%%%%%%%%%%%%%%%%%%%%%%%%%%%%%%%%%%%%%%%%%%%

%%%%%%%%%%%%%%%%%%%%%%%%%%%%%%%%%%%%%%%%%%%%%
\subsection{Differential vertex operators}\label{S62}
%%%%%%%%%%%%%%%%%%%%%%%%%%%%%%%%%%%%%%%%%%%%%
In Subsection \ref{S63} we apply the results of the previous subsection to $M_i=\mathcal{H}_{\lambda_i}^\infty$ with $\lambda_i\in\mathfrak{h}^*$ ($i=0,\ldots,N$) and to finite dimensional $G$-representations $U_j$ ($j=1,\ldots,N$). Before doing so, we first describe the 
appropriate class of vertex operators in this context, which consists of $G$-equivariant differential operators.
We use the notion of vector-valued $G$-equivariant differential operators between spaces of global sections of complex vector bundles, see \cite[Chpt. II]{He} as well as \cite[\S 1]{KR}.

Identify the $G$-space $\mathcal{H}_{\lambda}^{\infty}$ with the space of global smooth sections of the complex line bundle $\mathcal{L}_\lambda:=(G\times\mathbb{C})/\sim_\lambda$ over $G/AN_+\simeq K$, with equivalence relation
$\sim_\lambda$ given by 
\begin{equation}\label{sim}
(gb,\eta_{\lambda+\rho}(b^{-1})c)\sim_\lambda (g,c),\qquad  (g\in G, b\in AN_+, c\in\mathbb{C}).
\end{equation}
For $\lambda,\mu\in\mathfrak{h}^*$ and $U$ a finite dimensional $G$-representation let $\mathbb{D}(\mathcal{H}_{\lambda}^\infty,\mathcal{H}_{\mu}^{\infty}\otimes U)$ be the space of differential 
$G$-intertwiners $\mathcal{H}_{\lambda}^{\infty}\rightarrow\mathcal{H}_{\mu}^{\infty}\otimes U$
(note that $\mathcal{H}_\mu^\infty\otimes U$ is the space of smooth section of the vector bundle
$(G\times U)/\sim_\lambda$, with $\sim_\lambda$ given by the same formula \eqref{sim} with $c\in U$). We call $D\in\mathbb{D}(\mathcal{H}_\lambda^\infty,\mathcal{H}_\mu^{\infty}\otimes U)$ a {\it differential vertex operator}. 

Let $\mathbb{D}_0^\prime(\mathcal{L}_\lambda)$ be the $\mathfrak{g}$-module consisting of distributions on $\mathcal{L}_\lambda$ supported at $1\in K$. Note that $\mathbb{D}_0^\prime(\mathcal{L}_\lambda)$ is contained in the continuous linear dual of $\mathcal{H}_\lambda^\infty$. A straightforward adjustment of the proof of \cite[Lem. 2.4]{CS} yields a linear isomorphism
\[
\mathbb{D}(\mathcal{H}_{\lambda}^\infty,\mathcal{H}_{\mu}^{\infty}\otimes U)\simeq
\textup{Hom}_{\mathfrak{g}}(\mathbb{D}_0^\prime(\mathcal{L}_\mu),\mathbb{D}_0^\prime(\mathcal{L}_\lambda)\otimes U)
\]
via dualisation. Furthermore,
\[
M_{-\lambda-\rho}\simeq \mathbb{D}_0^\prime(\mathcal{L}_\lambda)
\]
as $\mathfrak{g}$-modules by Schwartz' theorem, with the distribution $\omega$ associated to 
$xm_{-\lambda-\rho}$
($x\in U(\mathfrak{g})$) defined by
\[
\omega(\phi):=\bigl(r_\ast(x)\phi\bigr)(1)\qquad (\phi\in\mathcal{H}_\lambda^\infty)
\] 
(see again the proof of \cite[Lem. 2.4]{CS}). We thus reach the following conclusion.
%%%%%%%%%%%%%%%%%%%%%%%%%%%%%%%%%%%%%%%%%%%%
\begin{prop}\label{isoprop}
For $\lambda,\nu\in\mathfrak{h}^*$ and $U$ a finite dimensional $G$-representation we have
\begin{equation}\label{isodiff}
\mathbb{D}(\mathcal{H}_{\lambda}^\infty,\mathcal{H}_{\mu}^{\infty}\otimes U)\simeq
\textup{Hom}_{\mathfrak{g}}(M_{-\mu-\rho}, M_{-\lambda-\rho}\otimes U).
\end{equation}
\end{prop}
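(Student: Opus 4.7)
The plan is to deduce the isomorphism by composing the two identifications spelled out in the paragraph preceding the statement, after filling in the reasoning behind each.

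First I would produce the map
\[
\Phi:\mathbb{D}(\mathcal{H}_\lambda^\infty,\mathcal{H}_\mu^\infty\otimes U)\longrightarrow
\textup{Hom}_{\mathfrak{g}}\bigl(\mathbb{D}_0^\prime(\mathcal{L}_\mu),\mathbb{D}_0^\prime(\mathcal{L}_\lambda)\otimes U\bigr)
\]
by transposition: for a differential vertex operator $D$ and $\omega\in\mathbb{D}_0^\prime(\mathcal{L}_\mu)$ set
\[
\Phi(D)(\omega)(\phi):=(\omega\otimes\textup{id}_U)(D\phi),\qquad \phi\in\mathcal{H}_\lambda^\infty,
\]
where $D\phi\in\mathcal{H}_\mu^\infty\otimes U$ is paired with $\omega$ only in the first tensor factor. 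Since $D$ is local, the functional $\Phi(D)(\omega)$ vanishes on any $\phi$ supported away from $1\in K$, so it defines a $U$-valued distribution at $1$, i.e.\ an element of $\mathbb{D}_0^\prime(\mathcal{L}_\lambda)\otimes U$ (here the finite-dimensionality of $U$ enters). Differentiating the $G$-equivariance of $D$ shows that $\Phi(D)$ intertwines the natural $\mathfrak{g}$-actions on distribution spaces at the origin.

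Next I would verify that $\Phi$ is a bijection, adapting the argument of \cite[Lem. 2.4]{CS}. Injectivity is straightforward: if $\Phi(D)=0$ then pairing with a family of distributions whose jets at $1$ span the cotangent tower forces $(D\phi)$ to vanish to all orders at $1$ for every $\phi\in\mathcal{H}_\lambda^\infty$, and then $G$-equivariance of $D$ yields $D\phi\equiv 0$. For surjectivity, given $\Psi\in\textup{Hom}_{\mathfrak{g}}(\mathbb{D}_0^\prime(\mathcal{L}_\mu),\mathbb{D}_0^\prime(\mathcal{L}_\lambda)\otimes U)$, one prescribes $D$ pointwise by the rule $(D\phi)(1):=\Psi_{\delta_1}(\phi)$, regarding $\Psi$ as a $U$-valued distribution on $\mathcal{H}_\lambda^\infty$ supported at $1$; the definition is extended to arbitrary $g\in G$ by $G$-equivariance, and its well-definedness (i.e.\ independence of the representative) is exactly the statement that $\Psi$ is $\mathfrak{g}$-equivariant, which, because $\mathfrak{g}$ exhausts the infinitesimal action of $G$ on smooth vectors, suffices.

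Finally, combining the bijection $\Phi$ with the Schwartz-theorem identification $M_{-\lambda-\rho}\simeq\mathbb{D}_0^\prime(\mathcal{L}_\lambda)$ (and the analogue for $\mu$) recalled in the excerpt yields
\[
\mathbb{D}(\mathcal{H}_\lambda^\infty,\mathcal{H}_\mu^\infty\otimes U)
\simeq \textup{Hom}_{\mathfrak{g}}(M_{-\mu-\rho}, M_{-\lambda-\rho}\otimes U),
\]
as required. The main obstacle will be the surjectivity of $\Phi$: reconstructing a globally defined $G$-equivariant differential operator from the jet at $1$ prescribed by $\Psi_{\delta_1}$ requires showing that $\mathfrak{g}$-equivariance suffices for the transport under the $G$-action on $G/AN\simeq K$ to be consistent, and that the resulting assignment $\phi\mapsto D\phi$ lands in $\mathcal{H}_\mu^\infty\otimes U$ rather than in a weaker completion. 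Both points are handled by combining the local nature of distributions supported at $1$ with the smooth structure on the principal series representation $\mathcal{H}_\mu^\infty$.
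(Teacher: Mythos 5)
Your proposal is essentially the paper's own argument: the paper establishes the proposition by precisely this dualisation, invoking an adaptation of \cite[Lem. 2.4]{CS} for the bijectivity of the transpose map onto $\textup{Hom}_{\mathfrak{g}}\bigl(\mathbb{D}_0^\prime(\mathcal{L}_\mu),\mathbb{D}_0^\prime(\mathcal{L}_\lambda)\otimes U\bigr)$ and then Schwartz's theorem to identify $\mathbb{D}_0^\prime(\mathcal{L}_\lambda)\simeq M_{-\lambda-\rho}$ as $\mathfrak{g}$-modules. The details you fill in (locality of $D$ giving support at $1$, reconstruction of $D$ from $\Psi$ applied to the delta distribution and extension by equivariance) are exactly what that cited adaptation amounts to, so the proposal is correct and follows the same route.
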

%%%%%%%%%%%%%%%%%%%%%%%%%%%%%%%%%%%%%%%%%%%%

%%%%%%%%%%%%%%%%%%%%%%%%%%%%%%%%%%%%%%%%%%%%%
\begin{rema}\label{explicitinverse}
The inverse of the isomorphism \eqref{isodiff} can be described explicitly as follows. 
Fix a vertex operator $\Psi\in\textup{Hom}_{\mathfrak{g}}(M_{-\mu-\rho}, 
M_{-\lambda-\rho}\otimes U)$. 
Let $\{u_i\}_i$ be a linear basis of $U$ and write $\{u_i^*\}_i$ for its dual basis.  
Let $Y_i\in U(\mathfrak{n}_-)$ be the unique elements such that
\[
\Psi(m_{-\mu-\rho})=\sum_iY_im_{-\lambda-\rho}\otimes u_i,
\]
where \textup{(}recall\textup{)} $\mathfrak{n}_-=\theta(\mathfrak{n}_+)$ is the nilpotent subalgebra of $\mathfrak{g}$ opposite to $\mathfrak{b}$.
Under the isomorphism \eqref{isodiff}, the intertwiner $\Psi$ is mapped to the differential vertex operator 
\[
D_\Psi=\sum_iL_i\vert_{\mathcal{H}_\lambda^\infty}\otimes u_i\in\mathbb{D}(\mathcal{H}_\lambda^\infty,\mathcal{H}_\mu^\infty\otimes U)
\]
with $L_i$ the differential operators
\[
L_i:=\sum_ju_i^*(\cdot\,u_j)r_\ast(Y_j)
\]
on $G$.
\end{rema}
%%%%%%%%%%%%%%%%%%%%%%%%%%%%%%%

For 
$\Psi_V\in\textup{Hom}_{\mathfrak{g}}(M_{-\mu-\rho}, M_{-\lambda-\rho}\otimes V)$
and $\Psi_U\in\textup{Hom}_{\mathfrak{g}}(M_{-\nu-\rho}, M_{-\mu-\rho}\otimes U)$
set 
\begin{equation}\label{compositionvertex}
\begin{split}
\Psi_{V,U}:=(\Psi_V\otimes\textup{id}_U)\Psi_U&\in\textup{Hom}_{\mathfrak{g}}(M_{-\nu-\rho}, M_{-\lambda-\rho}\otimes V\otimes U),\\
D_{U,V}:=(D_{\Psi_U}\otimes\textup{id}_V)D_{\Psi_V}&\in\mathbb{D}(\mathcal{H}_\lambda^\infty,\mathcal{H}_\nu^\infty
\otimes U\otimes V).
\end{split}
\end{equation}
These two composition rules are compatible with the isomorphism from Proposition \ref{isoprop}:
%%%%%%%%%%%%%%%%%%%%%%%%%%%%%%%%%%%%%%%%%%
\begin{prop}
Let $P_{UV}: U\otimes V\rightarrow V\otimes U$ be the $G$-linear isomorphism flipping the two tensor components. Then 
\[
D_{\Psi_{V,U}}=(\textup{id}_{\mathcal{H}_\nu^\infty}\otimes P_{UV})D_{U,V}
\]
 in $\mathbb{D}(\mathcal{H}_\lambda^\infty, \mathcal{H}_\nu^\infty\otimes V\otimes U)$.
 \end{prop}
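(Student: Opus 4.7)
The plan is to verify the identity by a direct computation using the explicit inverse of the isomorphism of Proposition \ref{isoprop} given in Remark \ref{explicitinverse}. Write
\[
\Psi_V(m_{-\mu-\rho}) = \sum_a Y_a m_{-\lambda-\rho}\otimes v_a,\qquad \Psi_U(m_{-\nu-\rho}) = \sum_c Z_c m_{-\mu-\rho}\otimes u_c
\]
with $Y_a,Z_c\in U(\mathfrak{k})$ and $\{v_a\}$, $\{u_c\}$ bases of $V$, $U$. Collapsing the dual-basis sums $\sum_i v_i^*(gv_a)v_i = gv_a$ in the formulas of Remark \ref{explicitinverse} gives
\[
(D_{\Psi_V}\phi)(g)=\sum_b (r_*(Y_b)\phi)(g)\cdot g v_b,\qquad (D_{\Psi_U}\psi)(g)=\sum_d (r_*(Z_d)\psi)(g)\cdot g u_d.
\]
Fix $\phi\in\mathcal{H}_\lambda^\infty$ and $g\in G$, and evaluate both sides of the identity as elements of $V\otimes U$.

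For the left hand side, using that $\Psi_V$ is a $\mathfrak{g}$-intertwiner and that $U(\mathfrak{k})$ acts on tensor products through its coproduct, I would obtain
\[
\Psi_{V,U}(m_{-\nu-\rho})=\sum_{a,c,(Z_c)} Z_{c,(1)}Y_a m_{-\lambda-\rho}\otimes Z_{c,(2)}v_a\otimes u_c.
\]
Expanding $Z_{c,(2)}v_a$ in the basis $\{v_{a'}\}$ of $V$ puts this into the form required by Remark \ref{explicitinverse} with the basis $\{v_{a'}\otimes u_c\}$ of $V\otimes U$, and the same dual-basis contractions then yield
\[
(D_{\Psi_{V,U}}\phi)(g)=\sum_{a,c,(Z_c)} (r_*(Z_{c,(1)}Y_a)\phi)(g)\cdot (g Z_{c,(2)} v_a)\otimes (g u_c).
\]
For the right hand side, the composition $(D_{\Psi_U}\otimes\textup{id}_V)D_{\Psi_V}$ requires applying $r_*(Z_d)$ to a product $v_a^*(\cdot v_b)(r_*(Y_b)\phi)$ of functions on $G$. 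Using that $r_*$ is an algebra homomorphism, that $(r_*(X)v_a^*(\cdot v_b))(g)=v_a^*(gXv_b)$ for $X\in U(\mathfrak{g})$ (since $V$ is a $G$-module), and the generalised Leibniz rule coming from the cocommutative coproduct of $U(\mathfrak{g})$, one has
\[
r_*(Z_d)\bigl(v_a^*(\cdot v_b)(r_*(Y_b)\phi)\bigr)(g)=\sum_{(Z_d)} v_a^*(g Z_{d,(1)}v_b)\,(r_*(Z_{d,(2)}Y_b)\phi)(g).
\]
Substituting this, applying $\textup{id}\otimes P_{UV}$ and contracting basis sums as before produces
\[
((\textup{id}\otimes P_{UV})D_{U,V}(\phi))(g)=\sum_{a,c,(Z_c)} (r_*(Z_{c,(2)}Y_a)\phi)(g)\cdot (g Z_{c,(1)} v_a)\otimes (g u_c).
\]

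The two expressions differ only by the swap $Z_{c,(1)}\leftrightarrow Z_{c,(2)}$ inside the Sweedler sum, and this leaves the sum unchanged by cocommutativity of the coproduct of $U(\mathfrak{k})$. The main obstacle is keeping the Sweedler bookkeeping and the various dual-basis contractions (with the $G$-action on the $v_a$ and $u_c$) under control; conceptually, the need for $P_{UV}$ is forced by the opposite conventions in the two composition rules \eqref{compositionvertex}, namely that algebraic composition prepends the new $V$-factor on the left of the tensor product while the dual differential composition prepends $U$, and cocommutativity of $U(\mathfrak{k})$ is precisely the identity needed to reconcile this mismatch.
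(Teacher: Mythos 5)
Your proof is correct and follows exactly the route the paper intends: its own proof is just the remark that the identity "follows by a straightforward but lengthy computation using Remark \ref{explicitinverse}", and you have carried out precisely that computation (intertwining to get the Sweedler expansion of $\Psi_{V,U}(m_{-\nu-\rho})$, the Leibniz/coproduct rule for $r_\ast$ on products with matrix coefficients, and cocommutativity of $U(\mathfrak{k})$ to reconcile the two sides). No gaps; you have simply supplied the details the paper omits.
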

%%%%%%%%%%%%%%%%%%%%%%%%%%%%%%%%%%%%%%%%%%
\begin{proof}
This follows by a straightforward but lengthy computation using Remark \ref{explicitinverse}.
 \end{proof}
%%%%%%%%%%%%%%%%%%%%%%%%%%%%%%%%%%%%%%

Next we consider the parametrisation of the spaces of vertex operators.
Write $m_\mu^*\in M_\mu^*$ for the linear functional satisfying $m_\mu^*(m_\mu)=1$ and $m_\mu^*(v)=0$ for $v\in\bigoplus_{\nu<\mu}M_\mu[\nu]$. 
%%%%%%%%%%%%%%%%%%%%%%%%%%%%%%%%%%%%%%%%%%%%
\begin{defi}
Let $U$ be a finite dimensional $\mathfrak{g}$-module,
$\lambda,\mu\in\mathfrak{h}^*$, and $\Psi\in\textup{Hom}_{\mathfrak{g}}(M_\lambda,M_\mu\otimes U)$. Then
\[
\langle\Psi\rangle:=(m_{\mu}^*\otimes\textup{id}_U)\Psi(m_\lambda)\in U[\lambda-\mu]
\]
is called the expectation value of the vertex operator $\Psi$.
\end{defi}
%%%%%%%%%%%%%%%%%%%%%%%%%%%%%%%%%%%%%%%%%%%
Set $N_-:=\Theta_0(N_+)$. The multiplication map 
\[
N_-\times A\times N_+\rightarrow G
\]
defines a diffeomorphism onto the open connected submanifold $N_-AN_+$ of $G$. Write
$\widetilde{\mathcal{H}}_\lambda^\infty$ for the space of smooth functions $f: N_-AN_+\rightarrow\mathbb{C}$ satisfying $f(n_-an_+)=a^{-\lambda-\rho}f(n_-)$ for
$a\in A$ and $n_{\pm}\in N_{\pm}$. 

In the following lemma we express the expectation value of $\Psi\in\textup{Hom}_{\mathfrak{g}}(M_{-\mu-\rho},M_{-\lambda-\rho}\otimes U)$
in terms of the differential vertex operator $D_\Psi\in\mathbb{D}(\mathcal{H}_\lambda^\infty,\mathcal{H}_\mu^\infty\otimes U)$, which we now view
as the $U$-valued differential operator $\widetilde{\mathcal{H}}_\lambda^\infty\rightarrow\widetilde{\mathcal{H}}_\mu^\infty\otimes U$ mapping $f\in\widetilde{\mathcal{H}}_\lambda^\infty$ to $\sum_iL_i(f)u_i$
(see Remark \ref{explicitinverse} for the notations).
%%%%%%%%%%%%%%%%%%%%%%%%%%%%%%%%%%%%%%%%%%%%
\begin{lem}
For $\Psi\in\textup{Hom}_{\mathfrak{g}}(M_{-\mu-\rho}, M_{-\lambda-\rho}\otimes U)$ we
have
\[
\bigl(D_\Psi\mathbb{I}_\lambda\bigr)(1)=\langle \Psi\rangle
\]
in $U[\lambda-\mu]$,
where $\mathbb{I}_\lambda\in\widetilde{\mathcal{H}}_\lambda^\infty$ is defined by
\[
\mathbb{I}_\lambda(n_-an_+):=a^{-\lambda-\rho}\qquad (a\in A,\, n_{\pm}\in N_{\pm}).
\]
\end{lem}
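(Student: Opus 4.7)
The plan is to exploit the explicit formula for $D_\Psi$ in Remark~\ref{explicitinverse} together with the fact that $\mathbb{I}_\lambda|_K\equiv 1$. Writing $\Psi(m_{-\mu-\rho})=\sum_i Y_i\,m_{-\lambda-\rho}\otimes u_i$ with $Y_i\in U(\mathfrak{k})$ and $\{u_i\}$ a weight basis of $U$, say $u_i\in U[\nu_i]$, the formula $(L_i\phi)(g)=\sum_j u_i^*(gu_j)(r_\ast(Y_j)\phi)(g)$ specialises at $g=1$ to
\[
(D_\Psi\mathbb{I}_\lambda)(1)=\sum_i\bigl(r_\ast(Y_i)\mathbb{I}_\lambda\bigr)(1)\,u_i,
\]
which I intend to compare term by term with $\langle\Psi\rangle=\sum_i m_{-\lambda-\rho}^*(Y_i m_{-\lambda-\rho})\,u_i$.

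The first key step will be the identity $(r_\ast(Y)\mathbb{I}_\lambda)(1)=\varepsilon(Y)$ for all $Y\in U(\mathfrak{k})$, where $\varepsilon$ is the counit of $U(\mathfrak{k})$. Since $r_\ast$ is an algebra homomorphism onto the left-invariant differential operators, PBW reduces this to monomials $Y=X_1\cdots X_n$ with $X_j\in\mathfrak{k}_0$, for which
\[
\bigl(r_\ast(X_1\cdots X_n)\mathbb{I}_\lambda\bigr)(1)=\frac{\partial^n}{\partial t_1\cdots\partial t_n}\bigg|_{t_1=\cdots=t_n=0}\mathbb{I}_\lambda\bigl(\exp(t_1X_1)\cdots\exp(t_nX_n)\bigr).
\]
The argument lies entirely in $K$ and $\mathbb{I}_\lambda\equiv 1$ on $K$, so this derivative vanishes for $n\geq 1$, while for $n=0$ the value is $\mathbb{I}_\lambda(1)=1=\varepsilon(1)$. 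Consequently $(D_\Psi\mathbb{I}_\lambda)(1)=\sum_i\varepsilon(Y_i)\,u_i$.

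The remaining step is a weight count. By the $\mathfrak{h}$-equivariance of $\Psi$, the vector $Y_i m_{-\lambda-\rho}$ is of weight $-\mu-\rho-\nu_i$; for $\nu_i=\lambda-\mu$ this weight is exactly $-\lambda-\rho$, forcing $Y_i m_{-\lambda-\rho}=c_i\, m_{-\lambda-\rho}$ with $c_i\in\mathbb{C}$, and the bijectivity of $U(\mathfrak{k})\overset{\sim}{\to}M_{-\lambda-\rho}$ from the proof of Proposition~\ref{relEisPrinalg}\,{\bf a} forces $Y_i=c_i\cdot 1$, so that $\varepsilon(Y_i)=c_i=m_{-\lambda-\rho}^*(Y_i m_{-\lambda-\rho})$. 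For $\nu_i\neq\lambda-\mu$ the weight of $Y_i m_{-\lambda-\rho}$ lies strictly below $-\lambda-\rho$, so $m_{-\lambda-\rho}^*(Y_i m_{-\lambda-\rho})=0$ and these indices do not contribute to $\langle\Psi\rangle$. Projected to $U[\lambda-\mu]$, both expressions therefore reduce to $\sum_{i:\nu_i=\lambda-\mu}c_i u_i$, which proves the lemma. The step that will need the most care is the first: although $r_\ast(Y)\mathbb{I}_\lambda$ is not a scalar multiple of $\mathbb{I}_\lambda$ in general, only its value at the identity enters, and the reduction to the counit via $K$-valued exponential curves has to be carried out cleanly to justify the collapse of all positive-order derivatives.
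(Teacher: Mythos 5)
Your computation of $(D_\Psi\mathbb{I}_\lambda)(1)$ is exactly the paper's: through Remark \ref{explicitinverse}, the dual-basis evaluation at $g=1$, and the identity $(r_\ast(Y)\mathbb{I}_\lambda)(1)=\epsilon(Y)$ for $Y\in U(\mathfrak{k})$ (which you justify by PBW and $\mathbb{I}_\lambda|_K\equiv 1$, and which the paper merely asserts), both arrive at $\sum_i\epsilon(Y_i)u_i$. The divergence is at the comparison with $\langle\Psi\rangle$: the paper simply writes $\langle\Psi\rangle=\sum_i\epsilon(Y_i)u_i$, whereas your weight analysis (for a weight basis, the indices with $\nu_i=\lambda-\mu$ give $Y_i=c_i\cdot 1$ by the bijectivity of $U(\mathfrak{k})\to M_{-\lambda-\rho}$, so $\epsilon(Y_i)=c_i=m_{-\lambda-\rho}^*(Y_im_{-\lambda-\rho})$, while the remaining indices contribute nothing to $\langle\Psi\rangle$) only yields that the two sides agree after projection onto $U[\lambda-\mu]$. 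As a proof of the statement verbatim, i.e.\ of the equality of the two vectors of $U$, this leaves a gap: you never show $\epsilon(Y_i)=0$ for $\nu_i\neq\lambda-\mu$, and your closing sentence quietly replaces the assertion by its weight-$(\lambda-\mu)$ component.

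You should be aware, however, that this gap cannot be closed, so your caution is exactly right and the weak point sits in the paper's first display rather than in your argument. Take $\mathfrak{g}=\mathfrak{sl}_2$ with standard basis $e,h,f$, $\mathfrak{k}=\mathbb{C}y$, $y=e-f$, let $U$ be the three-dimensional simple module with weight basis $w_0$, $w_1=fw_0$, $w_2=f^2w_0$ (so $ew_1=2w_0$, $ew_2=2w_1$), write $m=m_{-\lambda-\rho}$ and $t:=(-\lambda-\rho)(h)$ generic. The vector $m\otimes w_2-\tfrac{2}{t}\,fm\otimes w_1+\tfrac{2}{t(t-1)}\,f^2m\otimes w_0$ is a highest weight vector of weight $-\lambda-\rho-2$, hence equals $\Psi(m_{-\mu-\rho})$ for a unique vertex operator with $\lambda-\mu=-2$ and $\langle\Psi\rangle=w_2$. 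Since $fm=-ym$ and $f^2m=(y^2+t)m$, the coefficients of Remark \ref{explicitinverse} are $Y_{(w_2)}=1$, $Y_{(w_1)}=\tfrac{2}{t}y$, $Y_{(w_0)}=\tfrac{2}{t(t-1)}(y^2+t)$, so $(D_\Psi\mathbb{I}_\lambda)(1)=\sum_i\epsilon(Y_i)u_i=w_2+\tfrac{2}{t-1}\,w_0$, which has a nonzero component outside $U[\lambda-\mu]$ and differs from $\langle\Psi\rangle$. The underlying point is that the functional $Ym\mapsto\epsilon(Y)$ on $M_{-\lambda-\rho}$ agrees with $m_{-\lambda-\rho}^*$ only on the highest weight line, so the identity of the lemma is correct only as an identity of $U[\lambda-\mu]$-components of $(D_\Psi\mathbb{I}_\lambda)(1)$ --- which is precisely what you proved, and is also all that the subsequent parametrisation $D_\lambda^u$ in Definition \ref{notVO} needs. (A last cosmetic remark: for $\nu_i\neq\lambda-\mu$ the weight of $Y_im_{-\lambda-\rho}$ is strictly below $-\lambda-\rho$ only when $\nu_i>\lambda-\mu$; otherwise $Y_i=0$, so your conclusion $m_{-\lambda-\rho}^*(Y_im_{-\lambda-\rho})=0$ is unaffected.)
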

%%%%%%%%%%%%%%%%%%%%%%%%%%%%%%%%%%%%%%%%%%%
\begin{proof}
Using the notations from Remark \ref{explicitinverse}, we have
\[
\langle\Psi\rangle=\sum_i\epsilon(Y_i)u_i
\]
with $\epsilon$ the counit of $U(\mathfrak{n}_-)$. On the other hand,
\[
\bigl(D_\Psi\mathbb{I}_\lambda\bigr)(1)=\sum_i(L_i\mathbb{I}_\lambda)(1)u_i=
\sum_i\bigl(r_*(Y_i)\mathbb{I}_\lambda\bigr)(1)u_i=\sum_i\epsilon(Y_i)u_i,
\]
hence the result.
\end{proof}
%%%%%%%%%%%%%%%%%%%%%%%%%%%%%%%%%%%%%%%%%%%
By \cite[Lem. 3.3]{E} we have the following result.
%%%%%%%%%%%%%%%%%%%%%%%%%%%%%%
\begin{lem}
\label{evlem}
Let $U$ be a finite dimensional $\mathfrak{g}$-module, $\lambda\in\mathfrak{h}^*$
and $\mu\in\mathfrak{h}_{\textup{irr}}^*$.
The expectation value map $\langle\cdot\rangle$ defines a linear isomorphism
\[
\langle\cdot\rangle: \textup{Hom}_{\mathfrak{g}}(M_\lambda,M_{\mu}\otimes U)\overset{\sim}{\longrightarrow} 
U[\lambda-\mu].
\] 
\end{lem}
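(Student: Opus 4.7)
The plan is to reduce the claim to a statement about $\mathfrak{n}$-singular vectors in $M_\mu\otimes U$, and then to verify bijectivity of the expectation value map by exploiting the triangular structure of the weight decomposition together with the irreducibility of $M_\mu$.

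First, I would use the universal property of the Verma module $M_\lambda=U(\mathfrak{g})\otimes_{U(\mathfrak{b})}\mathbb{C}_\lambda$: any $\Psi\in\textup{Hom}_{\mathfrak{g}}(M_\lambda,M_\mu\otimes U)$ is determined by $\Psi(m_\lambda)\in(M_\mu\otimes U)[\lambda]$, which must be annihilated by $\mathfrak{n}$, and conversely every $\mathfrak{n}$-singular vector of weight $\lambda$ extends uniquely to a $\mathfrak{g}$-intertwiner. This identifies $\textup{Hom}_{\mathfrak{g}}(M_\lambda,M_\mu\otimes U)$ with $(M_\mu\otimes U)^{\mathfrak{n}}[\lambda]$, and under this identification the expectation value corresponds to $(m_\mu^*\otimes\textup{id}_U)\Psi(m_\lambda)$. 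Decomposing
\[
(M_\mu\otimes U)[\lambda]=\bigoplus_{\gamma\in Q_+}M_\mu[\mu-\gamma]\otimes U[\lambda-\mu+\gamma],
\]
which is a finite direct sum since $U$ is finite-dimensional, this projection picks out precisely the $\gamma=0$ summand, canonically identified with $U[\lambda-\mu]$.

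For bijectivity, given $u\in U[\lambda-\mu]$ I would construct the preimage explicitly by seeking $v=m_\mu\otimes u+\sum_{\gamma>0}v_\gamma$ with $v_\gamma\in M_\mu[\mu-\gamma]\otimes U[\lambda-\mu+\gamma]$, and imposing $e_{\alpha_j}v=0$ for each simple root $\alpha_j$. Expanding $v_\gamma$ along a PBW basis of $U(\mathfrak{n}^-)$ applied to $m_\mu$, these equations become a system that is triangular in the height of $\gamma$: at height $n$, the unknowns with $\mathrm{ht}(\gamma)=n$ are determined from lower-order data through a linear system whose determinant is a nonvanishing polynomial in $\mu$ on $\mathfrak{h}_{\textup{irr}}^*$ (a Shapovalov-type determinant). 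Only finitely many $v_\gamma$ are nonzero, because $U$ has only finitely many weights. Injectivity follows from the same triangular structure, since an $\mathfrak{n}$-singular $v$ with zero $\gamma=0$ component forces $v_\gamma=0$ for all $\gamma>0$ by the recursion.

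The main obstacle is making the Shapovalov-denominator analysis clean. An alternative, less computational route is to invoke the standard $\mathfrak{g}$-module filtration of $M_\mu\otimes U$ whose successive quotients are Verma modules $M_{\mu+\eta}$, with each weight $\eta$ of $U$ appearing with multiplicity $\dim U[\eta]$, and to combine it with the fact that for $\mu\in\mathfrak{h}_{\textup{irr}}^*$ the space $\textup{Hom}_{\mathfrak{g}}(M_\lambda,M_{\mu+\eta})$ is one-dimensional when $\mu+\eta=\lambda$ and vanishes otherwise. This yields the dimension equality $\dim\textup{Hom}_{\mathfrak{g}}(M_\lambda,M_\mu\otimes U)=\dim U[\lambda-\mu]$, which together with the injectivity from the triangular argument completes the proof.
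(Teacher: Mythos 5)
Your reduction to $\mathfrak{n}$-singular vectors of weight $\lambda$ in $M_\mu\otimes U$, the identification of the expectation value with the $\gamma=0$ component of the decomposition $(M_\mu\otimes U)[\lambda]=\bigoplus_{\gamma\in Q_+}M_\mu[\mu-\gamma]\otimes U[\lambda-\mu+\gamma]$, and the injectivity argument are all sound: if all components below height $n$ vanish, the height-$n$ equations reduce to $(e_\alpha\otimes 1)v_\gamma=0$ for all simple $\alpha$, and irreducibility of $M_\mu$ (no singular vectors below the top, i.e.\ nonvanishing Shapovalov determinants) forces $v_\gamma=0$. Note that the paper itself does not prove the lemma but quotes it from \cite[Lem.~3.3]{E}, so there is no in-text proof to compare with; the issue is whether your argument is complete, and on the surjectivity side it is not.

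The gap is existence. In your main route, the height-$n$ unknown $v_\gamma$ is constrained by one equation block $(e_\alpha\otimes 1)v_\gamma=-(1\otimes e_\alpha)v_{\gamma-\alpha}$ for \emph{each} simple root $\alpha$, so the linear system is overdetermined (it maps $M_\mu[\mu-\gamma]\otimes U[\lambda-\mu+\gamma]$ into a direct sum over simple roots); irreducibility of $M_\mu$ gives injectivity of this map, i.e.\ uniqueness, but there is no square matrix whose nonvanishing determinant yields solvability — one must prove that the right-hand sides satisfy the compatibility conditions, and this is precisely the content of the lemma. The proposed alternative route does not close this gap: the claim that $\textup{Hom}_{\mathfrak{g}}(M_\lambda,M_{\mu+\eta})$ vanishes for $\mu+\eta\neq\lambda$ only uses irreducibility of $M_\mu$, not of $M_{\mu+\eta}$, and is false in this generality — for $\mathfrak{g}=\mathfrak{sl}_2$, $\mu=-1\in\mathfrak{h}^*_{\textup{irr}}$, $U$ the adjoint representation and $\lambda=-3$, the subquotient $M_{\mu+2}=M_1$ is reducible and $\textup{Hom}_{\mathfrak{g}}(M_{-3},M_1)\simeq\mathbb{C}$. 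Moreover, even with correct Hom dimensions, applying $\textup{Hom}_{\mathfrak{g}}(M_\lambda,-)$ to a Verma flag only gives an \emph{upper} bound (left exactness), which is the same direction as your injectivity bound; surjectivity needs a lower bound, so you would have to prove the relevant $\textup{Ext}^1$-vanishing or a splitting of the flag, which requires all $M_{\mu+\eta}$ to be irreducible — a strictly stronger hypothesis than $\mu\in\mathfrak{h}^*_{\textup{irr}}$. A clean way to finish under the stated hypothesis is to compute $\dim\textup{Hom}_{\mathfrak{g}}(M_\lambda,M_\mu\otimes U)$ directly: $M_\mu\otimes U$ is free over $U(\theta(\mathfrak{n}))$ of rank $\dim U$ (Hopf-algebra untwisting), so its $\theta(\mathfrak{n})$-coinvariants in weight $\lambda$ have dimension $\dim U[\lambda-\mu]$; combining this with the nondegeneracy of the contravariant form on $M_\mu\otimes U$ (which holds exactly because $M_\mu$ is irreducible), the space of singular vectors of weight $\lambda$ is the orthocomplement of $(\theta(\mathfrak{n})\cdot(M_\mu\otimes U))[\lambda]$ and hence has dimension $\dim U[\lambda-\mu]$, giving surjectivity; alternatively one can simply follow \cite[Lem.~3.3]{E}.
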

%%%%%%%%%%%%%%%%%%%%%%%%%%%%%
The weights of a finite dimensional $\mathfrak{g}$-module $U$ lie in the integral weight lattice
\[
P:=\{\mu\in\mathfrak{h}^* \,\, | \,\, (\mu,\alpha^\vee)\in\mathbb{Z} \quad \forall\, \alpha\in R\}.
\]
Hence for $\mu\in\mathfrak{h}_{\textup{irr}}^*$, the space $\textup{Hom}_{\mathfrak{g}}(M_\lambda, M_\mu\otimes U)$ of vertex operators is trivial unless $\lambda\in \mu+P$. 
At a later stage (see Section \ref{sectionBFO}), we want to restrict to highest weights $\lambda_0\in\mathfrak{h}_{\textup{irr}}^*$ such that for any vertex operator $\mathbf{\Psi}\in\textup{Hom}_{\mathfrak{g}}(M_{\lambda_N},M_{\lambda_0}\otimes\mathbf{U})$, given as a product of vertex operators $\Psi_i\in\textup{Hom}_{\mathfrak{g}}(M_{\lambda_i},
M_{\lambda_{i-1}}\otimes U_i)$ ($i=1,\ldots,N$), has the property that $\lambda_{i-1}\in\mathfrak{h}_{\textup{irr}}^*$ for $i=1,\ldots,N$ (i.e., all vertex operators are determined by their expectation values). In that case we will restrict to highest weights from the dense open subset
\[
\mathfrak{h}_{\textup{reg}}^*:=\{\nu\in\mathfrak{h}^*\,\, | \,\, (\nu,\alpha^\vee)\not\in\mathbb{Z}\,\, \quad
\forall\, \alpha\in R\}
\]
of $\mathfrak{h}$. The (differential) vertex operators are then denoted as follows.
%%%%%%%%%%%%%%%%%%%%%%%%%%%%%%%%%%%%%%%%%%%%
\begin{defi}\label{notVO}
Let $\lambda\in\mathfrak{h}_{\textup{reg}}^*$.
\begin{enumerate}
\item[{\bf a.}] If $U$ is a finite dimensional $\mathfrak{g}$-module and $u\in U[\lambda-\mu]$ is
a weight vector of weight $\lambda-\mu$, then we write $\Psi_\lambda^u\in\textup{Hom}_{\mathfrak{g}}(M_\lambda,M_{\mu}\otimes U)$ for the unique vertex operator with
expectation value $\langle\Psi_\lambda^u\rangle=u$. 
\item[{\bf b.}] If $U$ is a finite dimensional $G$-representation and $u\in U[\lambda-\mu]$ is 
a weight vector of weight $\lambda-\mu$, then we write
$D_\lambda^u\in\mathbb{D}(\mathcal{H}_\lambda^\infty,\mathcal{H}_\mu^\infty\otimes U)$
for the unique differential vertex operator with $(D_\lambda^u\mathbb{I}_\lambda)(1)=u$.
\end{enumerate}
\end{defi}
%%%%%%%%%%%%%%%%%%%%%%%%%%%%%%%
The expectation value of products of vertex operators gives rise to the fusion operator. We recall its definition in Subsection \ref{sectionBFO}, where we also discuss boundary versions of fusion operators.

%%%%%%%%%%%%%%%%%%%%%%%%%%%%%%%%%%%%%%%%%%%
\subsection{$N$-point spherical functions and asymptotic boundary KZB equations}\label{S63}
%%%%%%%%%%%%%%%%%%%%%%%%%%%%%%%%%%%%%%%%%%%
Fix finite dimensional $G$-representations $U_1,\ldots,U_N$ with representation maps $\tau_{U_1},\ldots,\tau_{U_N}$,
and differential vertex operators $D_i\in\mathbb{D}(\mathcal{H}_{\lambda_i}^\infty,
\mathcal{H}_{\lambda_{i-1}}^\infty\otimes U_i)$ for $i=1,\ldots,N$. Write $\underline{\lambda}=(\lambda_0,\lambda_1,\ldots,\lambda_N)$ and
$\mathbf{U}:=U_1\otimes\cdots\otimes U_N$. Write $\mathbf{D}\in\mathbb{D}(\mathcal{H}_{\lambda_N}^\infty,\mathcal{H}_{\lambda_0}^{\infty}\otimes\mathbf{U})$ for the product of the $N$
differential vertex operators $D_i$ ($1\leq i\leq N$),
\[
\mathbf{D}=(D_1\otimes\textup{id}_{U_2\otimes\cdots\otimes U_N})\cdots
(D_{N-1}\otimes\textup{id}_{U_N})D_N,
\]
which we call a differential vertex operator of weight $\underline{\lambda}$.

Fix two finite dimensional $K$-representations $V_\ell$ and $V_r$, with representation maps
$\sigma_\ell$ and $\sigma_r$ respectively. Let $\sigma_\ell^{(N)}$ be the representation map of the
tensor product $K$-representation $V_\ell\otimes\mathbf{U}$. We consider
$(V_\ell\otimes\mathbf{U})\otimes V_r^*\simeq\textup{Hom}(V_r,V_\ell\otimes\mathbf{U})$ as $K\times K$-representation, with representation map $\sigma^{(N)}:=\sigma_\ell^{(N)}\otimes\sigma_r^*$.
Note that if $\phi_\ell\in\textup{Hom}_K(\mathcal{H}_{\lambda_0},V_\ell)$ then 
\[
\bigl(\phi_\ell\otimes\textup{id}_{\mathbf{U}}\bigr)\mathbf{D}\in
\textup{Hom}_K(\mathcal{H}_{\lambda_N},V_\ell\otimes\mathbf{U})
\]
by \eqref{algvsanal}.

%%%%%%%%%%%%%%%%%%%%%%%%%%%%%%%%%%%%%%%%%%%%%
\begin{defi}\label{Nsphdef}
Let $\phi_\ell\in\textup{Hom}_K(\mathcal{H}_{\lambda_0},V_\ell)$,
$\phi_r\in\textup{Hom}_K(V_r,\mathcal{H}_{\lambda_N})$ and $\mathbf{D}$ a differential vertex operator of weight $\underline{\lambda}$.  We call the
elementary $\sigma^{(N)}$-spherical function
\begin{equation}\label{Sphericalfunctionwithinsertions}
f_{\mathcal{H}_{\underline{\lambda}}}^{\phi_\ell,\mathbf{D},\phi_r}(g):=(\phi_\ell\otimes\textup{id}_{\mathbf{U}})\mathbf{D}
(\pi_{\lambda_N}(g)\phi_r)\qquad (g\in G)
\end{equation}
a $N$-point $\sigma^{(N)}$-spherical function associated with
the $(N+1)$-tuple of principal series representations
$\mathcal{H}_{\underline{\lambda}}:=(\mathcal{H}_{\lambda_0},\ldots,\mathcal{H}_{\lambda_N})$.
\end{defi}
%%%%%%%%%%%%%%%%%%%%%%%%%%%%%%%%%%%%%%%%%%%%%%%
To keep the notations manageable we write from now on the action of $U(\mathfrak{g})$ and $G$ on $\mathcal{H}_{\lambda}^{\infty}$ without specifying the representation map if no confusion can arise. For instance, for $x\in U(\mathfrak{g})$, $g\in G$ and $v\in\mathcal{H}_{\lambda_N}^{\infty}$ we write
$gxv\in\mathcal{H}_{\lambda_N}^{\infty}$ for the smooth vector $\pi_{\lambda_N}(g)((x)_{\mathcal{H}_{\lambda_N}^{\infty}}v)$, and the $N$-point spherical function will be written as
\[
f_{\mathcal{H}_{\underline{\lambda}}}^{\phi_\ell,\mathbf{D},\phi_r}(g):=(\phi_\ell\otimes\textup{id}_{\mathbf{U}})\mathbf{D}
(g\phi_r)\qquad (g\in G).
\]

%%%%%%%%%%%%%%%%%%%%%
\begin{rema}
In Subsection \ref{intertwinersection} we define {\it formal} $N$-point spherical functions, which are asymptotical $N$-point correlation functions for boundary Wess-Zumino-Witten conformal field theory on the cylinder when the positions escape to infinity. The $N$-point spherical functions in Definition \ref{Nsphdef} are their analogues in the context of principal series. 
\end{rema}
%%%%%%%%%%%%%%%%%%%%

By Proposition \ref{relEisPrin}\,{\bf c} the $N$-point spherical function $f_{\mathcal{H}_{\underline{\lambda}}}^{\phi_\ell,\mathbf{D},\phi_r}$ admits the Eisenstein type integral representation
\begin{equation}\label{EisN}
\begin{split}
f_{\mathcal{H}_{\underline{\lambda}}}^{\phi_\ell,\mathbf{D},\phi_r}(g)=
\int_Kdx\, \xi_{-\lambda_N-\rho}(a(g^{-1}x))&\Bigl(\sigma_\ell(x)\otimes\tau_{U_1}(x)\otimes\cdots\\
&\quad\cdots\otimes
\tau_{U_N}(x)\otimes\sigma_r^*(k(g^{-1}x))\Bigr)
T_{\lambda_N}^{(\phi_\ell\otimes\textup{id}_{\mathbf{U}})\mathbf{D}, \phi_r}
\end{split}
\end{equation}
with the vector $T_{\lambda_N}^{(\phi_\ell\otimes\textup{id}_{\mathbf{U}})\mathbf{D}, \phi_r}\in V_\ell\otimes\mathbf{U}\otimes V_r^*$ given by
\[
T_{\lambda_N}^{(\phi_\ell\otimes\textup{id}_{\mathbf{U}})\mathbf{D}, \phi_r}=
\iota_{\lambda_N,V_\ell\otimes\mathbf{U}}((\phi_\ell\otimes\textup{id}_{\mathbf{U}})\mathbf{D})\otimes\j_{\lambda_N,V_r}(\phi_r).
\]
Theorem \ref{thmRAD}\,{\bf a} gives the family of differential equations
\begin{equation}\label{diffN}
\widehat{\Pi}^{\sigma^{(N)}}(z)\bigl(f_{\mathcal{H}_{\underline{\lambda}}}^{\phi_\ell,\mathbf{D},\phi_r}|_{A_{\textup{reg}}}\bigr)=
\zeta_{\lambda_N-\rho}(z)f_{\mathcal{H}_{\underline{\lambda}}}^{\phi_\ell,\mathbf{D},\phi_r}|_{A_{\textup{reg}}},
\qquad z\in Z(\mathfrak{g})
\end{equation}
for the restriction of $f_{\mathcal{H}_{\underline{\lambda}}}^{\phi_\ell,\mathbf{D},\phi_r}$ to $A_{\textup{reg}}$.
We will now show that 
$f_{\mathcal{H}_{\underline{\lambda}}}^{\phi_\ell,\mathbf{D},\phi_r}|_{A_{\textup{reg}}}$ satisfies $N$ additional first order asymptotic boundary KZB type differential equations.
Recall the factorisation $(r^+(a),-r^-(a),b(a))$ of $\Omega$ for $a\in A_{\textup{reg}}$, with $r^{\pm}$ the folded $r$-matrices \eqref{explicitsigmatauexpl} and $b$ given by \eqref{kappa}. 
%%%%%%%%%%%%%%%%%%%%%%%%%%%%%%%%%%%%%%%%%%%%%%%
\begin{prop}\label{bKZBresult}
The $N$-point $\sigma^{(N)}$-spherical function 
$f_{\mathcal{H}_{\underline{\lambda}}}^{\phi_\ell,\mathbf{D},\phi_r}$ satisfies
\begin{equation}\label{bKZBungauged}
\begin{split}
\Bigl(&\frac{(\lambda_{i-1},\lambda_{i-1})}{2}-\frac{(\lambda_i,\lambda_i)}{2}+
\sum_{j=1}^\rr(x_j)_{U_i}\partial_{x_j}\Bigr)f_{\mathcal{H}_{\underline{\lambda}}}^{\phi_\ell,\mathbf{D},\phi_r}|_{A_{\textup{reg}}}=\\
&\quad\,=\Bigl(
r_{V_\ell U_i}^++\sum_{j=1}^{i-1}r_{U_jU_i}^++b_{U_i}+\sum_{j=i+1}^Nr_{U_iU_j}^-
+\widetilde{r}^+_{U_iV_r^*}\Bigr)
f_{\mathcal{H}_{\underline{\lambda}}}^{\phi_\ell,\mathbf{D},\phi_r}|_{A_{\textup{reg}}}
\end{split}
\end{equation}
for $i=1,\ldots,N$,
with right boundary term
\begin{equation}\label{rtildeplus}
\widetilde{r}^+:=\sum_{\alpha\in R}
\frac{e_{\alpha}\otimes y_\alpha}{\xi_\alpha-\xi_{-\alpha}}\in\mathcal{R}\otimes\mathfrak{g}\otimes\mathfrak{k}
\end{equation}
satisfying $\widetilde{r}^+(a)=(\textup{Ad}_{a^{-1}}\otimes 1)r_{21}^+(a)$ for $a\in A_{\textup{reg}}$.
\end{prop}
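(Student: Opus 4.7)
The plan is to apply Corollary~\ref{bulkKZB} to $\mathbf{D}$, regarded as a $\mathfrak{g}$-equivariant intertwiner $\mathcal{H}_{\lambda_N}^\infty\to\mathcal{H}_{\lambda_0}^\infty\otimes\mathbf{U}$, using the Cartan-type factorisation $(\tau^\ell,\tau^r,d)=(r^+(a),-r^-(a),b(a))$ of $\Omega$ from Proposition~\ref{propfactorization}\,{\bf b}, with $a\in A_{\textup{reg}}$ playing the role of dynamical parameter. Since $\Omega$ acts on each $\mathcal{H}_{\lambda_i}^\infty$ as the scalar $\zeta_{\lambda_i-\rho}(\Omega)=(\lambda_i,\lambda_i)-(\rho,\rho)$, the LHS of \eqref{multioperatorKZB} becomes $\frac{1}{2}\bigl((\lambda_{i-1},\lambda_{i-1})-(\lambda_i,\lambda_i)\bigr)\mathbf{D}$. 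Applying both sides of the resulting operator identity to $a\phi_r(v)$ and composing with $\phi_\ell\otimes\textup{id}_{\mathbf{U}}$ produces the two central-character terms on the LHS of \eqref{bKZBungauged}; the bulk piece $\bigl(\sum_{j<i}r^+(a)_{U_jU_i}+\sum_{j>i}r^-(a)_{U_iU_j}+b(a)_{U_i}\bigr)\mathbf{D}$ acts only in the $U_j$ tensor slots and so passes directly through $\phi_\ell\otimes\textup{id}_{\mathbf{U}}$ and evaluation to give the corresponding bulk RHS.

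For the left boundary $\sum_\alpha\frac{1}{1-a^{-2\alpha}}(y_\alpha)_{\mathcal{H}_{\lambda_0}^\infty}(e_\alpha)_{U_i}\mathbf{D}$ arising from $\tau^\ell=r^+(a)$, I use $y_\alpha\in\mathfrak{k}$ and the $\mathfrak{k}$-equivariance $\phi_\ell\circ(y_\alpha)_{\mathcal{H}_{\lambda_0}^\infty}=\sigma_\ell(y_\alpha)\circ\phi_\ell$ to absorb $(y_\alpha)_{\mathcal{H}_{\lambda_0}^\infty}$ into the $V_\ell$-slot, producing $r^+(a)_{V_\ell U_i}f_{\mathcal{H}_{\underline{\lambda}}}^{\phi_\ell,\mathbf{D},\phi_r}(a)(v)$ on the RHS.

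The right boundary is the delicate piece. The $x_j$-contribution $-\sum_j(x_j)_{U_i}\mathbf{D}(x_j)_{\mathcal{H}_{\lambda_N}^\infty}$ is handled by observing that $x_j\in\mathfrak{h}$ commutes with $a\in A$, so $(x_j)_{\mathcal{H}_{\lambda_N}^\infty}a=a(x_j)_{\mathcal{H}_{\lambda_N}^\infty}$, and $(\phi_\ell\otimes\textup{id}_{\mathbf{U}})\mathbf{D}\bigl((x_j)_{\mathcal{H}_{\lambda_N}^\infty}a\phi_r(v)\bigr)$ equals $\partial_{x_j}f_{\mathcal{H}_{\underline{\lambda}}}^{\phi_\ell,\mathbf{D},\phi_r}(a)(v)$, contributing $+\sum_j(x_j)_{U_i}\partial_{x_j}$ on the LHS of \eqref{bKZBungauged}. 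The remaining part $-\sum_\alpha\frac{1}{1-a^{-2\alpha}}(e_\alpha+e_{-\alpha})_{U_i}\mathbf{D}(e_\alpha)_{\mathcal{H}_{\lambda_N}^\infty}$ is obstructed by $e_\alpha\notin\mathfrak{k}$. The cleanest fix is to trace this contribution back to the un-simplified form
\[
\tau^r=-\sum_j x_j\otimes x_j+\frac{1}{2}\sum_\alpha\frac{(a^\alpha+a^{-\alpha})y_\alpha-2\textup{Ad}_a(y_\alpha)}{(a^\alpha-a^{-\alpha})^2}\otimes\textup{Ad}_a(y_\alpha)
\]
used in the proof of Proposition~\ref{propfactorization}\,{\bf b}, whose right tensor slot lies in $\textup{Ad}_a(\mathfrak{k})$. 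The conjugation $(\textup{Ad}_a(y_\alpha))_{\mathcal{H}_{\lambda_N}^\infty}a=a(y_\alpha)_{\mathcal{H}_{\lambda_N}^\infty}$ reduces the $\mathcal{H}_{\lambda_N}^\infty$-side action to $(y_\alpha)_{\mathcal{H}_{\lambda_N}^\infty}$ acting on $\phi_r(v)$, which is absorbed by the $\mathfrak{k}$-equivariance of $\phi_r$ into a $\sigma_r^*(y_\alpha)$-action on the $V_r^*$-slot of $f_{\mathcal{H}_{\underline{\lambda}}}^{\phi_\ell,\mathbf{D},\phi_r}(a)$. The elementary identity $(a^\alpha+a^{-\alpha})y_\alpha-2\textup{Ad}_a(y_\alpha)=-(a^\alpha-a^{-\alpha})(e_\alpha+e_{-\alpha})$ applied in the $U_i$-slot, together with the reindexing $\alpha\to-\alpha$ in the sum over $R$, collapses the coefficient into exactly $\widetilde{r}^+(a)_{U_iV_r^*}$.

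The main obstacle is this right-boundary reflection. Working with the simplified form $\tau^r=-r^-(a)$ and splitting $e_\alpha=y_\alpha+e_{-\alpha}$ directly at $\mathcal{H}_{\lambda_N}^\infty$ leaves a residual $(e_{-\alpha})_{\mathcal{H}_{\lambda_N}^\infty}$-action which, after being pushed back through $\mathbf{D}$ by $\mathfrak{g}$-equivariance, must itself be split upon hitting $\mathcal{H}_{\lambda_0}^\infty$, triggering an infinite recursion. The un-simplified factorisation performs the reflection in a single step, and the identity $\widetilde{r}^+(a)=(\textup{Ad}_{a^{-1}}\otimes 1)r_{21}^+(a)$ recorded in the statement of the proposition encapsulates precisely this left-to-right reflection arising from conjugation by $a$.
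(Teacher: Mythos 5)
Your proposal is correct and is essentially the paper's first proof: apply Corollary \ref{bulkKZB} with the factorisation $(r^+(a),-r^-(a),b(a))$ of Proposition \ref{propfactorization}\,{\bf b}, absorb the left boundary term into $\sigma_\ell$ via the $K$-equivariance of $\phi_\ell$ (using $r^+(a)\in\mathfrak{k}\otimes\mathfrak{g}$), and convert the right boundary term by commuting past $\pi_{\lambda_N}(a)$ into the derivative terms $\sum_j(x_j)_{U_i}\partial_{x_j}$ plus a $\mathfrak{k}$-action absorbed by $\phi_r$, yielding $\widetilde{r}^+_{U_iV_r^*}$. Your reversion to the un-simplified $\tau^r$ with second leg $\textup{Ad}_a(y_\alpha)$ is just a repackaging of the paper's identity $(1\otimes\textup{Ad}_{a^{-1}})r^-(a)=\sum_{j}x_j\otimes x_j+\widetilde{r}^+(a)$, so the two arguments are the same computation.
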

%%%%%%%%%%%%%%%%%%%%%%%%%%%%%%%%%%%%%%%%%%%%%%%
We derive the asymptotic boundary KZB type equations \eqref{bKZBungauged} in two different ways. The first proof uses Proposition \ref{propfactorization}\,{\bf b} involving the folded versions of Felder's dynamical $r$-matrix, the second proof uses Proposition \ref{propfactorization}\,{\bf a} with a reflection argument. The second argument is of interest from the conformal field theoretic point of view,
and provides some extra insights in the term $b$ \eqref{kappa} appearing in the 
asymptotic boundary KZB equations.\\  
{\bf Proof 1} (using the factorisation of $\Omega$ in terms of folded $r$-matrices).\\
Let $a\in A_{\textup{reg}}$. By Corollary \ref{bulkKZB} applied to the factorisation $(r^+(a), -r^-(a), b(a))$ of $\Omega$,
we have
\begin{equation*}
\begin{split}
\frac{1}{2}(\zeta_{\lambda_{i-1}-\rho}(\Omega)-\zeta_{\lambda_i-\rho}(\Omega))f_{\mathcal{H}_{\underline{\lambda}}}^{\phi_\ell,\mathbf{D},\phi_r}(a)&=
\Bigl(\sum_{j=1}^{i-1}r_{U_jU_i}^+(a)+b_{U_i}(a)+\sum_{j=i+1}^Nr_{U_iU_j}^-(a)\Bigr)
f_{\mathcal{H}_{\underline{\lambda}}}^{\phi_\ell,\mathbf{D},\phi_r}(a)\\
&+\sum_k\bigl(\phi_\ell\otimes\textup{id}_{\mathbf{U}}\bigr)\bigl((\alpha_k^+)_{\mathcal{H}_{\lambda_0}^{\infty}}(\beta_k^+)_{U_i}
\mathbf{D}(a\phi_r)\bigr)\\
&-\sum_k\bigl(\phi_\ell\otimes\textup{id}_{\mathbf{U}}\bigr)\bigl((\alpha_k^-)_{U_i}\mathbf{D}
(\beta_k^-a\phi_r)\bigr)
\end{split}
\end{equation*}
where we have written $r^{\pm}(a)=\sum_k\alpha_k^{\pm}\otimes\beta_k^{\pm}$. Now using
$r^+(a)\in\mathfrak{k}\otimes\mathfrak{g}$ and 
\[
\bigl(1\otimes\textup{Ad}_{a^{-1}})r^-(a)=
\sum_{j=1}^\rr x_j\otimes x_j+\widetilde{r}^+(a)
\]
with $\widetilde{r}^+(a)\in\mathfrak{g}\otimes\mathfrak{k}$ given by \eqref{rtildeplus}, 
the asymptotic boundary KZB type equation
\eqref{bKZBungauged} follows from the fact that $\phi_\ell$ and $\phi_r$ are $K$-intertwiners
and $\zeta_{\lambda_{i-1}-\rho}(\Omega)-\zeta_{\lambda_i-\rho}(\Omega)=
(\lambda_{i-1},\lambda_{i-1})-(\lambda_i,\lambda_i)$.\\
{\bf Proof 2} (using a reflection argument).\\
Let $a\in A_{\textup{reg}}$. Recall that the unfolded factorisation of $\Omega$ is $(r(a), r(a), d(a))$ with
\[
d(a):=-\frac{1}{2}\sum_{\alpha\in R^+}\Bigl(\frac{1+a^{-2\alpha}}{1-a^{-2\alpha}}\Bigr)t_\alpha.
\]
Then it follows from a direct computation that
\begin{equation}\label{kappaalt}
b(a)=d(a)+m(r^{\theta_1}(a)).
\end{equation}
Furthermore,
\begin{equation}\label{twistedcomm}
-r^{\theta_1}(a)D-D\ast r^{\theta_1}(a)=(1\otimes m(r^{\theta_1}(a)))D
\end{equation}
for a differential vertex operator $D$.
This follows from the fact that $r^{\theta_1}(a)$ is a symmetric tensor in
$\mathfrak{g}\otimes\mathfrak{g}$ and
\[
-(x\otimes x)D-D\ast (x\otimes x)=(1\otimes x^2)D
\]
for $x\in\mathfrak{g}$. The proof of the asymptotic boundary KZB equation \eqref{bKZBungauged} using a reflection argument now proceeds as follows. Corollary \ref{bulkKZB} gives 
\begin{equation*}
\begin{split}
\Bigl(\frac{(\lambda_{i-1},\lambda_{i-1})}{2}-&\frac{(\lambda_i,\lambda_i)}{2}\Bigr)f_{\mathcal{H}_{\underline{\lambda}}}^{\phi_\ell,\mathbf{D},\phi_r}(a)=
\Bigl(\sum_{j=1}^{i-1}r_{U_jU_i}(a)-\sum_{j=i+1}^Nr_{U_iU_j}(a)+d_{U_i}(a)\Bigr)
f_{\mathcal{H}_{\underline{\lambda}}}^{\phi_\ell,\mathbf{D},\phi_r}(a)\\
&+\bigl(\phi_\ell\otimes\textup{id}_{\mathbf{U}}\bigr)\bigl(r_{\mathcal{H}_{\lambda_0}^{\infty}U_i}(a)\mathbf{D}(a\phi_r)\bigr)
+\sum_k\bigl(\phi_\ell\otimes\textup{id}_{\mathbf{U}}\bigr)((\alpha_k)_{U_i}
\mathbf{D}(\beta_ka\phi_r))
\end{split}
\end{equation*}
with $r(a)=\sum_k\alpha_k\otimes\beta_k$. We now apply the identity
\[
\bigl(\phi_\ell\otimes\textup{id}_{\mathbf{U}}\bigr)r_{\mathcal{H}_{\lambda_0}^{\infty}U_i}(a)=
r_{V_\ell U_i}^+(a)\bigl(\phi_\ell\otimes\textup{id}_{\mathbf{U}}\bigr)-
\bigl(\phi_\ell\otimes\textup{id}_{\mathbf{U}}\bigr)r^{\theta_1}_{\mathcal{H}_{\lambda_0}^{\infty}U_i}(a)
\]
in $\textup{Hom}(\mathcal{H}_{\lambda_0}^{\infty}\otimes\mathbf{U},V_\ell\otimes\mathbf{U})$ to the left boundary term and 
the identity 
\begin{equation*}
\begin{split}
\sum_k(\alpha_k)_{U_i}\mathbf{D}(\beta_ka\phi_r)&=-\sum_{j=1}^\rr (x_j)_{U_i}\mathbf{D}(ax_j\phi_r)\\
&+\sum_k(\theta(\alpha_k))_{U_i}\mathbf{D}(\beta_ka\phi_r)+
\widetilde{r}^+_{U_iV_r^*}(a)\mathbf{D}(a\phi_r)
\end{split}
\end{equation*}
in $\mathbf{U}\otimes V_r^*$ to the right boundary term. The latter equality follows
from the easily verified identities
\begin{equation*}
\begin{split}
-\bigl(\textup{id}\otimes \theta\textup{Ad}_{a^{-1}}\bigr)(r(a))&=
-\sum_{j=1}^\rr x_j\otimes x_j+\bigl(\textup{id}\otimes\textup{Ad}_{a^{-1}}\bigr)(r^{\theta_1}(a)),\\
-\bigl(\textup{Ad}_{a^{-1}}\otimes\textup{id}\bigr)(r_{21}^+(a))&=\bigl(\textup{id}\otimes\textup{Ad}_{a^{-1}}\bigr)(r(a))+\bigl(\textup{id}\otimes \theta\textup{Ad}_{a^{-1}}\bigr)(r(a)).
\end{split}
\end{equation*}
We thus arrive at the formula
\begin{equation}\label{tttt}
\begin{split}
\Bigl(&\frac{(\lambda_{i-1},\lambda_{i-1})}{2}-\frac{(\lambda_i,\lambda_i)}{2}+\sum_{j=1}^\rr(x_j)_{U_i}\partial_{x_j}\Bigr)f_{\mathcal{H}_{\underline{\lambda}}}^{\phi_\ell,\mathbf{D},\phi_r}(a)=\\
&=\Bigl(r_{V_\ell U_i}^+(a)+\sum_{j=1}^{i-1}r_{U_jU_i}(a)+d_{U_i}(a)-\sum_{j=i+1}^Nr_{U_iU_j}(a)
+\widetilde{r}_{U_iV_r^*}^+(a)\Bigr)f_{\mathcal{H}_{\underline{\lambda}}}^{\phi_\ell,\mathbf{D},\phi_r}(a)\\
&-\bigl(\phi_\ell\otimes\textup{id}_{\mathbf{U}}\bigr)\bigl(r_{\mathcal{H}_{\lambda_0}^{\infty}U_i}^{\theta_1}(a)\mathbf{D}(a\phi_r)\bigr)
+\sum_k\bigl(\phi_\ell\otimes\textup{id}_{\mathbf{U}}\bigr)((\theta(\alpha_k))_{U_i}
\mathbf{D}(\beta_ka\phi_r)).
\end{split}
\end{equation}
Now pushing
$r^{\theta_1}_{\mathcal{H}_{\lambda_0}^{\infty}U_i}(a)$ through the differential vertex operators
$D_j$ ($1\leq j<i$) and pushing the action of $\beta_k$ through $D_j$ ($i<j\leq N$) using \eqref{firsteqn} and using the fact that $r^{\theta_1}(a)$ is a symmetric tensor in $\mathfrak{g}\otimes\mathfrak{g}$, the last line becomes
\begin{equation*}
\begin{split}
-\bigl(\phi_\ell\otimes\textup{id}_{\mathbf{U}}\bigr)&\bigl(r_{\mathcal{H}_{\lambda_0}^{\infty}U_i}^{\theta_1}(a)\mathbf{D}(a\phi_r)\bigr)
+\sum_k\bigl(\phi_\ell\otimes\textup{id}_{\mathbf{U}}\bigr)((\theta(\alpha_k))_{U_i}
\mathbf{D}(\beta_ka\phi_r))\\
&=\Bigl(\sum_{j=1}^{i-1}r^{\theta_1}_{U_jU_i}(a)+\sum_{j=i+1}^Nr^{\theta_1}_{U_iU_j}(a)
\Bigr)f_{\mathcal{H}_{\underline{\lambda}}}^{\phi_\ell,\mathbf{D},\phi_r}(a)\\
&+\bigl(\phi_\ell\otimes\textup{id}_{\mathbf{U}}\bigr)
(D_{\mathcal{H}_{\lambda_1}^{\infty}}\cdots D_{\mathcal{H}_{\lambda_{i-1}}^{\infty}}\bigl(\widetilde{D}_i\otimes\textup{id}_{U_{i+1}\otimes\cdots\otimes U_N}\bigr)
D_{\mathcal{H}_{\lambda_{i+1}}^{\infty}}\cdots D_{\mathcal{H}_{\lambda_N}^{\infty}}(a\phi_r))
\end{split}
\end{equation*}
with $D_{\mathcal{H}_{\lambda_j}^{\infty}}:=D_j\otimes\textup{id}_{U_{j+1}\otimes\cdots\otimes U_N}$
and
\[
\widetilde{D}_i:=-r^{\theta_1}(a)D_i-D_i\ast r^{\theta_1}(a).
\]
Applying now \eqref{twistedcomm} we arrive at
\begin{equation*}
\begin{split}
-\bigl(\phi_\ell\otimes\textup{id}_{\mathbf{U}}\bigr)&\bigl(r_{\mathcal{H}_{\lambda_0}^{\infty}U_i}^{\theta_1}(a)\mathbf{D}(a\phi_r)\bigr)
+\sum_k\bigl(\phi_\ell\otimes\textup{id}_{\mathbf{U}}\bigr)((\theta(\alpha_k))_{U_i}
\mathbf{D}(\beta_ka\phi_r))\\
&=\Bigl(\sum_{j=1}^{i-1}r^{\theta_1}_{U_jU_i}(a)+\bigl(m(r^{\theta_1}(a))\bigr)_{U_i}+
\sum_{j=i+1}^Nr^{\theta_1}_{U_iU_j}(a)
\Bigr)f_{\mathcal{H}_{\underline{\lambda}}}^{\phi_\ell,\mathbf{D},\phi_r}(a).
\end{split}
\end{equation*}
Combined with \eqref{tttt} and \eqref{kappaalt}, we obtain \eqref{bKZBungauged}.
\qed\\
\vspace{.2cm}\\
%%%%%%%%

Write $\kappa^{\textup{core}}\in\mathcal{R}\otimes U(\mathfrak{g})$ for the element
\begin{equation}\label{kcore}
\kappa^{\textup{core}}:=\frac{1}{2}\sum_{j=1}^\rr x_j^2+\sum_{\alpha\in R}\frac{e_\alpha^2}{1-\xi_{-2\alpha}}
\end{equation}
and define $\kappa\in\mathcal{R}\otimes 
U(\mathfrak{k})\otimes U(\mathfrak{g})\otimes U(\mathfrak{k})$ by
\begin{equation}\label{kdef}
\kappa:=\sum_{\alpha\in R}\frac{y_\alpha\otimes e_\alpha\otimes 1}{1-\xi_{-2\alpha}}
+1\otimes \kappa^{\textup{core}}\otimes 1+\sum_{\alpha\in R}\frac{1\otimes e_\alpha\otimes y_\alpha}{\xi_\alpha-\xi_{-\alpha}}.
\end{equation}
Furthermore, write
\begin{equation}\label{E}
E:=\sum_{j=1}^\rr\partial_{x_j}\otimes x_j\in\mathbb{D}_{\mathcal{R}}\otimes U(\mathfrak{g}).
\end{equation}
The asymptotic boundary KZB operators are now defined as follows.

%%%%%%%%%%%%%%%%%%%%%%%%%%%%%%%%%%%%%%%%%%%
\begin{defi}
The first-order differential operators 
\begin{equation}\label{bKZBoper}
\mathcal{D}_i:=E_i
-\sum_{j=1}^{i-1}r_{ji}^+-\kappa_i-\sum_{j=i+1}^Nr_{ij}^-
\end{equation}
in
$\mathbb{D}_{\mathcal{R}}\otimes U(\mathfrak{k})\otimes U(\mathfrak{g})^{\otimes N}
\otimes U(\mathfrak{k})$ \textup{(}$i\in\{1,\ldots,N\}$\textup{)}
are called the asymptotic boundary KZB operators.
Here the subindices indicate in which tensor factor of $U(\mathfrak{g})^{\otimes N}$ the
$U(\mathfrak{g})$-components of $E$, $\kappa$ and $r^{\pm}$ are placed.
\end{defi}
%%%%%%%%%%%%%%%%%%%%%%%%%%%%%%%%%%%%%%%%%%%%%%%
\begin{rema}
Note that $\kappa^{\textup{core}}$ is the part of $\kappa$ that survives when the $U(\mathfrak{k})$-components act according to the trivial representation of $\mathfrak{k}$. Note furthermore that
\[
\mathcal{D}_i\in\mathbb{D}_{\mathcal{R}}\otimes U(\mathfrak{k})\otimes
\bigl(U(\mathfrak{k})^{\otimes (i-1)}\otimes U(\mathfrak{g})^{\otimes
(N-i+1)}\bigr)\otimes U(\mathfrak{k}).
\]
\end{rema}
%%%%%%%%%%%%%%

Consider the family $H_z^{(N)}\in\mathbb{D}_{\mathcal{R}}\otimes 
U(\mathfrak{k})^{\otimes (N+2)}$ ($z\in Z(\mathfrak{g})$) of commuting differential operator
\[
H_z^{(N)}:=\bigl(\Delta^{(N)}\otimes \textup{id}_{U(\mathfrak{k})}\bigr)H_z
\]
with $\Delta^{(N)}: U(\mathfrak{k})\rightarrow U(\mathfrak{k})^{\otimes (N+1)}$ the $N$th
iterate comultiplication map of $U(\mathfrak{k})$ and $H_z$ given by \eqref{Hz}.
Then $\mathbf{H}^{(N)}:=-\frac{1}{2}(H_\Omega^{(N)}+\|\rho\|^2)$ is the quantum double spin Calogero-Moser Hamiltonian
\begin{equation*}
\begin{split}
\mathbf{H}^{(N)}=&
-\frac{1}{2}\Delta+V^{(N)},\\
V^{(N)}:=&-\frac{1}{2}\sum_{\alpha\in R}\frac{1}{(\xi_\alpha-\xi_{-\alpha})^2}
\Bigl(\frac{\|\alpha\|^2}{2}+\prod_{\epsilon\in\{\pm 1\}}
(\Delta^{(N)}(y_\alpha)\otimes 1+\xi_{\epsilon\alpha}(1^{\otimes (N+1)}\otimes y_\alpha))
\Bigr)
\end{split}
\end{equation*}
by Proposition \ref{qH}.

%%%%%%%%%%%%%%%%%%%%%%%%%%%%%%%%%%%%%%%%
\begin{thm}\label{mainthmbKZBEisenstein}
Let $\lambda\in\mathfrak{h}^*$, $\phi_\ell\in\textup{Hom}_K(\mathcal{H}_{\lambda_0},V_\ell)$,
$\phi_r\in\textup{Hom}_K(V_r,\mathcal{H}_{\lambda_N})$ and $\mathbf{D}$ a differential
vertex operator of weight $\underline{\lambda}=(\lambda_0,\ldots,\lambda_N)$.
Consider the smooth $V_\ell\otimes\mathbf{U}\otimes V_r^*$-valued function 
\[
\mathbf{f}_{\underline{\lambda}}^{\phi_\ell,\mathbf{D},\phi_r}:=
\delta f_{\mathcal{H}_{\underline{\lambda}}}^{\phi_\ell,\mathbf{D},\phi_r}|_{A_+}
\]
on $A_+$, 
called the normalised $N$-point $\sigma^{(N)}$-spherical function of weight $\underline{\lambda}$, which admits the explicit integral representation
\begin{equation*}
\begin{split}
\mathbf{f}_{\underline{\lambda}}^{\phi_\ell,\mathbf{D},\phi_r}(a^\prime)
=\delta(a^\prime)
\int_Kdx\, \xi_{-\lambda_N-\rho}(a(a^\prime{}^{-1}x))&\bigl(\sigma_\ell(x)\otimes\tau_{U_1}(x)\otimes\cdots\\
&\quad\cdots\otimes
\tau_{U_N}(x)\otimes\sigma_r^*(k(a^\prime{}^{-1}x))\bigr)
T_{\lambda_N}^{(\phi_\ell\otimes\textup{id}_{\mathbf{U}})\mathbf{D}, \phi_r}.
\end{split}
\end{equation*}
It 
 satisfies the systems of differential equations
\begin{equation}\label{totalDEx}
\begin{split}
\mathcal{D}_i\bigl(\mathbf{f}_{\underline{\lambda}}^{\phi_\ell,\mathbf{D},\phi_r}\bigr)&=
\Bigl(\frac{(\lambda_i,\lambda_i)}{2}-
\frac{(\lambda_{i-1},\lambda_{i-1})}{2}\Bigr)\mathbf{f}_{\underline{\lambda}}^{\phi_\ell,\mathbf{D},\phi_r}\qquad
 (i=1,\ldots,N),\\
 \mathbf{H}^{(N)}\bigl(\mathbf{f}_{\underline{\lambda}}^{\phi_\ell,\mathbf{D},\phi_r}\bigr)&=
 -\frac{(\lambda_N,\lambda_N)}{2}\mathbf{f}_{\underline{\lambda}}^{\phi_\ell,\mathbf{D},\phi_r}
 \end{split}
 \end{equation}
 on $A_+$. Furthermore, $H_z^{(N)}\bigl(\mathbf{f}_{\underline{\lambda}}^{\phi_\ell,\mathbf{D},\phi_r}\bigr)=
\zeta_{\lambda_N-\rho}(z)\mathbf{f}_{\underline{\lambda}}^{\phi_\ell,\mathbf{D},\phi_r}$ on $A_+$
for $z\in Z(\mathfrak{g})$.
\end{thm}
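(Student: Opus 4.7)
The integral representation for $\mathbf{f}_{\underline{\lambda}}^{\phi_\ell,\mathbf{D},\phi_r}$ follows immediately from the definition and Proposition \ref{relEisPrin}\,{\bf c} (which already produced \eqref{EisN}): namely, view $(\phi_\ell\otimes\textup{id}_{\mathbf{U}})\mathbf{D}$ as an element of $\textup{Hom}_K(\mathcal{H}_{\lambda_N},V_\ell\otimes\mathbf{U})$ and multiply the Eisenstein integral by $\delta$. So the work lies in establishing the two systems of differential equations.

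For the eigenvalue equations $H_z^{(N)}\mathbf{f}_{\underline{\lambda}}^{\phi_\ell,\mathbf{D},\phi_r}=\zeta_{\lambda_N-\rho}(z)\mathbf{f}_{\underline{\lambda}}^{\phi_\ell,\mathbf{D},\phi_r}$ (which include the Schr\"odinger equation for $\mathbf{H}^{(N)}=-\tfrac12(H_\Omega^{(N)}+\|\rho\|^2)$ upon taking $z=\Omega$, since $\zeta_{\lambda_N-\rho}(\Omega)=(\lambda_N,\lambda_N)-\|\rho\|^2$), my plan is to start from \eqref{diffN}, which asserts that $\widehat{\Pi}^{\sigma^{(N)}}(z)f_{\mathcal{H}_{\underline{\lambda}}}^{\phi_\ell,\mathbf{D},\phi_r}|_{A_{\textup{reg}}}=\zeta_{\lambda_N-\rho}(z)f_{\mathcal{H}_{\underline{\lambda}}}^{\phi_\ell,\mathbf{D},\phi_r}|_{A_{\textup{reg}}}$. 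Since the left $K$-action on $V_\ell\otimes\mathbf{U}$ is the diagonal action of the tensor product representation, at the universal level $\sigma^{(N)}=(\sigma_\ell\otimes\tau_{U_1}\otimes\cdots\otimes\tau_{U_N}\otimes\sigma_r^*)\circ(\Delta^{(N)}\otimes\textup{id})$, which means $\widehat{\Pi}^{\sigma^{(N)}}(z)$ is obtained from $\widehat{\Pi}(z)$ by composing with $\Delta^{(N)}\otimes\textup{id}$ on the $U(\mathfrak{k})\otimes U(\mathfrak{k})$ values. Conjugating by $\delta$ then yields $H_z^{(N)}$ exactly, and the eigenvalue equation transports to $\mathbf{f}_{\underline{\lambda}}^{\phi_\ell,\mathbf{D},\phi_r}$.

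For the asymptotic boundary KZB equations, the starting point is Proposition \ref{bKZBresult}, which already has the correct shape up to the gauge transformation and a reassembly of terms. The boundary terms $r^+_{V_\ell U_i}$ and $\widetilde{r}^+_{U_iV_r^*}$ and the bulk terms $r^+_{U_jU_i}$, $r^-_{U_iU_j}$ commute with multiplication by $\delta$ and are unchanged by the gauging. The key computation is therefore to show that $\delta\circ(\sum_{s=1}^r(x_s)_{U_i}\partial_{x_s}-b_{U_i})\circ\delta^{-1}=E_i-(\kappa^{\textup{core}})_{U_i}$. Using $\partial_{x_s}(\log\delta)=\rho(x_s)+\sum_{\alpha\in R^+}\alpha(x_s)\frac{\xi_{-2\alpha}}{1-\xi_{-2\alpha}}$ and $t_\rho=\sum_s\rho(x_s)x_s=\tfrac12\sum_{\alpha\in R^+}t_\alpha$, a direct calculation gives
\[
\sum_{s=1}^r(x_s)_{U_i}\partial_{x_s}(\log\delta)=\tfrac12\sum_{\alpha\in R^+}\frac{1+\xi_{-2\alpha}}{1-\xi_{-2\alpha}}(t_\alpha)_{U_i}=(\kappa^{\textup{core}}-b)_{U_i},
\]
where the last equality is by comparing the definitions \eqref{kappa} of $b$ and \eqref{kcore} of $\kappa^{\textup{core}}$. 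Assembling the pieces and using the factorisation $\kappa_i=r^+_{V_\ell U_i}+(\kappa^{\textup{core}})_{U_i}+\widetilde{r}^+_{U_iV_r^*}$ that is built into the definition \eqref{kdef} of $\kappa$, the equation becomes $\mathcal{D}_i\mathbf{f}_{\underline{\lambda}}^{\phi_\ell,\mathbf{D},\phi_r}=\bigl(\tfrac{(\lambda_i,\lambda_i)}{2}-\tfrac{(\lambda_{i-1},\lambda_{i-1})}{2}\bigr)\mathbf{f}_{\underline{\lambda}}^{\phi_\ell,\mathbf{D},\phi_r}$ after a sign flip on the scalar eigenvalue.

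The main obstacle is essentially bookkeeping, in particular checking carefully that the gauging of the interior multiplication operator $b_{U_i}$ combines with the cross-term produced by conjugating the first-order operator $(x_s)_{U_i}\partial_{x_s}$ to yield precisely $(\kappa^{\textup{core}})_{U_i}$, and then recognising the decomposition of $\kappa_i$ into its left-boundary piece $r^+_{V_\ell U_i}$, its interior core $(\kappa^{\textup{core}})_{U_i}$, and its right-boundary piece $\widetilde{r}^+_{U_iV_r^*}$ coming from the relation $\widetilde{r}^+(a)=(\textup{Ad}_{a^{-1}}\otimes 1)r^+_{21}(a)$.
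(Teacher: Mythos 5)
Your proposal is correct and follows essentially the same route as the paper: the integral representation via Proposition \ref{relEisPrin}\,{\bf c}, the $H_z^{(N)}$ and Schr\"odinger equations by transporting \eqref{diffN} through the $\delta$-gauge, and the boundary KZB equations by gauging Proposition \ref{bKZBresult}. Your key identity $\sum_{s}(x_s)_{U_i}\partial_{x_s}(\log\delta)=\tfrac12\sum_{\alpha\in R^+}\frac{1+\xi_{-2\alpha}}{1-\xi_{-2\alpha}}(t_\alpha)_{U_i}=(\kappa^{\textup{core}}-b)_{U_i}$ is exactly the paper's computation $\delta E(\delta^{-1})=-\tfrac12\sum_{\alpha\in R^+}\bigl(\frac{1+\xi_{-2\alpha}}{1-\xi_{-2\alpha}}\bigr)t_\alpha$ together with its identification $\widetilde{\kappa}=\kappa-\tfrac12\sum_{\alpha\in R^+}\bigl(\frac{1+\xi_{-2\alpha}}{1-\xi_{-2\alpha}}\bigr)t_\alpha$.
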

%%%%%%%%%%%%%%%%%%%%%
\begin{proof}
The integral representation follows from \eqref{EisN}.
The second line of \eqref{totalDEx} follows from \eqref{diffN}.
By Proposition \ref{bKZBresult} we have
\begin{equation}\label{todo100x}
\widetilde{\mathcal{D}}_if_{\underline{\lambda}}^{\phi_\ell,\mathbf{D},\phi_r}=\Bigl(\frac{(\lambda_i,\lambda_i)}{2}-\frac{(\lambda_{i-1},\lambda_{i-1})}{2}\Bigr)f_{\underline{\lambda}}^{\phi_\ell,\mathbf{D},\phi_r}\qquad
 (i=1,\ldots,N)
\end{equation}
with $\widetilde{\mathcal{D}}_i=E_i-\sum_{j=1}^{i-1}r_{ji}^+-\widetilde{\kappa}_i-\sum_{j=i+1}^Nr_{ij}^-$
and 
\[
\widetilde{\kappa}:=r^+\otimes 1+1\otimes b\otimes 1+1\otimes \widetilde{r}^+=
\kappa-\frac{1}{2}\sum_{\alpha\in R^+}\Bigl(\frac{1+\xi_{-2\alpha}}{1-\xi_{-2\alpha}}
\Bigr)t_\alpha.
\]
To prove the first set of equations of \eqref{totalDEx} it thus suffices to show that
\begin{equation}\label{tododelta}
\delta E(\delta^{-1})=-\frac{1}{2}\sum_{\alpha\in R^+}\Bigl(\frac{1+\xi_{-2\alpha}}{1-\xi_{-2\alpha}}
\Bigr)t_\alpha
\end{equation}
in $\mathcal{R}\otimes\mathfrak{h}$. This
follows from the following computation,
\begin{equation*}
\begin{split}
\delta E(\delta^{-1})&=-\sum_{j=1}^\rr\left(\rho(x_j)x_j-\sum_{\alpha\in R^+}\frac{\xi_{-2\alpha}\alpha(x_j)x_j}{1-\xi_{-2\alpha}}\right)\\
&=-t_{\rho}-\sum_{\alpha\in R^+}\frac{\xi_{-2\alpha}t_\alpha}{1-\xi_{-2\alpha}}=
-\frac{1}{2}\sum_{\alpha\in R^+}\Bigl(\frac{1+\xi_{-2\alpha}}{1-\xi_{-2\alpha}}
\Bigr)t_\alpha.
\end{split}
\end{equation*}
\end{proof}
%%%%%%%%%%%%%%%%%%

%%%%%%%%%%%%%%%%%%%%%%%%%%%%%%
\subsection{Formal $N$-point spherical functions}\label{intertwinersection}
%%%%%%%%%%%%%%%%%%%%%%%%%%%%%%

In this subsection we introduce the analogue of $N$-point spherical functions in the context of Verma modules. They give rise to asymptotically free solutions of the asymptotic boundary KZB operators. 

Fix finite dimensional $\mathfrak{g}$-representations $\tau_i: \mathfrak{g}\rightarrow
\mathfrak{gl}(U_i)$ ($1\leq i\leq N$). Let $\underline{\lambda}=(\lambda_0,\ldots,\lambda_N)$ with $\lambda_i\in\mathfrak{h}^*$ 
and choose vertex operators $\Psi_i\in\textup{Hom}_{\mathfrak{g}}(M_{\lambda_i},M_{\lambda_{i-1}}\otimes U_i)$ for $i=1,\ldots,N$. Set
\begin{equation}\label{Psi}
\mathbf{\Psi}:=(\Psi_1\otimes\textup{id}_{U_2\otimes\cdots\otimes U_N})
\cdots (\Psi_{N-1}\otimes\textup{id}_{U_N})\Psi_N,
\end{equation}
which is a $\mathfrak{g}$-intertwiner $M_{\lambda_N}\rightarrow M_{\lambda_0}\otimes\mathbf{U}$.
Let  $(\sigma_\ell,V_\ell), (\sigma_r,V_r)$ be two finite dimensional semisimple $\mathfrak{k}$-modules. 
Write $\sigma_\ell^{(N)}$ for the representation map of the finite dimensional $\mathfrak{k}$-module 
$V_\ell\otimes\mathbf{U}$, where $\mathfrak{k}$ acts diagonally on $V_\ell\otimes\mathbf{U}$. Note that $\sigma_\ell^{(N)}$ is semisimple since $\mathfrak{k}$ is reductive in $\mathfrak{g}$ (see \cite[\S 1.7]{Di}).
Denote by $\sigma^{(N)}=\sigma_\ell^{(N)}\otimes\sigma_r^*$ the representation map of the associated finite dimensional semisimple
$\mathfrak{k}\oplus\mathfrak{k}$-module $(V_\ell\otimes\mathbf{U})\otimes V_r^*$. 

Note that
$(\phi_\ell\otimes\textup{id}_{\mathbf{U}})\mathbf{\Psi}\in\textup{Hom}_{\mathfrak{k}}(M_{\lambda_N},V_\ell\otimes\mathbf{U})$ for 
$\phi_\ell\in\textup{Hom}_{\mathfrak{k}}(M_{\lambda_0},V_\ell)$
%%%%%%%%%%%%%%%%%%%%%%%%%%%%%%%%%%%%%%%%%%%%%%
\begin{defi}\label{defNpoint}
Let $\phi_\ell\in\textup{Hom}_{\mathfrak{k}}(M_{\lambda_0},V_\ell)$, $\phi_r\in\textup{Hom}_{\mathfrak{k}}(V_r,\overline{M}_{\lambda_N})$ and let $\mathbf{\Psi}$ be a vertex operator of the form \eqref{Psi}. The formal elementary $\sigma^{(N)}$-spherical function 
\begin{equation}\label{relCFnormal}
\begin{split}
F_{M_{\underline{\lambda}}}^{\phi_\ell,\mathbf{\Psi},\phi_r}:=&
F_{M_{\lambda_N}}^{(\phi_\ell\otimes\textup{id}_{\mathbf{U}})\mathbf{\Psi},\phi_r}\\
=&\sum_{\mu\leq\lambda_N}((\phi_\ell\otimes\textup{id}_{\mathbf{U}})\mathbf{\Psi}\phi_r^\mu)\xi_\mu
\in (V_\ell\otimes\mathbf{U}\otimes V_r^*)[[\xi_{-\alpha_1},
\ldots,\xi_{-\alpha_\rr}]]\xi_{\lambda_N}
\end{split}
\end{equation}
is called a formal $N$-point 
$\sigma^{(N)}$-spherical function associated with the $(N+1)$-tuple of Verma modules
$(M_{\lambda_0},\ldots,M_{\lambda_N})$.
 \end{defi} 
%%%%%%%%%%%%%%%%%%%%%%%%%%%%%%%%%%%%%%%%%%%%%%

By Theorem \ref{mainTHMF}, the formal $N$-point $\sigma^{(N)}$-spherical function
$F_{M_{\underline{\lambda}}}^{\phi_\ell,\mathbf{\Psi},\phi_r}$
is analytic on $A_+$ for $\lambda_N\in\mathfrak{h}_{\textup{HC}}^*$.

Recall the normalisation factor $\delta$ defined by \eqref{deltagauge} (which we will view as formal power series in 
$\mathbb{C}[[\xi_{-\alpha_1},\ldots,\xi_{-\alpha_\rr}]]\xi_\rho$).
%%%%%%%%%%%%%%%%%%%%%%%%%%%%%%%%%
\begin{defi}\label{shiftdef}
Let $\Psi_i\in\textup{Hom}_{\mathfrak{g}}(M_{\lambda_i-\rho},M_{\lambda_{i-1}-\rho}\otimes U_i)$
($1\leq i\leq N$) and write 
\[
\mathbf{\Psi}\in\textup{Hom}_{\mathfrak{g}}(M_{\lambda_N-\rho},
M_{\lambda_0-\rho}\otimes\mathbf{U})
\] 
for the resulting vertex operator \eqref{Psi}. We call
\[
\mathbf{F}_{\underline{\lambda}}^{\phi_\ell,\mathbf{\Psi},\phi_r}:=\delta F_{M_{\underline{\lambda}-\rho}}^{\phi_\ell,\mathbf{\Psi},\phi_r}
\in (V_\ell\otimes\mathbf{U}\otimes V_r^*)[[\xi_{-\alpha_1},\ldots,\xi_{-\alpha_\rr}]]\xi_{\lambda_N}
\]
a normalised $N$-point $\sigma^{(N)}$-spherical function of weight
$\underline{\lambda}-\rho:=(\lambda_0-\rho,\lambda_1-\rho,\ldots,\lambda_N-\rho)$.
\end{defi}
%%%%%%%%%%%%%%%%%%%%%%%%%%%%%%%%%%%%%
For weight $\underline{\lambda}$ with 
$\lambda_N\in\mathfrak{h}_{\textup{HC}}^*+\rho$ the normalised formal $N$-point $\sigma^{(N)}$-spherical function
$\mathbf{F}_{\underline{\lambda}}^{\phi_\ell,\mathbf{\Psi},\phi_r}$ is an $V_\ell\otimes\mathbf{U}\otimes V_r^*$-valued analytic function on $A_+$.
%%%%%%%%%%%%%%%%%%%%%%%%%%%%%%%%%%%%
In terms of the normalised formal elementary $\sigma^{(N)}$-spherical functions, we have
\begin{equation}\label{relCF}
\mathbf{F}^{\phi_\ell,\mathbf{\Psi},\phi_r}_{\underline{\lambda}}=\mathbf{F}_{\lambda_N}^{(\phi_\ell\otimes\textup{id}_{\mathbf{U}})\mathbf{\Psi},\phi_r},
\end{equation}
and hence 
\begin{equation}\label{CMeigenfunctions}
\begin{split}
\mathbf{H}^{(N)}\bigl(\mathbf{F}_{\underline{\lambda}}^{\phi_\ell,\mathbf{\Psi},\phi_r}\bigr)&=
-\frac{(\lambda_N,\lambda_N)}{2}\,\mathbf{F}_{\underline{\lambda}}^{\phi_\ell,\mathbf{\Psi},\phi_r},\\
H_z^{(N)}\bigl(\mathbf{F}_{\underline{\lambda}}^{\phi_\ell,\mathbf{\Psi},\phi_r}\bigr)&=
\zeta_{\lambda_N-\rho}(z)\mathbf{F}_{\underline{\lambda}}^{\phi_\ell,\mathbf{\Psi},\phi_r},\qquad
z\in Z(\mathfrak{g})
\end{split}
\end{equation}
by Theorem \ref{thmnorm}.
We now show by
a suitable adjustment of the algebraic arguments from Subsection \ref{S61}
that the normalised formal $N$-point $\sigma^{(N)}$-spherical functions are eigenfunctions
of the asymptotic boundary KZB operators.

%%%%%%%%%%%%%%%%%%%%%%%%%%%%%%%%%%%%%%%%
\begin{thm}\label{mainthmbKZB}
Let $\phi_\ell\in\textup{Hom}_{\mathfrak{k}}(M_{\lambda_0},V_\ell)$, $\phi_r\in\textup{Hom}_{\mathfrak{k}}(V_r,\overline{M}_{\lambda_N})$ and let $\mathbf{\Psi}$ be a product of $N$ vertex operators as given in Definition \ref{shiftdef}. The 
normalised formal $N$-point $\sigma^{(N)}$-spherical function $\mathbf{F}_{\underline{\lambda}}^{\phi_\ell,\mathbf{\Psi},\phi_r}$ satisfies the system of differential equations
\begin{equation}\label{totalDE}
\mathcal{D}_i\bigl(\mathbf{F}_{\underline{\lambda}}^{\phi_\ell,\mathbf{\Psi},\phi_r}\bigr)=\Bigl(\frac{(\lambda_i,\lambda_i)}{2}-\frac{(\lambda_{i-1},\lambda_{i-1})}{2}\Bigr)
\mathbf{F}_{\underline{\lambda}}^{\phi_\ell,\mathbf{\Psi},\phi_r}\qquad
 (i=1,\ldots,N)
\end{equation}
in $(V_\ell\otimes\mathbf{U}\otimes V_r^*)[[\xi_{-\alpha_1},
\ldots,\xi_{-\alpha_\rr}]]\xi_{\lambda_N}$. For $\lambda_N\in\mathfrak{h}_{\textup{HC}}^*+\rho$ the differential
equations \eqref{totalDE} are valid as analytic $V_\ell\otimes\mathbf{U}\otimes V_r^*$-valued
analytic functions on $A_+$.
\end{thm}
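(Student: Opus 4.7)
The plan is to transpose Proof 1 of Proposition \ref{bKZBresult}, which uses the folded-$r$-matrix factorisation of $\Omega$, to the algebraic/formal setting, with the differential vertex operators $D_i$ replaced by the algebraic vertex operators $\Psi_i\in\textup{Hom}_{\mathfrak{g}}(M_{\lambda_i-\rho},M_{\lambda_{i-1}-\rho}\otimes U_i)$ between Verma modules, and the integral representation of $f^{\phi_\ell,\mathbf{D},\phi_r}_{\mathcal{H}_{\underline{\lambda}}}$ replaced by the formal generating series \eqref{relCF}. The Schr{\"o}dinger and $Z(\mathfrak{g})$-eigenvalue equations for $\mathbf{F}^{\phi_\ell,\mathbf{\Psi},\phi_r}_{\underline{\lambda}}$ are immediate from Theorem \ref{thmnorm} in view of \eqref{relCFnormal} and \eqref{CMeigenfunctions}, so the actual content is the first-order asymptotic boundary KZB equations \eqref{totalDE}; the analytic statement for $\lambda_N\in\mathfrak{h}^*_{\textup{HC}}+\rho$ will follow afterwards from Theorem \ref{mainTHMF}\,{\bf c} together with Proposition \ref{holomorphic}.

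Corollary \ref{bulkKZB} applied to $\mathbf{\Psi}$ with the factorisation $(r^+(a),-r^-(a),b(a))$ of $\Omega$ from Proposition \ref{propfactorization}\,{\bf b}, read as a formal identity in $\mathcal{R}\otimes\mathfrak{g}\otimes\mathfrak{g}$ (resp. $\mathcal{R}\otimes U(\mathfrak{g})$), yields via $\zeta_{\lambda_j-\rho}(\Omega)=(\lambda_j,\lambda_j)-(\rho,\rho)$ the identity
\begin{equation*}
\begin{split}
\tfrac{1}{2}\bigl((\lambda_{i-1},\lambda_{i-1})-(\lambda_i,\lambda_i)\bigr)\mathbf{\Psi} &= \Bigl(\sum_{j<i}r^+_{U_jU_i}+\sum_{j>i}r^-_{U_iU_j}+b_{U_i}\Bigr)\mathbf{\Psi} \\
&\quad + r^+_{M_{\lambda_0-\rho},U_i}\mathbf{\Psi}-\sum_k(\alpha^-_k)_{U_i}\mathbf{\Psi}\circ(\beta^-_k)_{M_{\lambda_N-\rho}},
\end{split}
\end{equation*}
with $r^-=\sum_k\alpha^-_k\otimes\beta^-_k$. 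I then evaluate both sides on $\phi_r^\mu(w)\in M_{\lambda_N-\rho}[\mu]$ for $w\in V_r$ and each $\mu\leq\lambda_N-\rho$, apply $\phi_\ell\otimes\textup{id}_{\mathbf{U}}$ to the left, multiply by $\xi_\mu$, and sum over $\mu$. The left boundary collapses to $r^+_{V_\ell,U_i}F^{\phi_\ell,\mathbf{\Psi},\phi_r}_{M_{\underline{\lambda}-\rho}}$ because $r^+\in\mathcal{R}\otimes\mathfrak{k}\otimes\mathfrak{g}$ and $\phi_\ell$ is $\mathfrak{k}$-equivariant. For the right boundary I use the splitting $r^-(a)=\sum_j x_j\otimes x_j+(1\otimes\textup{Ad}_a)\widetilde{r}^+(a)$ with $\widetilde{r}^+\in\mathcal{R}\otimes\mathfrak{g}\otimes\mathfrak{k}$ as in \eqref{rtildeplus}: the $\sum_j x_j\otimes x_j$-piece contributes $-E_iF^{\phi_\ell,\mathbf{\Psi},\phi_r}_{M_{\underline{\lambda}-\rho}}$ because $(x_j)_{M_{\lambda_N-\rho}}\phi_r^\mu(w)=\mu(x_j)\phi_r^\mu(w)$ and $\mu(x_j)\xi_\mu=\partial_{x_j}\xi_\mu$, while the $\widetilde{r}^+$-piece, whose second tensor factor lies in $\mathfrak{k}$, becomes $\widetilde{r}^+_{U_i,V_r^*}F^{\phi_\ell,\mathbf{\Psi},\phi_r}_{M_{\underline{\lambda}-\rho}}$ through the $\mathfrak{k}$-equivariance of $\phi_r$ after reindexing $\mu\to\mu\pm\alpha$ in a way compatible with the geometric-series expansions $(1-\xi_{-2\alpha})^{-1}=\sum_{m\geq 0}\xi_{-2m\alpha}$. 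Rearranging yields the formal analogue of \eqref{bKZBungauged} applied to the unnormalised $F^{\phi_\ell,\mathbf{\Psi},\phi_r}_{M_{\underline{\lambda}-\rho}}$.

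Conjugation by $\delta$ then converts this into \eqref{totalDE}: the identity \eqref{tododelta} gives $\delta E_i\delta^{-1}=E_i-\frac{1}{2}\sum_{\alpha\in R^+}\frac{1+\xi_{-2\alpha}}{1-\xi_{-2\alpha}}t_{\alpha,U_i}$, and this extra zeroth-order term absorbs exactly the $\mathfrak{h}$-part of $b_{U_i}$ inside $\widetilde{\kappa}_i=r^+_{V_\ell,U_i}+b_{U_i}+\widetilde{r}^+_{U_i,V_r^*}$, converting it into $\kappa_i=r^+_{V_\ell,U_i}+\kappa^{\textup{core}}_{U_i}+\widetilde{r}^+_{U_i,V_r^*}$ (using $\kappa^{\textup{core}}-b=\tfrac{1}{2}\sum_{\alpha\in R^+}\tfrac{1+\xi_{-2\alpha}}{1-\xi_{-2\alpha}}t_\alpha$). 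The main obstacle relative to the analytic Proof 1 of Proposition \ref{bKZBresult} is purely bookkeeping: the decomposition $r^-(a)=\sum_j x_j\otimes x_j+(1\otimes\textup{Ad}_a)\widetilde{r}^+(a)$ is used pointwise there, whereas here it must be read as a formal power-series identity and the $\textup{Ad}_a$-reflection has to be interpreted as a weight-shift operation on the generating series $\sum_\mu\phi_r^\mu(w)\xi_\mu$; one must verify termwise, in the spirit of the reindexing argument in the proof of Theorem \ref{mainTHMF}\,{\bf a}, that each $\xi_\nu$-coefficient is assembled from only finitely many contributions and that the algebraic manipulations are compatible with the fact that $\phi_r$ takes values in the $\mathfrak{n}^-$-completion $\overline{M}_{\lambda_N-\rho}$ (the latter causes no issue since each $U_i$ is finite-dimensional, so the vertex operators $\Psi_i$ extend canonically to the completions).
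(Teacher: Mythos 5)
Your proposal is correct and follows essentially the same route as the paper's proof: reduce to the ungauged identity via \eqref{tododelta}, apply Corollary \ref{bulkKZB} with the factorisation $(r^+(a),-r^-(a),b(a))$ of Proposition \ref{propfactorization}\,{\bf b} to $\mathbf{\Psi}$ acting on the weight components $\phi_r^\mu$, reflect the boundary terms off the $\mathfrak{k}$-intertwiners $\phi_\ell$ and $\phi_r$, read the Cartan part of $r^-$ as the Euler term $E_i$, and handle the $\textup{Ad}_a$-twist of the right boundary by the same weight-shift reindexing (producing $\widetilde{r}^+_{U_iV_r^*}$) that the paper performs. The only immaterial difference is bookkeeping: the paper first truncates to the quasi-polynomials $F_{M_{\underline{\lambda}},m}^{\phi_\ell,\mathbf{\Psi},\phi_r}$, evaluates on $A_+$, and then removes the truncation by comparing $\xi_\nu$-coefficients, whereas you sum over $\mu$ termwise and verify directly that each coefficient receives only finitely many contributions.
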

%%%%%%%%%%%%%%%%%%%%%
\begin{proof}
As in the proof of Theorem \ref{mainthmbKZBEisenstein}, the differential equations \eqref{totalDE} are equivalent to
\begin{equation}\label{todo100}
\widetilde{\mathcal{D}}_i\bigl(F_{M_{\underline{\lambda}}}^{\phi_\ell,\mathbf{\Psi},\phi_r}\bigr)=\frac{1}{2}(\zeta_{\lambda_i}(\Omega)-
\zeta_{\lambda_{i-1}}(\Omega))F_{M_{\underline{\lambda}}}^{\phi_\ell,\mathbf{\Psi},\phi_r}\qquad
 (i=1,\ldots,N)
\end{equation}
with $\widetilde{\mathcal{D}}_i=E_i-\sum_{j=1}^{i-1}r_{ji}^+-\widetilde{\kappa}_i-\sum_{j=i+1}^Nr_{ij}^-$
and 
\[
\widetilde{\kappa}:=\kappa-\frac{1}{2}\sum_{\alpha\in R^+}\Bigl(\frac{1+\xi_{-2\alpha}}{1-\xi_{-2\alpha}}
\Bigr)t_\alpha=
r^+\otimes 1+1\otimes b\otimes 1+1\otimes \widetilde{r}^+
\]
(here $b$ is given by \eqref{kappa}).

Write $\Lambda=\{\mu\in\mathfrak{h}^*\,\, | \,\, \mu\leq\lambda_N\}$ and 
$\Lambda_m:=\{\mu\in\Lambda \,\, | \,\, (\lambda_N-\mu,\rho^\vee)\leq m\}$ ($m\in\mathbb{Z}_{\geq 0}$).
Consider the $V_\ell\otimes\mathbf{U}\otimes V_r^*$-valued quasi-polynomial
\[
F_{M_{\underline{\lambda}},m}^{\phi_\ell,\mathbf{\Psi},\phi_r}:=\sum_{\mu\in\Lambda_m}
((\phi_\ell\otimes\textup{id}_{\mathbf{U}})\mathbf{\Psi}\phi_r^\mu)\xi_\mu
\]
for $m\in\mathbb{Z}_{\geq 0}$. Fix $a\in A_+$. Then we have
\begin{equation*}
\begin{split}
(\zeta_{\lambda_{i-1}}(\Omega)-&\zeta_{\lambda_i}(\Omega))F_{M_{\underline{\lambda}},m}^{\phi_\ell,\mathbf{\Psi},
\phi_r}(a)=\\
&=\sum_{\mu\in\Lambda_m}
(\phi_\ell\otimes\textup{id}_{\mathbf{U}})(\Psi_{M_{\lambda_1}}\cdots
\Psi_{M_{\lambda_{i-1}}}\widetilde{\Psi}_i
\Psi_{M_{\lambda_{i+1}}}\cdots
\Psi_{M_{\lambda_N}}\phi_r^\mu)a^\mu
\end{split}
\end{equation*}
with $\Psi_{M_{\lambda_i}}:=\Psi_i\otimes\textup{id}_{U_{i+1}\otimes\cdots\otimes U_N}$ and
$\widetilde{\Psi}_i:=\Omega_{M_{\lambda_{i-1}}}\Psi_{M_{\lambda_i}}-
\Psi_{M_{\lambda_i}}\Omega_{M_{\lambda_i}}$. By the asymptotic operator KZB equation \eqref{multioperatorKZB} applied to the factorisation $(r^+(a), -r^-(a), b(a))$ of $\Omega$
(see Proposition \ref{propfactorization}\,{\bf b}), we get
\begin{equation*}
\begin{split}
\Bigl(\Bigl(\frac{\zeta_{\lambda_{i-1}}(\Omega)}{2}-&\frac{\zeta_{\lambda_i}(\Omega)}{2}+E_{U_i}
-\sum_{j=1}^{i-1}r_{U_jU_i}^+-r_{V_\ell U_i}^+-b_{U_i}-\sum_{j=i+1}^Nr_{U_iU_j}^-\Bigr)
F_{M_{\underline{\lambda}},m}^{\phi_\ell,\mathbf{\Psi},\phi_r}\Bigr)(a)=\\
&\qquad\quad=-\sum_{\mu\in\Lambda_m}\sum_{\alpha\in R}\frac{(e_\alpha+e_{-\alpha})_{U_i}}{(1-a^{-2\alpha})}
(\phi_\ell\otimes\textup{id}_{\mathbf{U}})(\mathbf{\Psi}(e_\alpha)_{M_{\lambda_N}}\phi_r^\mu)a^\mu.
\end{split}
\end{equation*}
This being valid for all $a\in A_+$, hence we get
\begin{equation}\label{trunkm}
\begin{split}
\Bigl(\frac{1}{2}(\zeta_{\lambda_{i-1}}(\Omega)-&\zeta_{\lambda_i}(\Omega))+E_{U_i}
-\sum_{j=1}^{i-1}r_{U_jU_i}^+-r_{V_\ell U_i}^+-b_{U_i}-\sum_{j=i+1}^Nr_{U_iU_j}^-\Bigr)
F_{M_{\underline{\lambda}},m}^{\phi_\ell,\mathbf{\Psi},\phi_r}=\\
&=-\sum_{\mu\in\Lambda_m}\sum_{\alpha\in R}(e_\alpha+e_{-\alpha})_{U_i}
(\phi_\ell\otimes\textup{id}_{\mathbf{U}})(\mathbf{\Psi}(e_\alpha)_{M_{\lambda_N}}\phi_r^\mu)\frac{\xi_\mu}{(1-\xi_{-2\alpha})}
\end{split}
\end{equation}
viewed as identity in
$(V_\ell\otimes\mathbf{U}\otimes V_r^*)[[\xi_{-\alpha_1},\ldots,\xi_{-\alpha_\rr}]]\xi_{\lambda_N}$ (so
$(1-\xi_{-2\alpha})^{-1}=\sum_{k=0}^{\infty}\xi_{-\alpha}^{2k}$ if $\alpha\in R^+$ and
$(1-\xi_{-2\alpha})^{-1}=-\sum_{k=1}^{\infty}\xi_\alpha^{2k}$ if $\alpha\in R^-$, and analogous expansions for the coefficients of $r^{\pm}$ and $\widetilde{\kappa}$).
We claim that the identity \eqref{trunkm} is also valid when the summation over $\Lambda_m$ is replaced by summation over $\Lambda$:
\begin{equation}\label{notrunk}
\begin{split}
\Bigl(\frac{1}{2}(\zeta_{\lambda_{i-1}}(\Omega)-&\zeta_{\lambda_i}(\Omega))+E_{U_i}
-\sum_{j=1}^{i-1}r_{U_jU_i}^+-r_{V_\ell U_i}^+-b_{U_i}-\sum_{j=i+1}^Nr_{U_iU_j}^-\bigr)
F_{M_{\underline{\lambda}}}^{\phi_\ell,\mathbf{\Psi},\phi_r}=\\
&=-\sum_{\mu\in\Lambda}\sum_{\alpha\in R}(e_\alpha+e_{-\alpha})_{U_i}
(\phi_\ell\otimes\textup{id}_{\mathbf{U}})(\mathbf{\Psi}(e_\alpha)_{M_{\lambda_N}}\phi_r^\mu)
\frac{\xi_\mu}{(1-\xi_{-2\alpha})}
\end{split}
\end{equation}
in $(V_\ell\otimes\mathbf{U}\otimes V_r^*)[[\xi_{-\alpha_1},\ldots,\xi_{-\alpha_\rr}]]\xi_{\lambda_N}$.
Take $\eta\in\Lambda$ and let $m\in\mathbb{N}$ such that $(\lambda_N-\eta,\rho^\vee)\leq m$.
Then the $\xi_\eta$-coefficient of the left hand side of \eqref{notrunk} is the same as the $\xi_\eta$-coefficient of the left hand side of \eqref{trunkm} since the coefficients of $r^{\pm}$ and $\widetilde{\kappa}$ are in
$\mathbb{C}[[\xi_{-\alpha_1},\ldots,\xi_{-\alpha_\rr}]]$. Exactly the same argument applies to the $\xi_\mu$-coefficients of the right hand sides of \eqref{notrunk} and \eqref{trunkm}, from which the claim follows. Now rewrite the right hand side of \eqref{notrunk} as
\begin{equation*}
\begin{split}
-\sum_{\mu\in\Lambda}\sum_{\alpha\in R}&(e_\alpha+e_{-\alpha})_{U_i}
(\phi_\ell\otimes\textup{id}_{\mathbf{U}})(\mathbf{\Psi}(e_\alpha)_{M_{\lambda_N}}\phi_r^\mu)
\frac{\xi_\mu}{(1-\xi_{-2\alpha})}=\\
=&-\sum_{\alpha\in R}\sum_{\nu\in\Lambda}(e_\alpha+e_{-\alpha})_{U_i}(\phi_\ell\otimes\textup{id}_{\mathbf{U}})\mathbf{\Psi}(
\textup{proj}_{M_{\lambda_N}}^\nu(e_\alpha)_{M_{\lambda_N}}\phi_r)\frac{\xi_{\nu}}{(\xi_\alpha-\xi_{-\alpha})}
\end{split}
\end{equation*}
and use that 
\[
\sum_{\alpha\in R}\frac{(e_\alpha+e_{-\alpha})\otimes e_\alpha}{\xi_\alpha-\xi_{-\alpha}}=
\sum_{\alpha\in R}\frac{e_\alpha\otimes y_\alpha}{\xi_\alpha-\xi_{-\alpha}}
\]
to obtain
\begin{equation*}
\begin{split}
-\sum_{\mu\in\Lambda}\sum_{\alpha\in R}&(e_\alpha+e_{-\alpha})_{U_i}
(\phi_\ell\otimes\textup{id}_{\mathbf{U}})(\mathbf{\Psi}(e_\alpha)_{M_{\lambda_N}}\phi_r^\mu)
\frac{\xi_\mu}{(1-\xi_{-2\alpha})}=\\
=&-\sum_{\alpha\in R}\sum_{\nu\in\Lambda}(e_\alpha)_{U_i}(\phi_\ell\otimes\textup{id}_{\mathbf{U}})\mathbf{\Psi}(
\textup{proj}_{M_{\lambda_N}}^\nu(y_\alpha)_{M_{\lambda_N}}\phi_r)
\frac{\xi_{\nu}}{(\xi_\alpha-\xi_{-\alpha})}\\
=&\widetilde{r}_{U_iV_r^*}^+F_{M_{\underline{\lambda}}}^{\phi_\ell,\mathbf{\Psi},\phi_r},
\end{split}
\end{equation*}
where we used that $\phi_r$ is a $\mathfrak{k}$-intertwiner, as well as the explicit formula \eqref{rtildeplus} for $\widetilde{r}^+$. Substituting this identity
in \eqref{notrunk}, we obtain \eqref{todo100}.
\end{proof}
%%%%%%%%%%%%%%%%%%%%%%%%%%%%%%%%%%%%%%%%%%%

%%%%%%%%%%%%%%%%%%%
\subsection{Boundary fusion operators}\label{sectionBFO}
%%%%%%%%%%%%%%%%%%%%

In Subsection \ref{S62} we introduced the expectation value of (differential) vertex operators.
Recall the parametrisation of the vertex operators introduced in Definition \ref{notVO}.
The expectation value of products \eqref{Psi} of vertex operators gives rise to the fusion operator:
%%%%%%%%%%%%%%%%%%%%%%%%%%%%%%%%%%%%
\begin{defi}\textup{(}\cite[Prop. 3.7]{E}\textup{)}.
Let $\lambda\in\mathfrak{h}_{\textup{reg}}^*$. The fusion operator 
$\mathbf{J}_{\mathbf{U}}(\lambda)$ is the $\mathfrak{h}$-linear automorphism of $\mathbf{U}$ defined by
\[
u_1\otimes\cdots\otimes u_N\mapsto
(m_{\lambda_0}^*\otimes\textup{id}_{\mathbf{U}})
(\Psi_{\lambda_1}^{u_1}\otimes\textup{id}_{U_2\otimes\cdots\otimes U_N})
\cdots (\Psi_{\lambda_{N-1}}^{u_{N-1}}\otimes\textup{id}_{U_N})\Psi_{\lambda_N}^{u_N}(m_\lambda)
\]
for $u_i\in U_i[\mu_i]$ ($\mu_i\in P$), where $\lambda_i:=\lambda-\mu_{i+1}\cdots -\mu_N$
for $i=0,\ldots,N-1$ and $\lambda_N=\lambda$. 
\end{defi}
%%%%%%%%%%%%%%%%%%%%%%%%%%%%%%%%%%%
We suppress the dependence on $\mathbf{U}$ and denote $\mathbf{J}_{\mathbf{U}}(\lambda)$ by
$\mathbf{J}(\lambda)$ if no confusion is possible (in fact, a universal fusion operator 
exists, in the sense that its action on $\mathbf{U}$ reproduces
$\mathbf{J}_{\mathbf{U}}(\lambda)$ for all finite dimensional 
$\mathfrak{g}^{\oplus N}$-modules, see, e.g., \cite[Prop. 3.19]{E} and references therein).

Lemma \ref{evlem} shows that for $\mathbf{u}:=u_1\otimes\cdots\otimes u_N$ with $u_i\in U[\mu_i]$  and $\lambda\in\mathfrak{h}_{\textup{reg}}^*$ we have
\begin{equation}\label{fusionmeaning}
\Psi_\lambda^{\mathbf{J}(\lambda)\mathbf{u}}=
(\Psi_{\lambda_1}^{u_1}\otimes\textup{id}_{U_2\otimes\cdots\otimes U_N})
\cdots (\Psi_{\lambda_{N-1}}^{u_{N-1}}\otimes\textup{id}_{U_N})\Psi_{\lambda_N}^{u_N}
\end{equation}
in $\textup{Hom}_{\mathfrak{g}}(M_\lambda,M_{\lambda_0}\otimes\mathbf{U})$.

Fix from now on a finite dimensional semisimple $\mathfrak{k}$-module $V_\ell$. Recall the parametrisation of $\mathfrak{k}$-intertwiners $\phi_{\ell,\lambda}^v\in\textup{Hom}_{\mathfrak{k}}(M_\lambda,V_\ell)$ by their expectation value $v\in V_\ell$ as introduced in Definition \ref{intertwinerparametrization}\,{\bf a}.
%%%%%%%%%%%%%%%%%%%%%%%%%%%%%%%%%%%%%
\begin{lem}\label{boundaryfusion}
Let $\lambda\in\mathfrak{h}_{\textup{reg}}^*$. The linear operator $\mathbf{J}_{\ell,\mathbf{U}}(\lambda)\in\textup{End}(V_\ell\otimes\mathbf{U})$,
defined by
\[
\mathbf{J}_{\ell,\mathbf{U}}(\lambda)(v\otimes\mathbf{u}):=
(\phi_{\ell,\lambda-\mu}^v\otimes\textup{id}_{\mathbf{U}})\Psi_\lambda^{\mathbf{J}(\lambda)\mathbf{u}}(m_\lambda)
\]
for $v\in V_\ell$ and $\mathbf{u}\in \mathbf{U}[\mu]$ \textup{(}$\mu\in P$\textup{)},
is a linear automorphism. 
 \end{lem}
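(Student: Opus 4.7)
The plan is to establish invertibility by a weight-triangularity argument. First I would observe that since $\lambda\in\mathfrak{h}_{\textup{reg}}^*$, any weight of the form $\lambda-\nu$ with $\nu$ in the (finite) set $P_{\mathbf{U}}\subset P$ of weights of $\mathbf{U}$ again lies in $\mathfrak{h}_{\textup{reg}}^*\subseteq \mathfrak{h}_{\textup{irr}}^*$, so the vertex operators $\Psi_{\lambda-\nu}^{u}$ and the $\mathfrak{k}$-intertwiners $\phi_{\ell,\lambda-\nu}^{v}$ appearing in the construction are all well-defined and uniquely determined by their expectation values (Lemma \ref{evlem} and Proposition \ref{relEisPrinalg}\,{\bf a}). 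Moreover, by \cite[Prop.~3.7]{E} the fusion operator $\mathbf{J}(\lambda)\in\textup{End}(\mathbf{U})$ is an $\mathfrak{h}$-linear automorphism, so it preserves each weight space $\mathbf{U}[\mu]$.

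Next I would compute the leading term. Fix $\mu\in P_{\mathbf{U}}$ and $\mathbf{u}\in\mathbf{U}[\mu]$. The vertex operator $\Psi_\lambda^{\mathbf{J}(\lambda)\mathbf{u}}\in\textup{Hom}_{\mathfrak{g}}(M_\lambda,M_{\lambda-\mu}\otimes\mathbf{U})$ has, by definition, expectation value $\mathbf{J}(\lambda)\mathbf{u}\in\mathbf{U}[\mu]$, whence
\[
\Psi_\lambda^{\mathbf{J}(\lambda)\mathbf{u}}(m_\lambda)
=m_{\lambda-\mu}\otimes \mathbf{J}(\lambda)\mathbf{u}+R,
\]
with the remainder $R$ lying in $\bigoplus_{\eta<\lambda-\mu}M_{\lambda-\mu}[\eta]\otimes\mathbf{U}[\lambda-\eta]$ by weight considerations (the left tensor factor of $M_{\lambda-\mu}\otimes\mathbf{U}$ has weight $\leq\lambda-\mu$, and the total weight must be $\lambda$). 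In particular, every $\mathbf{U}$-component of $R$ lies in $\mathbf{U}[\nu]$ for some $\nu>\mu$ (strictly in the dominance order). Applying $\phi_{\ell,\lambda-\mu}^v\otimes\textup{id}_{\mathbf{U}}$ yields
\[
\mathbf{J}_{\ell,\mathbf{U}}(\lambda)(v\otimes\mathbf{u})
=v\otimes \mathbf{J}(\lambda)\mathbf{u}+\sum_{\nu>\mu}w_\nu,\qquad w_\nu\in V_\ell\otimes\mathbf{U}[\nu].
\]

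Finally I would carry out the triangularization. Order the finite set $P_{\mathbf{U}}$ compatibly with the dominance order, putting maximal weights first, and decompose $V_\ell\otimes\mathbf{U}=\bigoplus_{\mu\in P_{\mathbf{U}}}V_\ell\otimes\mathbf{U}[\mu]$. The preceding display says that, in this block decomposition, $\mathbf{J}_{\ell,\mathbf{U}}(\lambda)$ is block upper-triangular with diagonal blocks $\textup{id}_{V_\ell}\otimes\mathbf{J}(\lambda)\vert_{\mathbf{U}[\mu]}$. Since $\mathbf{J}(\lambda)$ is an automorphism preserving each weight space, each diagonal block is a linear automorphism, and hence so is $\mathbf{J}_{\ell,\mathbf{U}}(\lambda)$.

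The only subtlety, and the sole place I expect to have to be careful, is the order-theoretic bookkeeping: the dominance order $\leq$ is only a partial order, so ``upper-triangular'' means block upper-triangular with respect to any total refinement, and one must check that the strict inequalities $\eta<\lambda-\mu$ in the remainder really do translate into $\nu>\mu$ for the $\mathbf{U}$-weights appearing in the image (which they do, since the total weight is fixed and $M_{\lambda-\mu}$ has unique highest weight space). Once this is verified, the invertibility follows formally from the invertibility of $\mathbf{J}(\lambda)$.
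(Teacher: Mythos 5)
Your proof is correct and follows essentially the same route as the paper: the leading-term computation $\mathbf{J}_{\ell,\mathbf{U}}(\lambda)(v\otimes\mathbf{u})\in v\otimes\mathbf{J}(\lambda)\mathbf{u}+\bigoplus_{\nu>\mu}V_\ell\otimes\mathbf{U}[\nu]$, followed by triangularity with respect to (a total refinement of) the dominance order on the $\mathbf{U}$-weights. The only cosmetic difference is that the paper pre-composes with $\textup{id}_{V_\ell}\otimes\mathbf{J}(\lambda)^{-1}$ to get a unitriangular matrix, whereas you keep the invertible diagonal blocks $\textup{id}_{V_\ell}\otimes\mathbf{J}(\lambda)\vert_{\mathbf{U}[\mu]}$; these are equivalent.
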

%%%%%%%%%%%%%%%%%%%%%%%%%%%%%%%%%%%%%
\begin{proof}
For $v\in V_\ell$ and $\mathbf{u}\in\mathbf{U}[\mu]$ we have
\[
\Psi_\lambda^{\mathbf{J}(\lambda)\mathbf{u}}(m_\lambda)\in m_{\lambda-\mu}\otimes
\mathbf{J}(\lambda)\mathbf{u}+\bigoplus_{\nu>\mu}M_{\lambda-\mu}\otimes
\mathbf{U}[\nu],
\]
and consequently we get
\[
\mathbf{J}_{\ell,\mathbf{U}}(\lambda)(v\otimes\mathbf{u})\in v\otimes \mathbf{J}(\lambda)\mathbf{u}
+\bigoplus_{\nu>\mu}V_\ell\otimes \mathbf{U}[\nu].
\]
Choose an ordered tensor product basis of $V_\ell\otimes\mathbf{U}$ 
in which the $\mathbf{U}$-components consist of weight vectors. Order the tensor product basis 
in such a way that is compatible with the dominance order on the weights of the $\mathbf{U}$-components of the basis elements. With respect to such a basis, 
$\mathbf{J}_{\ell,\mathbf{U}}(\lambda)(\textup{id}_{V_\ell}\otimes\mathbf{J}(\lambda)^{-1})$ is represented by a triangular operator with ones on the diagonal, hence it is invertible.
\end{proof}
%%%%%%%%%%%%%%%%%%%%%%%%%%%%%%%%%%%%%%
We call $\mathbf{J}_{\ell,\mathbf{U}}(\lambda)$ the (left) boundary fusion operator on $V_\ell\otimes\mathbf{U}$ (we denote $\mathbf{J}_{\ell,\mathbf{U}}(\lambda)$ by $\mathbf{J}_\ell(\lambda)$ if no confusion is possible). A right version $\mathbf{J}_r(\lambda)$ of the
left boundary fusion operator $\mathbf{J}_\ell(\lambda)$ can be constructed in an analogous
manner. We leave the straightforward details to the reader. 

Let $\lambda\in\mathfrak{h}_{\textup{reg}}^*$. By Proposition \ref{relEisPrinalg}\,{\bf a} and Lemma \ref{boundaryfusion}, the map
\[
V_\ell\otimes \mathbf{U}\rightarrow\textup{Hom}_{\mathfrak{k}}(M_\lambda,V_\ell\otimes\mathbf{U}),
\qquad
v\otimes \mathbf{u}\mapsto \phi_{\ell,\lambda}^{\mathbf{J}_\ell(\lambda)(v\otimes\mathbf{u})}
\]
is a linear isomorphism. The $\mathfrak{k}$-intertwiner $\phi_{\ell,\lambda}^{\mathbf{J}_\ell(\lambda)(v\otimes\mathbf{u})}$ admits the following alternative description.
%%%%%%%%%%%%%%%%%%%%%%%%%%%%%
\begin{cor}\label{vertexint}
Let $\lambda\in\mathfrak{h}_{\textup{reg}}^*$, $\mu\in P$
and $v\in V_\ell$. For $\mathbf{u}\in\mathbf{U}[\mu]$ we have
\begin{equation}\label{boundaryfusionformula}
\phi_{\ell,\lambda}^{\mathbf{J}_\ell(\lambda)(v\otimes\mathbf{u})}=
(\phi_{\ell,\lambda-\mu}^v\otimes\textup{id}_{\mathbf{U}})
\Psi_\lambda^{\mathbf{J}(\lambda)\mathbf{u}}.
\end{equation}
For $\mathbf{u}=u_1\otimes\cdots\otimes u_N$ with $u_i\in U_i[\mu_i]$ 
\textup{(}$1\leq i\leq N$\textup{)} we furhermore have
\begin{equation}\label{boundaryfusionformula2}
\phi_{\ell,\lambda}^{\mathbf{J}_\ell(\lambda)(v\otimes\mathbf{u})}=
(\phi_{\ell,\lambda_0}^v\otimes\textup{id}_{\mathbf{U}})(\Psi_{\lambda_1}^{u_1}\otimes\textup{id}_{U_2\otimes\cdots\otimes U_N})
\cdots (\Psi_{\lambda_{N-1}}^{u_{N-1}}\otimes\textup{id}_{U_N})\Psi_{\lambda_N}^{u_N},
\end{equation}
with $\lambda_i:=\lambda-\mu_{i+1}\cdots-\mu_N$ \textup{(}$i=0,\ldots,N-1$\textup{)}, 
and $\lambda_N:=\lambda$.
\end{cor}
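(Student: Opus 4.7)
The plan is to use the uniqueness part of Proposition \ref{relEisPrinalg}\,\textbf{a}: a $\mathfrak{k}$-intertwiner $M_\lambda\to V_\ell\otimes\mathbf{U}$ is determined by its expectation value at $m_\lambda$. Coupled with the factorisation \eqref{fusionmeaning} of $\Psi_\lambda^{\mathbf{J}(\lambda)\mathbf{u}}$ as a composition of single-factor vertex operators, this will reduce both identities of the corollary to direct verifications.

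For \eqref{boundaryfusionformula}, I would first check that the right-hand side lies in $\textup{Hom}_{\mathfrak{k}}(M_\lambda,V_\ell\otimes\mathbf{U})$. Indeed, $\Psi_\lambda^{\mathbf{J}(\lambda)\mathbf{u}}\in\textup{Hom}_{\mathfrak{g}}(M_\lambda,M_{\lambda-\mu}\otimes\mathbf{U})$ is in particular $\mathfrak{k}$-linear, and $\phi_{\ell,\lambda-\mu}^v\otimes\textup{id}_{\mathbf{U}}$ is a $\mathfrak{k}$-intertwiner $M_{\lambda-\mu}\otimes\mathbf{U}\to V_\ell\otimes\mathbf{U}$. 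By Proposition \ref{relEisPrinalg}\,\textbf{a} it therefore suffices to verify that both sides have the same expectation value at $m_\lambda$. The expectation value of the left-hand side is by definition $\mathbf{J}_\ell(\lambda)(v\otimes\mathbf{u})$, and this coincides with the evaluation of the right-hand side at $m_\lambda$ by the very formula defining $\mathbf{J}_\ell(\lambda)$ in Lemma \ref{boundaryfusion}.

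For \eqref{boundaryfusionformula2}, set $\mu:=\mu_1+\cdots+\mu_N$, so that $\mathbf{u}\in\mathbf{U}[\mu]$ and $\lambda-\mu=\lambda_0$. Thus $\phi_{\ell,\lambda-\mu}^v=\phi_{\ell,\lambda_0}^v$, and substituting the expression \eqref{fusionmeaning} for $\Psi_\lambda^{\mathbf{J}(\lambda)\mathbf{u}}$ into \eqref{boundaryfusionformula} immediately produces \eqref{boundaryfusionformula2}. No real obstacle is anticipated: the corollary is essentially a direct unpacking of the definition of $\mathbf{J}_\ell(\lambda)$ in Lemma \ref{boundaryfusion}, together with \eqref{fusionmeaning} and the uniqueness of $\mathfrak{k}$-intertwiners with prescribed expectation value. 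A minor bookkeeping point worth recording is that the hypothesis $\lambda\in\mathfrak{h}_{\textup{reg}}^*$ guarantees $\lambda_0,\ldots,\lambda_N\in\mathfrak{h}_{\textup{reg}}^*\subseteq\mathfrak{h}_{\textup{irr}}^*$ (since the $\lambda_i$ differ from $\lambda$ by integral weights), so that all the vertex operators $\Psi_{\lambda_i}^{u_i}$ and $\Psi_\lambda^{\mathbf{J}(\lambda)\mathbf{u}}$ are well-defined by Lemma \ref{evlem}.
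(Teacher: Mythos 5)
Your proof is correct and is essentially the paper's own argument: the paper deduces the corollary "immediately" from Proposition \ref{relEisPrinalg}\,\textbf{a}, Lemma \ref{boundaryfusion} and \eqref{fusionmeaning}, which are exactly the three ingredients you combine (uniqueness of the $\mathfrak{k}$-intertwiner with prescribed expectation value, the defining formula for $\mathbf{J}_\ell(\lambda)$, and the factorisation of $\Psi_\lambda^{\mathbf{J}(\lambda)\mathbf{u}}$). Your extra remark that $\lambda\in\mathfrak{h}_{\textup{reg}}^*$ forces $\lambda_0,\ldots,\lambda_N\in\mathfrak{h}_{\textup{irr}}^*$ is a correct (and harmless) elaboration of why all the operators involved are well-defined.
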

%%%%%%%%%%%%%%%%%%%%%%%%%%%%%%%%%%%%%%%%
%%%%%%%%%%%%%%%%%%%%%%%%%%%%%%%%%%%%%%%
\begin{proof}
The result follows immediately from Proposition \ref{relEisPrinalg}\,{\bf a}, 
Lemma \ref{boundaryfusion} and \eqref{fusionmeaning}.
\end{proof}
%%%%%%%%%%%%%%%%%%%%%%%%%%%%%%%%%%%%%%%

%%%%%%%%%%%%%%%%%%%%%%%%%%%%%%%%
\begin{defi}
Let $V_\ell,V_r$ be finite dimensional semisimple $\mathfrak{k}$-modules.
Let $\underline{\lambda}=(\lambda_0,\ldots,\lambda_N)$ with $\lambda_N\in\mathfrak{h}_{\textup{reg}}^*$ and with $\mu_i:=\lambda_i-\lambda_{i-1}\in P$ for $i=1,\ldots,N$.
Let $v\in V_\ell$, $f\in V_r^*$ and $\mathbf{u}=u_1\otimes\cdots\otimes
u_N$ with $u_i\in U_i[\mu_i]$. We write
\[
F_{M_{\underline{\lambda}}}^{v,\mathbf{u},f}:=
F_{M_{\underline{\lambda}}}^{\mathbf{J}_\ell(\lambda_N)(v\otimes\mathbf{u})\otimes f}
\]
for the formal $N$-point spherical function with leading coefficient 
$\mathbf{J}_\ell(\lambda_N)(v\otimes\mathbf{u})\otimes f$, and 
\[
\mathbf{F}_{\underline{\lambda}}^{v,u,f}:=
\delta F_{M_{\underline{\lambda}-\rho}}^{v,\mathbf{u},f}=
\mathbf{F}_{\underline{\lambda}}^{\mathbf{J}_\ell(\lambda_N-\rho)(v\otimes\mathbf{u})\otimes f}
\]
for its normalised version.
\end{defi}
%%%%%%%%%%%%%%%%%%%%%%%%%%%%%%%%
Note that
\[
F_{M_{\underline{\lambda}}}^{v,\mathbf{u},f}=
F_{M_{\underline{\lambda}}}^{\phi_{\ell,\lambda_0}^v,\mathbf{\Psi}_{\lambda_N}^{\mathbf{J}(\lambda_N)\mathbf{u}},\phi_{r,\lambda_N}^f}
\]
by Lemma \ref{vertexint}.
Written out as formal power series we thus have
the following three expressions for $F_{M_{\underline{\lambda}}}^{v,\mathbf{u},f}$,
\begin{equation*}
\begin{split}
& F_{M_{\underline{\lambda}}}^{v,\mathbf{u},f}=\sum_{\mu\leq\lambda_N}
\phi_{\ell,\lambda_N}^{\mathbf{J}_\ell(\lambda_N)\mathbf{u}}(\textup{proj}_{M_{\lambda_N}}^\mu\phi_{r,\lambda_N}^f)\xi_\mu\\
&\qquad\quad=\sum_{\mu\leq\lambda_N}(\phi_{\ell,\lambda_0}^v\otimes\textup{id}_{\mathbf{U}})
\Psi_{\lambda_N}^{\mathbf{J}(\lambda_N)\mathbf{u}}(\textup{proj}_{M_{\lambda_N}}^\mu\phi_{r,\lambda_N}^f)\xi_\mu\\
&\quad=\sum_{\mu\leq\lambda_N}(\phi_{\ell,\lambda_0}^v\otimes\textup{id}_{\mathbf{U}})(\Psi_{\lambda_1}^{u_1}\otimes\textup{id}_{U_2\otimes\cdots\otimes U_N})
\cdots (\Psi_{\lambda_{N-1}}^{u_{N-1}}\otimes\textup{id}_{U_N})\Psi_{\lambda_N}^{u_N}
(\textup{proj}_{M_{\lambda_N}}^\mu\phi_{r,\lambda_N}^f)\xi_\mu.
\end{split}
\end{equation*}
The main results of the previous subsection for $\lambda\in\mathfrak{h}_{\textup{reg}}^*$ can now be
reworded as follows.
%%%%%%%%%%%%%%%%%%%%%%%%%%%%%%%%%%%%%%%%%%%
\begin{cor}\label{corMAINkzb}
Let $\lambda\in\mathfrak{h}_{\textup{reg}}^*$. 
Let $\underline{\lambda}=(\lambda_0,\ldots,\lambda_N)$ with $\lambda_N\in\mathfrak{h}_{\textup{reg}}^*$ and with $\mu_i:=\lambda_i-\lambda_{i-1}\in P$ for $i=1,\ldots,N$.
Let $v\in V_\ell$, $f\in V_r^*$ and $\mathbf{u}=u_1\otimes\cdots\otimes
u_N$ with $u_i\in U_i[\mu_i]$ \textup{(}$i=1,\ldots,N$\textup{)}. Then we have for $i=1,\ldots,N$, 
\begin{equation}\label{eigenvalueeqn}
\begin{split}
\mathcal{D}_i\bigl(\mathbf{F}_{\underline{\lambda}}^{v,\mathbf{u},f}\bigr)&=\Bigl(\frac{(\lambda_i,\lambda_i)}{2}-
\frac{(\lambda_{i-1},\lambda_{i-1})}{2}\Bigr)\mathbf{F}_{\underline{\lambda}}^{v,\mathbf{u},f},\\
\mathbf{H}^{(N)}\bigl(\mathbf{F}_{\underline{\lambda}}^{v,\mathbf{u},f}\bigr)&=
-\frac{(\lambda_N,\lambda_N)}{2}\mathbf{F}_{\underline{\lambda}}^{v,\mathbf{u},f}
\end{split}
\end{equation}
and $H_z^{(N)}\bigl(\mathbf{F}_{\underline{\lambda}}^{v,\mathbf{u},f}\bigr)=\zeta_{\lambda_N-\rho}(z)
\mathbf{F}_{\underline{\lambda}}^{v,\mathbf{u},f}$ for $z\in Z(\mathfrak{g})$.
This holds true 
as $V_\ell\otimes\mathbf{U}\otimes V_r^*$-valued analytic functions on $A_+$ when 
$\lambda_N\in\mathfrak{h}_{\textup{HC}}^*\cap\mathfrak{h}_{\textup{reg}}^*$.
\end{cor}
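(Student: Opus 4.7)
The strategy is to deduce the corollary directly from Theorem \ref{mainthmbKZB} and the eigenvalue equations \eqref{CMeigenfunctions} by recognising $\mathbf{F}_{\underline{\lambda}}^{v,\mathbf{u},f}$ as a normalised formal $N$-point spherical function of the general form $\mathbf{F}_{\underline{\lambda}}^{\phi_\ell,\mathbf{\Psi},\phi_r}$ for a canonical choice of intertwiners. First, I would check that the hypothesis $\lambda_N\in\mathfrak{h}_{\textup{reg}}^*$ guarantees that all auxiliary objects are defined: since $(\rho,\alpha^\vee)\in\mathbb{Z}$ for all $\alpha\in R$, we have $\lambda_N-\rho\in\mathfrak{h}_{\textup{reg}}^*\subseteq\mathfrak{h}_{\textup{irr}}^*$, and because $\mathfrak{h}_{\textup{reg}}^*$ is stable under translations by the integral weight lattice $P$, every intermediate weight $\lambda_i-\rho=\lambda_N-\rho-(\mu_{i+1}+\cdots+\mu_N)$ again lies in $\mathfrak{h}_{\textup{reg}}^*\subseteq\mathfrak{h}_{\textup{irr}}^*$. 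Consequently, the boundary fusion operator $\mathbf{J}_{\ell}(\lambda_N-\rho)$, the intertwiners $\phi_{\ell,\lambda_0-\rho}^v$ and $\phi_{r,\lambda_N-\rho}^f$, and the vertex operators $\Psi_{\lambda_i-\rho}^{u_i}\in\textup{Hom}_{\mathfrak{g}}(M_{\lambda_i-\rho},M_{\lambda_{i-1}-\rho}\otimes U_i)$ are all well-defined.

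Next, applying Corollary \ref{vertexint} with weights shifted by $-\rho$, set
\[
\phi_\ell:=\phi_{\ell,\lambda_0-\rho}^v,\qquad \phi_r:=\phi_{r,\lambda_N-\rho}^f,\qquad
\mathbf{\Psi}:=(\Psi_{\lambda_1-\rho}^{u_1}\otimes\textup{id})\cdots(\Psi_{\lambda_{N-1}-\rho}^{u_{N-1}}\otimes\textup{id})\Psi_{\lambda_N-\rho}^{u_N}.
\]
The composition formula \eqref{boundaryfusionformula2}, combined with Definition \ref{shiftdef} and the definition of $\mathbf{F}_{\underline{\lambda}}^{v,\mathbf{u},f}$, then yields the identification
\[
\mathbf{F}_{\underline{\lambda}}^{v,\mathbf{u},f}=\mathbf{F}_{\underline{\lambda}}^{\phi_\ell,\mathbf{\Psi},\phi_r}
\]
in $(V_\ell\otimes\mathbf{U}\otimes V_r^*)[[\xi_{-\alpha_1},\ldots,\xi_{-\alpha_r}]]\xi_{\lambda_N}$. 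Once this identification is in place, the boundary KZB eigenvalue equations are immediate from Theorem \ref{mainthmbKZB}, while the Schr{\"o}dinger equation for $\mathbf{H}^{(N)}$ and the general eigenvalue equations $H_z^{(N)}\mathbf{F}_{\underline{\lambda}}^{v,\mathbf{u},f}=\zeta_{\lambda_N-\rho}(z)\mathbf{F}_{\underline{\lambda}}^{v,\mathbf{u},f}$ follow from \eqref{CMeigenfunctions}, which in turn is a consequence of Theorem \ref{thmnorm} via the relation \eqref{relCF}.

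Finally, the analyticity claim for $\lambda_N\in\mathfrak{h}_{\textup{HC}}^*\cap\mathfrak{h}_{\textup{reg}}^*$ follows from Theorem \ref{mainTHMF}\,{\bf c}. Since $\lambda_N-\rho\in\mathfrak{h}_{\textup{HC}}^*$, the underlying formal elementary $\sigma^{(N)}$-spherical function $F_{M_{\lambda_N-\rho}}^{(\phi_\ell\otimes\textup{id}_{\mathbf{U}})\mathbf{\Psi},\phi_r}$ is represented on $A_+$ by $\Phi_{\lambda_N-\rho}^{\sigma^{(N)}}(\cdot)$ acting on its leading coefficient, hence is analytic there; multiplication by $\delta$, which is analytic on $A_+$, preserves this, and the formal eigenvalue equations upgrade to genuine analytic identities because all the differential operators $\mathcal{D}_i$, $\mathbf{H}^{(N)}$ and $H_z^{(N)}$ have coefficients in $\mathcal{R}\subset C^\omega(A_+)$.

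I expect no substantive obstacle: the heavy lifting—the recursive formal-power-series manipulations, the reflection/Cartan-factorisation arguments for the Casimir, and the construction of the boundary fusion operator that produces the correct leading coefficient—has already been executed in Theorem \ref{mainthmbKZB}, Theorem \ref{thmnorm}, and Lemma \ref{boundaryfusion}. The only point that requires mild care is confirming that the intermediate weights $\lambda_i-\rho$ remain in the appropriate genericity locus so that the fusion-operator reparametrisation \eqref{boundaryfusionformula2} is valid term-by-term; this is the $P$-stability of $\mathfrak{h}_{\textup{reg}}^*$ noted above.
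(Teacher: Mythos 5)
Your proposal is correct and follows essentially the same route as the paper, which obtains the corollary as a direct rewording of Theorem \ref{mainthmbKZB}, Theorem \ref{thmnorm} (via \eqref{relCF}--\eqref{CMeigenfunctions}) and Theorem \ref{mainTHMF}\,{\bf c}, using Corollary \ref{vertexint} to identify $\mathbf{F}_{\underline{\lambda}}^{v,\mathbf{u},f}$ with a normalised formal $N$-point spherical function $\mathbf{F}_{\underline{\lambda}}^{\phi_\ell,\mathbf{\Psi},\phi_r}$ for the $\rho$-shifted canonical intertwiners; your explicit check that $\mathfrak{h}_{\textup{reg}}^*$ is stable under $P$-translations and under the $\rho$-shift makes precise what the paper leaves implicit. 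The only caveat is that your analyticity step uses $\lambda_N-\rho\in\mathfrak{h}_{\textup{HC}}^*$ (the hypothesis of Theorem \ref{mainthmbKZB}), whereas the corollary as printed assumes $\lambda_N\in\mathfrak{h}_{\textup{HC}}^*$ — a $\rho$-shift discrepancy already present in the paper itself, not a defect of your argument.
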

%%%%%%%%%%%%%%%%%%%%%%%%%%%%%%%%%%%%%%%%%%%%

%%%%%%%%%%%%%%%%%%%%%%%%%%%%%%%
\subsection{Commutativity of the asymptotic boundary KZB operators}\label{S66}
%%%%%%%%%%%%%%%%%%%%%%%%%%%%%%%%
In this subsection we show that the asymptotic boundary KZB operators $\mathcal{D}_i$ ($1\leq i\leq N$)
pairwise commute in $\mathbb{D}_{\mathcal{R}}\otimes U(\mathfrak{k})\otimes
U(\mathfrak{g})^{\otimes N}\otimes U(\mathfrak{k})$, and that they also commute with the
quantum Hamiltonians $H_z^{(N)}\in \mathbb{D}_{\mathcal{R}}\otimes U(\mathfrak{k})^{\otimes (N+2)}$ for $z\in Z(\mathfrak{g})$ (and hence also with $\mathbf{H}^{(N)}$).
We begin with the following lemma.
%%%%%%%%%%%%%%%%%%%%%%%%%%%%%%%%%%%%%%%%%%%
\begin{lem}
Let $V$ be a finite dimensional semisimple $U(\mathfrak{k})\otimes U(\mathfrak{g})^{\otimes N}\otimes
U(\mathfrak{k})$-module and 
suppose that $\lambda\in\mathfrak{h}_{\textup{reg}}^*$. 
Then the asymptotic boundary KZB operators 
$\mathcal{D}_i$ \textup{(}$1\leq i\leq N$\textup{)} and the quantum Hamiltonians $H_z^{(N)}$ \textup{(}$z\in Z(\mathfrak{g})$\textup{)}
pairwise commute as linear operators on 
$V[[\xi_{-\alpha_1},\ldots,\xi_{-\alpha_\rr}]]\xi_\lambda$.
\end{lem}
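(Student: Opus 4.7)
The plan is to deduce the commutativity from the observation that the normalized formal $N$-point spherical functions of Corollary \ref{corMAINkzb} supply a rich parametric family of simultaneous eigenfunctions of the operators $\mathcal{D}_i$ and $H_z^{(N)}$ with scalar eigenvalues, and to then leverage this to force every pairwise commutator to vanish as a differential operator in $\mathbb{D}_{\mathcal{R}}\otimes\textup{End}(V)$. Since every finite-dimensional irreducible $U(\mathfrak{k})\otimes U(\mathfrak{g})^{\otimes N}\otimes U(\mathfrak{k})$-module (with $U(\mathfrak{k})\subset U(\mathfrak{g})$) is an outer tensor product $V_\ell\otimes U_1\otimes\cdots\otimes U_N\otimes V_r^*$ with $V_\ell,V_r$ finite-dimensional $\mathfrak{k}$-modules and $U_i$ finite-dimensional $\mathfrak{g}$-modules, and the vanishing of a commutator on $V$ is inherited by sub- and quotient modules, it suffices to treat $V=V_\ell\otimes\mathbf{U}\otimes V_r^*$ with representation map $\sigma^{(N)}$.

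Fix such a $V$ and let $C$ denote any one of $[\mathcal{D}_i^{\sigma^{(N)}},\mathcal{D}_j^{\sigma^{(N)}}]$, $[\mathcal{D}_i^{\sigma^{(N)}},H_z^{(N),\sigma^{(N)}}]$ or $[H_z^{(N),\sigma^{(N)}},H_{z'}^{(N),\sigma^{(N)}}]$. For any $\lambda'\in(\mathfrak{h}_{\textup{HC}}^*+\rho)\cap(\mathfrak{h}_{\textup{reg}}^*+\rho)$, any tuple $\underline{\lambda}'=(\lambda_0',\ldots,\lambda_N')$ with $\lambda_N'=\lambda'$ and $\mu_i':=\lambda_i'-\lambda_{i-1}'$ a weight of $U_i$, and any $v\in V_\ell$, $f\in V_r^*$, and $\mathbf{u}=u_1\otimes\cdots\otimes u_N$ with $u_i\in U_i[\mu_i']$, Corollary \ref{corMAINkzb} shows that $\mathbf{F}_{\underline{\lambda}'}^{v,\mathbf{u},f}$ is a simultaneous eigenfunction of the relevant operators with scalar eigenvalues, so $C\mathbf{F}_{\underline{\lambda}'}^{v,\mathbf{u},f}=0$. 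By Theorem \ref{normalizedHCseriesthm} applied to $\sigma=\sigma^{(N)}$, every formal power series eigenfunction of $\mathbf{H}^{(N),\sigma^{(N)}}$ with eigenvalue $-(\lambda',\lambda')/2$ and leading exponent $\lambda'$ is uniquely of the form $\mathbf{\Phi}_{\lambda'}^{\sigma^{(N)}}(\cdot)w$ for its leading coefficient $w\in V$. By Lemma \ref{boundaryfusion}, the boundary fusion operator $\mathbf{J}_\ell(\lambda'-\rho)$ is a linear automorphism of $V_\ell\otimes\mathbf{U}$, so as $\mu_i'$ ranges over the weights of $U_i$, $u_i$ over bases of the corresponding weight spaces, and $v,f$ freely, the leading coefficients $\mathbf{J}_\ell(\lambda'-\rho)(v\otimes\mathbf{u})\otimes f$ of the $\mathbf{F}_{\underline{\lambda}'}^{v,\mathbf{u},f}$ exhaust $V$. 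Consequently $C\bigl(\mathbf{\Phi}_{\lambda'}^{\sigma^{(N)}}(\cdot)w\bigr)=0$ for every $w\in V$ and every generic $\lambda'$.

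The final step is a polynomial interpolation argument in the spectral parameter $\lambda'$. Write $C=\sum_\eta c_\eta\partial^\eta$ with $c_\eta=\sum_{\beta\in Q_-}c_\eta^\beta\xi_\beta$ and $c_\eta^\beta\in\textup{End}(V)$, and expand $\mathbf{\Phi}_{\lambda'}^{\sigma^{(N)}}=\sum_{\gamma\in Q_-}\mathbf{\Gamma}_\gamma^{\sigma^{(N)}}(\lambda')\xi_{\lambda'+\gamma}$ with $\mathbf{\Gamma}_0^{\sigma^{(N)}}(\lambda')=\textup{id}_V$ and each $\mathbf{\Gamma}_\gamma^{\sigma^{(N)}}(\lambda')$ rational in $\lambda'$. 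The vanishing of the $\xi_{\lambda'+\delta}$-coefficient of $C\bigl(\mathbf{\Phi}_{\lambda'}^{\sigma^{(N)}}(\cdot)w\bigr)$ gives, for each $\delta\in Q_-$ and every $w\in V$, the identity
\[
\sum_{\eta,\,\gamma+\beta=\delta}c_\eta^\beta(\lambda'+\gamma)^\eta\mathbf{\Gamma}_\gamma^{\sigma^{(N)}}(\lambda')w=0
\]
for all generic $\lambda'$. At height $\delta=0$ only $(\gamma,\beta)=(0,0)$ contributes and this reduces to $\sum_\eta c_\eta^0(\lambda')^\eta=0$; since the monomials $(\lambda')^\eta$ are linearly independent polynomial functions of $\lambda'\in\mathfrak{h}^*$ and the vanishing holds on a Zariski-dense set, $c_\eta^0=0$ for every $\eta$. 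Inducting on the height of $\delta\in Q_-$, the lower-height contributions already vanish by the inductive hypothesis, leaving $\sum_\eta c_\eta^\delta(\lambda')^\eta=0$ and hence $c_\eta^\delta=0$. Therefore $C=0$ in $\mathbb{D}_{\mathcal{R}}\otimes\textup{End}(V)$, and in particular $C$ acts as zero on $V[[\xi_{-\alpha_1},\ldots,\xi_{-\alpha_r}]]\xi_\lambda$ for the given $\lambda\in\mathfrak{h}_{\textup{reg}}^*$. The main delicacy is the clean bookkeeping of this induction, which rests on $\mathbf{\Gamma}_0^{\sigma^{(N)}}(\lambda')=\textup{id}_V$ together with the linear independence of the $(\lambda')^\eta$; everything else is forced by the eigenfunction structure already established in the preceding subsections.
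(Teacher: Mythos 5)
Your proposal is correct in its main thrust, but it takes a genuinely different route from the paper. The paper's proof stays at the fixed $\lambda\in\mathfrak{h}^*_{\textup{reg}}$: it equips $V[[\xi_{-\alpha_1},\ldots,\xi_{-\alpha_r}]]\xi_\lambda$ with an adic (ultrametric) topology, observes that $\mathcal{D}_i$ and $H_z^{(N)}$ are continuous, and exhibits a \emph{topological basis} of simultaneous formal eigenfunctions, namely the $\mathbf{F}_{\underline{\lambda}(\mathbf{b}_j)-q}^{v_i,\mathbf{b}_j,f_s}$ with $q$ ranging over the positive cone of the root lattice, whose leading coefficients exhaust $V$ by the invertibility of the boundary fusion operator (Lemma \ref{boundaryfusion}); no Harish-Chandra genericity and no interpolation in the spectral parameter are needed, since Corollary \ref{corMAINkzb} holds formally for all $\lambda_N\in\mathfrak{h}^*_{\textup{reg}}$. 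You instead work at generic $\lambda'$ (regular and HC-generic), identify the normalised formal $N$-point functions with $\mathbf{\Phi}_{\lambda'}^{\sigma^{(N)}}(\cdot)w$ via Theorem \ref{mainTHMF}, and run a leading-coefficient induction combined with polynomial interpolation in $\lambda'$ to conclude that the commutator vanishes identically in $\mathbb{D}_{\mathcal{R}}\otimes\textup{End}(V)$, after which the statement for the given $\lambda$ is immediate. This is a stronger intermediate conclusion than the lemma itself and is in fact close in spirit to the paper's proof of Theorem \ref{consistentoperators}, which also uses a Zariski-density argument in $\lambda$ (there combined with \cite[Thm. 2.5.7]{Di} to pass to the universal level). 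What your route buys is independence of $\lambda$ and avoidance of the topological-basis machinery; what the paper's route buys is that it never leaves the formal setting at the given weight and never needs $\mathfrak{h}^*_{\textup{HC}}$, so no density argument is required. Your interpolation step itself is sound: the coefficients $c_\eta^\beta$ are independent of $\lambda'$, $\mathbf{\Gamma}_0^{\sigma^{(N)}}(\lambda')=\textup{id}$, and the complement of countably many affine hyperplanes forces identical vanishing of the polynomial $\lambda'\mapsto\sum_\eta c_\eta^\delta(\lambda')^\eta$.

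One step in your reduction is flawed as stated. You reduce to $V=V_\ell\otimes\mathbf{U}\otimes V_r^*$ by saying that irreducible finite-dimensional modules are outer tensor products and that "vanishing on $V$ is inherited by sub- and quotient modules". That inheritance goes the wrong way: knowing the commutator kills $W[[\xi]]\xi_\lambda$ for every irreducible subquotient $W$ of $V$ does not imply it kills $V[[\xi]]\xi_\lambda$ (it only forces the commutator to shift the filtration), and you cannot appeal to complete reducibility since $\mathfrak{k}$ need not be semisimple (for type $C_r$ one has $\mathfrak{k}\simeq\mathfrak{gl}_r$, so finite-dimensional $U(\mathfrak{k})$-modules are not automatically semisimple). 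The repair is easy and does give the sufficiency the paper also asserts without proof: every finite-dimensional module over $U(\mathfrak{k})\otimes U(\mathfrak{g})^{\otimes N}\otimes U(\mathfrak{k})$ is a quotient of a finite direct sum of outer tensor products of finite-dimensional modules (replace each tensor factor by its image in $\textup{End}(V)$ and present $V$ as a quotient of a free module over the resulting finite-dimensional algebra), and vanishing of the commutator passes to direct sums and to quotients. Alternatively, since your argument actually proves $C=0$ in $\mathbb{D}_{\mathcal{R}}\otimes\textup{End}(V_\ell\otimes\mathbf{U}\otimes V_r^*)$ for all such outer products, you could follow the paper's Theorem \ref{consistentoperators} and invoke \cite[Thm. 2.5.7]{Di} to get $C=0$ universally, which then covers arbitrary finite-dimensional $V$. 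With either repair your proof is complete.
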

%%%%%%%%%%%%%%%%%%%%%%%%%%%%%%%%%%%%%%%%%%
\begin{proof}
It suffices to prove the lemma for $V=V_\ell\otimes\mathbf{U}\otimes V_r^*$ with
$V_\ell, V_r$ finite dimensional semisimple $\mathfrak{k}$-modules and $\mathbf{U}=U_1\otimes\cdots\otimes U_N$ with $U_1,\ldots,U_N$ finite dimensional $\mathfrak{g}$-modules.

Define an ultrametric $d$ on 
$(V_\ell\otimes\mathbf{U}\otimes V_r^*)[[\xi_{-\alpha_1},\ldots,\xi_{-\alpha_\rr}]]\xi_\lambda$
by the formula $d(f,g):=2^{-\varpi(f-g)}$ with, for $\sum_{\mu\leq\lambda}e_\mu\xi_\mu\in
(V_\ell\otimes\mathbf{U}\otimes V_r^*)[[\xi_{-\alpha_1},\ldots,\xi_{-\alpha_\rr}]]\xi_\lambda$ nonzero,
\[
\varpi\bigl(\sum_{\mu\leq\lambda}e_\mu\xi_\mu\bigr):=
\textup{min}\{ (\lambda-\mu,\rho^\vee) \,\, | \,\, \mu\leq\lambda: e_\mu\not=0\},
\]
and $\varpi(0)=\infty$.
Consider $(V_\ell\otimes\mathbf{U}\otimes V_r^*)[[\xi_{-\alpha_1},\ldots,
\xi_{-\alpha_\rr}]]\xi_\lambda$ as topological space with respect to the resulting metric topology. 
Note that $\mathcal{D}_i$ ($1\leq i\leq N$) and $H_z^{(N)}$ ($z\in Z(\mathfrak{g})$) are continuous linear operators on
$(V_\ell\otimes\mathbf{U}\otimes V_r^*)[[\xi_{-\alpha_1},\ldots,
\xi_{-\alpha_\rr}]]\xi_\lambda$ since their scalar components lie
in the subring
$\mathcal{R}
\subseteq\mathbb{C}[[\xi_{-\alpha_1},\ldots,\xi_{-\alpha_\rr}]]$. It thus suffices to show that 
$(V_\ell\otimes\mathbf{U}\otimes V_r^*)[[\xi_{-\alpha_1},\ldots,
\xi_{-\alpha_\rr}]]\xi_\lambda$ has a topological linear basis consisting of common eigenfunctions
for the differential operators $\mathcal{D}_i$ ($1\leq i\leq N$) and $H_z^{(N)}$ ($z\in Z(\mathfrak{g})$).

Fix linear basis $\{v_i\}_{i\in I}$, $\{\mathbf{b}_j\}_{j\in J}$ and $\{f_s\}_{s\in S}$ of $V_\ell$, $\mathbf{U}$ and $V_r^*$
respectively. Take the basis elements $\mathbf{b}_j$ of the form $\mathbf{b}_j=u_{1,j}\otimes\cdots\otimes u_{N,j}$ with
$u_{k,j}$ a weight vector in $U_k$ of weight $\mu_k(\mathbf{b}_j)$ ($1\leq k\leq N$). For 
$q\in \sum_{k=1}^r\mathbb{Z}_{\geq 0}\alpha_k$ write 
\[
\underline{\lambda}(\mathbf{b}_j)-q:=(\lambda-q-\mu_1(\mathbf{b}_j)-\cdots-\mu_N(\mathbf{b}_j),
\ldots,\lambda-q-\mu_N(\mathbf{b}_j),\lambda-q).
\]
We then have
\[
\mathbf{F}_{\underline{\lambda}(\mathbf{b}_j)-q}^{v_i,\mathbf{b}_j,f_s}=
\bigl(\mathbf{J}_\ell(\lambda-q)(v_i\otimes\mathbf{b}_j)\otimes f_s\bigr)\xi_{\lambda-q}+
\sum_{\mu<\lambda-q}e_{i,j,s;q}(\mu)\xi_\mu
\]
for certain vectors $e_{i,j,s;q}(\mu)\in V_\ell\otimes\mathbf{U}\otimes V_r^*$. Lemma \ref{boundaryfusion} then implies that
\[
\bigl\{\mathbf{F}_{\underline{\lambda}(\mathbf{b}_j)-q}^{v_i,\mathbf{b}_j,f_s}\,\,\, | \,\,\, (i,j,s)\in I\times J\times S,\,\,  q\in \sum_{k=1}^r\mathbb{Z}_{\geq 0}\alpha_k \bigr\}
\]
is a topological linear basis of $(V_\ell\otimes\mathbf{U}\otimes V_r^*)[[\xi_{-\alpha_1},\ldots,
\xi_{-\alpha_\rr}]]\xi_\lambda$. Finally Corollary \ref{corMAINkzb} shows that the basis elements
$\mathbf{F}_{\underline{\lambda}(\mathbf{b}_j)-q}^{v_i,\mathbf{b}_j,f_s}$ are simultaneous eigenfunctions of
$\mathcal{D}_k$ ($1\leq k\leq N$) and $H_z^{(N)}$ ($z\in Z(\mathfrak{g})$).
\end{proof}
%%%%%%%%%%%%%%%%%%%%%%%%%%%%%%%%%%%%%%%%%
We can now show the universal integrability of the asymptotic boundary KZB operators, as well as their compatibility with the quantum Hamiltonians $H_z^{(N)}$ ($z\in Z(\mathfrak{g})$).
%%%%%%%%%%%%%%%%%%%%%%%%%%%%%%%%%%%%%%%%%%%%%%%%
\begin{thm}\label{consistentoperators}
In $\mathbb{D}_{\mathcal{R}}\otimes U(\mathfrak{k})\otimes U(\mathfrak{g})^{\otimes N}
\otimes U(\mathfrak{k})$ we have
\[
[\mathcal{D}_i,\mathcal{D}_j]=0,\qquad [\mathcal{D}_i,H_z^{(N)}]=0,\qquad [H_z^{(N)},H_{z^\prime}^{(N)}]=0
\]
for $i,j=1,\ldots,N$ and $z,z^\prime\in Z(\mathfrak{g})$.
\end{thm}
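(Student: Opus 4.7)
The plan is to upgrade the preceding lemma---which establishes commutativity of the operators $\mathcal{D}_i$ and $H_z^{(N)}$ as linear operators on every $V[[\xi_{-\alpha_1},\ldots,\xi_{-\alpha_r}]]\xi_\lambda$ for $V$ a finite-dimensional $U(\mathfrak{k})\otimes U(\mathfrak{g})^{\otimes N}\otimes U(\mathfrak{k})$-module and $\lambda\in\mathfrak{h}^*_{\textup{reg}}$---to the desired identities in the ambient algebra $\mathcal{A}:=\mathbb{D}_{\mathcal{R}}\otimes U(\mathfrak{k})\otimes U(\mathfrak{g})^{\otimes N}\otimes U(\mathfrak{k})$. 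The mechanism will be faithfulness of the natural $\mathcal{A}$-action on the family $\bigl\{V[[\xi_{-\alpha_1},\ldots,\xi_{-\alpha_r}]]\xi_\lambda\bigr\}_{V,\lambda}$ as $V$ and $\lambda$ range as above, which I would verify in two decoupled steps.

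First, I would show that $\mathbb{D}_{\mathcal{R}}$ acts faithfully on $\bigl\{\mathbb{C}[[\xi_{-\alpha_1},\ldots,\xi_{-\alpha_r}]]\xi_\lambda\bigr\}_{\lambda\in\mathfrak{h}^*_{\textup{reg}}}$: for $D=\sum_\eta c_\eta\,\partial^\eta\in\mathbb{D}_{\mathcal{R}}$ a direct computation gives $D(\xi_\lambda)=\bigl(\sum_\eta \lambda^\eta c_\eta\bigr)\xi_\lambda$ with $\lambda^\eta:=\prod_s\lambda(x_s)^{\eta_s}$; if $D\cdot\xi_\lambda=0$ for all $\lambda\in\mathfrak{h}^*_{\textup{reg}}$, then since $\mathfrak{h}^*_{\textup{reg}}$ is Zariski-dense in $\mathfrak{h}^*$ the polynomial identity $\sum_\eta \lambda^\eta c_\eta=0$ holds in $\mathcal{R}$ for every $\lambda\in\mathfrak{h}^*$, forcing each $c_\eta=0$ by Vandermonde inversion. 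Second, I would invoke the classical residual finite-dimensionality of $U(\mathfrak{l})$ for any finite-dimensional complex reductive Lie algebra $\mathfrak{l}$: the natural map $U(\mathfrak{l})\hookrightarrow\prod_{L}\textup{End}(L)$, with $L$ ranging over the finite-dimensional $\mathfrak{l}$-modules, is injective. For semisimple $\mathfrak{l}$ this is standard, and for reductive $\mathfrak{l}$ it reduces to the semisimple case via $\mathfrak{l}=\mathfrak{z}(\mathfrak{l})\oplus[\mathfrak{l},\mathfrak{l}]$ together with the fact that $S(\mathfrak{z}(\mathfrak{l}))$ is separated by character evaluations. Applied to each tensor slot, $U(\mathfrak{k})\otimes U(\mathfrak{g})^{\otimes N}\otimes U(\mathfrak{k})$ acts faithfully on its family of finite-dimensional modules.

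Combining the two faithfulness statements, $\mathcal{A}$ embeds into $\prod_{V,\lambda}\textup{End}\bigl(V[[\xi_{-\alpha_1},\ldots,\xi_{-\alpha_r}]]\xi_\lambda\bigr)$. Since the preceding lemma places $[\mathcal{D}_i,\mathcal{D}_j]$, $[\mathcal{D}_i,H_z^{(N)}]$ and $[H_z^{(N)},H_{z'}^{(N)}]$ in the kernel of every projection of this embedding, all three commutators vanish in $\mathcal{A}$. The main obstacle I anticipate is securing the reductive (rather than just semisimple) faithfulness of $U(\mathfrak{k})$---relevant because $\mathfrak{k}$ carries an abelian factor in rank one and in type $C_r$---but once this separation-of-points result is in place the conclusion is formal. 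The alternative route through component-wise generalised radial component maps indicated in footnote~3 bypasses this issue and is deferred to a companion paper.
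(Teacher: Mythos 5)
Your proposal is correct and takes essentially the same route as the paper: the paper likewise upgrades the preceding lemma by showing that any element of $\mathbb{D}_{\mathcal{R}}\otimes U(\mathfrak{k})\otimes U(\mathfrak{g})^{\otimes N}\otimes U(\mathfrak{k})$ acting as zero on every $V[[\xi_{-\alpha_1},\ldots,\xi_{-\alpha_r}]]\xi_\lambda$ with $\lambda\in\mathfrak{h}_{\textup{reg}}^*$ must vanish, via an expansion with linearly independent $\mathcal{R}$-coefficients, vanishing of polynomials on the dense set $\mathfrak{h}_{\textup{reg}}^*$, and separation of points of the enveloping-algebra factor by finite-dimensional modules. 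The only difference is bookkeeping: where you split the faithfulness into two tensor factors and worry about the reductive case of $U(\mathfrak{k})$, the paper simply cites \cite[Thm. 2.5.7]{Di}, which holds for enveloping algebras of arbitrary finite-dimensional Lie algebras, so that point is not an obstacle.
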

%%%%%%%%%%%%%%%%%%%%%%%%%%%%%%%%%%%%%%%%%%%%%%%
\begin{proof}
By the previous lemma, it suffices to show that if the differential operator
\[
L\in\mathbb{D}_{\mathcal{R}}\otimes U(\mathfrak{k})\otimes U(\mathfrak{g})^{\otimes N}\otimes U(\mathfrak{k})\]
acts as zero
on $V[[\xi_{-\alpha_1},\ldots,\xi_{-\alpha_\rr}]]\xi_\lambda$ for all finite dimensional semisimple
$U(\mathfrak{k})\otimes U(\mathfrak{g})^{\otimes N}\otimes U(\mathfrak{k})$-modules $V$
and all $\lambda\in\mathfrak{h}_{\textup{reg}}^*$, then $L=0$.

We identify the algebra $\mathbb{D}(A)^A$ of constant coefficient differential operators
on $A$ with the algebra $S(\mathfrak{h}^*)$ of complex polynomials on $\mathfrak{h}^*$,
by associating $\partial_h$ ($h\in \mathfrak{h}_0$) with the linear polynomial
$\lambda\mapsto \lambda(h)$. For $p\in S(\mathfrak{h}^*)$ we write $p(\partial)$
for the corresponding constant coefficient differential operator on $A$.

Write
\[
L=\sum_if_iL_i
\]
with $\{f_i\}_i\subset\mathcal{R}$ linear independent 
and $L_i\in \mathbb{D}(A)^A\otimes U(\mathfrak{k})\otimes U(\mathfrak{g})^{\otimes N}\otimes U(\mathfrak{k})$. Expand 
\[
L_i=\sum_{j}a_{ij}p_{ij}(\partial)
\]
with $p_{ij}\in S(\mathfrak{h}^*)$ and $\{a_{ij}\}_{j}\subset U(\mathfrak{k})\otimes U(\mathfrak{g})^{\otimes N}\otimes U(\mathfrak{k})$ linear independent for all $i$. Then 
\[
0=L(v\xi_\lambda)=\sum_i\bigl(\sum_jp_{ij}(\lambda)a_{ij}(v)\bigr)f_i\xi_\lambda
\]
in $V[[\xi_{-\alpha_1},\ldots,\xi_{-\alpha_\rr}]]\xi_\lambda$ for $v\in V$ and $\lambda\in\mathfrak{h}_{\textup{reg}}^*$, where $V$ is an arbitrary finite dimensional semisimple $U(\mathfrak{k})\otimes
U(\mathfrak{g})^{\otimes N}\otimes U(\mathfrak{k})$-module. Since the $\{f_i\}_i$ are linear independent, we get
\[
\sum_jp_{ij}(\lambda)a_{ij}(v)=0
\]
in $V$ for all $i$, for all $\lambda\in\mathfrak{h}_{\textup{reg}}^*$ and $v\in V$, with $V$ any finite dimensional semisimple $U(\mathfrak{k})\otimes
U(\mathfrak{g})^{\otimes N}\otimes U(\mathfrak{k})$-module. 
Since $\mathfrak{l}=\mathfrak{k}\oplus\mathfrak{g}^{\oplus N}\oplus\mathfrak{k}$ is a reductive Lie algebra, there exists for each 
$0\not=x\in U(\mathfrak{l})$ a finite dimensional semisimple representation $\pi: \mathfrak{l}\rightarrow\mathfrak{gl}(V)$ 
such that $\pi(x)\not=0$ (this follows by noting that in the proof of \cite[Thm. 2.5.7]{Di} the representation may be chosen to be 
semisimple when the Lie algebra is reductive). 
Hence
\[
\sum_jp_{ij}(\lambda)a_{ij}=0
\]
in $U(\mathfrak{k})\otimes U(\mathfrak{g})^{\otimes N}\otimes U(\mathfrak{k})$ for all
$i$ and all $\lambda\in\mathfrak{h}_{\textup{reg}}^*$.
By the linear independence of $\{a_{ij}\}_{j}$,
we get $p_{ij}(\lambda)=0$ for all $i,j$ and all $\lambda\in \mathfrak{h}_{\textup{reg}}^*$,
hence $p_{ij}=0$ for all $i,j$. This completes the proof of the theorem.

\end{proof}
%%%%%%%%%%%%%%%%%%%%%%%

%%%%%%%%%%%%%%%%%%%%%%%%%%%%%%%%%%%%%%%%%%%%
\subsection{Folded dynamical trigonometric $r$-and $k$-matrices}\label{inteq}
%%%%%%%%%%%%%%%%%%%%%%%%%%%%%%%%%%%%%%%%%%%%

We end this section by discussing the reformulation of the commutator relations
\[
[\mathcal{D}_i,\mathcal{D}_j]=0
\]
in 
$\mathbb{D}_{\mathcal{R}}\otimes U(\mathfrak{k})\otimes U(\mathfrak{g})^{\otimes N}
\otimes U(\mathfrak{k})$ for $1\leq i,j\leq N$
in terms of explicit consistency conditions for the constituents $r^{\pm}$ and $\kappa$
of the asymptotic boundary KZB operators $\mathcal{D}_i$ (see \eqref{bKZBoper}).

Before doing so, we first discuss as a warm-up the situation for the usual asymptotic KZB equations (see \cite{ES} and references therein), which we will construct from an appropriate "universal" version of the operators that are no longer integrable. Recall that $\Delta^{(N-1)}: U(\mathfrak{g})\rightarrow U(\mathfrak{g})^{\otimes N}$ is the $(N-1)$th iterated comultiplication of $U(\mathfrak{g})$.

%%%%%%%%%%%%%%%%%%%%%%%%%%%%%%%%%%%%%%%%%%%%%%
\begin{prop}
Fix $\widehat{r}\in \mathcal{R}\otimes U(\mathfrak{g})^{\otimes 2}$ satisfying the invariance
property
\begin{equation}\label{invprop}
\lbrack\Delta(h), \widehat{r}\,\rbrack=0\qquad \forall\, h\in\mathfrak{h}.
\end{equation}
For $N\geq 2$ and $1\leq i\leq N$ write,
\[
\widehat{\mathcal{D}}_i^{(N)}:=E_i-
\sum_{s=1}^{i-1}\widehat{r}_{si}+\sum_{s=i+1}^N\widehat{r}_{is}\in
\mathbb{D}_{\mathcal{R}}\otimes U(\mathfrak{g})^{\otimes N}
\]
with $E=\sum_{k=1}^\rr\partial_{x_k}\otimes x_k$, see \eqref{E}.
The following two statements are equivalent.
\begin{enumerate}
\item[{\bf a.}] For $N\geq 2$ and $1\leq i\not=j\leq N$,
\begin{equation}\label{gencomm}
\lbrack \widehat{\mathcal{D}}_i^{(N)},\widehat{\mathcal{D}}_j^{(N)}\rbrack=
-\sum_{k=1}^r\partial_{x_k}(\widehat{r}_{ij})\Delta^{(N-1)}(x_k)
\end{equation}
in $\mathbb{D}_{\mathcal{R}}\otimes U(\mathfrak{g})^{\otimes N}$.
\item[{\bf b.}] $\widehat{r}$ is a solution of the classical dynamical Yang-Baxter equation,
\begin{equation}\label{cdYBe}
\begin{split}
\sum_{k=1}^\rr\bigl((x_k)_3\partial_{x_k}(\widehat{r}_{12})-
&(x_k)_2\partial_{x_k}(\widehat{r}_{13})+(x_k)_1\partial_{x_k}(\widehat{r}_{23})\bigr)\\
&\qquad+\lbrack \widehat{r}_{12},\widehat{r}_{13}\rbrack
+\lbrack \widehat{r}_{12},\widehat{r}_{23}\rbrack+\lbrack \widehat{r}_{13},\widehat{r}_{23}
\rbrack=0
\end{split}
\end{equation}
in $\mathcal{R}\otimes U(\mathfrak{g})^{\otimes 3}$.
\end{enumerate}
\end{prop}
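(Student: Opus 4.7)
The plan is to derive a universal identity in $\mathbb{D}_{\mathcal{R}}\otimes U(\mathfrak{g})^{\otimes N}$ of the form
\begin{equation*}
[\widehat{\mathcal{D}}_i^{(N)},\widehat{\mathcal{D}}_j^{(N)}] + \sum_{k=1}^r \partial_{x_k}(\widehat{r}_{ij})\,\Delta^{(N-1)}(x_k) \;=\; \sum_{s\notin\{i,j\}} \varepsilon_s\,\mathrm{CDYBE}_{sij}(\widehat{r}),
\end{equation*}
valid for every $N\geq 2$ and every pair $i\neq j$ (take $i<j$ without loss of generality), where $\mathrm{CDYBE}_{sij}(\widehat{r})$ denotes the left-hand side of \eqref{cdYBe} embedded into the three tensor positions $\{s,i,j\}$ according to their natural linear order, and $\varepsilon_s\in\{\pm 1\}$ is a sign depending on whether $s$ lies to the left, between, or to the right of $i$ and $j$. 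Once this identity is proved, the implication (b) $\Rightarrow$ (a) is immediate since \eqref{cdYBe} forces every summand on the right to vanish; conversely, (a) $\Rightarrow$ (b) is obtained by specialising to $N=3$ with $(i,j)=(1,2)$ and the only surviving index $s=3$, so that the right-hand side becomes (up to a sign) the standard CDYBE on positions $(1,2,3)$ of $U(\mathfrak{g})^{\otimes 3}$.

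To derive the identity I split $\widehat{\mathcal{D}}_i^{(N)} = E_i + R_i$ with $R_i := -\sum_{s<i}\widehat{r}_{si} + \sum_{s>i}\widehat{r}_{is}$, and expand the commutator into the four pieces
\[
[E_i,E_j]+[E_i,R_j]+[R_i,E_j]+[R_i,R_j].
\]
The first piece vanishes because the $\partial_{x_k}$ pairwise commute and $(x_k)_i,(x_l)_j$ are Cartan elements in disjoint tensor slots. For every $\widehat{r}_{ab}$ appearing in the mixed brackets a Leibniz computation yields
\[
[E_i,\widehat{r}_{ab}] = \sum_k [(x_k)_i,\widehat{r}_{ab}]\,\partial_{x_k} + \sum_k (x_k)_i\,\partial_{x_k}(\widehat{r}_{ab}),
\]
whose first sum vanishes whenever $i\notin\{a,b\}$. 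The delicate case is $\{a,b\}=\{i,j\}$: here the signs $-\widehat{r}_{ij}\in R_j$ and $+\widehat{r}_{ij}\in R_i$ combine the two $\widehat{r}_{ij}$ contributions into $-[E_i,\widehat{r}_{ij}]-[E_j,\widehat{r}_{ij}]$, whose residual differential tail reads $-\sum_k[(x_k)_i+(x_k)_j,\widehat{r}_{ij}]\,\partial_{x_k}$ and is annihilated by the $\mathfrak{h}$-invariance \eqref{invprop}. The same invariance lets one commute $(x_k)_i+(x_k)_j$ past $\partial_{x_k}(\widehat{r}_{ij})$, producing the ``$\{i,j\}$-block'' contribution $-\sum_k\partial_{x_k}(\widehat{r}_{ij})\bigl((x_k)_i+(x_k)_j\bigr)$ to the commutator.

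The remaining terms collect naturally by triples $\{s,i,j\}$ with $s\notin\{i,j\}$. For each such $s$, the bracket $[R_i,R_j]$ contributes three Lie commutators of the shape $[\widehat{r}_{?i},\widehat{r}_{?j}]$, $[\widehat{r}_{?i},\widehat{r}_{ij}]$, $[\widehat{r}_{?j},\widehat{r}_{ij}]$ (where ``$?$'' indicates the placement of $s$ and with signs inherited from those of $R_i,R_j$), while $[E_i,R_j]$ and $[R_i,E_j]$ contribute the two derivative terms $\partial_{x_k}(\widehat{r}_{?j})(x_k)_i$ and $\partial_{x_k}(\widehat{r}_{?i})(x_k)_j$. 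Since the relevant Cartan factors sit in tensor slots disjoint from the neighbouring $\widehat{r}$-factors they can be freely reordered; treating the three cases $s<i<j$, $i<s<j$, $i<j<s$ separately, one verifies that these five terms reassemble into $\varepsilon_s\,\mathrm{CDYBE}_{sij}(\widehat{r}) - (x_k)_s\,\partial_{x_k}(\widehat{r}_{ij})$. Summing over $s$, the isolated summands $-(x_k)_s\partial_{x_k}(\widehat{r}_{ij})$ combine with the $\{i,j\}$-block from the previous paragraph (using disjoint-position commuting for $s\notin\{i,j\}$) to reconstruct $-\partial_{x_k}(\widehat{r}_{ij})\sum_{s=1}^N(x_k)_s = -\partial_{x_k}(\widehat{r}_{ij})\Delta^{(N-1)}(x_k)$, completing the universal identity. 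The main obstacle will be the sign bookkeeping in this triple-reassembly step: the $\pm$-signs built into $R_i,R_j$ depend on the three relative-ordering cases of $s$ with respect to $i,j$, and in the middle case $i<s<j$ the cumulative sign $\varepsilon_s=-1$ flips; this does not threaten the equivalence of (a) and (b), because (b) is a single identity on $\widehat{r}\in\mathcal{R}\otimes U(\mathfrak{g})^{\otimes 2}$ that implies the vanishing of every positional embedding of CDYBE at once.
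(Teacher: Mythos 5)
Your proposal is correct and follows essentially the same route as the paper: a direct expansion of $[\widehat{\mathcal{D}}_i^{(N)},\widehat{\mathcal{D}}_j^{(N)}]$ into the pieces $[E_i,E_j]$, $[E_i,R_j]$, $[R_i,E_j]$, $[R_i,R_j]$, which the paper writes out only for $N=2$ and for $N=3$ with $(i,j)=(1,2)$ and otherwise dismisses as a ``straightforward but tedious computation''. Your universal identity organizing that computation for all $N$ and $i<j$, with $\varepsilon_s=+1$ for $s<i$ or $s>j$ and $\varepsilon_s=-1$ for $i<s<j$ and with the compensating terms $-\sum_k (x_k)_s\,\partial_{x_k}(\widehat{r}_{ij})$ recombining with the $\{i,j\}$-block into $-\sum_k\partial_{x_k}(\widehat{r}_{ij})\Delta^{(N-1)}(x_k)$, is exactly what the expansion yields (it is consistent with the paper's $N=3$ display, where $s=3$ enters with sign $+1$), and it cleanly delivers both implications at once.
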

%%%%%%%%%%%%%%%%%%%%%%%%%%%%%%%%%%%%%%%%%%%
\begin{proof}
By direct computations,
\[
\lbrack \widehat{\mathcal{D}}_1^{(2)},\widehat{\mathcal{D}}_2^{(2)}\rbrack=
-\sum_{k=1}^\rr\partial_{x_k}(\widehat{r})\Delta(x_k)
\]
in $\mathbb{D}_{\mathcal{R}}\otimes U(\mathfrak{g})^{\otimes 2}$ and
\begin{equation*}
\begin{split}
\lbrack \widehat{\mathcal{D}}_1^{(3)},\widehat{\mathcal{D}}_2^{(3)}\rbrack=
&-\sum_{k=1}^\rr\partial_{x_k}(\widehat{r}_{12})\Delta^{(2)}(x_k)
+ \lbrack \widehat{r}_{12},\widehat{r}_{13}\rbrack
+\lbrack \widehat{r}_{12},\widehat{r}_{23}\rbrack+\lbrack \widehat{r}_{13},\widehat{r}_{23}
\rbrack\\
&+
\sum_{k=1}^\rr\bigl((x_k)_3\partial_{x_k}(\widehat{r}_{12})-
(x_k)_2\partial_{x_k}(\widehat{r}_{13})+(x_k)_1\partial_{x_k}(\widehat{r}_{23})\bigr)
\end{split}
\end{equation*}
in $\mathbb{D}_{\mathcal{R}}\otimes U(\mathfrak{g})^{\otimes 3}$.
Hence {\bf a} implies {\bf b}.

It is a straightforward but tedious computation to show that classical dynamical Yang-Baxter equation
\eqref{cdYBe} implies \eqref{gencomm} for all $N\geq 2$ and all $1\leq i\not=j\leq N$.
\end{proof}
%%%%%%%%%%%%%%%%%%%%%%%%%%%%%%%%%%%%%%%%%%%%%%
For instance, $\widehat{r}(h):=r(h/2)$ ($h\in\mathfrak{h}$) with $r$ Felder's $r$-matrix \eqref{Felderr} satisfies the classical dynamical Yang-Baxter equation \eqref{cdYBe} as well as the invariance condition \eqref{invprop}. The same holds true for $\widehat{r}=2r$.
%%%%%%%%%%%%%%%%%%%%%%%%%%%%%%%%%%%%%%%%%%%%%%
\begin{cor}[KZB operators]
Let $N\geq 2$. Let $U_1,\ldots,U_N$ be finite dimensional $\mathfrak{g}$-modules and write
$\mathbf{U}:=U_1\otimes\cdots\otimes U_N$ as before. Let 
$\widehat{r}\in \mathcal{R}\otimes U(\mathfrak{g})^{\otimes 2}$ be a solution of the
classical dynamical Yang-Baxter equation \eqref{cdYBe} satisfying the invariance property
\eqref{invprop}. Define differential operators $\widehat{\mathcal{D}}_i^{\mathbf{U}}\in
\mathbb{D}_{\mathcal{R}}\otimes \textup{End}\bigl(\mathbf{U}[0]\bigr)$ for $i=1,\ldots,N$ by
\[
\widehat{\mathcal{D}}_i^{\mathbf{U}}:=\widehat{\mathcal{D}}_i^{(N)}|_{\mathbf{U}[0]}.
\]
Then $\lbrack \widehat{\mathcal{D}}_i^{\mathbf{U}},\widehat{\mathcal{D}}_j^{\mathbf{U}}\rbrack=0$
in $\mathbb{D}_{\mathcal{R}}\otimes \textup{End}\bigl(\mathbf{U}[0]\bigr)$ for $i,j=1,\ldots,N$.
\end{cor}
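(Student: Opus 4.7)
The plan is to derive the commutativity on the zero-weight space as a direct consequence of the preceding proposition. The key identity, already established, is
\[
[\widehat{\mathcal{D}}_i^{(N)},\widehat{\mathcal{D}}_j^{(N)}]=-\sum_{k=1}^r\partial_{x_k}(\widehat{r}_{ij})\Delta^{(N-1)}(x_k)
\]
in $\mathbb{D}_{\mathcal{R}}\otimes U(\mathfrak{g})^{\otimes N}$. The proof reduces to two checks: first, that each $\widehat{\mathcal{D}}_i^{(N)}$ preserves the zero-weight subspace $\mathbf{U}[0]\subseteq\mathbf{U}$, so that the restriction $\widehat{\mathcal{D}}_i^{\mathbf{U}}$ is well defined; and second, that the right-hand side above acts as zero on $\mathbf{U}[0]$.

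For the first step I would note that the Euler-type piece $E_i=\sum_{k=1}^r\partial_{x_k}\otimes (x_k)_i$ visibly preserves the weight grading on $\mathbf{U}$, since each $x_k\in\mathfrak{h}$ acts diagonally on weight vectors. For the $\widehat{r}_{ij}$-contributions, the invariance hypothesis $[\Delta(h),\widehat{r}\,]=0$ in $\mathcal{R}\otimes U(\mathfrak{g})^{\otimes 2}$ translates, after placing the two tensor legs in slots $i$ and $j$, into $[(h)_i+(h)_j,\widehat{r}_{ij}]=0$ in $U(\mathfrak{g})^{\otimes N}$ for all $h\in\mathfrak{h}$; since $\widehat{r}_{ij}$ is the identity on all other tensor slots it automatically commutes with $(h)_k$ for $k\neq i,j$, so $[\Delta^{(N-1)}(h),\widehat{r}_{ij}]=0$. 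Hence $\widehat{r}_{ij}$ preserves every weight space of $\mathbf{U}$, in particular $\mathbf{U}[0]$.

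For the second step, the same argument applies to the coefficients $\partial_{x_k}(\widehat{r}_{ij})\in\mathcal{R}\otimes U(\mathfrak{g})^{\otimes N}$ on the right-hand side of the commutator identity: differentiating with respect to the dynamical variables $x_k$ does not affect the algebraic $\mathfrak{h}$-invariance, so each $\partial_{x_k}(\widehat{r}_{ij})$ also commutes with $\Delta^{(N-1)}(\mathfrak{h})$. The crucial observation is then that $\Delta^{(N-1)}(x_k)$ acts on a weight vector $v\in\mathbf{U}[\mu]$ by the scalar $\mu(x_k)$, and therefore vanishes identically on $\mathbf{U}[0]$. Combining, the restriction of $-\sum_{k=1}^r\partial_{x_k}(\widehat{r}_{ij})\Delta^{(N-1)}(x_k)$ to $\mathbf{U}[0]$ is zero in $\mathbb{D}_{\mathcal{R}}\otimes\textup{End}(\mathbf{U}[0])$, and so $[\widehat{\mathcal{D}}_i^{\mathbf{U}},\widehat{\mathcal{D}}_j^{\mathbf{U}}]=0$.

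No step here presents real difficulty; the only mild subtlety is book-keeping the $\mathfrak{h}$-invariance as it propagates from $\widehat{r}$ to $\partial_{x_k}(\widehat{r}_{ij})$ inside $U(\mathfrak{g})^{\otimes N}$, which is what guarantees that the "obstruction" term on the right-hand side splits off a factor $\Delta^{(N-1)}(x_k)$ acting on a weight-compatible operator and can therefore be killed by restricting to weight zero.
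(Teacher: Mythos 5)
Your proof is correct and is essentially the argument the paper intends: the corollary is stated as an immediate consequence of the preceding proposition, and your two checks (each $\widehat{\mathcal{D}}_i^{(N)}$ preserves $\mathbf{U}[0]$ by the invariance property \eqref{invprop}, and the obstruction term $-\sum_k\partial_{x_k}(\widehat{r}_{ij})\Delta^{(N-1)}(x_k)$ kills $\mathbf{U}[0]$ since $\Delta^{(N-1)}(x_k)$ acts by the weight, which is zero) are exactly what is needed. In fact the second check is even simpler than you make it, since $\Delta^{(N-1)}(x_k)$ is applied first and already annihilates $\mathbf{U}[0]$, so the $\mathfrak{h}$-invariance of $\partial_{x_k}(\widehat{r}_{ij})$ is not needed there.
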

%%%%%%%%%%%%%%%%%%%%%%%%%%%%%%%%%%%%%%%%%%%%%%
\begin{rema}\label{ESremark}
Let $\lambda\in\mathfrak{h}_{\textup{reg}}^*$. Let $\mathbf{u}=u_1\otimes\cdots\otimes u_N\in\mathbf{U}[0]$ with $u_i\in U_i[\mu_i]$ and $\sum_{j=1}^N\mu_j=0$, and write
$\lambda_i:=\lambda-\mu_{i+1}-\cdots-\mu_N$ ($i=1,\ldots,N-1$) and $\lambda_N:=\lambda$.
By \cite{ES,E}, the weighted trace of the product $\Psi_\lambda^{\mathbf{J}(\lambda)\mathbf{u}}$
of the $N$ vertex operators $\Psi_{\lambda_i}^{u_i}\in\textup{Hom}_{\mathfrak{g}}(M_{\lambda_i}, M_{\lambda_{i-1}}\otimes U_i)$ are common eigenfunctions of the asymptotic KZB operators 
$\widehat{\mathcal{D}}_i^{\mathbf{U}}$ \textup{(}$1\leq i\leq N$\textup{)} with $\widehat{r}(h)=r(h/2)$ and $r$ Felder's $r$-matrix \eqref{Felderr}.
\end{rema}
%%%%%%%%%%%%%%%%%%%%%%%%%%%%%%%%%%%%%%%%%%%%%

Now we prove the analogous result for asymptotic boundary KZB type operators. This time the universal versions of the asymptotic boundary KZB operators themselves will already be integrable. This is because we are considering asymptotic boundary KZB operators associated to {\it split} Riemannian symmetric pairs $G/K$ (note that the representation theoretic context from Remark \ref{ESremark} relates the asymptotic KZB operators to the group $G$ viewed as the symmetric space
$G\times G/\textup{diag}(G)$, with $\textup{diag}(G)$ the group $G$ diagonally embedding into 
$G\times G$). 
%%%%%%%%%%%%%%%%%%%%%%%%%%%%%%%%%%%%%%%%%%%%%
\begin{prop}[general asymptotic boundary KZB operators]\label{bKZBopergeneral}
Let $A_\ell$ and $A_r$ be two complex unital associative algebras.
Let $\widetilde{r}^{\,\pm}\in 
\mathcal{R}\otimes U(\mathfrak{g})^{\otimes 2}$ and $\widetilde{\kappa}\in
\mathcal{R}\otimes A_\ell\otimes U(\mathfrak{g})\otimes A_r$ and suppose that
\begin{equation}\label{invpropb}
\lbrack h\otimes 1, \widetilde{r}^{\,+}\rbrack=\lbrack 1\otimes h, \widetilde{r}^{\,-}\rbrack\qquad
\forall\, h\in\mathfrak{h}.
\end{equation}
Write for $N\geq 2$ and $1\leq i\leq N$,
\[
\widetilde{\mathcal{D}}_i^{(N)}:=E_i-
\sum_{s=1}^{i-1}\widetilde{r}_{si}^{\,+}-\widetilde{\kappa}_i
-\sum_{s=i+1}^N\widetilde{r}_{is}^{\,-}
\in\mathbb{D}_{\mathcal{R}}\otimes A_\ell\otimes U(\mathfrak{g})^{\otimes N}\otimes A_r
\]
with $E$ given by \eqref{E} and with the indices indicating in which tensor components
of $U(\mathfrak{g})^{\otimes N}$ the $U(\mathfrak{g})$-components of $\widetilde{r}^{\,\pm}$ and
$\widetilde{\kappa}$ are placed. The following statements are equivalent.
\begin{enumerate}
\item[{\bf a.}] For all $N\geq 2$ and all $1\leq i,j\leq N$,
\[
\lbrack \widetilde{\mathcal{D}}_i^{(N)},\widetilde{\mathcal{D}}_j^{(N)}\rbrack=0
\]
in $\mathbb{D}_{\mathcal{R}}\otimes A_\ell\otimes U(\mathfrak{g})^{\otimes N}\otimes A_r$.
\item[{\bf b.}] $\widetilde{r}^+$ and $\widetilde{r}^-$ are solutions of the following three
coupled classical dynamical Yang-Baxter equations,
\begin{equation}\label{mixedcdYBe}
\begin{split}
\sum_{k=1}^\rr\bigl((x_k)_1\partial_{x_k}(\widetilde{r}_{23}^{\,-})-
(x_k)_2\partial_{x_k}(\widetilde{r}_{13}^{\,-})\bigr)&=
\lbrack \widetilde{r}_{13}^{\,-},\widetilde{r}_{12}^{\,+}\rbrack
+\lbrack \widetilde{r}_{12}^{\,-},\widetilde{r}_{23}^{\,-}\rbrack+\lbrack \widetilde{r}_{13}^{\,-},
\widetilde{r}_{23}^{\,-}\rbrack,\\
\sum_{k=1}^\rr\bigl((x_k)_1\partial_{x_k}(\widetilde{r}_{23}^{\,+})-
(x_k)_3\partial_{x_k}(\widetilde{r}_{12}^{\,-})\bigr)&=\lbrack \widetilde{r}_{12}^{\,-},\widetilde{r}_{13}^{\,+}\rbrack
+\lbrack \widetilde{r}_{12}^{\,-},\widetilde{r}_{23}^{\,+}\rbrack+\lbrack \widetilde{r}_{13}^{\,-},
\widetilde{r}_{23}^{\,+}\rbrack,\\
\sum_{k=1}^\rr\bigl((x_k)_2\partial_{x_k}(\widetilde{r}_{13}^{\,+})-
(x_k)_3\partial_{x_k}(\widetilde{r}_{12}^{\,+})\bigr)&=
\lbrack \widetilde{r}_{12}^{\,+},\widetilde{r}_{13}^{\,+}\rbrack
+\lbrack \widetilde{r}_{12}^{\,+},\widetilde{r}_{23}^{\,+}\rbrack+\lbrack \widetilde{r}_{23}^{\,-},
\widetilde{r}_{13}^{\,+}\rbrack
\end{split}
\end{equation}
in $\mathcal{R}\otimes U(\mathfrak{g})^{\otimes 3}$,
and $\widetilde{\kappa}$ is a solution of the associated classical dynamical reflection equation
\begin{equation}\label{mixedcdRe}
\sum_{k=1}^\rr\bigl((x_k)_1\partial_{x_k}(\widetilde{\kappa}_2+\widetilde{r}^{+})-
(x_k)_2\partial_{x_k}(\widetilde{\kappa}_1+\widetilde{r}^{-})\bigr)
=
\lbrack\widetilde{\kappa}_1+\widetilde{r}^{-}, \widetilde{\kappa}_2+\widetilde{r}^{+}\rbrack
\end{equation}
in $\mathcal{R}\otimes A_\ell\otimes U(\mathfrak{g})^{\otimes 2}\otimes A_r$.
\end{enumerate}
\end{prop}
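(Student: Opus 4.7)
The equivalence will be established by a support-decomposition argument modelled on the proof of the analogous statement for the ordinary asymptotic KZB operators recalled at the beginning of this subsection. The new feature is that three distinct kinds of "slots" must be tracked simultaneously: the two $U(\mathfrak{g})$-slots of $\widetilde{r}^{\,\pm}$ and the single middle $U(\mathfrak{g})$-slot of $\widetilde{\kappa}$ (the outer $A_\ell$ and $A_r$ slots of $\widetilde{\kappa}$ are fixed once and for all).

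The plan is as follows. First I would fix $1\le i<j\le N$ and expand $[\widetilde{\mathcal{D}}_i^{(N)},\widetilde{\mathcal{D}}_j^{(N)}]$ by bilinearity, using the elementary identity
\[
[E_s,\,f\cdot a]=\sum_{k=1}^{r}\partial_{x_k}(f)(x_k)_s\,a+f\cdot\sum_{k=1}^{r}\bigl[(x_k)_s,a\bigr]\partial_{x_k}
\]
for $f\in\mathcal{R}$ and $a$ a tensor-slot algebra element (which commutes with every $\partial_{x_k}$). In particular $[E_i,E_j]=0$. The expansion produces three types of contributions: genuine derivative terms $(x_k)_s\,\partial_{x_k}(\widetilde{r}^{\,\pm}),\,(x_k)_s\,\partial_{x_k}(\widetilde{\kappa})$; "correction" terms of the form $f\,[(x_k)_s,a]\,\partial_{x_k}$; and purely algebraic brackets among the various $\widetilde{r}^{\,\pm}_{??}$ and $\widetilde{\kappa}_\bullet$. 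The invariance hypothesis \eqref{invpropb} is exactly what is needed to re-assemble the derivative and correction terms into expressions of the shape $(x_k)_t\,\partial_{x_k}(\cdot)$ with the index $t$ matching the slot prescribed by \eqref{mixedcdYBe} and \eqref{mixedcdRe}.

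Next, because $\widetilde{r}^{\,\pm}$ occupies two tensor slots and $\widetilde{\kappa}$ only one, every summand that survives the expansion is supported on a subset of $\{1,\ldots,N\}$ that is either $\{i,j\}$, or $\{k,i,j\}$ for a uniquely determined "spectator" index $k\in\{1,\ldots,N\}\setminus\{i,j\}$. This furnishes a canonical decomposition
\[
[\widetilde{\mathcal{D}}_i^{(N)},\widetilde{\mathcal{D}}_j^{(N)}]=\Xi^{\{i,j\}}+\sum_{k<i}\Xi^{\{k,i,j\}}+\sum_{i<k<j}\Xi^{\{i,k,j\}}+\sum_{j<k}\Xi^{\{i,j,k\}},
\]
where each $\Xi$ lives in its own tensor-slot sector. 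Since the sectors are disjoint, the total commutator vanishes iff every $\Xi$ vanishes. A direct calculation in each sector, carried out by placing all legs of the participating $\widetilde{r}^{\,\pm}$ and $\widetilde{\kappa}$ in their prescribed slots and using \eqref{invpropb} as described above, identifies $\Xi^{\{i,j\}}=0$ with the classical dynamical reflection equation \eqref{mixedcdRe}, $\Xi^{\{k,i,j\}}=0$ (for $k<i$) with the first mixed CDYBE of \eqref{mixedcdYBe} (slots $(1,2,3)$ identified with $(k,i,j)$), $\Xi^{\{i,k,j\}}=0$ (for $i<k<j$) with the second, and $\Xi^{\{i,j,k\}}=0$ (for $j<k$) with the third. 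Granting these four sector identifications, the implication (b)$\Rightarrow$(a) is immediate. For (a)$\Rightarrow$(b) one specializes to $N=2$, where only the $\{i,j\}$-sector is present, to deduce \eqref{mixedcdRe}, and then to $N=3$ with the spectator slot placed before, between, and after $i,j$ respectively, to extract the three equations of \eqref{mixedcdYBe}.

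The main obstacle is the bookkeeping in the sector identifications. The two terms $\widetilde{\kappa}_i+\widetilde{r}^{\,-}_{ij}$ (of $\widetilde{\mathcal{D}}_i^{(N)}$) and $\widetilde{\kappa}_j+\widetilde{r}^{\,+}_{ij}$ (of $\widetilde{\mathcal{D}}_j^{(N)}$) each contribute "corrections" whose $\partial_{x_k}$-part must be shown, via \eqref{invpropb}, to cancel exactly against the symmetric correction coming from the opposite operator, leaving behind only the derivative terms $(x_k)_1\partial_{x_k}(\widetilde{\kappa}_2+\widetilde{r}^{\,+})$ and $(x_k)_2\partial_{x_k}(\widetilde{\kappa}_1+\widetilde{r}^{\,-})$ that appear in \eqref{mixedcdRe}; the analogous recombination, with $\widetilde{\kappa}$ replaced throughout by an $\widetilde{r}^{\,\pm}$ on the spectator slot, produces the correct left-hand sides of \eqref{mixedcdYBe}. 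Verifying this cancellation cleanly in all four sectors, and checking that no residual $\partial_{x_k}$ terms remain outside of the CDYBE/CDRE shape, is the computational heart of the argument.
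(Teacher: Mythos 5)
Your proposal is correct and takes essentially the same route as the paper's (very terse) proof: expand the commutators directly, with the invariance condition \eqref{invpropb} cancelling the residual $\partial_{x_k}$-terms, so that the case $N=2$ is equivalent to \eqref{mixedcdRe}, the case $N=3$ yields the three equations \eqref{mixedcdYBe}, and for general $N$ the commutator is a sum of embedded copies of these expressions, giving \textbf{(b)}$\Rightarrow$\textbf{(a)}. One small caution: the blanket claim that the disjointly supported sectors must vanish independently needs care (tensor legs may carry identity components), but your actual argument — deriving \eqref{mixedcdRe} at $N=2$ first and then using it to isolate the spectator sector at $N=3$ — does not rely on that claim.
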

%%%%%%%%%%%%%%%%%%%%%%%%%%%%%%%%%%%%%%%%%%%
\begin{proof}
By direct computations, $\lbrack \widetilde{\mathcal{D}}_1^{(2)},\widetilde{\mathcal{D}}_2^{(2)}\rbrack=0$ is equivalent to the dynamical reflection equation \eqref{mixedcdRe} and
$\lbrack \widetilde{\mathcal{D}}_i^{(3)},\widetilde{\mathcal{D}}_j^{(3)}\rbrack=0$ for
$(i,j)=(1,2), (1,3), (2,3)$ is equivalent to the three coupled classical dynamical Yang-Baxter equations.
Hence {\bf a} implies {\bf b}. Conversely, a direct but tedious computation shows that the three coupled classical dynamical Yang-Baxter equations and the associated classical dynamical reflection equation
imply $\lbrack \widetilde{\mathcal{D}}_i^{(N)},\widetilde{\mathcal{D}}_j^{(N)}\rbrack=0$ for 
$N\geq 2$ and $1\leq i,j\leq N$. 
\end{proof}
%%%%%%%%%%%%%%%%%%%%%%%%%%%%%%%%%%
Applied to the asymptotic boundary KZB operators $\mathcal{D}_i$ ($1\leq i\leq N$) given by \eqref{bKZBoper},
%with 
we obtain from Theorem \ref{consistentoperators} with $A_\ell=U(\mathfrak{k})=A_r$  
the following main result of this subsection.
%%%%%%%%%%%%%%%%%%%%%%%%%%%%%%%%%%
\begin{thm}\label{thmDEF}
The folded dynamical $r$-matrices $r^{\pm}\in\mathcal{R}\otimes \mathfrak{g}^{\otimes 2}$ \textup{(}see
\eqref{explicitsigmatauexpl}\textup{)} and the dynamical $k$-matrix $\kappa\in\mathcal{R}\otimes
\mathfrak{k}\otimes U(\mathfrak{g})\otimes\mathfrak{k}$ \textup{(}see \eqref{kdef}\textup{)}
satisfy the coupled classical dynamical Yang-Baxter equations \eqref{mixedcdYBe} in
$\mathcal{R}\otimes U(\mathfrak{g})^{\otimes 3}$ and the associated classical dynamical reflection equation \eqref{mixedcdRe} in $\mathcal{R}\otimes U(\mathfrak{k})\otimes U(\mathfrak{g})^{\otimes 2}\otimes U(\mathfrak{k})$.
\end{thm}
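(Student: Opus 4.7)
The plan is to deduce Theorem \ref{thmDEF} as a direct consequence of Theorem \ref{consistentoperators} combined with Proposition \ref{bKZBopergeneral} applied to $A_\ell=A_r=U(\mathfrak{k})$, $\widetilde{r}^{\,\pm}=r^{\pm}$, and $\widetilde{\kappa}=\kappa$. Indeed, the asymptotic boundary KZB operators $\mathcal{D}_i$ of \eqref{bKZBoper} are precisely the operators $\widetilde{\mathcal{D}}_i^{(N)}$ from Proposition \ref{bKZBopergeneral} built from these choices, and Theorem \ref{consistentoperators} provides their pairwise commutativity in $\mathbb{D}_{\mathcal{R}}\otimes U(\mathfrak{k})\otimes U(\mathfrak{g})^{\otimes N}\otimes U(\mathfrak{k})$. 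Thus, assuming the applicability of Proposition \ref{bKZBopergeneral}, the implication $(\textup{a})\Rightarrow(\textup{b})$ yields both \eqref{mixedcdYBe} (obtained from $N=3$, $1\leq i,j\leq 3$) and \eqref{mixedcdRe} (obtained from $N=2$), which is exactly the content of Theorem \ref{thmDEF}.

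The only item that still requires verification is the invariance hypothesis \eqref{invpropb} of Proposition \ref{bKZBopergeneral}, namely
\[
\lbrack h\otimes 1,r^+\rbrack=\lbrack 1\otimes h,r^-\rbrack\qquad (h\in\mathfrak{h}).
\]
This is a short direct calculation using the explicit formulas \eqref{explicitsigmatauexpl}. Since $y_\alpha=e_\alpha-e_{-\alpha}$ one has $[h,y_\alpha]=\alpha(h)(e_\alpha+e_{-\alpha})$, and hence
\[
\lbrack h\otimes 1,r^+\rbrack=\sum_{\alpha\in R}\frac{\alpha(h)(e_\alpha+e_{-\alpha})\otimes e_\alpha}{1-\xi_{-2\alpha}}.
\]
On the other hand, the first summand $\sum_{j}x_j\otimes x_j$ in $r^-$ commutes with $1\otimes h$ (as $\mathfrak{h}$ is abelian), while the remaining summands give
\[
\lbrack 1\otimes h,r^-\rbrack=\sum_{\alpha\in R}\frac{(e_\alpha+e_{-\alpha})\otimes [h,e_\alpha]}{1-\xi_{-2\alpha}}=\sum_{\alpha\in R}\frac{\alpha(h)(e_\alpha+e_{-\alpha})\otimes e_\alpha}{1-\xi_{-2\alpha}},
\]
so the two expressions coincide.

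There is no real obstacle: all the analytic and algebraic work has already been done. The key representation-theoretic input (namely that the normalised formal $N$-point spherical functions $\mathbf{F}_{\underline{\lambda}}^{v,\mathbf{u},f}$ diagonalise the $\mathcal{D}_i$ with an abundance of parameters, and that this forces universal commutativity) is supplied by Corollary \ref{corMAINkzb} and Theorem \ref{consistentoperators}. The translation between commutativity of the universal operators $\mathcal{D}_i$ and the mixed classical dynamical Yang-Baxter and reflection equations is carried out once and for all in Proposition \ref{bKZBopergeneral}. The mildly tedious part, which has already been verified there, is checking that the $N=2$ and $N=3$ cases of $[\widetilde{\mathcal{D}}_i^{(N)},\widetilde{\mathcal{D}}_j^{(N)}]=0$ exhaust the content of \eqref{mixedcdYBe}--\eqref{mixedcdRe}; combined with the invariance verification above, this completes the proof.
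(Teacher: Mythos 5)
Your proposal is correct and follows essentially the same route as the paper, which obtains Theorem \ref{thmDEF} by applying Proposition \ref{bKZBopergeneral} with $A_\ell=U(\mathfrak{k})=A_r$, $\widetilde{r}^{\,\pm}=r^{\pm}$, $\widetilde{\kappa}=\kappa$ to the commutativity statement of Theorem \ref{consistentoperators}. Your explicit check of the invariance hypothesis \eqref{invpropb}, namely $[h\otimes 1,r^+]=[1\otimes h,r^-]=\sum_{\alpha\in R}\alpha(h)(e_\alpha+e_{-\alpha})\otimes e_\alpha/(1-\xi_{-2\alpha})$, is accurate and usefully makes explicit a step the paper leaves implicit.
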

%%%%%%%%%%%%%%%%%%%%%%%%%%%%%%%%%%%
A direct algebraic proof of Theorem \ref{thmDEF}, which does not resorting to the commutativity of the asymptotic boundary KZB operators,  is given in \cite{St2}.
%%%%%%%%%%%%%%%%%%%%%%%%%%%%%%%%%%%%%%%%%%%%%%%%

\end{document}